\newcommand\scanpic[1]		{\raisebox{-0.5\height}{\scalebox{.54}{\includegraphics{pic#1.png}}}}
\newcommand\scanPIC[1]	{\raisebox{-0.5\height}{\scalebox{.66}{\includegraphics{pic#1.png}}}}
\newcommand\scanPICC[1]		{\raisebox{-0.5\height}{\scalebox{.80}{\includegraphics{pic#1.png}}}}
\theoremstyle{plain}
\newtheorem{theorem}{Theorem}
\newtheorem{lemma}[theorem]{Lemma}
\newtheorem{proposition}[theorem]{Proposition}
\newtheorem{corollary}[theorem]{Corollary}
\theoremstyle{definition}
\newtheorem{remark}[theorem]{Remark}
\newtheorem{definition}[theorem]{Definition}
\newtheorem{notation}[theorem]{Notations}
 \numberwithin{equation}{section}
 \numberwithin{theorem}{section}
\DeclareMathOperator{\Aut}{Aut}
\DeclareMathOperator{\End}{End}
\DeclareMathOperator{\Rep}{Rep}
\DeclareMathOperator{\ev}{ev}
\DeclareMathOperator{\coev}{coev}
\DeclareMathOperator{\Ad}{Ad}
\newcommand\be            {\begin{equation}}
\newcommand\ee            {\end{equation}}
\newcommand{\ot}{\otimes}
\newcommand{\lb}{\label}
\newcommand\nxt{\noindent\raisebox{.08em}{\rule{.44em}{.44em}}\hspace{.4em}}
\newcommand\vect{\mathcal{V}\hspace{-.5pt}ect}
\newcommand\svect{\mathcal{S}\mathcal{V}\hspace{-.5pt}ect}
\newcommand\eps           {\varepsilon}
\newcommand\id            {id}
\newcommand\Id            {I\hspace{-1pt}d}
\newcommand\one           {{\bf1}}
\newcommand\Zc            {\mathcal{Z}}
\newcommand\Cb            {\mathbb{C}}
\newcommand\Zb            {\mathbb{Z}}
\newcommand\Cc            {\mathcal{C}}
\newcommand\Sc            {\mathcal{S}}
\newcommand\h            {\mathfrak{h}}
\newcommand\void[1]	{}
\begin{document}

\thispagestyle{empty}
\def\thefootnote{\fnsymbol{footnote}}
\begin{flushright}
ZMP-HH/12-8\\
Hamburger Beitr\"age zur Mathematik 432
\end{flushright}
\vskip 3em
\begin{center}\LARGE
$\Zb/2\Zb$-extensions of Hopf algebra module categories\\ by their base categories
\end{center}

\vskip 2em
\begin{center}
{\large 
Alexei Davydov$^{a}$,\, Ingo Runkel$^{b}$,\, ~\footnote{Emails: {\tt alexei1davydov@gmail.com}, {\tt ingo.runkel@uni-hamburg.de}}}
\\[1em]
\it$^a$ 
Department of Mathematics\\
Ohio University, Athens OH 45701, USA
\\[1em]
$^b$ Fachbereich Mathematik, Universit\"at Hamburg\\
Bundesstra\ss e 55, 20146 Hamburg, Germany
\end{center}

\vskip 2em
\begin{center}
  July 2012
\end{center}
\vskip 2em

\begin{abstract}
Starting with a self-dual Hopf algebra $H$ in a braided monoidal category $\Sc$ we construct a $\Zb/2\Zb$-graded monoidal category $\Cc = \Cc_0 + \Cc_1$. The degree zero component is the category $Rep_\Sc(H)$ of representations of $H$ and the degree one component is the category $\Sc$. The extra structure on $H$ needed to define the associativity isomorphisms is a choice of self-duality map and cointegral, subject to certain conditions. We also describe rigid, braided and ribbon structures on $\Cc$ in Hopf algebraic terms.

Our construction permits a uniform treatment of Tambara-Yamagami categories and categories related to symplectic fermions in conformal field theory.
\end{abstract}

\setcounter{footnote}{0}
\def\thefootnote{\arabic{footnote}}

\newpage

\tableofcontents

\section{Introduction}

Let $\Sc$ be a braided monoidal category. A monoidal category over $\Sc$ is a monoidal category $\Cc$ with a full monoidal embedding $\Sc\to\Cc$, together with a lift to a braided monoidal functor $\Sc\to\Zc(\Cc)$ \cite{DGNO,Davydov:2011a}.
In this paper we construct examples of $\Zb/2\Zb$-graded monoidal categories $\Cc = \Cc_0 + \Cc_1$ over $\Sc$ such that $\Cc_1=\Sc$. The component $\Cc_0$ is the category $\Rep_\Sc(H)$ of representations of a Hopf algebra $H$ in $\Sc$. 
Our aim when setting up this construction was to have a natural framework in which to place an example obtained from so-called symplectic fermions; we will get back to this in a moment. 

\medskip

The first main result of this paper is the construction of a tensor product functor and associativity isomorphisms for the $\Zb/2\Zb$-graded monoidal category $\Cc = \Rep_\Sc(H) + \Sc$. We describe solutions to the pentagon in terms of Hopf algebraic data: one needs to fix a self-duality structure on $H$ and a cointegral, subject to compatibility conditions (Theorem \ref{thm:main1}).

In a followup paper we plan to address the converse construction \cite{DR-prep}: Take a $\Zb/2\Zb$-graded monoidal category $\Cc = \Cc_0 + \Cc_1$ over $\Sc$ such that $\Cc_1=\Sc$ (subject to certain extra conditions). 
The tensor product $\Cc_0\times\Cc_1\to \Cc_1$ gives rise to a monoidal (fibre) functor $\Cc_0\to\Sc$. Under mild conditions the fibre functor produces via Tannaka-Krein reconstruction a Hopf algebra object $H$ in $\Sc$ such that $\Cc_0$ coincides with the category $\Rep_\Sc(H)$ of representations of $H$ in $\Sc$ \cite[Thm.\,3.2]{Majid:1995}. The rest of the tensor product in $\Cc$ is also essentially fixed: the product $\Cc_1\times\Cc_0\to \Cc_1$ is obtained from the fibre functor, and if the category $\Cc$ is to be rigid, the product $\Cc_1\times\Cc_1\to \Cc_0$ must be the tensor product $\Sc\times\Sc\to\Sc$ followed by tensoring with $H$. The condition that $\Cc$ is a monoidal category over $\Sc$ restricts the possible shape of the associativity isomorphisms to the ones in Section \ref{results} (up to an extra invertible object, see \cite{DR-prep}).

\medskip

Our second main result is a description of braiding isomorphisms for the $\Zb/2\Zb$-graded monoidal category $\Cc = \Rep_\Sc(H) + \Sc$. Our assumption here is that $\Sc$ is a symmetric category with its own monoidal $\Zb/2\Zb$-grading $\Sc = \Sc_0 + \Sc_1$ (such as for example super-vector spaces)
and that $\Cc$ is a braided monoidal category over $\Sc_0$, i.e.\ $\Sc_0$ is a full subcategory of the symmetric centre\footnote{
  The symmetric centre of a braided category is the full subcategory of all objects $T$ such that $c_{U,T} \circ c_{T,U} = \id_{T \otimes U}$ for all objects $U$ \cite{Muger:1998a}. Such objects $T$ are also called transparent \cite{Bruguieres:2000}.}
of $\Cc$. This essentially restricts the possible shape of the braiding to the one in \eqref{eq:braiding-ansatz} \cite{DR-prep}. Again we characterise possible solutions in terms of Hopf algebraic data (Theorem \ref{thm:main2}).

\medskip

Let us now discuss our motivating examples in more detail. The first one comes from the theory of symplectic fermions \cite{Kausch:1995py}, which give an important family of examples of two-dimensional logarithmic conformal field theories.

Let $\h$ be a finite dimensional complex vector space with a symplectic form $(-|-)$. Consider $\h$ as a purely odd abelian Lie super-algebra. The symplectic form on $\h$ becomes an invariant (super-)symmetric form. The vertex operator super-algebra of symplectic fermions is the vacuum module of the affinisation of the pair $\h, (-|-)$, see \cite[Sect.\,3.5]{Kac:1998} and \cite{Abe:2005}. Via a conformal field theory computation one can find the category of representations $\Cc$ and the conformal three- and four-point blocks for symplectic fermions \cite{Runkel:2012cf}. As for any vertex operator super-algebra, the category $\Cc$ is $\Zb/2\Zb$-graded with $\Cc_0$ consisting of untwisted (Neveu-Schwarz) representations and $\Cc_1$ of twisted (Ramond) representations. For symplectic fermions the untwisted sector $\Cc_0$ coincides with the category of $\h$-modules (in the category of super-vector spaces). The twisted sector $\Cc_1$ has two irreducible representations and is equivalent to the category of super-vector spaces. The conformal block calculation gives $\Cc$ the structure of a $\Zb/2\Zb$-graded braided monoidal category. The associativity and braiding isomorphisms found in \cite{Runkel:2012cf} fit into the framework described here with $\Sc$ being the category of super-vector spaces and $H$ being the universal enveloping algebra of $\h$ in $\Sc$, see Section \ref{sec:symp-ferm-mon}.

The other motivating example which we consider in some detail are the well-known Tambara-Yamagami categories \cite{Tambara:1998}. In our language, these categories correspond to group algebras $H = k[A]$ of finite abelian groups in the category $\Sc$ of $k$-vector spaces. We describe associativity isomorphisms and braidings for Tambara-Yamagami categories and recover the results of \cite{Tambara:1998} and \cite{Siehler:2001}, see Sections \ref{sec:TY}.

\bigskip

This paper is organised as follows. In Section \ref{prem-res} we review our conventions for Hopf algebras in braided categories and state our main results in a self-contained fashion. The proofs of our results are contained in Sections \ref{sec:theorem1} and \ref{sec:braiding}. Finally, in Section \ref{sec:mon-ex} we discuss four examples of our construction.

\bigskip
\noindent

{\bf Acknowledgements:} We would like to thank M.\ Izumi, D.\ Jordan, M.\ Mombelli and D.\ Nikshych for helpful conversations. We also thank J.\ Lederich for useful comments on the draft.
AD thanks the Department of Mathematics of Hamburg University for hospitality during two visits in 2011 and 2012. During these visits AD was partially supported by the Graduiertenkolleg 1670 of the Deutsche Forschungsgemeinschaft.

%%%%%%%%%%%%%%%%
\section{Conventions and main results}\label{prem-res}

Before we formulate the main results of the paper in more detail, we need to give our conventions on Hopf algebras in braided categories.

\subsection{Hopf algebras in braided categories}\label{sec:Hopf}

\begin{notation} \label{not:sec2}
We fix a ribbon category $\Sc$, which we will assume to be strict for notational simplicity; $\otimes$ (without index) stands for the tensor product in $\Sc$; $c_{U,V} : U \otimes V \to V \otimes U$ is the braiding in $\Sc$. 
We denote the evaluation and coevaluation by $\ev : U^\vee \otimes U \to \one$ and $\coev : \one \to U \otimes U^\vee$, the ribbon twist by $\theta_U$, and the natural monoidal isomorphism to the double dual by $\delta_U : U \to U^{\vee\vee}$.
We will think of $\End_{\Sc}(\one)$ as scalars and write e.g.\ $s \cdot f$ with $s : \one \to \one$ and $f : U \to V$ instead of $U \xrightarrow{=} \one \otimes U \xrightarrow{s \otimes f}  \one \otimes V \xrightarrow{=} V$. 
\end{notation}

An algebra in $\Sc$ is an object $A \in \Sc$, together with a product morphism $\mu = \mu_A : A \otimes A \to A$ and a unit morphism $\eta = \eta_A : \one \to A$, subject to the associativity and unit conditions. Analogously, a coalgebra is an object $C$ together with a coproduct $\Delta = \Delta_C : C \to C \otimes C$ and a counit $\eps = \eps_C : C \to \one$ subject to coassociativity and counit conditions. 

Given algebras $A,B$ in a symmetric category, the tensor product $A \otimes B$ carries a canonical algebra structure. In the present case, where $\Sc$ is braided, the product on $A \otimes B$ involves a choice between the braiding and its inverse. We fix
\be \label{eq:mult-on-product}
  \mu_{A \otimes B} = (\mu_A \otimes \mu_B) \circ (\id_A \otimes c_{B,A} \otimes \id_B) 
\ee
The main player in this paper is a Hopf algebra in the braided category $\Sc$ (see e.g. \cite{Majid:1995}).

\begin{definition}\label{def:braided-Hopf}
A {\em Hopf algebra} in $\Sc$ is an object $H \in \Sc$ together with morphisms $\mu,\eta,\Delta,\eps,S$, such that
\begin{enumerate}
\item $H$ is an algebra with product $\mu$ and unit $\eta$,
\item $H$ is a coalgebra with coproduct $\Delta$ and counit $\eps$,
\item $\Delta$ is an algebra morphism from $H$ to $H \otimes H$, $\eps$ is an algebra morphism from $H$ to $\one$,
\item $S$, the {\em antipode}, is an endomorphism of $H$ and satisfies
$$
  \mu \circ (S \otimes \id) \circ \Delta = \eta \circ \eps = \mu \circ (\id \otimes S) \circ \Delta \ .
$$ 
\end{enumerate}
\end{definition}

\begin{figure}[tb]
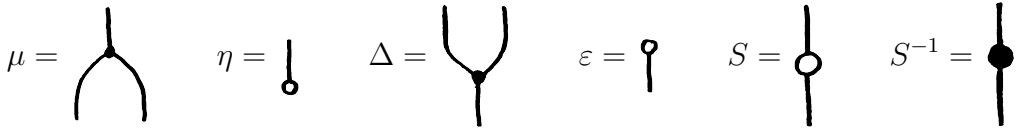

$$
\mu = \scanPIC{h01}
\qquad
\eta = \scanPIC{h02}
\qquad
\Delta = \scanPIC{h04}
\qquad
\eps = \scanPIC{h03}
\qquad
S = \scanPIC{h05}
\qquad
S^{-1} = \scanPIC{h06}
$$
\caption{Our conventions for the string diagram representation of the structure morphisms of a Hopf algebra. In this paper, string diagrams are read from bottom to top. E.g.\ the product $\mu$ is a morphism $H \otimes H \to H$. If source and target object in a string diagram are not explicitly indicated, they are (tensor products of) the Hopf algebra $H$.}
\label{fig:Hopf-graph-convention}
\end{figure}

Figure \ref{fig:Hopf-graph-convention} shows our convention for string diagrams which will be used extensively below. The algebra-map property of the coproduct and counit, as well as the property of the antipode, which we will call `bubble-property', are shown in Figure \ref{fig:Hopf-property}\,a)--d) and g). One checks that $S$ is an algebra and a coalgebra anti-automorphism in the sense that (see e.g.\ \cite[Lem.\,2.3]{Majid:1995})
\be\lb{ap}
  S \circ \mu = \mu \circ c_{H,H} \circ (S \otimes S)
  ~~,~~~
  \Delta \circ S =  (S \otimes S) \circ c_{H,H} \circ \Delta
  ~~,~~~
  S \circ \eta = \eta
  ~~,~~~
  \eps \circ S = \eps \ .
\ee
The first two properties are shown in Figure \ref{fig:Hopf-property}\,e),\,f).
For the rest of this paper let us in addition agree on the convention that
\begin{quote}
  `Hopf algebra' stands for `Hopf algebra with invertible antipode'.
\end{quote}
This is not a strong restriction; for example, if $H$ is taken from the category of finite-dimensional vector spaces, $S$ is automatically invertible \cite[Cor.\,5.1.6]{Sweedler:1967}. More generally, if $\Sc$ is ribbon and abelian, the antipode of a Hopf-algebra in $\Sc$ is invertible \cite[Thm.\,4.1]{Takeuchi:1999} (where in fact $\Sc$ is just braided with equalisers and $H$ is rigid).

\begin{figure}[tb]
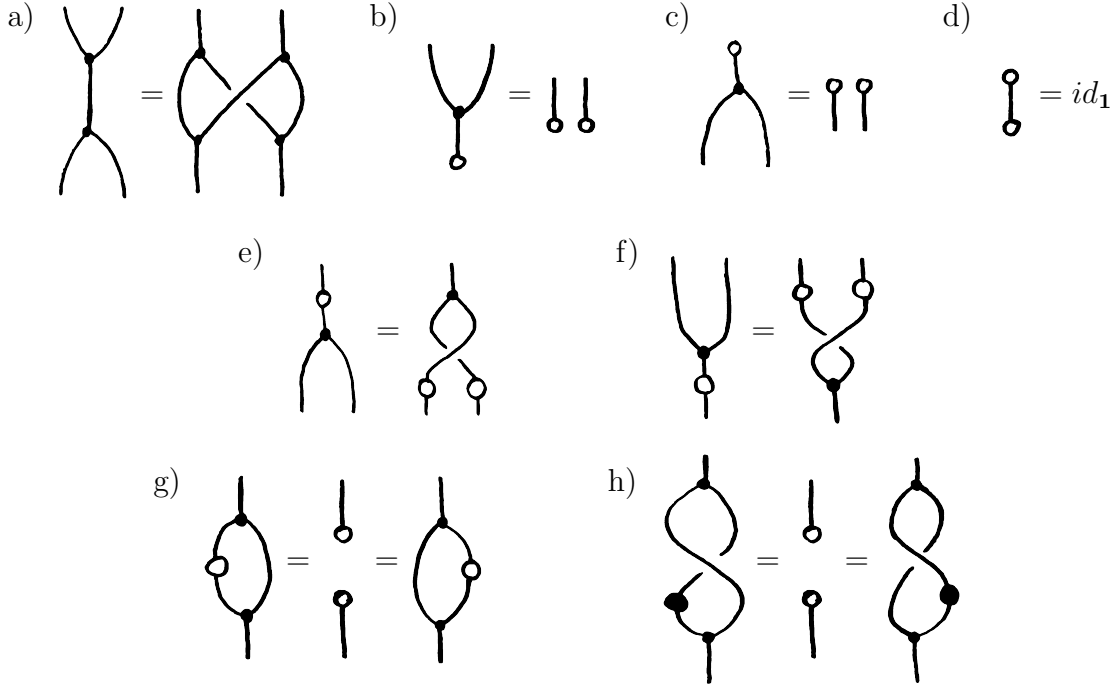

$$
\raisebox{2.5em}{\text{a)}}~~
\scanPIC{p01}
=
\scanPIC{p02}
\qquad
\raisebox{2.5em}{\text{b)}}~~
\scanPIC{p03}
=
\scanPIC{h02}\scanPIC{h02}
\qquad
\raisebox{2.5em}{\text{c)}}
\scanPIC{p04}
=
\scanPIC{h03}\scanPIC{h03}
\qquad
\raisebox{2.5em}{\text{d)}}~~
\scanPIC{p05} = \id_\one
$$
$$
\raisebox{2.5em}{\text{e)}}~~
\scanPIC{p06}
=
\scanPIC{p07}
\qquad
\qquad
\raisebox{2.5em}{\text{f)}}~~
\scanPIC{p08}
=
\scanPIC{p09}
$$
$$
\raisebox{2.5em}{\text{g)}}~~
\scanPIC{p10}
=
\scanPIC{p12}
=
\scanPIC{p11}
\qquad
\qquad
\raisebox{2.5em}{\text{h)}}~~
\scanPIC{p13}
=
\scanPIC{p12}
=
\scanPIC{p14}
$$
\caption{Properties of Hopf algebras: a)--d) state that $\Delta$ and $\eps$ are algebra maps. e) and f) give the sense in which $S$ is an algebra-anti-automorphism. g) is the `bubble-property' of the antipode $S$, h) is the corresponding property for $S^{-1}$.}
\label{fig:Hopf-property}
\end{figure}

Given that $S$ is invertible, one checks the counterpart of the bubble-property for $S^{-1}$ which is shown in Figure \ref{fig:Hopf-property}\,h).

\medskip

The {\em dual Hopf algebra} $H^\vee$ is defined to have structure morphisms\footnote{
  This convention agrees with \cite[Sect.\,2.2]{Majid:1995} and it is the natural one to use in the categorical formulation. However, if applied to elements it may look slightly unusual. Consider, for example, $\Sc = \vect(k)$ and let $H$ be a finite-dimensional Hopf algebra. Take $x,y \in H$ and $\alpha,\beta \in H^*$. In the present convention, one has $\big(\Delta_{H^\vee}(\alpha)\big)(x \otimes y) = \alpha\big(\, \mu(y \otimes x) \,\big)$ (note the switch).}  
\begin{align} 
  \mu_{H^\vee} ~&=~ \scanpic{d1} 
  \quad , & \quad
  \Delta_{H^\vee} ~&=~ \scanpic{d2} 
  \quad ,
  \nonumber\\
  \eta_{H^\vee} ~&=~ (\eps_H)^\vee
  \quad , & \quad
  \eps_{H^\vee} ~&=~ (\eta_H)^\vee
  \quad , \quad
  & S_{H^\vee} ~&=~ (S_H)^\vee \ .
\label{eq:H^-structuremaps}
\end{align}

Recall that for a braided category $\Sc$,  the reverse category $\overline{\Sc}$ is the category $\Sc$ equipped with the same tensor product and associativity isomorphisms, but with the new braiding
$\overline{c}_{X,Y} = c_{Y,X}^{-1}$. If $\Sc$ is ribbon, then $\overline{\Sc}$ also has inverse twist isomorphisms.

Given a Hopf-algebra $H$, we obtain two new Hopf algebras in $\overline{\Sc}$ (cf.\ \cite[Sect.\,2.3]{Bespalov:1995}):
\begin{itemize}
\item
$H^\mathrm{op}$: the Hopf algebra with the opposite product. The underlying object is $H$ and the structure morphisms are
\be \label{eq:Hop-structure-maps}
\mu_{H^\mathrm{op}} = \mu_H \circ \overline{c}_{H,H} 
~~,~~~
\eta_{H^\mathrm{op}} = \eta_H
~~,~~~
\Delta_{H^\mathrm{op}} = \Delta_H
~~,~~~
\eps_{H^\mathrm{op}} = \eps_H
~~,~~~
S_{H^\mathrm{op}} = S_H^{-1} \ .
\ee
The use of $\overline{c}_{H,H} = c_{H,H}^{-1}$ is dictated by Figure \ref{fig:Hopf-property}\,h), which guarantees that the morphisms \eqref{eq:Hop-structure-maps} also satisfy condition 4 in Definition \ref{def:braided-Hopf}.
\item
$H_\mathrm{cop}$: the Hopf algebra with the opposite coproduct. The underlying object is again $H$ and the structure morphisms are
\be \label{eq:Hcop-structure-maps}
\mu_{H_\mathrm{cop}} = \mu_H 
~~,~~~
\eta_{H_\mathrm{cop}} = \eta_H
~~,~~~
\Delta_{H_\mathrm{cop}} = \overline{c}_{H,H} \circ \Delta_H
~~,~~~
\eps_{H_\mathrm{cop}} = \eps_H
~~,~~~
S_{H_\mathrm{cop}} = S_H^{-1} \ .
\ee
\end{itemize}
The fact that $H^\mathrm{op}$ and $H_\mathrm{cop}$ are Hopf algebras in $\overline{\Sc}$ (rather than $\Sc$) becomes apparent when checking that the coproduct is an algebra map. Iterating the procedure gives Hopf algebras $(H^\mathrm{op})_\mathrm{cop}$ and $(H_\mathrm{cop})^\mathrm{op}$ in $\Sc$
%details: with the structure morphisms $$(\mu_{cop})^{op} = \mu\circ c_{H,H},\quad (\eta_{cop})^{op} = \eta,\quad (\Delta_{cop})^{op} = c_{H,H}^{-1} \circ \Delta,\quad (\eps_{cop})^{op} = \eps,\quad (S_{cop})^{op} = S$$ and $$(\mu^{op})_{cop} = \mu\circ c_{H,H}^{-1},\quad (\eta^{op})_{cop} = \eta,\quad (\Delta^{op})_{cop} = c_{H,H} \circ \Delta,\quad (\eps^{op})_{cop} = \eps,\quad (S^{op})_{cop} = S$$
, which are in general not equal (but isomorphic via the twist). The antipode provides an isomorphism of Hopf algebras in $\Sc$:  %details: \eqref{ap} gives $$S \circ \mu \circ c_{H,H}^{-1} = \mu \circ (S \otimes S),\quad\Delta \circ S =  (S \otimes S) \circ c_{H,H} \circ \Delta,~~~ S \circ \eta = \eta,~~~  \eps \circ S = \eps$$ 
\be
  S : (H^\mathrm{op})_\mathrm{cop} \xrightarrow{~\sim~} H \ .
\ee 
In any case, we will only use $H^\mathrm{op}$ and $H_\mathrm{cop}$ for symmetric $\Sc$, where $\overline\Sc = \Sc$ and $(H^\mathrm{op})_\mathrm{cop} = (H_\mathrm{cop})^\mathrm{op} =: H_\mathrm{cop}^\mathrm{op}$.

\medskip

The left and right multiplication by an `element' $x$ of $H$ will be denoted by ${}_xM$ and $M_x$, that is, for $x : \one \to H$,
\be\label{eq:left-right-mult-def}
  {}_xM = \mu \circ (x \otimes \id) ~:~ H \to H
  \qquad , \quad
  M_x = \mu \circ (\id \otimes x) ~:~ H \to H \ .
\ee
If there is an $x' : \one \to H$ such that $\mu \circ (x \otimes x') = \eta = \mu \circ (x' \otimes x)$, we say that $x$ has a {\em multiplicative inverse}, and we denote it by $x^{-1} = x'$. Since for $H \ncong \one$, a morphism $\one \to H$ never has an inverse morphism $H \to \one$, and since for the Hopf algebra $H=\one$ the two inverses agree, we hope that the notation $x^{-1}$ for the multiplicative inverse is not confusing. 

Given $x : \one \to H$ with multiplicative inverse, we write 
\be\label{eq:Ad_x-def}
  \Ad_x = {}_xM \circ M_{x^{-1}}  :  H \to H  
\ee
for the conjugation with $x$.

\medskip

We will also make use of left/right integrals and cointegrals. A morphism $\Lambda : \one \to H$ (respectively a morphism $\lambda : H \to \one$) is a left or right integral (respectively a left or right cointegral) if
\be
\begin{array}{l@{\hspace{3em}}l@{\hspace{3em}}l@{\hspace{3em}}l}
\text{left integral:}
&
\text{right integral:}
&
\text{left cointegral:}
&
\text{right cointegral:}
\\[.5em]
\scanpic{93a1}
=
\scanpic{93a2}
&
\scanpic{93b1}
=
\scanpic{93b2}
&
\scanpic{93c1}
=
\scanpic{93c2}
&
\scanpic{93d1}
=
\scanpic{93d2}
\end{array}
\ee

A {\em module} of a Hopf algebra $H$ is an object $M \in \Sc$ together with a morphism $\rho : H \otimes M \to M$ such that the associativity condition $\rho \circ (\id_H \otimes \rho) = \rho \circ (\mu \otimes \id_M)$ and the unit condition $\rho \circ (\eta \otimes \id_M) = \id_M$ hold. The string diagram notation we use for the action morphism $\rho$ is
\be
  \rho ~=~ \scanpic{97} \quad .
\ee
If we want to stress the algebra and the module it is acting on, we write $\rho^H_M$ instead of $\rho$. Given two $H$-modules $M$, $N$, the action of $H \otimes H$ on $M \otimes N$ is, in accordance with \eqref{eq:mult-on-product},
\be
  \rho^{H \otimes H}_{M \otimes N} = (\rho^H_M \otimes \rho^H_N) \otimes (\id_H \otimes c_{H,M} \otimes \id_N) \ .
\ee

The category of $H$-modules is denoted by $\Rep_\Sc(H)$. It is a monoidal category; the tensor product of two $H$-modules $M$, $N$ has $M \otimes N$ as underlying object. The $H$-action is given by the coproduct, 
\be
  \rho^H_{M \otimes N} =  \rho^{H \otimes H}_{M \otimes N} \circ (\Delta \otimes \id_M \otimes \id_N) \ .
\ee
The forgetful functor $F : \Rep_\Sc(H) \to \Sc$ is strict monoidal. 

In the presentation of our results and later in the proof we will need the following four identities involving $H$-modules.

\begin{lemma} \label{lem:H-intertwiner}
Let $M$ be an $H$-module. We have:
$$
\begin{array}{ll}
\raisebox{3em}{\text{a)}}~  
\scanPICC{19a}~=~\id_{H \otimes M}~=~\scanPICC{19b} \qquad
&
\raisebox{3em}{\text{b)}}~  
\scanPIC{23a}~=~\id_{H \otimes M}~=~\scanPIC{23b}
\\[.5em]
\raisebox{3em}{\text{c)}}~  
\scanPIC{24a}~=~\scanPIC{24b} \qquad
&
\raisebox{3em}{\text{d)}}~  
\scanPIC{25a}~=~\scanPIC{25b}
\end{array}
$$
\end{lemma}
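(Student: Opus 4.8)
The plan is to separate the four identities by type: I read (a) and (b) as asserting that two explicitly given endomorphisms of $H \otimes M$ are mutually inverse (each composite equals $\id_{H \otimes M}$, read in the two possible orders), whereas (c) and (d) are $H$-linearity statements, asserting that a certain morphism built from $\rho$ and the braiding is an $H$-intertwiner. The only ingredients needed are the module axioms, the Hopf axioms collected in Figures \ref{fig:Hopf-graph-convention} and \ref{fig:Hopf-property}, and naturality of the braiding $c$.

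For (a) I would write the two composites as $\Phi \circ \Psi$ and $\Psi \circ \Phi$, where $\Phi$ and $\Psi$ are the two natural maps $H \otimes M \to H \otimes M$ assembled from $\Delta$, $\rho$, and (in one of them) the antipode $S$. To reduce each composite to $\id_{H \otimes M}$ I would first apply coassociativity to merge the two comultiplications into a single threefold coproduct, then slide the $S$-strand next to its partner strand using naturality of $c$ so that the `bubble-property' of Figure \ref{fig:Hopf-property}\,g) applies and collapses that pair to $\eta \circ \eps$. The counit axiom then removes the $\eta \circ \eps$, and the module unit axiom $\rho \circ (\eta \otimes \id_M) = \id_M$ finishes the collapse to the identity. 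Identity (b) is the mirror argument, now invoking the $S^{-1}$-bubble of Figure \ref{fig:Hopf-property}\,h) in place of the $S$-bubble; this is exactly the point at which the standing assumption that $S$ is invertible is used.

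For (c) and (d) I would show the two sides agree by verifying that the map in question commutes with the $H$-action. Concretely, I would push the action through the diagram by resolving a product into two successive actions via associativity of $\rho$, using the algebra-map property of $\Delta$ from Figure \ref{fig:Hopf-property}\,a) to split the acting copy of $H$, and moving crossings past the structure morphisms by naturality of $c$, until both sides become manifestly the same morphism.

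The main obstacle will be the bookkeeping of braidings in the non-symmetric setting rather than any structural difficulty: one must ensure that each crossing is the correct over- or under-braiding so that naturality of $c$ is applicable, and that the handedness of the antipode property used (the $S$-bubble versus the $S^{-1}$-bubble) matches the orientation of the adjacent crossing. Once the strands are correctly aligned, the collapses in (a)--(d) are forced by the axioms; getting these conventions mutually consistent is the delicate part.
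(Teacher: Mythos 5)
Your reading of the four identities and your plans for a)--c) match the paper's proof essentially step for step: a) and b) are coassociativity of $\Delta$ plus associativity of the action, followed by the bubble-properties of Figure \ref{fig:Hopf-property}\,g),\,h) and the counit/unit axioms (and you correctly locate the use of invertibility of $S$ in the $S^{-1}$-bubble for b)); c) is exactly the algebra-map property of $\Delta$ from Figure \ref{fig:Hopf-property}\,a) combined with associativity of the action. One small imprecision in your write-up of a): after merging the coproducts you must also merge the two successive actions on $M$ into a single action by a product, via $\rho \circ (\id_H \otimes \rho) = \rho \circ (\mu \otimes \id_M)$, before the bubble-property can be applied; you invoke this associativity of $\rho$ only in your discussion of c)/d), but it is needed here as well.

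The genuine gap is in part d). The diagram in d) contains $S^{-1}$, and after you split the acting copy of $H$ using Figure \ref{fig:Hopf-property}\,a) and merge actions by associativity of $\rho$, you are left with $S^{-1}$ applied to a \emph{product} of two $H$-strands. No amount of sliding crossings by naturality of $c$ will resolve this: naturality lets you move morphisms along strands, but it cannot commute $S^{-1}$ past $\mu$. For that you need the braided anti-algebra-homomorphism property of the antipode --- the $S^{-1}$-version of Figure \ref{fig:Hopf-property}\,e), cf.\ \eqref{ap} --- and this is precisely the one identity the paper singles out in its proof of d). It does sit inside your declared toolkit (``the Hopf axioms collected in Figures \ref{fig:Hopf-graph-convention} and \ref{fig:Hopf-property}''), but your concrete plan for c)/d) lists only the $\Delta$ algebra-map property, associativity of $\rho$, and naturality of $c$, and with those alone the computation for d) does not close. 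So the obstacle in d) is not braiding bookkeeping, as your final paragraph suggests, but a missing Hopf-algebraic step; inserting Figure \ref{fig:Hopf-property}\,e) for $S^{-1}$ repairs the argument.
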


\begin{proof}
Part a) and b) are immediate from coassociativity of $H$ and the associativity of the action on $M$, together with Figure \ref{fig:Hopf-property}\,g),\,h). Part c) is the algebra-map property of $\Delta$, see Figure \ref{fig:Hopf-property}\,a), together with associativity of the action. For part d) we have
\be
  \text{lhs.\ of part d)} 
  ~=~
  \scanpic{92a}
  ~=~
  \scanpic{92b}
  ~=~
  \text{rhs.\ of part d)}  \ .
\ee
In the second step the identity which follows from Figure \ref{fig:Hopf-property}\,e) for $S^{-1}$ has been used.
\end{proof}

\subsection{Results}\label{results}

Let $H$ be a Hopf algebra in $\Sc$ with invertible antipode $S_H$.
Consider the $\Zb/2\Zb$-graded category given by\footnote{
	In the general setting used in Sections \ref{prem-res}--\ref{sec:braiding}, where $\Sc$ is just ribbon, the notation `$+$' in $\Cc = \Cc_0 + \Cc_1$ refers to the union $\sqcup$ of the two categories $\Cc_0$ and $\Cc_1$ (where objects and morphisms belong either to $\Cc_0$ or else to $\Cc_1$). However, in examples where $\Sc$ is additive or $k$-linear we implicitly extend the union by mixed direct sums, so that then `$+$' refers to the direct sum $\oplus$ of categories.}
\be
  \Cc = \Cc_0 + \Cc_1
  \qquad , ~~ \text{where} \quad \Cc_0 = \Rep_\Sc(H) ~~,~~ \Cc_1 = \Sc \ .
\ee
We would like to extend the monoidal structure on $\Rep_\Sc(H)$ to all of $\Cc$. To this end we fix the specific form of the tensor product functor $\otimes_\Cc$ as follows:
\begin{equation}\label{tab:tensorproducts}
\begin{array}{c|c|ll}
 A  &  B  &  A \otimes_{\Cc} B  & \\
\hline
 \Cc_0  &  \Cc_0  &  A \otimes_{\Rep_\Sc(H)} B  &   \in \Cc_0  \\
 \Cc_0  &  \Cc_1  &  F(A) \otimes_{\Sc} B   &   \in \Cc_1 \\ 
 \Cc_1  &  \Cc_0  &  A \otimes_{\Sc} F(B)  &   \in \Cc_1  \\
 \Cc_1  &  \Cc_1  &  H \otimes_{\Sc} A \otimes_{\Sc} B  &   \in \Cc_0 
\end{array}
\qquad
\begin{array}{c|c|l}
 A\xrightarrow{f}A'  &  B\xrightarrow{g}B'  &  f \otimes_{\Cc} g  \\
\hline
 \Cc_0  &  \Cc_0  &  f \otimes_{\Rep_\Sc(H)} g  \\
 \Cc_0  &  \Cc_1  &  F(f) \otimes_{\Sc} g  \\ 
 \Cc_1  &  \Cc_0  &  f \otimes_{\Sc} F(g)  \\
 \Cc_1  &  \Cc_1  &  \id_H \otimes_{\Sc} f \otimes_{\Sc} g 
\end{array}
\end{equation}
Here, $F : \Rep_\Sc(H) \to \Sc$ denotes the forgetful functor and the left $H$-action on $H \otimes_{\Sc} A \otimes_{\Sc} B$ in the last line of the left table is by the multiplication of $H$.

\begin{notation} \label{not:sec3} 
In addition to Notations \ref{not:sec2} by $A,B,\dots$ we denote objects from $\Cc$, and by $A^i, B^i, \dots$ we refer to objects from $\Cc_i$. We will not make the forgetful functor $F:\Rep_{\Sc}(H) \to \Sc$  explicit  in string diagrams. All string diagrams below are string diagrams in $\Sc$ and our conventions for string diagrams are given in Figure \ref{fig:Hopf-graph-convention}.
\end{notation}

Next we
list our ansatz for the associativity natural isomorphisms $\alpha_{A,B,C} : A \otimes_{\Cc} (B \otimes_{\Cc} C) \to ( A \otimes_{\Cc} B ) \otimes_{\Cc} C$. They are defined in terms of three morphisms, namely
\be\label{eq:gdp-def}
  \gamma : \one \to H \otimes H \quad , ~~
  \delta : \one \to H \otimes H\quad , ~~
  \phi : H \to H \ , 
\ee
where
\begin{itemize}
\item $\gamma$ has a multiplicative inverse in $H \otimes H$, i.e.\ there exists a $\gamma^{-1} : \one \to H \otimes H$ such that $\mu_{H \otimes H} \circ (\gamma \otimes \gamma^{-1}) = \eta \otimes \eta = \mu_{H \otimes H} \circ (\gamma^{-1} \otimes \gamma)$,
\item $(S^{-1} \otimes \id) \circ \delta$ has a multiplicative inverse in $H \otimes H$,
\item $\phi$ is invertible.
\end{itemize}
These conditions guarantee that the $\alpha_{A,B,C}$ given below are indeed isomorphisms. We will order the discussion of the different associativity isomorphisms by the number of objects taken from $\Cc_1$.

\nxt $0$ objects from $\Cc_1$: 
The associativity isomorphisms are those of $\Rep_\Sc(H)$, i.e.\  $\alpha_{A^0,B^0,C^0} = \id_{A^0} \otimes \id_{B^0} \otimes \id_{C^0}$.

\medskip

\nxt $1$ object from $\Cc_1$: In this case $A \otimes_{\Cc} (B \otimes_{\Cc} C)$ and $( A \otimes_{\Cc} B ) \otimes_{\Cc} C$ are in $\Cc_1$, and the tensor product is that in $\Sc$ with $F$ applied to the two objects from $\Cc_0$. Our ansatz for the associativity isomorphisms allows the $\alpha$ with the middle entry from $\Cc_1$ to be non-trivial:
\be\begin{array}{rcl}
\alpha_{A^0,B^0,C^1} &=& \id_{F(A^0)} \otimes \id_{F(B^0)} \otimes \id_{C^1} \ , \\[.3em]
\alpha_{A^0,B^1,C^0} &=& 
(\id_{F(A^0)} \otimes c_{F(C^0),B^1}) 
\circ \Big[F\big(\rho_{A^0 \otimes C^0}^{H \otimes H} \circ (\gamma \otimes \id_{A^0}\otimes \id_{C^0})\big) \otimes \id_{B^1}\Big] \\[.3em]
 && \hspace{18em} \circ (\id_{F(A^0)} \otimes c^{-1}_{F(C^0),B^1}) \ ,
\\[.3em]
\alpha_{A^1,B^0,C^0} &=& \id_{A^1} \otimes \id_{F(B^0)} \otimes \id_{F(C^0)} \ .
\end{array}
\ee
The structure of $\alpha_{A^0,B^1,C^0}$ may be easier to understand in graphical notation. As mentioned in Notations \ref{not:sec3}, in string diagrams we do not spell out the forgetful functor.
\be\label{fig:assoc-b}
  \alpha_{A^0,B^1,C^0} ~=~ \scanPIC{01}  ~=~   \scanPIC{02a}
\quad . 
\ee
Clearly, $\alpha_{A^0,B^0,C^1}$ and $\alpha_{A^1,B^0,C^0}$ are natural and isomorphisms. For $\alpha_{A^0,B^1,C^0}$, naturality in $A^0$ and $C^0$ follows since the action $\rho_{A^0 \otimes C^0}^{H \otimes H}$ commutes with $H$-module maps $f : A^0 \to {A^0}'$ and $g : C^0 \to {C^0}'$. The invertibility of $\alpha_{A^0,B^1,C^0}$ follows from our assumption that $\gamma$ has a multiplicative inverse in $H \otimes H$.

\medskip

\nxt $2$ objects from $\Cc_1$:
In this sector, the construction is more involved. Let us start with $\alpha_{A^0,B^1,C^1}$. Its source and target objects in $\Cc_0$ are:
\be
\begin{array}{ll}
 A^0 \otimes_{\Cc} (B^1 \otimes_{\Cc} C^1) 
 = A^0 \otimes_{\Cc} (H \otimes B^1 \otimes C^1) 
 = A^0 \otimes_{\Rep_{\Sc}(H)} (H \otimes B^1 \otimes C^1) \ ,
\\[.3em]
( A^0 \otimes_{\Cc} B^1 ) \otimes_{\Cc} C^1 = (F(A^0) \otimes B^1) \otimes_{\Cc} C^1 = H \otimes F(A^0) \otimes B^1 \otimes C^1 \ .
\end{array}
\ee 
In particular, in $A^0 \otimes_{\Cc} (B^1 \otimes_{\Cc} C^1)$ the left $H$-action is obtained from the modules $A^0$ and $H$, while in  $( A^0 \otimes_{\Cc} B^1 ) \otimes_{\Cc} C^1$ the left $H$-action is just on $H$. By Lemma \ref{lem:H-intertwiner}\,d), the string diagram 
\be
\label{fig:assoc-c1}
\alpha_{A^0,B^1,C^1} = \scanPIC{02b}
\ee
provides a morphism in $\Rep_{\Sc}(H)$. The left $H$-module structure is given by the objects in dashed boxes.
It is invertible by Lemma \ref{lem:H-intertwiner}\,b). Naturality is straightforward in this case, as well as in the two cases to follow.

Next consider $\alpha_{A^1,B^0,C^1}$. The source and target objects in $\Cc_0$ are:
\be
\begin{array}{ll}
 A^1 \otimes_{\Cc} (B^0 \otimes_{\Cc} C^1) 
 = A^1 \otimes_{\Cc} (F(B^0) \otimes C^1) 
 = H \otimes A^1 \otimes F(B^0) \otimes C^1 \ ,
\\[.3em]
( A^1 \otimes_{\Cc} B^0 ) \otimes_{\Cc} C^1 
= (A^1 \otimes F(B^0)) \otimes_{\Cc} C^1 
= H \otimes A^1 \otimes F(B^0) \otimes C^1 \ .
\end{array}
\ee 
In both cases, the $H$-action is given by multiplication on $H$. The picture for the associativity isomorphism is \be\label{fig:assoc-c2}
\alpha_{A^1,B^0,C^1} = \scanPIC{02c}
\ee
By associativity of $H$, this is indeed a morphism in $\Rep_{\Sc}(H)$. To see invertibility, it is helpful to rewrite $\alpha_{A^1,B^0,C^1}$ as
\be
  \alpha_{A^1,B^0,C^1} = \scanPIC{26} \ .
\ee
It is now evident that $\alpha_{A^1,B^0,C^1}$ is invertible because by assumption, $(S^{-1} \otimes \id) \circ \delta$ has a multiplicative inverse in $H \otimes H$.

Finally, for $\alpha_{A^1,B^1,C^0}$ we have 
\be
\begin{array}{ll}
 A^1 \otimes_{\Cc} (B^1 \otimes_{\Cc} C^0) 
 = A^1 \otimes_{\Cc} (B^1 \otimes F(C^0)) 
 = H \otimes A^1 \otimes B^1 \otimes F(C^0) \ ,
\\[.3em]
( A^1 \otimes_{\Cc} B^1 ) \otimes_{\Cc} C^0 
= (H \otimes A^1 \otimes B^1) \otimes_{\Cc} C^0 
= (H \otimes A^1 \otimes B^1) \otimes_{\Rep_{\Sc}(H)} C^0 \ .
\end{array}
\ee 
In the source object, the $H$-action is just by multiplication on the first $H$-factor. In the target object, the $H$-action is on the $H$- and on the $C^0$-factor in the tensor-product. The associativity isomorphism is 
\be\label{fig:assoc-c3}
\alpha_{A^1,B^1,C^0} = \scanPIC{02d}
\ee
Lemma \ref{lem:H-intertwiner}\,c) shows that this is indeed a morphism in $\Rep_{\Sc}(H)$. Invertibility follows from Lemma \ref{lem:H-intertwiner}\,a).
 
\medskip

\nxt $3$ objects from $\Cc_1$:
For $\alpha_{A^1,B^1,C^1}$ we have 
\be
\begin{array}{ll}
 A^1 \otimes_{\Cc} (B^1 \otimes_{\Cc} C^1) 
 = A^1 \otimes_{\Cc} (H \otimes B^1 \otimes C^1) 
 = A^1 \otimes H \otimes B^1 \otimes C^1 \ , 
\\[.3em]
( A^1 \otimes_{\Cc} B^1 ) \otimes_{\Cc} C^1 
= (H \otimes A^1 \otimes B^1) \otimes_{\Cc} C^1 
= H \otimes A^1 \otimes B^1 \otimes C^1 \ .
\end{array}
\ee 
Source and target object of $\alpha_{A^1,B^1,C^1}$ lie in $\Cc_1$, so there is no $H$-action. The graphical presentation for the isomorphism is 
\be\label{fig:assoc-d} 
\alpha_{A^1,B^1,C^1} = \scanPIC{02e}
\ee
Naturality in $A^1,B^1,C^1$ is again immediate.

\medskip

In formulating our main result (and parametrising monoidal structures on the category $\Cc$) it is useful to rewrite $\gamma,\delta,\phi$ in terms of some other Hopf-algebraic data $\Gamma: H^\vee \to H, \lambda: H \to \one$ and $g : \one \to H$ (which turns out to be uniquely determined):
\be\label{eq:gdp-via-Hopf}
\gamma ~=~ \scanPIC{03a}
\quad , \qquad
\delta ~=~ \scanPIC{03b}
\quad , \qquad
\phi ~=~ \scanPIC{03c}
\quad .
\ee
Conversely, the Hopf-algebraic data $\Gamma,\lambda$ is determined by $\gamma,\delta,\phi$ via 
\be\label{eq:Hopf-via-gdp}
  \Gamma = \scanPIC{04} \qquad , \quad \lambda = \eps \circ \phi \ .
\ee

The next theorem is our first main result.

\begin{theorem} \label{thm:main1}
Let $\Sc$, $H$, $\Cc$, $\otimes_\Cc$ be as above. 
\\[.3em]
(i) Let $\Gamma : H^\vee \to H$ and $\lambda : H \to \one$ be two morphisms in $\Sc$ such that
\begin{itemize}
\item[a)] $\Gamma$ is a Hopf-algebra isomorphism. (In particular, $H$ is self-dual.)  
\item[b)] $\lambda$ is a right cointegral for $H$ such that there exists $g : \one \to H$ with $(\id_H \otimes \lambda) \circ \Delta_H = g \circ \lambda$, and such that $(\lambda \otimes \lambda) \circ (\id_H \otimes \Gamma) \circ \coev_H = \id_\one$. (It will be proved that $g$ is unique and has a multiplicative inverse.)
\item[c)] $\Gamma$ is related to its conjugate via $\Gamma^\vee = \delta_H \circ \theta_H^{-1} \circ S_H^2 \circ \Ad_{g^{-1}} \circ \Gamma$, where $\Ad_{g^{-1}}$ is conjugation with $g^{-1}$ (see \eqref{eq:Ad_x-def}).
\end{itemize}
Then the family of natural isomorphisms $\alpha_{A,B,C} : A \otimes_\Cc (B \otimes_\Cc C) \to (A \otimes_\Cc B) \otimes_\Cc C$ given above are associativity isomorphisms for $\otimes_\Cc$. 
\\[.3em]
(ii) Denote  by $\Cc(H,\Gamma,\lambda)$ the monoidal category resulting via (i) from $H,\Gamma,\lambda$. Given a Hopf algebra $H'$ in $\Sc$ and a Hopf algebra isomorphism $\varphi : H' \to H$ there is a monoidal equivalence $\Cc(H,\Gamma,\lambda) \cong \Cc(H',\Gamma',\lambda')$, where $\Gamma' = \varphi^{-1} \circ \Gamma \circ (\varphi^{-1})^\vee$ and $\lambda' = \lambda\circ \varphi$.
\end{theorem}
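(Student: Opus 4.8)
The content of part~(i) that still needs proof is twofold: the \emph{pentagon axiom} for the family $\{\alpha_{A,B,C}\}$ (naturality and invertibility of each individual $\alpha$ having already been settled in the discussion preceding the theorem), together with the two auxiliary assertions in hypothesis~b) that $g$ is unique and multiplicatively invertible. Both $\otimes_\Cc$ and the $\alpha$'s depend on their arguments only through the $\Zb/2\Zb$-degrees (an object of $\Cc_0$ enters through its underlying object $F(-)$ and its $H$-action, an object of $\Cc_1$ only as an object of $\Sc$), so the pentagon splits into finitely many string-diagram identities, one for each assignment of degrees to $(A,B,C,D)$. The plan is to organise the $2^4$ cases by the number $n$ of arguments in $\Cc_1$; since $\otimes_\Cc$ preserves total degree, each pentagon relates objects of a single total degree, and I expect the genuine content to grow with $n$.

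First I would dispatch the low cases. For $n=0$ all associators are identities (the associativity of $\Rep_\Sc(H)$ is strict, as $F$ is strict monoidal into the strict category $\Sc$ and coassociativity of $\Delta$ makes the two $H$-actions on a triple product literally equal), so the pentagon is trivial. For $n=1$ only the middle-slot isomorphism $\alpha_{A^0,B^1,C^0}$ is non-trivial along any edge, and the pentagon collapses to a cocycle/coassociativity condition on $\gamma$: namely that the $H\otimes H$-action built from $\gamma$ on a pair $(A^0,C^0)$ is compatible with letting the second leg act on a tensor-product module $C^0\otimes D^0$ via $\Delta$. Through \eqref{eq:gdp-via-Hopf} this follows from the coalgebra-morphism part of hypothesis~a) together with coassociativity of $\Delta$. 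Before going further I would also settle the claims in~b): the normalisation $(\lambda\otimes\lambda)\circ(\id_H\otimes\Gamma)\circ\coev_H=\id_\one$ exhibits $r=(\id_H\otimes\lambda)\circ(\id_H\otimes\Gamma)\circ\coev_H$ as a section of $\lambda$, so $\lambda$ is split epi; post-composing the defining relation $(\id_H\otimes\lambda)\circ\Delta_H=g\circ\lambda$ with $r$ then forces $g$ to be unique, and identifying $g$ as the distinguished grouplike-type element attached to the cointegral yields its multiplicative inverse from $S$ and the bubble-property (Figure~\ref{fig:Hopf-property}\,g,h).

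The real work is in $n=2,3,4$, where I would rewrite every occurrence of $\gamma,\delta,\phi$ through $\Gamma,\lambda,g$ using \eqref{eq:gdp-via-Hopf} and reduce each pentagon to an identity in $\Sc$. A sizeable part of the plan is to predict the division of labour among the hypotheses. Hypothesis~a) ($\Gamma$ a Hopf-algebra isomorphism) is what converts the $H^\vee$-legs produced by $\delta$ into $H$-legs and trades products for coproducts, allowing the two sides of a pentagon to be brought to a common normal form; Lemma~\ref{lem:H-intertwiner} supplies the moves for sliding actions past coproducts. Hypothesis~b) is used to absorb the $\lambda$-legs coming from $\phi$ (cf.\ \eqref{fig:assoc-d}): the relation $(\id_H\otimes\lambda)\circ\Delta_H=g\circ\lambda$ is precisely the rule for pushing a coproduct past $\lambda$ at the cost of the element $g$, while the normalisation is the cancellation that removes a closed $\Gamma$--$\lambda$ loop.

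The \emph{main obstacle} I expect is the most mixed pentagon --- plausibly an $n=2$ configuration in which two self-dualities meet, or the $n=3$ case --- where the self-duality map is effectively applied twice and the ribbon structure is forced into play. There the reduced string identity should match $\Gamma$ not against itself but against its conjugate $\Gamma^\vee$, and closing the diagram requires exactly hypothesis~c), $\Gamma^\vee=\delta_H\circ\theta_H^{-1}\circ S_H^2\circ\Ad_{g^{-1}}\circ\Gamma$: the twist $\theta_H$ and $S_H^2$ arise from rotating a cap past a self-duality, $\delta_H$ identifies the double dual, and $\Ad_{g^{-1}}$ compensates the $g$ produced by the cointegral manipulations of step three. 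The delicate point is the bookkeeping --- verifying that the pentagon collapses to precisely this relation, neither weaker nor stronger. My expectation is therefore that c) is the genuine closing condition, with a) and b) providing the structural moves that bring each diagram into the form where c) applies.

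For part~(ii), transport of structure is routine once~(i) is in place. Given a Hopf-algebra isomorphism $\varphi:H'\to H$, the pullback-of-action functor $\Rep_\Sc(H)\to\Rep_\Sc(H')$ is a monoidal equivalence on the degree-zero parts and the identity on the degree-one part $\Sc$. I would define the comparison morphisms on mixed products from $\varphi$ (using $\varphi$ on the prefactor $H$ in the $\Cc_1\otimes_\Cc\Cc_1$ slot), check they are monoidal, and verify that the transported data $\Gamma'=\varphi^{-1}\circ\Gamma\circ(\varphi^{-1})^\vee$ and $\lambda'=\lambda\circ\varphi$ reproduce $\gamma',\delta',\phi'$ through \eqref{eq:gdp-via-Hopf}, so that the functor intertwines the two families of associativity isomorphisms. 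Functoriality of $(-)^\vee$ and naturality of $\delta_H$ and $\theta_H$ guarantee that conditions a)--c) are preserved (so that $\Cc(H',\Gamma',\lambda')$ is indeed defined), giving the asserted equivalence $\Cc(H,\Gamma,\lambda)\cong\Cc(H',\Gamma',\lambda')$.
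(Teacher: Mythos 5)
Your plan follows the same route as the paper's proof: Section \ref{sec:pentagon} performs exactly your degree-splitting of the pentagon into sixteen cases and reduces each to an identity involving only $H$ (conditions \eqref{eq:pentagon-3}--\eqref{eq:pentagon-16}), and Section \ref{sec:Hopf-to-pent} verifies these from hypotheses a)--c). The pieces you actually carry out are correct and coincide with the paper's: your section $r$ of $\lambda$ is the paper's $\Lambda' = (\id \otimes (\lambda\circ\Gamma))\circ\coev_H$; your uniqueness argument for $g$ (apply the defining relation to $r$) and the inverse $g^{-1}=S\circ g$ via group-likeness and the bubble-property are precisely the uniqueness argument in the proof of Proposition \ref{prop:pent-to-Hopf} and Lemma \ref{lem:g-grouplike-2}; and your part (ii) is Proposition \ref{prop:mon-equiv} essentially verbatim. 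One small correction to your $n=1$ analysis: \emph{both} halves of hypothesis a) are used there, since \eqref{eq:pentagon-3} is equivalent to $\Gamma$ being an algebra map and \eqref{eq:pentagon-4} to $\Gamma$ being a coalgebra map, not just the coalgebra half.

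The genuine gap is in $n=2,3,4$ --- ten of the sixteen pentagons and the actual mathematical content of the theorem --- where you only \emph{predict} that the bookkeeping closes; you never close it, and the toolkit you name (Lemma \ref{lem:H-intertwiner}, pushing $\Delta$ past $\lambda$ at the cost of $g$, cancelling closed loops) is not sufficient. The paper's verification of these cases rests on a chain of integral-theoretic lemmas that your plan never surfaces: the left integral $\Lambda=(\lambda\otimes\Gamma)\circ\coev_H$ of Lemma \ref{lem:integral-cointegral}; a braided Larson--Radford identity for sliding products past integrals (Lemma \ref{lem:move-prod-past-integral}); non-degeneracy of the copairing $\Delta\circ\Lambda$ (Lemma \ref{lem:Lam-copair-nondeg}); the multiplicative functional $\alpha=\ev_H\circ(\Gamma^{-1}\otimes g)$ and the right integral it produces (Lemma \ref{lem:right-int-from-left-int}); and, crucially, the flip identity of Lemma \ref{lem:flip-lambda}, which is the concrete shape in which hypothesis c) is applied to pentagons \eqref{eq:pentagon-14} and \eqref{eq:pentagon-16}. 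Moreover, c) also acts through the second expression for $\delta$ in \eqref{eq:delta-expression-ab} (derived from \eqref{eq:Gamma-dual-via-gamma}), which is what drives the $n=2$ cases \eqref{eq:pentagon-8} and \eqref{eq:pentagon-10} and the $n=3$ cases \eqref{eq:pentagon-13}, \eqref{eq:pentagon-14}. Your heuristic for why c) should appear (rotating a cap past the self-duality producing $\theta_H^{-1}\circ S_H^2$, with $\Ad_{g^{-1}}$ compensating cointegral manipulations) is a fair reading of \eqref{eq:Gamma-dual-via-gamma}, but none of the lemmas that convert this into the pentagon identities are present in your plan; as written, cases 7--16 are asserted rather than proved, so the proposal is a correct strategy matching the paper's, not yet a proof.
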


\begin{remark}\label{rem:main1}
(i) Theorem \ref{thm:main1} is proved in Propositions \ref{prop:Hopf-to-pent} and \ref{prop:mon-equiv}. We also prove the converse statement of part (i) (Proposition \ref{prop:pent-to-Hopf}): if one uses our ansatz  for the associativity isomorphisms, then all solutions to the pentagon are of the form stated in the theorem. We hasten to add that our ansatz for the associativity isomorphisms is not the most general natural transformation between the two functors $\Cc \times \Cc \times \Cc \to \Cc$. So an obvious question left open is whether there is a sense in which $\Cc(H,\Gamma,\lambda)$ provides {\em all} monoidal structures on $\Cc$. A framework in which this is the case is that of monoidal categories over $\Sc$; we will address this in a separate paper \cite{DR-prep}.
\\[.3em]
(ii) In the case where $\Sc = \vect(k)$ and $H = k[A]$ is the group algebra for a (necessarily abelian) finite group $A$, the above theorem reduces to the construction in \cite{Tambara:1998}. Tensor categories with these fusion rules are treated in the subfactor setting (for $k=\Cb$) by Izumi \cite{Izumi:1993}. Izumi also considers the case where $H$ is a semi-simple Hopf algebra \cite{Izumi:1998}.
\\[.3em]
(iii) Even in the case $\Sc=\vect(k)$, the construction in Theorem \ref{thm:main1} is more general than \cite{Izumi:1993,Tambara:1998,Izumi:1998}, because $H$ need not be commutative (since $H$ is self-dual, it is commutative if and only if it is cocommutative) or semi-simple (as would be required in the subfactor setting). The smallest example is Sweedler's 4-dimensional Hopf algebra (Section \ref{sec:Sweedler-mon}).
\\[.3em]
(iv) In \cite[Thm.\,1.3]{Etingof:2009}, Etinghof, Nikshych and Ostrik classify $G$-extensions of fusion categories in terms of cohomological data. The methods in \cite{Etingof:2009} should work in the tensor\footnote{\label{fn:tensor}
  By {\em tensor} we mean a $k$-linear monoidal category $\Cc$ with bi-linear tensor product and with $\Cc(I,I)=k$ for the unit object $I$.} 
(or probably just monoidal) setting as well. Theorem \ref{thm:main1} then gives such an extension in the case $G = \Zb/2\Zb$ and $\Cc_0=\Rep_\Sc(H)$. It would be interesting to compare the data $\Gamma$, $\lambda$ in Theorem \ref{thm:main1} to the cohomological data in \cite{Etingof:2009}; we hope to return to this point in \cite{DR-prep}.
\\[.3em]
(v) The category $\Cc(H,\Gamma,\lambda)$ is rigid. Left and right duals and the corresponding duality maps are given in Section \ref{sec:rigid}.
\end{remark}

Next we look at braidings on the category $\Cc(H,\Gamma,\lambda)$. For this, we restrict ourselves to symmetric $\Sc$ with trivial twist. We also need to fix a natural monoidal isomorphism 
\be
  \omega : \Id_{\Sc} \Rightarrow \Id_{\Sc}
\ee
which squares to the identity. The parity involution for super-vector spaces is an example to have in mind. This family of isomorphisms enters the ansatz for the braiding isomorphisms in \eqref{eq:braiding-ansatz} below. 

\begin{notation} \label{not:sec4} 
In addition to Notations \ref{not:sec2} and \ref{not:sec3} we assume the ribbon category $\Sc$ to be symmetric and to have trivial twist. On $\Sc$ we fix a natural monoidal automorphism of the identity functor, $\omega : \Id_\Sc \to \Id_\Sc$, such that $\omega$ squares to one. For the Hopf algebra $H$ in $\Sc$ we fix a choice of $\Gamma,\lambda$ satisfying the conditions of Theorem \ref{thm:main1}.
\end{notation}

Our ansatz for the braiding isomorphisms will be formulated in terms of the four morphisms
\be \label{eq:Rstn-def}
  R : \one \to H \otimes H
  \quad , \quad
  \sigma,\tau,\nu : \one \to H \ .
\ee
We require $R$ and $\sigma,\tau,\nu$ to have multiplicative inverses. In addition, $R$ has to make $H$ quasi-cocommutative:
\be\label{eq:R-intertwiner}
  \scanPIC{55a}
  ~=~
  \scanPIC{55b} \quad .
\ee
There are four braiding isomorphisms depending on whether the two objects are from $\Cc_0$ or $\Cc_1$. Our ansatz is
\be\label{eq:braiding-ansatz} 
  c_{A^0,B^0} = \scanPIC{54ba}
  ~~, \quad
  c_{A^0,B^1} = \scanPIC{54bb} 
  ~~,\quad
  c_{A^1,B^0} = \scanPIC{54bc}
  ~~,\quad
  c_{A^1,B^1} = \scanPIC{54bd}
  \quad .
\ee
Since $R,\sigma,\tau,\nu$ have multiplicative inverses, these maps are indeed isomorphisms. It is also immediate that they are natural in $A^a$ and $B^b$. Condition \eqref{eq:R-intertwiner} ensures that $c_{A^0,B^0}$ is a map of $H$-modules. 

\medskip

The expression for $R,\tau,\nu$ in terms of the Hopf-algebraic data $\sigma,\beta$ from Theorem \ref{thm:main2} below is as follows:
\be\label{eq:R-etc-via-Hopf}
  R = ({}_\sigma M \otimes {}_\sigma M) \circ \Delta \circ \sigma^{-1}
  \quad,\qquad
  \tau = \sigma
  \quad,\qquad
  \nu = \beta \cdot \sigma^{-1} \ .
\ee

Now we can formulate our second main result.

\begin{theorem} \label{thm:main2}
Let $\Sc$ 
      be as in Notation \ref{not:sec4}.
Let $\Cc(H,\Gamma,\lambda)$ and $g: \one \to H$ be as in Theorem \ref{thm:main1}. 
Let $\sigma : \one \to H$ and $\beta : \one \to \one$ be morphisms such that $\sigma$ has a multiplicative inverse and such that $\beta$ is invertible in $\End(\one)$.
\\[.3em]
(i)
Suppose that
\begin{itemize}
\item[a)] 
The isomorphism $\Gamma$ is determined through $\sigma$ by
$$
  (\id_H \otimes \Gamma) \circ \coev_H  
  =
  (\mu_H \otimes \mu_H)
  \circ
  (\id_H \otimes \sigma \otimes \sigma \otimes \id_H)
  \circ
  \Delta_H
  \circ
  \sigma^{-1} \ .
$$
\item[b)] The right cointegral $\lambda$ satisfies $\lambda \circ S_H = \lambda \circ \Ad_\sigma$
and $\lambda \circ \sigma = \beta \circ \beta$.
\item[c)] $\Ad_\sigma$ is a Hopf-algebra isomorphism $H \to H_\mathrm{cop}$ (the opposite coalgebra). Equivalently, 
$$
  \eps \circ \Ad_\sigma = \eps
  ~,~~
  \Delta_H \circ \Ad_\sigma = (\Ad_\sigma \otimes \Ad_\sigma) \circ c_{H,H} \circ \Delta_H
  ~,~~
  \Ad_\sigma \circ S_H = S_H^{-1} \circ \Ad_\sigma \ .
$$  
\item[d)] $S_H \circ \sigma = \mu_H \circ (g \otimes \sigma) = \mu_H \circ (\sigma \otimes g^{-1})$.
\item[e)] The natural isomorphism $\omega$ evaluated on $H$ is
$$
  \omega_H ~=~ \big[ \,
  H \xrightarrow{\Gamma^{-1}} H^\vee 
  \xrightarrow{(\Ad_\sigma)^\vee} (H^\vee)^\mathrm{op}
  \xrightarrow{~\Gamma~} H^\mathrm{op} 
  \xrightarrow{\Ad_\sigma^{-1}} H^\mathrm{op}_\mathrm{cop} 
  \xrightarrow{~S_H~} H \,\big] \ .
$$
(Since $\omega$ is natural and monoidal, both sides are Hopf algebra automorphisms.)
\end{itemize}
Then the family of natural isomorphisms $c_{A,B}^{\Cc} : A \otimes_\Cc B \to B \otimes_\Cc A$ in Eqn.\,\eqref{eq:braiding-ansatz} with $R,\tau,\nu$ given by Eqn.\,\eqref{eq:R-etc-via-Hopf} defines a braiding on $\Cc(H,\Gamma,\lambda)$.
\\[.3em]
(ii) Denote the braided category resulting via (i) from $\sigma,\beta$ by $\Cc(H,\lambda,\sigma,\beta)$. Given a Hopf algebra $H'$ in $\Sc$ and a Hopf algebra isomorphism $\varphi : H' \to H$, there is a braided monoidal equivalence $\Cc(H,\lambda,\sigma,\beta) \cong \Cc(H',\lambda',\sigma',\beta)$, where $\lambda' = \lambda\circ \varphi$ and $\sigma' = \varphi^{-1}\circ\sigma$.
\end{theorem}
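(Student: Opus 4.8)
The plan is to show that the morphisms of \eqref{eq:braiding-ansatz}, with $R,\tau,\nu$ fixed through $\sigma,\beta$ by \eqref{eq:R-etc-via-Hopf}, form a braiding, which amounts to three things: each $c_{A,B}^\Cc$ is a well-defined morphism of the correct degree in $\Cc$; they are natural isomorphisms; and they satisfy the two hexagon identities relative to the associativity isomorphisms of $\Cc(H,\Gamma,\lambda)$ from Theorem \ref{thm:main1}. Naturality and invertibility are already recorded in the text (the latter from the multiplicative invertibility of $R,\sigma,\tau,\nu$), so the genuine content is well-definedness and the hexagons. For well-definedness, the two ``diagonal'' braidings $c_{A^0,B^0}$ and $c_{A^1,B^1}$ land in $\Cc_0=\Rep_\Sc(H)$ and must therefore be $H$-module maps, while the off-diagonal $c_{A^0,B^1}$ and $c_{A^1,B^0}$ land in $\Cc_1=\Sc$ and carry no such constraint. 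First I would record that the intertwining property of $c_{A^0,B^0}$ is exactly the quasi-cocommutativity \eqref{eq:R-intertwiner}, that the $R$ built from $\sigma$ in \eqref{eq:R-etc-via-Hopf} indeed satisfies \eqref{eq:R-intertwiner} (this being equivalent, via condition (c), to $\Ad_\sigma$ being a Hopf-algebra isomorphism $H\to H_\mathrm{cop}$), and that $c_{A^1,B^1}$ is likewise an intertwiner.

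The bulk of the proof is the hexagon verification. Each of the two hexagon identities involves three objects $A,B,C$, each of which may lie in $\Cc_0$ or $\Cc_1$, giving $2^3=8$ cases per hexagon and sixteen diagrams in all. I would organise these by the number of $\Cc_1$-factors among $A,B,C$, since this controls which nontrivial associators and which of the four braidings occur. In each case one writes both sides as string diagrams in $\Sc$ (suppressing the forgetful functor $F$ as in Notations \ref{not:sec3}) and reduces one to the other using the Hopf-algebra axioms together with conditions (a)--(e). The cases with zero or one $\Cc_1$-factor are governed by the quasi-cocommutativity of condition (c) and the formula for $R$ in condition (a); the cases with two $\Cc_1$-factors additionally invoke the cointegral relation of condition (b) and the identity $S_H\circ\sigma=\mu_H\circ(g\otimes\sigma)=\mu_H\circ(\sigma\otimes g^{-1})$ of condition (d), while the normalisation $\lambda\circ\sigma=\beta\circ\beta$ fixes the scalar factors.

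The hardest step will be the cases in which the external parity $\omega$ on $\Sc$ must be reproduced by purely Hopf-algebraic manipulations, culminating in the all-$\Cc_1$ hexagon. There the $\omega_H$ that enters through the ansatz must coincide with the composite of Hopf-algebra automorphisms prescribed in condition (e); proving that the Hopf-algebraic side of such a $\Cc_1$-laden hexagon collapses to exactly $\omega_H$ is where all of (a)--(e) are used at once, and it is the crux of part (i). A useful bookkeeping device is that, since $\omega$ is natural and monoidal and (e) already exhibits $\omega_H$ as a Hopf-algebra automorphism, one may test the relevant equalities on the generating structure morphisms of $H$ rather than on arbitrary module objects, reducing the diagrammatic load.

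For part (ii) I would begin from the monoidal equivalence $\Cc(H,\Gamma,\lambda)\cong\Cc(H',\Gamma',\lambda')$ already produced in Theorem \ref{thm:main1}(ii), whose underlying functor acts by $\varphi$ on the modules of $\Cc_0$ and as the identity on $\Cc_1$, and upgrade it to a braided equivalence. Concretely, using $\sigma'=\varphi^{-1}\circ\sigma$ and $\lambda'=\lambda\circ\varphi$ one checks through \eqref{eq:R-etc-via-Hopf} that $R',\tau',\nu'$ are the $\varphi^{-1}$-images of $R,\tau,\nu$ (here one uses that $\varphi$ is a Hopf-algebra isomorphism, so it transports multiplicative inverses, products and coproducts), whence the four braidings of \eqref{eq:braiding-ansatz} for the primed data are carried to those for the unprimed data by the equivalence functor. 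The only point needing care is that the external datum $\omega$ is the \emph{same} for both categories, so one must confirm that the right-hand side of condition (e) is invariant under conjugation by $\varphi$; this follows because $\Gamma'=\varphi^{-1}\circ\Gamma\circ(\varphi^{-1})^\vee$, together with $\sigma'$, are precisely the transported data and the composite in (e) is natural in $H$. Verifying the braided-functor compatibility axiom then reduces to this transport computation, which is routine.
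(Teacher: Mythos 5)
Your proposal follows essentially the same route as the paper: reduce each of the sixteen hexagon diagrams to an identity involving only $H$ (by specialising the module objects to $H$ and $\one$), verify those identities one by one from conditions a)--e) together with the Hopf-algebra axioms (with quasi-cocommutativity, derived from condition c), giving the module-map property of $c_{A^0,B^0}$), and for part (ii) upgrade the monoidal equivalence $G(\varphi)$ of Theorem \ref{thm:main1}\,(ii) to a braided one by transporting $\sigma$ via $\sigma' = \varphi^{-1}\circ\sigma$. The only slight misplacement is that condition e) is pinned down by the hexagon with a single middle $\Cc_1$-factor (case \eqref{eq:hexagon-3R}) rather than by the all-$\Cc_1$ case, but since your plan keeps all of a)--e) available in every case this does not affect the argument.
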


\begin{remark} \label{rem:main2}
(i) This theorem is proved in Propositions \ref{prop:Hopf-to-hex} and \ref{prop:braid-mon-equiv}. To verify that the conditions a)--e) are not unnecessarily restrictive, in Proposition \ref{prop:hex-to-Hopf} we prove the converse statement to part (i): we start with the ansatz \eqref{eq:braiding-ansatz} and derive that all solutions to the two hexagons are of the form stated in Theorem \ref{thm:main2}. But also here our ansatz for the braiding is not the most general one and the question whether there is a sense in which Theorem \ref{thm:main2} provides all braidings on $\Cc(H,\Gamma,\lambda)$ will be postponed to \cite{DR-prep}.
\\[.3em]
(ii) Some consequences of conditions a)\,--\,e) are
\begin{itemize}
\item $\Ad_\sigma^{\,4} = \id_H$, $S_H^2 = \Ad_g \circ \Ad_\sigma^{\,2}$ and $\Ad_g \circ \Ad_\sigma = \Ad_\sigma \circ \Ad_g^{-1}$, see \eqref{eq:Adsig4=id}, \eqref{eq:Adsig-inv} and condition d).
\item $\Gamma$ is self-dual up to an inner automorphism, namely $\Gamma^\vee = \delta_H \circ \Ad_\sigma^{\,2} \circ \Gamma$ (combine condition c) in Theorem \ref{thm:main1} with $S_H^2 = \Ad_g \circ \Ad_\sigma^{\,2}$ and recall that in Theorem \ref{thm:main2} we assume $\Sc$ to have trivial twist). 
\item Denote by $R$ the R-matrix defining the braiding on $\Rep_\Sc(H)$ (see \eqref{eq:braiding-ansatz}). This R-matrix is non-degenerate as a copairing and satisfies $c_{H,H} \circ R =  (\Ad_\sigma \otimes \Ad_\sigma) \circ R$ (compare \eqref{eq:R-via-gamma_2nd} and \eqref{eq:R-via-gamma-op}).
\end{itemize}
(iii) If $H$ is commutative (hence cocommutative) then $g$ is the unit of $H$ and the conditions in Theorem \ref{thm:main2}\,(i) simplify to:
\begin{itemize}
\item[a)] (unchanged)
\item[b)] $\lambda$ satisfies $\lambda \circ S_H = \lambda$
and $\lambda \circ \sigma = \beta \circ \beta$
\item[c)] (trivially true)
\item[d)] $S_H \circ \sigma = \sigma$
\item[e)] $\omega_H =S_H$
\end{itemize}
We also have $S_H^2=\id_H$ (from e) together with the fact that $\omega$ is an involution).
We note that the conditions in Theorem \ref{thm:main2} do have non-commutative solutions. One such Hopf algebra is given in Section \ref{sec:H16-non-comm}, precisely to illustrate this point.
\\[.3em]
(iv) The R-matrix which gives the braiding on $\Rep_\Sc(H)$ is determined by $\sigma$ via (see \eqref{eq:R-etc-via-Hopf}):
\be
  R = ({}_\sigma M \otimes {}_\sigma M) \circ \Delta \circ \sigma^{-1} \ .
\ee
Here, ${}_\sigma M$ denotes the left-multiplication by $\sigma$ (see \eqref{eq:left-right-mult-def}). 
R-matrices of this form occur in the theory of quantum groups\footnote{
  However, $U_q(\mathfrak{g})$ does not provide an example for our construction because it is infinite-dimensional and so not a rigid (and in particular not a self-dual) object in $\Sc = \vect$. Furthermore, the $U_q(\mathfrak{g})$ R-matrices are elements of a completion of the tensor square.} 
\cite{Kirillov:1990,Levendorskii:1991} and are studied in \cite{Snyder:2008} as part of the definition of a `half-ribbon Hopf algebra'.
\end{remark}

\begin{remark} \label{rem:twist+reverse}
(i) From Remark \ref{rem:main2}\,(ii) we know that $\Ad_\sigma^{\,4}=\id_H$. If already $\Ad_\sigma^{\,2} = \id_H$, then the category $\Cc(H,\lambda,\sigma,\beta)$ is ribbon. A choice of twist isomorphisms is given by acting with $\sigma^{-2}$ on $H$-modules in $\Cc_0$ and by applying $\beta^{-1} \cdot \omega$ to objects in $\Cc_1$, see Proposition \ref{prop:twist}.
\\[.3em]
(ii) For a braided category $\Cc$  the {\em reverse category}\footnote{also called {\em mirror} category} $\overline{\Cc}$ is the category $\Cc$ with the same tensor product and associativity isomorphisms and with a new braiding
$\overline{c}_{X,Y} = c_{Y,X}^{-1}.$
We show in Proposition \ref{prop:Cbar-equiv-C} that
\be
  \overline{\Cc(H,\lambda,\sigma,\beta)} \cong \Cc(H,\lambda,\sigma^{-1},\beta^{-1})
\ee
as braided monoidal categories. 
\end{remark}

\begin{remark}
Let $\Cc$ be a $G$-graded monoidal category. 
Let $\Aut_{\Cc|\Cc}(Id_\Cc)$ denote the group of automorphisms of the identity functor on $\Cc$ as a $\Cc$-bimodule functor. This group is naturally isomorphic to the automorphism group $\Aut_{\Zc(\Cc)}(I)$ of the identity object in the monoidal centre of $\Cc$ (this follows from a monoidal equivalence between the monoidal centre $\Zc(\Cc)$ of $\Cc$ and the category $\End_{\Cc|\Cc}(\Cc)$ of $\Cc$-bimodule endofunctors of $\Cc$ \cite{Ostrik:2002}). Note that if $\Cc$ is tensor${}^{\text{\ref{fn:tensor}}}$ this group is $k^\times$, the group of invertible elements of the ground field. A 3-cocycle $a$ of $G$ with coefficients in $\Aut_{\Cc|\Cc}(Id_\Cc)$ can be used to twist the associativity isomorphisms $\alpha$ of $\Cc$,
\be
  \alpha^a_{X,Y,Z} = a(x,y,z)_{(X\ot Y)\ot Z} \circ \alpha_{X,Y,Z},\quad\quad X\in\Cc_x,\ Y\in\Cc_y,\ Z\in\Cc_z \ .
\ee  
It is easy to see that up to monoidal equivalence twisting depends only on the cohomology class of $a$.
(However, it is in general not true that different cohomology classes give non-equivalent monoidal structures.)
We obtain a (not necessarily faithful) action of third cohomology group $H^3\big(G\,,\,Aut_{\Cc|\Cc}(Id_\Cc)\big)$ 
on the set of equivalence classes of monoidal structures on $\Cc$.

\noindent
Similarly a braided structure on a $G$-graded category $\Cc$ can be twisted by an abelian 3-cocycle $(a,\gamma)$ \cite{Eilenberg:1954,Joyal:1993}. Namely the associativity is twisted as above via $\alpha^{(a,\gamma)} = \alpha^a$, while the deformed braiding $c^{(a,\gamma)} = c^\gamma$ is defined as
\be 
  c^\gamma_{X,Y} = \gamma(x,y)_{Y\ot X} \circ c_{X,Y},\quad\quad X\in\Cc_x,\ Y\in\Cc_y \ .
\ee
Let $G=\Zb/2\Zb$ be the grading group and let $\Sc$ be tensor with ground field $\Cb$ and consider the case $\Cc = \Cc(H,\Gamma,\lambda)$ as in Theorem \ref{thm:main1}. We have $H^3(\Zb/2\Zb,\Cb^\times) = \Zb/2\Zb$ and the non-trivial element of $\Zb/2\Zb$ acts as $(\Gamma,\lambda) \mapsto (\Gamma,-\lambda)$. 

\noindent
If $\Cc = \Cc(H,\lambda,\sigma,\beta)$ is braided via Theorem \ref{thm:main2}, the group of twists for associativity and braiding is $H_{ab}^3(\Zb/2\Zb,\Cb^\times) = \Zb/4\Zb$. An element $m \in \Zb/4\Zb$ acts as $(\lambda,\sigma,\beta) \mapsto ( (-1)^m \lambda , \sigma , i^{m} \beta)$, where $i = \sqrt{-1}$.
\end{remark}

\bigskip

This concludes the statement of our results. We now turn to the proofs of the assertions made above.

%%%%%%%%%%%%%%%
\section{Monoidal structure on $\Rep_\Sc(H)+\Sc$ for $\Sc$ ribbon}\label{sec:theorem1}

In this section we prove Theorem \ref{thm:main1} and its converse in the sense stated in Remark \ref{rem:main1}\,(i), as well as the existence of duals asserted in Remark \ref{rem:main1}\,(v). We assume the conventions in Notations \ref{not:sec2} and \ref{not:sec3}, but not those of Notations \ref{not:sec4}; the latter are specific to the discussion of the braiding in Section \ref{sec:braiding}.

\subsection{The pentagon equations}\label{sec:pentagon}

The associativity isomorphisms given in Section \ref{results} have to satisfy the pentagon axiom in $\Cc$. Namely, the diagram 
$$
\xygraph{ !{0;/r4.5pc/:;/u4.5pc/::}[]*+{A\otimes_\Cc(B\otimes_\Cc(C\otimes_\Cc D))} (
  :[u(1.1)r(1.7)]*+{(A\otimes_\Cc B)\otimes_\Cc(C\otimes_\Cc D)} ^{\alpha_{A,B,C D}}
  :[d(1.1)r(1.7)]*+{((A\otimes_\Cc B)\otimes_\Cc C)\otimes_\Cc D}="r" ^{\alpha_{A B, C,D}}
  ,
  :[r(.6)d(1.5)]*+!R(.3){A\otimes_\Cc((B\otimes_\Cc C)\otimes_\Cc D)} _{id_A\otimes_\Cc \alpha_{B,C,D}}
  :[r(2.2)]*+!L(.3){(A\otimes_\Cc(B\otimes_\Cc C))\otimes_\Cc D} ^{\alpha_{A,B C,D}}
  : "r" _{\alpha_{A,B,C}\otimes_\Cc id_D}
)
}
$$
has to commute. Here we have omitted the $\otimes_{\Cc}$ in the indices of $\alpha$. As an equation, this reads
\be\label{eq:pentagon}
  \alpha_{AB,C,D} \circ \alpha_{A,B,CD} = (\alpha_{A,B,C} \otimes_{\Cc} \id_D) \circ \alpha_{A,BC,D} \circ (\id_A \otimes_{\Cc} \alpha_{B,C,D}) \ .
\ee
Taking each of the objects $A,B,C,D$ either from $\Cc_0$ or from $\Cc_1$, we obtain 16 equations.
As before, we write $X^i$ for objects of $\Cc_i$. Let us number the 16 cases depending on $a,b,c,d \in \{0,1\}$ for $A = A^a$, $B = B^b$, $C = C^c$, $D = D^d$ as follows:

\begin{center}
\begin{tabular}{rcccc||rcccc||rcccc||rcccc}
case & $a$ & $b$ & $c$ & $d$ &\
case & $a$ & $b$ & $c$ & $d$ &
case & $a$ & $b$ & $c$ & $d$ &
case & $a$ & $b$ & $c$ & $d$ \\
\hline
1) & 0 & 0 & 0 & 0 & 5) & 1 & 0 & 0 & 0 &  9) & 0 & 1 & 1 & 0 & 13) & 1 & 0 & 1 & 1 \\
2) & 0 & 0 & 0 & 1 & 6) & 0 & 0 & 1 & 1 & 10) & 1 & 0 & 1 & 0 & 14) & 1 & 1 & 0 & 1 \\
3) & 0 & 0 & 1 & 0 & 7) & 0 & 1 & 0 & 1 & 11) & 1 & 1 & 0 & 0 & 15) & 1 & 1 & 1 & 0 \\
4) & 0 & 1 & 0 & 0 & 8) & 1 & 0 & 0 & 1 & 12) & 0 & 1 & 1 & 1 & 16) & 1 & 1 & 1 & 1 
\end{tabular}
\end{center}
% * = independent of gamma,delta,phi
% 01 - (000) (000) = (000) (000) (000) *	
% 02 - (001) (001) = (000) (001) (001) *	
% 03 - (010) (001) = (001) (010) (010)	
% 04 - (100) (010) = (010) (010) (100)	
% 05 - (100) (100) = (100) (100) (000) *	
% 06 - (011) (000) = (001) (011) (011) *	
% 07 - (101) (011) = (010) (011) (101)	
% 08 - (101) (101) = (100) (101) (001)	
% 09 - (110) (011) = (011) (000) (110) *	
% 10 - (110) (101) = (101) (110) (010)	
% 11 - (000) (110) = (110) (110) (100) *	
% 12 - (111) (010) = (011) (001) (111)	
% 13 - (111) (100) = (101) (111) (011)	
% 14 - (001) (111) = (110) (111) (101)	
% 15 - (010) (111) = (111) (100) (110)	
% 16 - (011) (110) = (111) (101) (111)	

Recall from Section \ref{results} that some of the associativity isomorphisms are identities. From this it is immediate that {\bf cases 1, 2 and 5} hold as they read $\id=\id$. Actually, also 
{\bf cases 6, 9 and 11} turn out to be true independent of the choice of $\gamma,\delta,\phi$, but the verification is slightly less immediate. Let us start with case 6. The pentagon reads
\be\label{eq:pentagon-6-aux1}
  \alpha_{A^0B^0,C^1,D^1} \circ \alpha_{A^0,B^0,C^1D^1} = (\alpha_{A^0,B^0,C^1} \otimes_{\Cc} \id_{D^1}) \circ \alpha_{A^0,B^0C^1,D^1} \circ (\id_{A^0} \otimes_{\Cc} \alpha_{B^0,C^1,D^1}) \ ,
\ee
and upon substituting the expressions from  Section \ref{results} we arrive at the string diagram
\be\label{eq:pentagon-6-aux2} 
  \scanPIC{08a}
  \quad = \quad
  \scanPIC{08b} \quad . 
\ee
From coassociativity and Figure \ref{fig:Hopf-property}\,f) it follows that this equality indeed holds. Similar (but even easier) calculations show that cases 9 and 11 also hold. 

The remaining cases result in conditions on the morphisms $\gamma,\delta,\phi$. 
In {\bf case 3}  
the pentagon is as in \eqref{eq:pentagon-6-aux1} with $D^1$ replaced by $D^0$. By  Section \ref{results} the corresponding string diagram is
\be\label{eq:pentagon-3-aux2}
\scanPIC{05a} 
~=~
\scanPIC{05b} 
\ .
\ee
Equation \eqref{eq:pentagon-3-aux2} has to hold for all $A^0,B^0,C^1,D^0$. This is equivalent to 
\be\tag{P-3}\label{eq:pentagon-3} 
\scanPIC{06a} 
~=~
\scanPIC{06b} 
\ .
\ee
Indeed, to see that \eqref{eq:pentagon-3-aux2} implies \eqref{eq:pentagon-3} just specialise to $A^0=B^0=D^0=H$, $C^1=\one$ and compose with $\eta\otimes\eta\otimes\eta$. Conversely, it is immediate that \eqref{eq:pentagon-3} implies \eqref{eq:pentagon-3-aux2} by using associativity of the $H$-action on $D^0$. An analogous calculation in {\bf case 4} results in
\be\tag{P-4}\label{eq:pentagon-4} 
\scanPIC{07a} 
~=~
\scanPIC{07b} \quad .
\ee
The string diagram for {\bf case 7} is
\be\label{eq:pentagon-7-aux1} 
\scanPIC{09a} 
~~~=~~~
\scanPIC{09b} \quad .
\ee
By specialising to $A^0=C^0=H$ and $B^1=D^1=\one$ and composing with the unit of $H$, and conversely by using associativity of the $H$-action, one finds that \eqref{eq:pentagon-7-aux1} is equivalent to
\be\tag{P-7}\label{eq:pentagon-7} 
\scanPIC{10a} 
~=~~~
\scanPIC{10b} \quad .
\ee
In each of the remaining cases, the argument to pass from the pentagon to an identity involving only the object $H$ is the same and we will only list the resulting conditions on $\gamma$, $\delta$, $\phi$:

\medskip\noindent
{\bf Case 8:} 
\be\tag{P-8}\label{eq:pentagon-8}
\scanPIC{11a}~~=~~\scanPIC{11b}
\ee
{\bf Case 10:} 
\be\tag{P-10}\label{eq:pentagon-10}
\scanPIC{12a}~~=~~\scanPIC{12b}
\ee
{\bf Case 12:}
\be\tag{P-12}\label{eq:pentagon-12}
\scanPIC{13a}~~=~~\scanPIC{13b}
\ee
{\bf Case 13:}
\be\tag{P-13}\label{eq:pentagon-13}
\scanPIC{14a}~~=~~\scanPIC{14b}
\ee
{\bf Case 14:}
\be\tag{P-14}\label{eq:pentagon-14}
\scanPIC{15a}~~=~~\scanPIC{15b}
\ee
{\bf Case 15:}
\be\tag{P-15}\label{eq:pentagon-15}
\scanPIC{16a}~~=~~\scanPIC{16b}
\ee
{\bf Case 16:}
\be\tag{P-16}\label{eq:pentagon-16}
\scanPIC{17a}~~=~~\scanPIC{17b}
\ee

\subsection{From pentagon to Hopf-algebraic data}\label{sec:pent-to-Hopf}

The following proposition shows that, given the ansatz for the associativity isomorphisms in  Section \ref{results}, the Hopf-algebraic data in Theorem \ref{thm:main1}\,(i) precisely parameterises the solutions to the 16 pentagon equations. Note that the proposition does not presuppose that $\gamma,\delta,\phi$ are of the form \eqref{eq:gdp-via-Hopf}.

\begin{proposition} \label{prop:pent-to-Hopf}
Suppose $\gamma,\delta,\phi$ as in \eqref{eq:gdp-def} satisfy the pentagon conditions \eqref{eq:pentagon-3}--\eqref{eq:pentagon-16}. Let $\Gamma,\lambda$ be given by \eqref{eq:Hopf-via-gdp}. Then
\\
(i) $\gamma,\delta,\phi$ are of the form \eqref{eq:gdp-via-Hopf}, 
\\
(ii) the pair $\Gamma,\lambda$ satisfies conditions a)--c) in Theorem \ref{thm:main1}\,(i).
\end{proposition}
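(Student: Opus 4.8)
The plan is to decode each of the ten non-trivial pentagon equations \eqref{eq:pentagon-3}--\eqref{eq:pentagon-16} into a closed identity of morphisms in $\Sc$ built from $\gamma,\delta,\phi$, and then to reassemble these identities in the language of $\Gamma,\lambda,g$. The basic technique is the one already used in Section~\ref{sec:pentagon} to pass from \eqref{eq:pentagon-3-aux2} to \eqref{eq:pentagon-3}: specialise the objects $A,B,C,D$ to copies of $H$ (or to $\one$) and pre-/post-compose with units, counits, $\coev$ and $\ev$ so that the module-theoretic content collapses to an identity for the three structure morphisms alone. Throughout I would use rigidity (the zig-zag identities), the antipode (anti-)automorphism relations \eqref{ap}, the bubble property of Figure~\ref{fig:Hopf-property}\,g),\,h), and the definition \eqref{eq:H^-structuremaps} of the dual Hopf algebra.

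For part (i), the recovery of $\gamma$ in \eqref{eq:gdp-via-Hopf} is essentially formal: since $\Gamma$ in \eqref{eq:Hopf-via-gdp} is obtained from $\gamma$ by bending one leg along $\coev_H$, bending it back reproduces $\gamma$ by a zig-zag identity, with no pentagon input. The content of (i) is in recovering $\phi$ and $\delta$. Since \eqref{eq:Hopf-via-gdp} retains only $\lambda=\eps\circ\phi$, reconstructing $\phi$ requires the pentagon condition(s) that force $\phi$ to factor through $\lambda$ and the morphism $g$; I expect these from the cases carrying $\phi$, ultimately \eqref{eq:pentagon-16}, once $\gamma$ and $\delta$ are known. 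The morphism $g$ enters through the cointegral-type relation $(\id_H\otimes\lambda)\circ\Delta_H = g\circ\lambda$, read off a suitable pentagon. I would prove the three claims about $g$ afterwards: once the normalisation $(\lambda\otimes\lambda)\circ(\id_H\otimes\Gamma)\circ\coev_H=\id_\one$ has been derived (itself a consequence of the pentagon, see part (ii)), it exhibits $\kappa:=(\id_H\otimes\lambda)\circ(\id_H\otimes\Gamma)\circ\coev_H$ as a right inverse of $\lambda$, so that $\lambda$ is split epi and $g\circ\lambda=g'\circ\lambda$ forces $g=g'$; invertibility of $g$ I would get by showing $g$ is group-like (from its defining relation together with coassociativity and the cointegral property) and setting $g^{-1}=S_H\circ g$. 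Finally the form of $\delta$ in \eqref{eq:gdp-via-Hopf} follows from the pentagon relating $\delta$ to $\gamma$ and $\phi$, i.e. the cases with the $(1,0,1)$-pattern such as \eqref{eq:pentagon-10} and \eqref{eq:pentagon-13}.

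For part (ii), I would split condition a) into the algebra-map, coalgebra-map and antipode compatibilities of $\Gamma$ and match each to a pentagon: via the product/coproduct duality \eqref{eq:H^-structuremaps}, the identities stating that $\gamma$ intertwines $\mu$ and $\Delta$ (coming from the $\gamma$-cases such as \eqref{eq:pentagon-7}) become precisely the algebra- and coalgebra-map properties of $\Gamma$, while its invertibility follows from the assumed multiplicative invertibility of $\gamma$ in $H\otimes H$. For condition b), the right-cointegral property of $\lambda$ and the existence of $g$ come from the single-$\Cc_1$-object cases \eqref{eq:pentagon-3} and \eqref{eq:pentagon-4}, and the normalisation of $\lambda$ comes from matching the multiplicative inverse $\gamma^{-1}$ against $\Gamma$ and $\lambda$ inside \eqref{eq:pentagon-16}.

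The hard part, and the last step, is condition c): $\Gamma^\vee = \delta_H\circ\theta_H^{-1}\circ S_H^2\circ\Ad_{g^{-1}}\circ\Gamma$. This self-duality-up-to-twist identity mixes all of the data and, crucially, the ribbon twist $\theta_H$ and the double-dual isomorphism $\delta_H$, neither of which occurs in the simpler conditions; I expect it to emerge only from the fully mixed pentagon \eqref{eq:pentagon-16} (possibly together with \eqref{eq:pentagon-15}), after substituting the now-known forms of $\gamma,\delta,\phi$ and simplifying with the bubble property and the relations \eqref{ap}. Tracking the twist and the $S_H^2$ correctly through the diagram manipulation --- in particular producing exactly one factor of $\Ad_{g^{-1}}$ and the correct orientation of $\theta_H^{-1}$ --- is where I expect the real work, and the main risk of convention errors, to lie.
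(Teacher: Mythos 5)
Your overall strategy --- decode the ten nontrivial pentagon identities into Hopf-algebraic identities for $\gamma,\delta,\phi$ and then reassemble them in terms of $\Gamma,\lambda,g$ --- is exactly the paper's strategy, and some ingredients are right (uniqueness of $g$ via a right inverse of $\lambda$, group-likeness giving $g^{-1}=S\circ g$, the normalisation coming from \eqref{eq:pentagon-16}). But there is a genuine mathematical gap in your treatment of condition a): you claim that invertibility of $\Gamma$ ``follows from the assumed multiplicative invertibility of $\gamma$ in $H\otimes H$''. It does not. $\Gamma$ being an isomorphism is equivalent to $\gamma$ being \emph{non-degenerate as a copairing}, which is a different condition from multiplicative invertibility (e.g.\ $\eta\otimes\eta$ is multiplicatively invertible and maximally degenerate). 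This is precisely the delicate point of the proof: the paper first extracts a one-sided duality statement from the invertibility of $\phi$ together with the expression for $\phi$ in Lemma \ref{lem:delta+phi-via-gamma}, and then upgrades it to non-degeneracy via the braiding relation \eqref{eq:gamma-braid} and the general duality Lemma \ref{lem:b-UU-secial-duality-property} (this is Lemma \ref{lem:gamma-inv}). Without such an argument, condition a) simply is not established.

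Beyond this, several of your pentagon-to-condition assignments are structurally impossible, so the plan as written would stall. The cases \eqref{eq:pentagon-3} and \eqref{eq:pentagon-4} involve \emph{only} $\gamma$ (no $\phi$, hence no $\lambda$ and no $\delta$): they are exactly the Hopf-copairing conditions, i.e.\ the algebra- and coalgebra-map property of $\Gamma$ --- so your attribution of condition a) to \eqref{eq:pentagon-7} is off (that case fixes $\delta$ in terms of $\gamma$ and $g$), and the right-cointegral property of $\lambda$ and the existence of $g$ \emph{cannot} come from \eqref{eq:pentagon-3}/\eqref{eq:pentagon-4}; in the paper they come from \eqref{eq:pentagon-15} and \eqref{eq:pentagon-13}, with $g:=(\eps\otimes\id)\circ\delta$ and its group-likeness from \eqref{eq:pentagon-8}. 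Similarly, \eqref{eq:pentagon-16} is \emph{quadratic} in $\phi$, so it can only yield constraints such as $S^{-1}=\phi\circ{}_gM\circ\phi$ (cf.\ \eqref{eq:phi-1_via_phi}) and the normalisation; the explicit formula $\phi=(\id\otimes\lambda)\circ(S\otimes\mu)\circ(\gamma\otimes\id)$ must be read off the $\phi$-linear case \eqref{eq:pentagon-12}. Finally, condition c) does not emerge from \eqref{eq:pentagon-16}: it is obtained by comparing the two expressions for $\delta$ extracted from \eqref{eq:pentagon-7} and \eqref{eq:pentagon-10} --- this comparison is \eqref{eq:gamma-braid} --- and then dualising, which is where $\theta_H^{-1}$ and $\delta_H$ enter; the same relation \eqref{eq:gamma-braid} is also an input to the non-degeneracy argument above. (A smaller inaccuracy: recovering $\gamma$ from $\Gamma$ is not pentagon-free either, since \eqref{eq:Hopf-via-gdp} defines $\Gamma$ through $\gamma^{-1}$, so one needs $\gamma^{-1}=(S\otimes\id)\circ\gamma$, i.e.\ Lemma \ref{lem:gamma-inv-via-S}\,(ii), which itself rests on \eqref{eq:pentagon-3}/\eqref{eq:pentagon-4}.)
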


The proof requires a series of lemmas. 

\medskip

Define the morphism $g : \one \to H$ as
\be\label{eq:g-def}
  g = (\eps \otimes \id) \circ \delta \ .
\ee
The first lemma shows that $g$ is group-like.

\begin{lemma} \label{lem:g-grouplike}
(i) $\Delta \circ g = g \otimes g$ and $\eps \circ g = \id_\one$. \\
(ii) The multiplicative inverse exists and is given by $g^{-1} = S \circ g = S^{-1} \circ g$.
\end{lemma}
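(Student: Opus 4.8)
The plan is to read the group-like identities for $g$ off the pentagon relations that involve $\delta$, and then to deduce invertibility by the standard argument that a group-like morphism is inverted by its antipode. For part (i) I would work with the pentagon conditions in which the $\delta$-associator $\alpha_{A^1,B^0,C^1}$ of \eqref{fig:assoc-c2} occurs, the cleanest being the pure-$\delta$ relation \eqref{eq:pentagon-8} (case~8), supplemented if needed by \eqref{eq:pentagon-10} and \eqref{eq:pentagon-13}. The mechanism that converts $\delta$ into $g$ is to specialise the interior $\Cc_0$-objects to the unit object $\one$, regarded as the trivial $H$-module via $\eps$. Acting with the first leg of $\delta$ on such a module produces the scalar $\eps\circ S^{-1}\circ(\text{first leg of }\delta)$, and since $\eps\circ S^{-1}=\eps$ (a consequence of $\eps\circ S=\eps$ in \eqref{ap}) this collapses $\delta$ into $g=(\eps\otimes\id)\circ\delta$ from \eqref{eq:g-def}; the remaining free legs are then capped with $\eta$ and $\eps$.

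Carrying this out, the two sides of \eqref{eq:pentagon-8} should reduce to morphisms on the distinguished $H$-factor, and comparing them yields the comultiplicativity $\Delta\circ g=g\otimes g$, while a further capping with counits on all legs gives the normalisation $\eps\circ g=\id_\one$. I expect this diagrammatic reduction to be the main obstacle: one must correctly track the $S^{-1}$-factors and the braidings that appear inside $\alpha_{A^1,B^0,C^1}$, and choose which legs to close with $\eta$ and $\eps$ so that the pentagon collapses to exactly these two clean relations rather than to some weaker consequence.

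For part (ii), once $g$ is group-like the argument is standard. Pre-composing the antipode axioms $\mu\circ(S\otimes\id)\circ\Delta=\eta\circ\eps=\mu\circ(\id\otimes S)\circ\Delta$ with $g$ and using $\Delta\circ g=g\otimes g$ together with $\eps\circ g=\id_\one$ gives $\mu\circ((S\circ g)\otimes g)=\eta=\mu\circ(g\otimes(S\circ g))$, so $S\circ g$ is a two-sided multiplicative inverse and $g^{-1}=S\circ g$. To identify $S^{-1}\circ g$ with this inverse I would use that for morphisms $x,y:\one\to H$ naturality of the braiding together with $c_{\one,\one}=\id_\one$ gives $c_{H,H}\circ(x\otimes y)=y\otimes x$; hence $c_{H,H}\circ(g\otimes g)=g\otimes g$, and the braidings drop out of the $S^{-1}$-bubble property of Figure~\ref{fig:Hopf-property}\,h). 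The latter then shows $\mu\circ((S^{-1}\circ g)\otimes g)=\eta=\mu\circ(g\otimes(S^{-1}\circ g))$, so $S^{-1}\circ g$ is also a multiplicative inverse of $g$, and uniqueness of inverses forces $S^{-1}\circ g=S\circ g=g^{-1}$.
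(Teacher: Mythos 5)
Your overall strategy matches the paper's: the comultiplicativity $\Delta\circ g = g\otimes g$ is indeed obtained exactly as you propose, by capping \eqref{eq:pentagon-8} with $\eps\otimes\id\otimes\id$, and your part (ii) is essentially the paper's argument (the paper verifies the $S^{-1}$ case via Figure \ref{fig:Hopf-property}\,h) and the triviality of the braiding on morphisms out of $\one$, exactly as you do, and notes the remaining cases are analogous).

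However, there is a genuine gap in your derivation of the normalisation $\eps\circ g=\id_\one$. Capping \eqref{eq:pentagon-8} (or the relation $\Delta\circ g=g\otimes g$) ``with counits on all legs'' does not give $\eps\circ g=\id_\one$; it only gives the idempotency relation $(\eps\circ g)\cdot(\eps\circ g)=\eps\circ g$ in $\End(\one)$, since the side with two copies of $\delta$ collapses to $(\eps\circ g)^2$ (using $\eps\circ S^{\pm1}=\eps$ and $\eps\circ\mu=\eps\otimes\eps$) while the other side collapses to $\eps\circ g$. In a general ribbon category $\Sc$, the commutative monoid $\End(\one)$ can have nontrivial idempotents (e.g.\ in a direct sum of categories), so one cannot conclude $\eps\circ g=\id_\one$ from idempotency alone. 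The missing ingredient is an invertibility argument: the paper uses the standing assumption from \eqref{eq:gdp-def} that $(S^{-1}\otimes\id)\circ\delta$ has a multiplicative inverse $\tilde\delta$ in $H\otimes H$ (this is what makes $\alpha_{A^1,B^0,C^1}$ an isomorphism). Composing the product of $(S^{-1}\otimes\id)\circ\delta$ with $\tilde\delta$ by $\eps\otimes\eps$ yields $(\eps\circ g)\cdot x=\id_\one$ with $x=(\eps\otimes\eps)\circ\tilde\delta$, so $\eps\circ g$ is invertible, and only then does idempotency force $\eps\circ g=\id_\one$. Note also that this gap propagates into your part (ii), since your identity $\mu\circ((S\circ g)\otimes g)=\eta$ uses $\eps\circ g=\id_\one$.
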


\begin{proof}
Composing \eqref{eq:pentagon-8} with $\eps \otimes \id \otimes \id$ gives $g \otimes g = \Delta \circ g$. By assumption, $(S^{-1} \otimes \id) \circ \delta$ has a multiplicative inverse, say $\tilde\delta$. Composing the product of $(S^{-1} \otimes \id) \circ \delta$ and $\tilde\delta$ with $\eps \otimes \eps$ gives $(\eps \circ g) \cdot x = \id_\one$ with $x=(\eps \otimes \eps) \circ \tilde\delta$. Thus $\eps \circ g$ is an invertible element in $\End(\one)$. Composing $g \otimes g = \Delta \circ g$ with $\eps \otimes \eps$ gives $(\eps \circ g) \cdot (\eps \circ g) = (\eps \circ g)$, and by invertibility we must have $\eps \circ g = \id_\one$. This establishes part (i). For part (ii), note that
\be\begin{array}{l}
  \mu \circ (S^{-1} \otimes \id) \circ (g \otimes g)
  = \mu \circ (S^{-1} \otimes \id) \circ c_{H,H}^{-1} \circ (g \otimes g)
\\[.5em]
  \qquad = \mu \circ (S^{-1} \otimes \id) \circ c_{H,H}^{-1} \circ \Delta \circ g
  = \eta \cdot (\eps \circ g) = \eta \ ,
\end{array}
\ee
where Figure \ref{fig:Hopf-property}\,h) was used. The other cases are checked analogously.
\end{proof}

By assumption, the morphism $\gamma : \one \to H \otimes H$ has a multiplicative inverse $\gamma^{-1}$. 

\begin{lemma}\label{lem:gamma-inv-via-S}
(i) $(\eps \otimes \id) \circ \gamma = \eta = (\id \otimes \eps) \circ \gamma$. \\
(ii) The multiplicative inverse of $\gamma$ satisfies $\gamma^{-1} = (S \otimes \id) \circ \gamma = (\id \otimes S) \circ \gamma$.
\end{lemma}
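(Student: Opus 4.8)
The plan is to read the two pentagon conditions (P-3) and (P-4) (equations \eqref{eq:pentagon-3} and \eqref{eq:pentagon-4}) as \emph{cocycle identities} for $\gamma$ with respect to the coproduct of $H$, and then to proceed in the spirit of the proof of Lemma \ref{lem:g-grouplike}. Concretely, I expect (P-3) to say (as an identity in the tensor-product algebra $H\otimes H\otimes H$, with the product from \eqref{eq:mult-on-product}) that $(\Delta\otimes\id)\circ\gamma$ equals a product of two copies of $\gamma$, one inserted into legs $1,2$ and one into legs $1,3$, and (P-4) to be the mirror statement with $\Delta$ on the other leg and insertions into legs $1,3$ and $2,3$. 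These are the categorical shadows of the fact that in the associativity isomorphism $\alpha_{A^0,B^1,C^0}$ of \eqref{fig:assoc-b} the morphism $\gamma$ acts on the two $\Cc_0$-modules, so that letting $\gamma$ act on a tensor-product module (on which $H$ acts through $\Delta$) must coincide with letting $\gamma$ act twice.

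For part (i) I would take one of these cocycle identities and apply the counit $\eps$ to one of the two legs produced by $\Delta$. On the left-hand side the counit axiom collapses $\Delta$ and returns $\gamma$; on the right-hand side one of the two inserted copies of $\gamma$ degenerates to $(\id\otimes\eps)\circ\gamma$ (resp.\ $(\eps\otimes\id)\circ\gamma$) sitting in a single leg, while the other survives. The resulting identity has the shape $\gamma=\gamma\cdot\big(w\otimes\eta\big)$ in $H\otimes H$ with $w$ the degenerated factor, so multiplying by the assumed multiplicative inverse $\gamma^{-1}$ forces $w=\eta$. Using the two cocycle identities (or applying the counit in the two possible positions) yields both $(\eps\otimes\id)\circ\gamma=\eta$ and $(\id\otimes\eps)\circ\gamma=\eta$. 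This is the analogue of the step ``$(\eps\circ g)^2=\eps\circ g$, hence $\eps\circ g=\id$'' in Lemma \ref{lem:g-grouplike}, except that invertibility here applies directly without passing through idempotency.

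For part (ii) I would show that $(S\otimes\id)\circ\gamma$ is a multiplicative inverse of $\gamma$ and then invoke uniqueness. Starting from the cocycle identity of the form $(\Delta\otimes\id)\circ\gamma=(\cdots)$, I apply $\mu\circ(S\otimes\id)$ to the first two legs. On the left-hand side the bubble property $\mu\circ(S\otimes\id)\circ\Delta=\eta\circ\eps$ (Figure \ref{fig:Hopf-property}\,g) collapses these two legs, leaving $\eta\otimes\big((\eps\otimes\id)\circ\gamma\big)=\eta\otimes\eta$ by part (i). On the right-hand side the very same operation reassembles exactly $\mu_{H\otimes H}\circ\big(((S\otimes\id)\circ\gamma)\otimes\gamma\big)$. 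Hence $(S\otimes\id)\circ\gamma$ is a left inverse of $\gamma$, and composing with the two-sided inverse $\gamma^{-1}$ gives $(S\otimes\id)\circ\gamma=\gamma^{-1}$. The identity $(\id\otimes S)\circ\gamma=\gamma^{-1}$ follows symmetrically from the other cocycle identity by applying $\mu\circ(\id\otimes S)$ and the bubble property to the last two legs; in particular the two expressions must then agree.

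The step I expect to be the genuine obstacle is the bookkeeping of braidings. Since $\Sc$ is only braided, the product \eqref{eq:mult-on-product} on $H\otimes H$ carries a factor $c_{H,H}$, and the reduced conditions (P-3), (P-4) likewise contain braidings inherited from the $c_{F(C^0),B^1}$ factors of $\alpha_{A^0,B^1,C^0}$. One must therefore verify that the strands line up so that the bubble property can be applied on the intended pair of legs, and that the right-hand side really reassembles as $\mu_{H\otimes H}$ on the nose rather than up to a stray braiding; this is most safely carried out in string-diagram form. The Hopf-algebraic input itself is light: only the counit axiom, the bubble property of Figure \ref{fig:Hopf-property}\,g, and the assumed multiplicative invertibility of $\gamma$ together with uniqueness of two-sided inverses are needed.
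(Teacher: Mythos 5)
Your part (ii) is essentially the paper's own proof: the paper sets $\gamma' = (S\otimes\id)\circ\gamma$, notes that by two-sided invertibility of $\gamma$ it suffices to check $\mu_{H\otimes H}\circ(\gamma'\otimes\gamma)=\eta\otimes\eta$, rewrites this product via \eqref{eq:pentagon-3}, collapses the resulting antipode loop with the bubble property of Figure \ref{fig:Hopf-property}\,g), and concludes by part (i); the expression $(\id\otimes S)\circ\gamma=\gamma^{-1}$ is then obtained analogously from \eqref{eq:pentagon-4}. Your part (i), by contrast, takes a genuinely different route. The paper does not use the cocycle identities for (i) at all: it inserts \eqref{eq:pentagon-12} into $(\eps\otimes\phi^{-1})\circ(-)\circ\eta$ and \eqref{eq:pentagon-15} into $(\id\otimes\eps)\circ(-)\circ\phi^{-1}\circ\eta$, so the counit conditions are extracted from the pentagons coupling $\gamma$ to $\phi$, trading on the assumed invertibility of $\phi$. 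Your argument instead uses only \eqref{eq:pentagon-3}, \eqref{eq:pentagon-4} and the multiplicative inverse of $\gamma$, and it does work: applying a counit to one leg degenerates one inserted copy of $\gamma$ to a single-leg element $w$, and multiplying with $\gamma^{-1}$ forces $w=\eta$. What each buys: the paper's version is a one-line insertion once the diagrams are drawn, while yours is more self-contained (it never touches $\phi$) and stays inside the ``Hopf copairing'' package later isolated in Corollary \ref{cor:main1-i}.

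Two bookkeeping corrections, neither of which damages your strategy. First, you have the shapes of \eqref{eq:pentagon-3} and \eqref{eq:pentagon-4} interchanged: in pentagon case 3 the coproduct falls on the \emph{first} leg of $\gamma$ (it is $A^0\otimes B^0$ that becomes a tensor-product module), and the two inserted copies must then share their second leg, so \eqref{eq:pentagon-3} has the form $(\Delta\otimes\id)\circ\gamma=\gamma_{13}\cdot\gamma_{23}$ and \eqref{eq:pentagon-4} the form $(\id\otimes\Delta)\circ\gamma=\gamma_{12}\cdot\gamma_{13}$ (up to ordering and braidings); this is confirmed by the symplectic-fermion verification in Section \ref{sec:symp-ferm-mon}. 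With your assignment, the contraction $\mu\circ(S\otimes\id)$ on legs $1,2$ of \eqref{eq:pentagon-3} would \emph{not} reassemble into $\mu_{H\otimes H}\circ(\gamma'\otimes\gamma)$. Second, your parenthetical ``or applying the counit in the two possible positions'' is false: applying $\eps$ to either leg $1$ or leg $2$ of $(\Delta\otimes\id)\circ\gamma=\gamma_{13}\cdot\gamma_{23}$ yields the same conclusion $(\eps\otimes\id)\circ\gamma=\eta$ both times, since the degenerate factor is in either case $(\eps\otimes\id)\circ\gamma$; both cocycle identities are therefore genuinely needed to get the two counit conditions, as the main clause of your plan correctly states.
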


\begin{proof}
For part (i) insert \eqref{eq:pentagon-12} into $(\eps \ot \phi^{-1}) \circ (-) \circ \eta$. This results in $(\eps \ot \id) \circ \gamma = \eta$. The second identity is obtained by inserting \eqref{eq:pentagon-15} into $(\id \ot \eps) \circ (-) \circ \phi^{-1} \circ \eta$.
For part (ii) let $\gamma' = (S \otimes \id) \circ \gamma$. Since $\gamma$ was already assumed to possess a multiplicative inverse, it is enough to check $\mu_{H \otimes H} \circ (\gamma' \otimes \gamma) = \eta \otimes \eta$. This follows from
\be
  \scanpic{98a}
  ~\overset{\text{\eqref{eq:pentagon-3}}}=~
  \scanpic{98b}
  ~\overset{\text{Fig.\,\ref{fig:Hopf-property}\,g)}}=~
  ((\eta \circ \eps) \otimes \id) \circ \gamma
  ~\overset{\text{(i)}}=~
  \eta \otimes \eta \ .
\ee
The expression $\gamma^{-1} =  (\id \otimes S) \circ \gamma$ is verified analogously via \eqref{eq:pentagon-4}.
\end{proof}

\begin{lemma}\label{lem:delta+phi-via-gamma}
The morphisms $\delta$ and $\phi$ can be written as
$$
  \delta ~=~ \scanPIC{18a}
  \qquad , \qquad
  \phi ~=~ \scanPIC{18b} \quad . 
$$
\end{lemma}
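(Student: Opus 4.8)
The plan is to obtain both closed formulas by specialising the two pentagon equations in which $\delta$ and $\phi$ occur \emph{together with} $\gamma$ and with only the ``canonical'' associators of Lemma~\ref{lem:H-intertwiner} (those built solely from $\Delta$ and $S$, carrying no free data). The structural identities already in hand do the heavy lifting: from Lemma~\ref{lem:g-grouplike} the element $g=(\eps\otimes\id)\circ\delta$ of \eqref{eq:g-def} is group-like with $g^{-1}=S\circ g$, and from Lemma~\ref{lem:gamma-inv-via-S} we have $(\eps\otimes\id)\circ\gamma=\eta=(\id\otimes\eps)\circ\gamma$ and $\gamma^{-1}=(S\otimes\id)\circ\gamma=(\id\otimes S)\circ\gamma$. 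These let me collapse the spurious tensor legs of a reduced pentagon and cancel copies of $\gamma$ against $\gamma^{-1}$.

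For $\phi$ I would start from the case-12 identity \eqref{eq:pentagon-12}, which involves $\gamma$ and two occurrences of $\phi$. This is the same relation used in Lemma~\ref{lem:gamma-inv-via-S}(i), but now I would exploit it without first killing $\phi$: one of the two copies of $\phi$ should enter the diagram only through a counit, i.e.\ only via $\eps\circ\phi$, which by \eqref{eq:Hopf-via-gdp} is the cointegral $\lambda$. Substituting $(\eps\otimes\id)\circ\gamma=\eta$, using group-likeness of $g$, and cancelling the single surviving $\phi$ on the other side by invertibility of $\phi$, should rearrange \eqref{eq:pentagon-12} into an expression for $\phi$ in terms of $\gamma$, $g$ and $\lambda$; this is the second formula in the statement. (The case-15 identity \eqref{eq:pentagon-15} is the mirror relation and can serve as a consistency check.)

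For $\delta$ I would use the case-13 identity \eqref{eq:pentagon-13}, in which $\delta$ appears \emph{only once} (alongside two copies of $\phi$ and one canonical associator). Feeding in the formula for $\phi$ just obtained, and again simplifying with $(\eps\otimes\id)\circ\gamma=\eta$ and $\Delta\circ g=g\otimes g$, then solves directly for $\delta$ as a $g$-twist of $\gamma$, giving the first formula in the statement. Alternatively one can try to read $\delta$ off the case-7 or case-10 identities \eqref{eq:pentagon-7}, \eqref{eq:pentagon-10} directly, but there $\delta$ occurs on both sides, so one must additionally invoke the group-like property of $g$ together with $\gamma^{-1}=(S\otimes\id)\circ\gamma$ in order to isolate a single factor of $\delta$.

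The conceptual content of the lemma is small; the work is entirely diagrammatic. The step I expect to be most delicate is the bookkeeping of the braiding $c_{H,H}$ and of the several antipodes hidden inside the canonical associators and inside $\gamma^{-1}$: one must apply counits on exactly the right legs and cancel exactly the right invertible factor, so that what remains is precisely $\delta$ (respectively $\phi$) rather than a composite. Once the legs and the cancellations are chosen correctly, the two asserted identities should follow by routine rewriting using coassociativity and the bubble-properties of Figure~\ref{fig:Hopf-property}\,g),\,h).
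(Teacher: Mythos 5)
Your derivation of $\phi$ is essentially the paper's: composing \eqref{eq:pentagon-12} with $S\otimes\eps$ from the top turns one of its two $\phi$'s into $\lambda=\eps\circ\phi$ and leaves the other $\phi$ isolated, which gives exactly the claimed formula. Two small corrections, though: no property of $g$ enters this step (the formula for $\phi$ involves only $\gamma$ and $\lambda$), and the final step is not a ``cancellation of the surviving $\phi$ by invertibility of $\phi$'' --- cancelling it would remove the very morphism you want a formula for; one simply reads it off, at most moving an (invertible) antipode to the other side.

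Your primary route to $\delta$, however, has a genuine gap. The identity \eqref{eq:pentagon-13} contains, besides the single $\delta$, one $\phi$-associator on \emph{each} side. If you feed in the formula for $\phi$ just obtained, two copies of $\lambda$ enter the equation, while the target formula $\delta=(S\otimes M_g)\circ\gamma^{-1}$ contains no $\lambda$ at all. Eliminating them requires strictly more than the facts you cite ($(\eps\otimes\id)\circ\gamma=\eta$ and $\Delta\circ g=g\otimes g$): one needs the cointegral property of $\lambda$ and the identity $(\id\otimes\lambda)\circ\Delta=g\circ\lambda$ (i.e.\ Lemma \ref{lem:move-coprod-past-lam}), or the expression \eqref{eq:phi-1_via_phi} for $\phi^{-1}$ coming from \eqref{eq:pentagon-16}. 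None of these are available at this point of the development --- they are established only \emph{after} the present lemma (Lemmas \ref{lem:lambda-cointegral-etc} and \ref{lem:norm-cond}), and indeed the paper's forward verification of case 13 relies precisely on Lemma \ref{lem:move-coprod-past-lam}. By contrast, the route you relegate to an ``alternative'' is the paper's actual proof, and your worry that $\delta$ occurs twice in \eqref{eq:pentagon-7} is misplaced: the second occurrence is a feature, not a bug, because it collapses to $g$ by the very definition \eqref{eq:g-def}, $g=(\eps\otimes\id)\circ\delta$. Concretely, inserting \eqref{eq:pentagon-7} into $(\eps\otimes\id\otimes\id)\circ(-)\circ\eta$ gives $\eta\otimes g=\mu_{H\otimes H}\circ\big\{\gamma\otimes((S^{-1}\otimes\id)\circ\delta)\big\}$, and multiplying with $\gamma^{-1}$ in $H\otimes H$ and composing with $S\otimes\id$ yields $\delta=(S\otimes M_g)\circ\gamma^{-1}$. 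This uses nothing about $\phi$ or $\lambda$, which is why the paper can, and does, treat $\delta$ first and independently of $\phi$.
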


\begin{proof}
Inserting \eqref{eq:pentagon-7} into $(\eps \otimes \id \otimes \id) \circ (-) \circ \eta$ results in the identity 
\be
  \eta \otimes g = \mu_{H \otimes H} \circ \big\{ \gamma \otimes ((S^{-1} \otimes \id) \circ \delta) \big\} \ .
\ee  
Multiplying this in $H\otimes H$ with $\gamma^{-1}$ and composing with $S \otimes \id$ results in the expression for $\delta$. The expression for $\phi$ follows by composing \eqref{eq:pentagon-12} with $S \otimes \eps$ from the left (i.e.\ `from the top').
\end{proof}

\begin{lemma}\label{lem:delta-braid}
The morphism $\delta$ can be expressed as
$$
  \delta ~=~ \scanPIC{27} \quad .
$$
\end{lemma}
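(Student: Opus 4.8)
The plan is to derive the asserted identity by reworking the expression for $\delta$ already extracted in Lemma~\ref{lem:delta+phi-via-gamma}. Reading off that lemma, $\delta = (S \otimes M_g) \circ \gamma^{-1}$, where $M_g$ denotes right multiplication by the group-like element $g$ (see \eqref{eq:left-right-mult-def}) and $\gamma^{-1}$ is the multiplicative inverse of $\gamma$ in $H \otimes H$. The point of the present lemma is to eliminate the a priori opaque inverse $\gamma^{-1}$ in favour of $\gamma$ itself and to expose an explicit braiding; both effects are produced by the antipode.

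First I would substitute Lemma~\ref{lem:gamma-inv-via-S}\,(ii), in the form $\gamma^{-1} = (\id \otimes S) \circ \gamma$, to obtain $\delta = \big(S \otimes (M_g \circ S)\big) \circ \gamma$. The remaining manipulation is local to the right leg, where I rewrite $M_g \circ S$, i.e.\ the assignment $x \mapsto S(x) \cdot g$. Using $g = S(g^{-1})$ — which follows from Lemma~\ref{lem:g-grouplike}\,(ii), since $S(g) = g^{-1}$ forces $S^2(g) = g$ — together with the anti-multiplicativity of $S$ from \eqref{ap} (equivalently Figure~\ref{fig:Hopf-property}\,e)), one converts the product $S(x)\cdot S(g^{-1})$ into $S\big(\mu \circ c_{H,H}^{-1} \circ (\id_H \otimes g^{-1})\, (x)\big)$; concretely, inverting the first relation in \eqref{ap} gives $\mu \circ (S \otimes S) = S \circ \mu \circ c_{H,H}^{-1}$. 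Feeding this back yields $\delta = (S \otimes S) \circ (\id_H \otimes \mu) \circ (\id_H \otimes c_{H,H}^{-1}) \circ (\gamma \otimes g^{-1})$, which is exactly the braided form of pic27: the crossing is forced by pulling $S$ out of a product, and $g^{-1}$ enters through it. As a consistency check one can verify $(\eps \otimes \id) \circ \delta = g$, recovering \eqref{eq:g-def} via $\eps \circ S = \eps$, $(\eps \otimes \id)\circ\gamma = \eta$ from Lemma~\ref{lem:gamma-inv-via-S}\,(i), and $S(g^{-1}) = S^2(g) = g$.

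I expect the main obstacle to be bookkeeping of crossings rather than any conceptual difficulty. Three sources of braiding interact here: the one built into $\mu_{H \otimes H}$ through \eqref{eq:mult-on-product} (implicit already in the expression for $\delta$), the one generated when $S$ is pulled through a product as above, and the naturality moves needed to slide the unit $\eta$ and the group-like $g$ past strands. The delicate part is to see that these collapse to the single crossing in pic27 with no stray $c_{H,H}^{\pm 1}$ left over. I would settle this diagrammatically, repeatedly invoking naturality of the braiding so that $\eta$ and $g$ pass freely through crossings, and the group-like relations $\Delta g = g \otimes g$, $\eps \circ g = \id_\one$ of Lemma~\ref{lem:g-grouplike}\,(i) to absorb the residual $g$-dependence into the stated normal form.
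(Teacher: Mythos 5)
Your steps through the substitution $\gamma^{-1} = (\id \otimes S)\circ\gamma$ and the rewriting of $M_g \circ S$ via anti-multiplicativity of $S$ are all correct, but they only re-draw the formula of Lemma \ref{lem:delta+phi-via-gamma}: the crossing you produce braids the \emph{second leg of $\gamma$} past the auxiliary $g^{-1}$ strand, while the two legs of $\gamma$ stay in place and each carries an antipode. Indeed your final expression collapses back to $(S^2 \otimes M_g)\circ\gamma$, i.e.\ to Lemma \ref{lem:delta+phi-via-gamma} again. The diagram in Lemma \ref{lem:delta-braid} is a different morphism: there the two legs of $\gamma$ cross \emph{each other} (a post-composition with $c_{H,H}^{-1}$), with a $g$-multiplication and no residual antipodes. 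So the decisive step, ``which is exactly the braided form of pic27,'' is unjustified --- and in fact false in general. The equality of your expression with that of Lemma \ref{lem:delta-braid} is precisely the content of \eqref{eq:gamma-braid}, which the paper obtains by \emph{combining} Lemmas \ref{lem:delta+phi-via-gamma} and \ref{lem:delta-braid}, and which is equivalent (via \eqref{eq:Gamma-dual-via-gamma}) to condition c) of Theorem \ref{thm:main1}. If Lemma \ref{lem:delta-braid} followed from Lemma \ref{lem:delta+phi-via-gamma} by Hopf-algebra identities alone, condition c) would be redundant given a) and b); it is not.

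Here is a concrete counterexample to your argument. Take $\Sc = \vect(\Cb)$, $H = k(A)$ with $A = (\Zb/n\Zb)^2$, $n \geq 3$, and $\gamma = \sum_{a,b}\chi(a,b)\,p_a \otimes p_b$ for the non-degenerate but non-symmetric bicharacter $\chi\big((a_1,a_2),(b_1,b_2)\big) = \zeta^{a_1b_2 - a_2b_1}$, with $\lambda$ normalised as in Section \ref{sec:TY}. Then conditions a) and b) of Corollary \ref{cor:main1-i} hold, $g = \eta$, $S^2 = \id$, and every formula you use --- Lemmas \ref{lem:g-grouplike}, \ref{lem:gamma-inv-via-S} and \ref{lem:delta+phi-via-gamma}, hence also your final expression --- is valid; but \eqref{eq:gamma-braid} reduces to $\chi(a,b)=\chi(b,a)$ and fails, so the formula of Lemma \ref{lem:delta-braid} fails as well. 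The missing ingredient is new pentagon input: the paper's proof extracts the identity directly from \eqref{eq:pentagon-10}, by applying $(\eps \otimes c_{H,H}^{-1}) \circ (-) \circ \eta$ and substituting the definition \eqref{eq:g-def} of $g$. Each of Lemmas \ref{lem:g-grouplike}--\ref{lem:delta-braid} mines a different pentagon case (P-8; P-12, P-15, P-3, P-4; P-7; P-10, respectively), and Lemma \ref{lem:delta-braid} cannot be reached from the earlier ones by antipode manipulations. Your consistency check $(\eps \otimes \id)\circ\delta = g$ does not detect this, since it holds equally for the expression of Lemma \ref{lem:delta+phi-via-gamma}.
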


\begin{proof}
Apply $(\eps \otimes c_{H,H}^{-1}) \circ (-) \circ \eta$ to \eqref{eq:pentagon-10} and substitute the definition of $g$.
\end{proof}

For later use we note the following implication of the above lemma: Combining  the two expressions for $\delta$ given in Lemmas \ref{lem:delta+phi-via-gamma} and \ref{lem:delta-braid} with Lemma \ref{lem:gamma-inv-via-S}\,(ii), we obtain the equation
\be\label{eq:gamma-braid}
  c_{H,H}^{-1} \circ \gamma = (\id \otimes \{ \Ad_{g^{-1}} \circ S^2 \} ) \circ \gamma \ .
\ee

We say that a morphism $b : \one \to U \otimes V$ is {\em non-degenerate} if the induced morphism $U^\vee \to V$ is invertible (or, equivalently, the induced morphism $V^\vee \to U$ is invertible). In other words, $b$ is the coevaluation of a duality between $U$ and $V$. We will need the following general observation.

\begin{lemma}\label{lem:b-UU-secial-duality-property}
Let $b : \one \to U \ot U$ and $d: U \ot U \to \one$ be two morphisms in $\Sc$ such that
$(\id \ot d) \circ (b \ot \id) = \id$ and $c_{U,U} \circ b = (\id \ot \tau) \circ b$ 
(or $c_{U,U}^{-1} \circ b = (\id \ot \tau) \circ b$)
for some automorphism $\tau$ of $U$. Then also
$(d \ot \id) \circ (\id \ot b) = \id$, i.e.\ $b$ is non-degenerate.
\end{lemma}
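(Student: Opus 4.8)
The plan is to reduce the statement to the invertibility of a single morphism, and then to use the symmetry hypothesis on $b$ as a categorical substitute for a dimension count.

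Since $\Sc$ is rigid, I would first replace $b$ and $d$ by their mates
\[
  b_\sharp = (\ev_U \otimes \id_U)\circ(\id_{U^\vee}\otimes b) : U^\vee \to U,
  \qquad
  d_\sharp = (d \otimes \id_{U^\vee})\circ(\id_U \otimes \coev_U): U \to U^\vee .
\]
A short computation with the snake identities for $(\ev_U,\coev_U)$ shows that the hypothesis $(\id\otimes d)\circ(b\otimes\id)=\id$ is equivalent to $d_\sharp \circ b_\sharp = \id_{U^\vee}$, that the desired identity $(d\otimes\id)\circ(\id\otimes b)=\id$ is equivalent to $b_\sharp \circ d_\sharp = \id_U$, and that ``$b$ non-degenerate'' means precisely ``$b_\sharp$ is invertible''. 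Thus it suffices to prove that $b_\sharp$ is an isomorphism: once it is, $d_\sharp$ is forced to be a two-sided inverse and both zig-zag identities hold. The first hypothesis already gives that $b_\sharp$ is a split monomorphism (with left inverse $d_\sharp$), so the entire content of the lemma is to upgrade this to invertibility.

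The missing split epimorphism is where the symmetry enters. Dualising $d_\sharp \circ b_\sharp = \id_{U^\vee}$ yields $b_\sharp^\vee \circ d_\sharp^\vee = \id_{U^{\vee\vee}}$, so $b_\sharp^\vee : U^\vee \to U^{\vee\vee}$ is a split epimorphism. The key point is that the hypothesis $c_{U,U}\circ b = (\id\otimes\tau)\circ b$ (or the version with $c_{U,U}^{-1}$) says that $b$ equals its flip up to the automorphism $\tau$, and this translates into the statement that $b_\sharp$ agrees with its own transpose up to isomorphism: one checks that $b_\sharp^\vee = \delta_U \circ \kappa \circ b_\sharp$ for a suitable automorphism $\kappa$ of $U$ assembled from $\tau$ and, in the non-symmetric ribbon case, the twist $\theta_U$. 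Since $\delta_U\circ\kappa$ is invertible and $b_\sharp^\vee$ is a split epi, $b_\sharp$ is a split epi as well. A morphism that is simultaneously a split mono and a split epi is an isomorphism, because its two one-sided inverses must coincide; hence $b_\sharp$ is invertible and the lemma follows. The case with $c_{U,U}^{-1}$ is handled identically, only altering the precise shape of $\kappa$.

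The only genuine work I expect is establishing the self-transpose relation $b_\sharp^\vee = \delta_U \circ \kappa \circ b_\sharp$, i.e.\ tracking how bending the two legs of $b$ interacts with the pivotal isomorphism $\delta_U$ and the twist in a general ribbon category that need not be symmetric. For the argument, however, only the \emph{existence} of the isomorphism $\delta_U\circ\kappa$ is needed, not its exact form, so this bookkeeping is harmless. It is worth emphasising why the symmetry hypothesis cannot be dropped in this generality: in $\Sc=\vect$ the morphism $b_\sharp$ is a split mono between spaces of equal dimension and is therefore automatically invertible, so the transpose computation is superfluous there; the condition $c_{U,U}\circ b = (\id\otimes\tau)\circ b$ is exactly the device that replaces this dimension count in an arbitrary ribbon category.
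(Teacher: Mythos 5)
Your proposal is sound, but it takes a genuinely different route from the paper's proof. The paper never leaves the level of the pairings: it rewrites the hypothesis $(\id \ot d)\circ(b \ot \id) = \id$, using naturality of the braiding, as $\big((d \circ c_{U,U}^{-1}) \ot \id\big)\circ\big(\id \ot (c_{U,U}\circ b)\big) = \id$, substitutes $c_{U,U}\circ b = (\id \ot \tau)\circ b$, and conjugates with $\tau$ to exhibit an explicit second evaluation $d' = d \circ c_{U,U}^{-1} \circ (\tau \ot \id)$ satisfying $(d' \ot \id)\circ(\id \ot b) = \id$; uniqueness of duality data then forces $d'=d$. That argument uses only the braiding and would work in any braided category. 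You instead pass to the mates $b_\sharp, d_\sharp$ — your three translations are correct: $d_\sharp \circ b_\sharp$ is the transpose of $(\id\ot d)\circ(b\ot\id)$, while $b_\sharp \circ d_\sharp$ literally equals $(d\ot\id)\circ(\id\ot b)$, and non-degeneracy in the paper's sense is invertibility of $b_\sharp$ — and then trade the missing section for a retraction of $b_\sharp^\vee$ via the self-transpose relation $b_\sharp^\vee = \delta_U\circ\kappa\circ b_\sharp$; your closing split-mono-plus-split-epi argument and the observation that any factorisation $b_\sharp^\vee = (\mathrm{iso})\circ b_\sharp\circ(\mathrm{iso})$ suffices are both fine. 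The one caveat is that the self-transpose relation, which you assert but do not prove, is essentially \emph{all} of the work: it is exactly the bending-plus-twist computation the paper performs in \eqref{eq:Gamma-dual-via-gamma} to establish condition c) of Theorem \ref{thm:main1}, where the hypothesis $c_{H,H}^{-1}\circ\gamma = (\id \ot \{\Ad_{g^{-1}}\circ S^2\})\circ\gamma$ yields $\Gamma^\vee = \delta_H \circ \theta_H^{-1}\circ S_H^2 \circ \Ad_{g^{-1}} \circ \Gamma$ — precisely your relation with $\kappa$ made explicit. So the claim is true and your architecture stands on it. In terms of what each approach buys: the paper's proof is shorter, needs neither rigidity nor a pivotal structure, and produces as a by-product the explicit compatibility of $d$ with the braiding; yours gives the conceptual reading you intend (the flip-symmetry of $b$ as a categorical substitute for an equal-dimension count, upgrading a split monomorphism to an isomorphism), at the price of invoking duals, $\delta_U$ and $\theta_U$, and of deferring the diagrammatic bookkeeping in which the actual content resides.
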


\begin{proof}
All we need to show is that there exists $d' : U \ot U \to \one$ with $(d' \ot \id) \circ (\id \ot b) = \id$. By the properties of dualities, this implies $d=d'$.
The first condition in the lemma can be rewritten as
$( (d \circ c_{U,U}^{-1}) \ot \id ) \circ ( \id \ot ( c_{U,U} \circ b) ) = \id$, which, using the second condition, becomes
$( (d \circ c_{U,U}^{-1}) \ot \id ) \circ ( \id \ot ( (\id \ot \tau) \circ b) ) = \id$.
Conjugating with $\tau$ we get the desired result with $d' = d \circ c_{U,U}^{-1} \circ (\tau \ot \id)$.
The same argument works when replacing all $c_{U,U}$ by their inverses.
\end{proof}

\begin{lemma}\label{lem:gamma-inv}
$\gamma$ is non-degenerate.
\end{lemma}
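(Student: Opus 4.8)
The plan is to invoke the general criterion just established in Lemma \ref{lem:b-UU-secial-duality-property}, applied to the object $U = H \otimes H$ and the copairing $b = \gamma$. To use that lemma I need two ingredients: first, a morphism $d : (H \otimes H) \otimes (H \otimes H) \to \one$ giving a one-sided duality with $\gamma$, i.e.\ $(\id \otimes d) \circ (\gamma \otimes \id) = \id_{H \otimes H}$; and second, a compatibility of $\gamma$ with the braiding on $H \otimes H$ of the form $c_{H \otimes H, H \otimes H}^{\pm 1} \circ \gamma = (\id \otimes \tau) \circ \gamma$ for some automorphism $\tau$ of $H \otimes H$. Once both are in hand, Lemma \ref{lem:b-UU-secial-duality-property} immediately upgrades the one-sided duality to a genuine (two-sided) duality, which is exactly the statement that $\gamma$ is non-degenerate.

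For the first ingredient, I would build $d$ out of the multiplicative inverse $\gamma^{-1}$ of $\gamma$ together with the counit. Concretely, since $\gamma$ has a multiplicative inverse in $H \otimes H$, the morphism $d := (\eps_{H \otimes H} \otimes \id) \circ (\mu_{H \otimes H} \otimes \id) \circ (\id \otimes \gamma^{-1} \otimes \cdots)$ — more cleanly, the pairing that multiplies the middle two tensor factors against $\gamma^{-1}$ and then applies the counit — should satisfy the required one-sided identity. The verification reduces to the unit and counit axioms for the algebra $H \otimes H$ and the defining property $\mu_{H \otimes H} \circ (\gamma \otimes \gamma^{-1}) = \eta \otimes \eta$, so this step is essentially bookkeeping with \eqref{eq:mult-on-product} and Lemma \ref{lem:gamma-inv-via-S}.

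The second ingredient is where the real content sits, and I expect it to be the main obstacle. I would extract the braiding-compatibility of $\gamma$ from the relation \eqref{eq:gamma-braid} derived above, namely $c_{H,H}^{-1} \circ \gamma = (\id \otimes \{\Ad_{g^{-1}} \circ S^2\}) \circ \gamma$. This already has precisely the shape $c^{-1} \circ \gamma = (\id \otimes \tau) \circ \gamma$ demanded by Lemma \ref{lem:b-UU-secial-duality-property}, with the automorphism $\tau = \Ad_{g^{-1}} \circ S^2$ of $H$; note $\tau$ is invertible because $S$ is invertible and $g$ has a multiplicative inverse by Lemma \ref{lem:g-grouplike}. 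The subtlety is that Lemma \ref{lem:b-UU-secial-duality-property} is stated for a copairing into $U \otimes U$ with a single braiding $c_{U,U}$, whereas $\gamma$ lands in $H \otimes H$ and \eqref{eq:gamma-braid} uses the braiding $c_{H,H}$ on the two $H$-factors rather than a braiding on the full object $H \otimes H$. The cleanest way to reconcile this is to apply the criterion directly with $U = H$, reading $\gamma : \one \to H \otimes H$ as a copairing between two copies of $H$ itself: \eqref{eq:gamma-braid} supplies exactly the hypothesis $c_{H,H}^{-1} \circ \gamma = (\id \otimes \tau) \circ \gamma$, and the one-sided duality $(\id \otimes d) \circ (\gamma \otimes \id) = \id_H$ can be produced from $\gamma^{-1}$ and the counit as above. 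Then Lemma \ref{lem:b-UU-secial-duality-property} with $U = H$ yields non-degeneracy of $\gamma$ directly, and I would favour this formulation to avoid the technical mismatch between the braiding on $H \otimes H$ and the product braiding.
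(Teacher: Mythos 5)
Your overall architecture is the right one, and in fact it is the paper's: apply Lemma \ref{lem:b-UU-secial-duality-property} with $U=H$ and $b=\gamma$, taking the braiding-compatibility hypothesis from \eqref{eq:gamma-braid} with $\tau = \Ad_{g^{-1}}\circ S^2$ (your opening attempt with $U = H\ot H$ is a type mismatch, but you correct it yourself, and your second ingredient is exactly the paper's). The genuine gap is in the first ingredient, the one-sided duality $(\id\ot d)\circ(\gamma\ot\id)=\id_H$, which you propose to build from the multiplicative inverse $\gamma^{-1}$ and the counit, and which you dismiss as ``essentially bookkeeping''. This cannot work. Any morphism $H\ot H\to\one$ assembled from products, antipodes, braidings, copies of $\gamma^{\pm1}$ and a final application of $\eps$ collapses to $\eps\ot\eps$: since $\eps\circ\mu=\eps\ot\eps$ and $\eps\circ S=\eps$, all multiplicative structure dies under the counit, and the leftover scalars are $(\eps\ot\eps)\circ\gamma^{\pm1}=\id_\one$ by Lemma \ref{lem:gamma-inv-via-S}. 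But $d=\eps\ot\eps$ gives $(\id\ot d)\circ(\gamma\ot\id)=\eta\circ\eps$, which is not $\id_H$ unless $H\cong\one$. More conceptually, invertibility in the algebra $H\ot H$ is strictly weaker than non-degeneracy as a copairing: $\eta\ot\eta$ is the unit of $H\ot H$, hence multiplicatively invertible, yet maximally degenerate. So no manipulation of the relation $\mu_{H\ot H}\circ(\gamma\ot\gamma^{-1})=\eta\ot\eta$ together with the (co)unit axioms can ever produce non-degeneracy.

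What is actually needed, and what the paper uses, is the cointegral, which enters through $\phi$. By Lemma \ref{lem:delta+phi-via-gamma} one has $\phi = (\id\ot\lambda)\circ(S\ot\mu)\circ(\gamma\ot\id)$, i.e.\ $\phi$ is precisely of the form $(S\ot\id_\one)\circ(\id\ot d)\circ(\gamma\ot\id)$ with $d=\lambda\circ\mu$. Since $\phi$ is invertible by the standing assumption in \eqref{eq:gdp-def}, this says $(\id\ot d)\circ(\gamma\ot\id)=S^{-1}\circ\phi$ is an isomorphism, and replacing $d$ by $d'=d\circ\big(\id\ot(S^{-1}\circ\phi)^{-1}\big)$ yields the missing hypothesis $(\id\ot d')\circ(\gamma\ot\id)=\id_H$ of Lemma \ref{lem:b-UU-secial-duality-property}; this is what the paper means by ``composing with $\phi^{-1}$ and conjugating with $S$''. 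The structural reason your route fails is that $\eps$ is an algebra map while $\lambda$ is not: the pairing $\lambda\circ\mu$ can be (and here is) non-degenerate, whereas $\eps\circ\mu$ never is. Keep your second ingredient, replace the first by the $\phi$-based argument, and the proof closes as in the paper.
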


\begin{proof}
The expression for $\phi$ in Lemma \ref{lem:delta+phi-via-gamma} gives us the first condition in Lemma \ref{lem:b-UU-secial-duality-property} (by composing with $\phi^{-1}$ and then conjugating with $S$). The second condition comes from \eqref{eq:gamma-braid}. Thus $\gamma$ is non-degenerate.
\end{proof}

\begin{lemma}\label{lem:Gamma-Hopf-iso}
The morphism $\Gamma : H^\vee \to H$ is a Hopf algebra isomorphism.
\end{lemma}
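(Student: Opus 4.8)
\emph{Overall strategy.} By \eqref{eq:gdp-via-Hopf} we have $\gamma = (\id_H \otimes \Gamma) \circ \coev_H$, so $\Gamma$ is exactly the morphism $H^\vee \to H$ induced by the copairing $\gamma$. Hence the fact that $\Gamma$ is invertible in $\Sc$ is nothing but the non-degeneracy of $\gamma$ already established in Lemma \ref{lem:gamma-inv}. It then suffices to verify that $\Gamma$ intertwines the bialgebra structures of $H^\vee$ and $H$: the antipode compatibility $\Gamma \circ S_{H^\vee} = S_H \circ \Gamma$ is automatic afterwards, since the antipode of a Hopf algebra is the convolution inverse of the identity and is therefore preserved by any bijective bialgebra morphism.

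\emph{Unit and counit.} Inserting $\eta_{H^\vee} = (\eps_H)^\vee$ and $\eps_{H^\vee} = (\eta_H)^\vee$ from \eqref{eq:H^-structuremaps} and using the snake identities of the duality, the two equations $(\eps_H \otimes \id) \circ \gamma = \eta_H$ and $(\id \otimes \eps_H) \circ \gamma = \eta_H$ of Lemma \ref{lem:gamma-inv-via-S}\,(i) translate directly into $\Gamma \circ \eta_{H^\vee} = \eta_H$ and $\eps_H \circ \Gamma = \eps_{H^\vee}$. So the unit and counit compatibilities require no further work.

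\emph{Comultiplicativity (the main step).} Next I would show $\Delta_H \circ \Gamma = (\Gamma \otimes \Gamma) \circ \Delta_{H^\vee}$. Expressing $\Delta_{H^\vee}$ through $\mu_H$ as in \eqref{eq:H^-structuremaps} and substituting $\gamma = (\id_H \otimes \Gamma) \circ \coev_H$, this is equivalent to the purely $H$-internal identity
\be
  (\id_H \otimes \Delta_H) \circ \gamma
  ~=~
  (\mu_H \otimes \id_H \otimes \id_H) \circ (\id_H \otimes c_{H,H} \otimes \id_H) \circ (\gamma \otimes \gamma) \ ,
\ee
i.e.\ applying the coproduct to the $\Gamma$-leg of $\gamma$ equals multiplying the spectator legs of two copies of $\gamma$. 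I expect to obtain this from the pentagon equation that fuses two objects of $\Cc_1$ while passing two objects of $\Cc_0$ through the coproduct, namely \eqref{eq:pentagon-8}: after stripping the spectator modules by the specialise-and-restrict argument used throughout Section \ref{sec:pentagon}, and rewriting $\delta$ via Lemma \ref{lem:delta+phi-via-gamma}, it should reduce to the displayed form. The main obstacle lives here: the careful bookkeeping of the braidings $c_{H,H}$ and of the order-switch built into the dual structure maps \eqref{eq:H^-structuremaps} (see the footnote there), since in this section $\Sc$ is only braided and not symmetric, so every transposition costs an honest braiding isomorphism.

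\emph{Multiplicativity and conclusion.} Finally I would establish $\Gamma \circ \mu_{H^\vee} = \mu_H \circ (\Gamma \otimes \Gamma)$. Rather than repeating the analysis on a second pentagon, the efficient route is to dualise: a coalgebra isomorphism $\Gamma$ yields an algebra isomorphism $\Gamma^\vee$, and the braided symmetry \eqref{eq:gamma-braid}, $c_{H,H}^{-1} \circ \gamma = (\id_H \otimes \{\Ad_{g^{-1}} \circ S_H^2\}) \circ \gamma$, identifies $\Gamma^\vee$ with $\Gamma$ up to the automorphism $\psi := \Ad_{g^{-1}} \circ S_H^2$ (and the twist $\theta_H$); this is precisely the source of condition c) in Theorem \ref{thm:main1}. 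Since $g$ is group-like by Lemma \ref{lem:g-grouplike}, $\Ad_{g^{-1}}$ is an inner Hopf-algebra automorphism and $S_H^2$ is one as well, so $\psi$ is a Hopf-algebra automorphism; transporting the algebra-morphism property through $\psi$ then yields multiplicativity of $\Gamma$ itself. The one caveat to watch in this shortcut is the twist $\theta_H$ entering \eqref{eq:gamma-braid} in the ribbon setting, which need not be an algebra map; should its contributions not cancel cleanly, one falls back on deriving multiplicativity directly from a companion pentagon (e.g.\ \eqref{eq:pentagon-13}) by the same graphical reduction as above. Combining the four steps, $\Gamma$ is a bialgebra isomorphism, hence a Hopf algebra isomorphism.
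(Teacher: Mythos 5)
Your skeleton (invertibility from Lemma \ref{lem:gamma-inv}, unit and counit from Lemma \ref{lem:gamma-inv-via-S}\,(i), antipode compatibility for free) matches the paper's proof, but both substantive steps have genuine gaps. For comultiplicativity: the $H$-internal identity you display, $(\id_H \otimes \Delta_H)\circ\gamma = (\mu_H\otimes\id\otimes\id)\circ(\id\otimes c_{H,H}\otimes\id)\circ(\gamma\otimes\gamma)$, is not something that needs to be extracted from \eqref{eq:pentagon-8} --- after the duality-map manipulations it \emph{is} the pentagon \eqref{eq:pentagon-4}, which is one of the standing hypotheses of Proposition \ref{prop:pent-to-Hopf}. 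That is exactly the paper's proof: comultiplicativity $\Leftrightarrow$ \eqref{eq:pentagon-4}, and multiplicativity $\Leftrightarrow$ \eqref{eq:pentagon-3}. Equation \eqref{eq:pentagon-8}, by contrast, is the analogous statement for $\delta$, not for $\gamma$: to get from it to your identity you would have to commute $\Delta$ past $M_g\circ S^2$ (since $\delta = (\id\otimes(M_g\circ S^2))\circ\gamma$), and in a braided category $\Delta\circ S^2 = (S^2\otimes S^2)\circ c_{H,H}\circ c_{H,H}\circ\Delta$, so double braidings appear that would then have to be absorbed using \eqref{eq:gamma-braid}. None of this is carried out; the sentence ``it should reduce to the displayed form'' sits precisely where the content is missing --- and the detour is unnecessary, since \eqref{eq:pentagon-4} is available by assumption.

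For multiplicativity, your dualization shortcut rests on the claim that $\Ad_{g^{-1}}$ and $S_H^2$ are Hopf-algebra automorphisms. The second claim is false in the present setting, where $\Sc$ is only ribbon, not symmetric: from $S\circ\mu = \mu\circ c_{H,H}\circ(S\otimes S)$ one gets $S^2\circ\mu = \mu\circ c_{H,H}\circ c_{H,H}\circ(S^2\otimes S^2)$, so $S^2$ fails to be an algebra map by a double braiding; only $\theta_H^{-1}\circ S^2$ is a Hopf-algebra automorphism, which is exactly why the twist appears in \eqref{eq:Gamma-dual-via-gamma} and in condition c) of Theorem \ref{thm:main1}. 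You flag the twist as a ``caveat'', but resolving it is the entire point of the step, and your fallback is also misidentified: \eqref{eq:pentagon-13} is the pentagon involving $\phi$ (hence the cointegral $\lambda$), which the paper uses for the distinguished group-like element in Lemma \ref{lem:lambda-cointegral-etc}\,(ii); the pentagon directly equivalent to $\mu\circ(\Gamma\otimes\Gamma) = \Gamma\circ\mu_{H^\vee}$ is \eqref{eq:pentagon-3}. As written, neither bialgebra compatibility is actually established; repairing the proof amounts to invoking \eqref{eq:pentagon-3} and \eqref{eq:pentagon-4} directly, which is what the paper does.
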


\begin{proof}
That $\Gamma$ in \eqref{eq:Hopf-via-gdp} is an isomorphism follows from Lemma \ref{lem:gamma-inv}. Recall the definition of the structure maps for the dual Hopf algebra $H^\vee$ given in \eqref{eq:H^-structuremaps}. That $\Gamma \circ S^\vee = S \circ \Gamma$ is immediate from Lemma \ref{lem:gamma-inv-via-S}\,(ii). That $\Gamma$ preserves unit and counit is the statement of Lemma \ref{lem:gamma-inv-via-S}\,(i). For the multiplication consider the equivalences
\begin{align}
  &\mu \circ (\Gamma \otimes \Gamma) = \Gamma \circ \mu_{H^\vee}
  \nonumber \\
  &\Leftrightarrow \quad
  \scanPIC{29a}
  ~=~
  \scanPIC{29b}
  \quad \Leftrightarrow \quad
  \scanPIC{29c}
  ~=~
  \scanPIC{29d}
  \nonumber \\
  &\Leftrightarrow \quad
  \text{\eqref{eq:pentagon-3}} \ .
\end{align}
In the first equivalence the definition of $\Gamma$ and $\mu_{H^\vee}$ has been substituted and in the second equivalence the antipode has been moved past the product and duality maps have been applied. An analogous calculation for the comultiplication yields
\be
  (\Gamma \otimes \Gamma) \circ \Delta_{H^\vee} = \Delta \circ \Gamma
  \quad \Leftrightarrow \quad
  \text{\eqref{eq:pentagon-4}} \ .
\ee
\end{proof}

\begin{lemma}\label{lem:lambda-cointegral-etc}
(i) $\lambda$ is a right cointegral: $(\lambda \otimes \id) \circ \Delta = \eta \circ \lambda$.\\
(ii) $g$ is the distinguished group-like element for $\lambda$: $(\id \otimes \lambda) \circ \Delta = g \circ \lambda$.\\  
% name taken from p590 of Radford, J.Alg.163(1994)
\end{lemma}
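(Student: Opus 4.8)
The plan is to read the two identities off the pentagon conditions that have not yet been exploited. A coproduct $\Delta$ can enter a pentagon case only where two degree-zero objects are tensored (this is the one place the $H$-action on a tensor product uses $\Delta$), while $\phi$ enters only through the triple-$\Cc_1$ associator \eqref{fig:assoc-d}. The three conditions not yet used in the preceding lemmas, namely \eqref{eq:pentagon-13}, \eqref{eq:pentagon-14} and \eqref{eq:pentagon-16}, each contain both a coproduct and a $\phi$, and are therefore the natural source for identities relating $\Delta$ to $\lambda = \eps\circ\phi$. Since also $g = (\eps\otimes\id)\circ\delta$ by \eqref{eq:g-def}, the uniform technique will be the same one used throughout this subsection: cap the auxiliary tensor legs of such an equation with counits $\eps$ and feed units $\eta$ into the remaining inputs, so that all $\gamma$- and $\delta$-decorations collapse and a single $\Delta$ together with a single factor $\eps\circ\phi=\lambda$ survive.

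For part (i) I would take the pure-$H$ reduction of \eqref{eq:pentagon-14} (or \eqref{eq:pentagon-13}), obtained exactly as in the earlier single-object reductions, and then post-compose the output legs with $\eps$. Using $(\eps\otimes\id)\circ\gamma=\eta=(\id\otimes\eps)\circ\gamma$ from Lemma \ref{lem:gamma-inv-via-S}\,(i) together with the counit and bubble axioms, every $\gamma$- and $\delta$-decoration disappears, leaving one $\Delta$ followed by $\lambda$ on the one side and $\eta\circ\lambda$ on the other. This is precisely the right-cointegral property $(\lambda\otimes\id)\circ\Delta = \eta\circ\lambda$.

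For part (ii) I would run the same reduction but cap a different collection of legs: composing with $\eps$ on the leg carrying the $\phi$-contribution again turns it into $\lambda$, while now the surviving leg carries the $\delta$-contribution, which under $(\eps\otimes\id)\circ\delta=g$ becomes $g$. The group-like relations $\Delta\circ g = g\otimes g$ and $\eps\circ g=\id_\one$ from Lemma \ref{lem:g-grouplike}, together with the invertibility of $g$ established there, are what guarantee that the collapsed decorations recombine into the single factor $g\circ\lambda$, yielding $(\id\otimes\lambda)\circ\Delta = g\circ\lambda$.

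The main obstacle is bookkeeping rather than conceptual. One must choose correctly, among the several tensor legs of a four-object pentagon, which to cap with $\eps$ and which to feed with $\eta$, and then slide the antipodes $S,S^{-1}$ and the braidings $c_{H,H}$ that appear past the coproduct using the anti-(co)multiplicativity of $S$ from \eqref{ap} and the bubble property in Figure \ref{fig:Hopf-property}\,g),\,h), so that the $\gamma$- and $\delta$-factors genuinely reduce to $\eta$ or to $g$ rather than to some dressed variant. Getting the placement of $S$ and $c_{H,H}$ right (which is what distinguishes the \emph{right} cointegral condition and fixes $g$ rather than $g^{-1}$) is the delicate point; once it is settled, the collapse is forced by Lemmas \ref{lem:g-grouplike} and \ref{lem:gamma-inv-via-S}.
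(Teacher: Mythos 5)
Your part (ii) is essentially the paper's own proof: apply $(\eps \otimes \id) \circ (-) \circ (\id \otimes \eta)$ to \eqref{eq:pentagon-13}, so that the $\phi$-associators produce $\lambda$ and the $\delta$-associator produces $g$ via $(\eps\otimes\id)\circ\delta = g$, leaving $\eta \circ \lambda = {}_gM \circ S^{-1} \circ (\id \otimes \lambda) \circ \Delta$; composing with $S \circ {}_{g^{-1}}M$ and using $g^{-1} = S \circ g$ from Lemma \ref{lem:g-grouplike} gives the claim. You only gesture at this $S^{-1}$-bookkeeping, but the route is the correct one.

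Part (i), however, has a genuine gap. Your selection principle --- that only \eqref{eq:pentagon-13}, \eqref{eq:pentagon-14}, \eqref{eq:pentagon-16} remain unused and only they contain both a coproduct and a $\phi$ --- is factually wrong, and the equations you pick cannot produce the right-cointegral property. First, cases \eqref{eq:pentagon-13} and \eqref{eq:pentagon-14} contain no $\gamma$ at all: their five constituent associators are of the $\phi$-type, the $\delta$-type, the coproduct-types and identities, and none is the $\gamma$-associator $\alpha_{A^0,B^1,C^0}$. So the identity $(\eps\otimes\id)\circ\gamma = \eta = (\id\otimes\eps)\circ\gamma$ from Lemma \ref{lem:gamma-inv-via-S}\,(i), which you invoke to collapse the decorations, never comes into play in those cases; $\gamma$ enters precisely in \eqref{eq:pentagon-15}, the case you discarded. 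Second, and more importantly, the position of the coproduct relative to $\phi$ is wrong in your chosen cases. To obtain $(\lambda \otimes \id)\circ\Delta = \eta\circ\lambda$ one needs a pentagon containing the composite $(\phi\otimes\id)\circ\Delta$, i.e.\ $\phi$ applied to the \emph{left} output leg of a coproduct of the input. This happens exactly in \eqref{eq:pentagon-15}, where one side is the coproduct-associator $\alpha_{B^1,C^1,D^0}$ followed by the $\phi$-associator and the other side carries the $\gamma$-decoration that collapses to $\eta$ --- which is precisely how the paper proves (i). In \eqref{eq:pentagon-13} the $\phi$ lands on the \emph{right} leg of the coproduct coming from $\alpha_{B^0,C^1,D^1}$, which is why every capping of it yields $(\id\otimes\lambda)\circ\Delta$ dressed with $S^{-1}$ and $g$, i.e.\ part (ii) again and never part (i). In \eqref{eq:pentagon-14} the coproduct-associator is applied \emph{after} the $\phi$-associator, so the coproduct acts on the image of $\phi$; capping either of its legs with $\eps$ simply deletes it by the counit axiom, and what remains is either a tautology or an identity of the type governed by Lemma \ref{lem:flip-lambda} (indeed, the paper's verification of case 14 in Proposition \ref{prop:Hopf-to-pent} requires that flip lemma, not merely the cointegral property). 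Finally, ``already used'' is not a valid reason to exclude \eqref{eq:pentagon-15}: Lemma \ref{lem:gamma-inv-via-S}\,(i) extracted only one consequence of it (a counit identity for $\gamma$), and the same pentagon still carries the cointegral property --- the paper reuses it for exactly this purpose.
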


\begin{proof}
Part (i) is obtained by applying $\eps \otimes \id$ to \eqref{eq:pentagon-15} from the left and using Lemma \ref{lem:gamma-inv-via-S}\,(i). 
For part (ii) apply $(\eps \otimes \id) \circ (-) \circ (\id \otimes \eta)$ to \eqref{eq:pentagon-13}. This gives
\be
  \eta \circ \lambda 
  = {}_gM \circ S^{-1} \circ (\id \otimes \lambda) \circ \Delta \ .
\ee
Composing with $S \circ {}_{g^{-1}}M$ results in the required identity (recall from Lemma \ref{lem:g-grouplike} that $g^{-1} = S \circ g$). 
\end{proof} 

\begin{lemma} \label{lem:integral-cointegral}
Set $\Lambda = (\lambda \otimes \Gamma) \circ \coev_H$ and $\Lambda' = (\id \otimes (\lambda \circ \Gamma)) \circ \coev_H$. Then both $\Lambda$ and $\Lambda'$ are left integrals for $H$.
\end{lemma}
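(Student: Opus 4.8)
\emph{Strategy.} By \eqref{eq:gdp-via-Hopf} we have $\gamma = (\id_H \otimes \Gamma)\circ\coev_H$, so the two morphisms in question are just $\lambda$ applied to the two legs of $\gamma$: $\Lambda = (\lambda\otimes\id_H)\circ\gamma$ and $\Lambda' = (\id_H\otimes\lambda)\circ\gamma$. The plan is to prove first that $\Lambda$ is a left integral by recognising it as the transport of $\lambda$ --- viewed as an integral on the dual Hopf algebra --- along the Hopf isomorphism $\Gamma$, and then to obtain $\Lambda'$ from $\Lambda$ by a Hopf automorphism using \eqref{eq:gamma-braid}.

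\emph{The integral $\Lambda$.} Set $\hat\lambda = (\lambda\otimes\id_{H^\vee})\circ\coev_H : \one\to H^\vee$, the element of $H^\vee$ represented by $\lambda$, so that $\Lambda = \Gamma\circ\hat\lambda$. First I would show that $\hat\lambda$ is a \emph{left} integral for $H^\vee$, i.e.\ $\mu_{H^\vee}\circ(\id_{H^\vee}\otimes\hat\lambda) = \hat\lambda\circ\eps_{H^\vee}$. This is a direct computation: unfolding $\mu_{H^\vee}$ from \eqref{eq:H^-structuremaps} and inserting the right-cointegral identity $(\lambda\otimes\id_H)\circ\Delta_H = \eta_H\circ\lambda$ from Lemma \ref{lem:lambda-cointegral-etc}(i) yields precisely the left-integral condition. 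Now any isomorphism of Hopf algebras maps left integrals to left integrals, because it is an algebra map intertwining the counits; since $\Gamma$ is a Hopf isomorphism by Lemma \ref{lem:Gamma-Hopf-iso} (so in particular $\eps_H\circ\Gamma = \eps_{H^\vee}$), the morphism $\Lambda = \Gamma\circ\hat\lambda$ is a left integral for $H$.

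\emph{The integral $\Lambda'$.} I would deduce the statement for $\Lambda'$ from that for $\Lambda$ without a second independent calculation. Equation \eqref{eq:gamma-braid} reads $c_{H,H}^{-1}\circ\gamma = (\id_H\otimes\psi)\circ\gamma$ with $\psi := \Ad_{g^{-1}}\circ S^2$. Applying $\lambda\otimes\id_H$ to both sides and using the naturality identity $(\lambda\otimes\id_H)\circ c_{H,H}^{-1} = \id_H\otimes\lambda$ (valid because the unit object is transparent to the braiding), the left-hand side becomes $(\id_H\otimes\lambda)\circ\gamma = \Lambda'$, while the right-hand side becomes $\psi\circ(\lambda\otimes\id_H)\circ\gamma = \psi\circ\Lambda$. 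Hence $\Lambda' = \psi(\Lambda)$. Since $g$ is group-like (Lemma \ref{lem:g-grouplike}), $\Ad_{g^{-1}}$ is a Hopf automorphism, and $S^2$ is always one, so $\psi$ is a Hopf automorphism; exactly as in the previous step it maps the left integral $\Lambda$ to the left integral $\Lambda'$.

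\emph{Main obstacle.} The only genuinely delicate point is the left/right bookkeeping in the middle step. The property $(\lambda\otimes\id_H)\circ\Delta_H = \eta_H\circ\lambda$ names $\lambda$ a \emph{right} cointegral, and one might naively expect it to produce a \emph{right} integral on $H^\vee$; it is precisely the switch built into the dual structure maps \eqref{eq:H^-structuremaps} (cf.\ the footnote there) that flips the side and yields a \emph{left} integral, in agreement with the claim. Once this flip is tracked correctly, the transport along $\Gamma$ and the conjugation by $\psi$ are routine.
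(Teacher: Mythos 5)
Your treatment of $\Lambda$ is correct and is essentially the paper's own argument in different words: the paper also inserts a pair $\Gamma\circ\Gamma^{-1}$, uses the algebra-map property $\mu_H\circ(\Gamma\otimes\Gamma)=\Gamma\circ\mu_{H^\vee}$ and then the right-cointegral property of $\lambda$, which is exactly your ``name of a right cointegral is a left integral on $H^\vee$, transported along the Hopf isomorphism $\Gamma$'' (and your remark about the side-flip built into \eqref{eq:H^-structuremaps} is the right bookkeeping). For $\Lambda'$, however, you depart from the paper, which simply runs the dual computation a second time using the coalgebra-map property of $\Gamma$, and your route contains two genuine gaps. The first is the opening identification: \eqref{eq:gdp-via-Hopf}/\eqref{eq:Hopf-via-gdp} relate $\Gamma$ to $\gamma^{-1}$, not to $\gamma$; that $(\id_H\otimes\Gamma)\circ\coev_H=\gamma^{-1}$ is stated in Remark \ref{rem:braided-via-gamma}\,(ii) and in the proof of Lemma \ref{lem:omega-expr}, and is forced by comparing the normalisation condition $(\lambda\otimes\lambda)\circ(\id_H\otimes\Gamma)\circ\coev_H=\id_\one$ of Theorem \ref{thm:main1} with $(\lambda\otimes\lambda)\circ\gamma^{-1}=\id_\one$ in Lemma \ref{lem:norm-cond} and Corollary \ref{cor:main1-i}. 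Hence $\Lambda=(\lambda\otimes\id)\circ\gamma^{-1}$ and $\Lambda'=(\id\otimes\lambda)\circ\gamma^{-1}$, while the morphisms you actually manipulate, $(\lambda\otimes\id)\circ\gamma$ and $(\id\otimes\lambda)\circ\gamma$, differ from these by an antipode (Lemma \ref{lem:gamma-inv-via-S}\,(ii)) and are not $\Lambda,\Lambda'$. This slip is repairable, because \eqref{eq:gamma-braid} holds verbatim for $\gamma^{-1}$: using $\gamma^{-1}=(\id\otimes S)\circ\gamma=(S\otimes\id)\circ\gamma$ and naturality of the braiding one gets $c_{H,H}^{-1}\circ\gamma^{-1}=(\id\otimes\{\Ad_{g^{-1}}\circ S^2\})\circ\gamma^{-1}$, after which your contraction argument does yield $\Lambda'=(\Ad_{g^{-1}}\circ S^2)\circ\Lambda$.

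The second gap is more serious: you assert that $S^2$ ``is always'' a Hopf automorphism, which is false for Hopf algebras in braided categories, and Section \ref{sec:theorem1}, where this lemma lives, only assumes $\Sc$ ribbon, not symmetric. From $S\circ\mu=\mu\circ c_{H,H}\circ(S\otimes S)$ one computes $S^2\circ\mu=\mu\circ c_{H,H}^{\,2}\circ(S^2\otimes S^2)$, so $S^2$ is multiplicative only up to the monodromy; it is $\theta_H^{-1}\circ S^2$ that is an algebra map (this is precisely why the twist $\theta_H^{-1}$ appears in condition c) of Theorem \ref{thm:main1}). Consequently $\psi=\Ad_{g^{-1}}\circ S^2$ is in general not an algebra map, and the transport-of-integrals argument does not apply to it as stated. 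The repair is short but must be said: write $\psi=\theta_H\circ\chi$ with $\chi=\Ad_{g^{-1}}\circ\theta_H^{-1}\circ S^2$, which is a counit-preserving algebra automorphism, so $\chi\circ\Lambda$ is a left integral; then naturality of the twist (with $\theta_\one=\id_\one$) gives $\theta_H\circ(\chi\circ\Lambda)=\chi\circ\Lambda$, hence $\Lambda'=\psi\circ\Lambda=\chi\circ\Lambda$ is a left integral. In a symmetric category with trivial twist (the setting of Section \ref{sec:braiding}) your argument as written would be fine, but for the lemma as stated both the $\gamma\leftrightarrow\gamma^{-1}$ bookkeeping and the twist correction to $S^2$ have to be made explicit.
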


\begin{proof}
That $\Lambda$ is a left integral follows from
\be
\scanpic{94a} 
~~=~~
\scanpic{94b}
~~=~~
\scanpic{94c}
\quad .
\ee
The second step uses that $\Gamma$ is an algebra map, i.e.\ that $\mu_H \circ (\Gamma \otimes \Gamma) = \Gamma \circ \mu_{H^\vee}$ with $\mu_{H^\vee}$ as given in \eqref{eq:H^-structuremaps}. In the last expression, apply the right cointegral property of $\lambda$ (Lemma \ref{lem:lambda-cointegral-etc}) and the fact that $\eps_{H^\vee} \circ \Gamma^{-1} = \eps_H$. The argument for $\Lambda'$ is similar: interpret $\mu_H$ as $\Delta_{H^\vee}$ by inserting an extra pair $\Gamma^{-1} \circ \Gamma$ and then use the coalgebra map property of $\Gamma$ to obtain $\Delta_H$.
\end{proof}

\begin{lemma}\label{lem:norm-cond}
The normalisation condition  $(\lambda \otimes \lambda) \circ \gamma^{-1} = \id_\one$ holds.
\end{lemma}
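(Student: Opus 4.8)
The plan is to read off the normalisation as a scalar identity in $\End(\one)$ from the one family of pentagon relations that has not yet been exploited. Counting through Section \ref{sec:pentagon}: cases 1, 2, 5 are $\id=\id$, cases 6, 9, 11 hold automatically, and cases 3, 4, 7, 8, 10, 12, 13, 15 have already been consumed in Lemmas \ref{lem:g-grouplike}--\ref{lem:lambda-cointegral-etc}. This leaves \eqref{eq:pentagon-14} and \eqref{eq:pentagon-16}, and I expect the fully-$\Cc_1$ relation \eqref{eq:pentagon-16} to be the relevant one. Among the associators entering case 16 the two outer factors on the left-hand side ($\alpha_{(AB)^0,C^1,D^1}$ and $\alpha_{A^1,B^1,(CD)^0}$) are parameter-free, while the right-hand side carries two factors of type $\alpha_{A^1,B^1,C^1}$ (built from $\phi$) and one of type $\alpha_{A^1,B^0,C^1}$ (built from $\delta$). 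Thus the equation naturally contains two copies of $\phi$ and one copy of $\delta$, matching the two $\lambda$'s and the single $\gamma^{-1}$ in the target identity.

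First I would substitute into \eqref{eq:pentagon-16} the expressions for $\phi$ and $\delta$ in terms of $\gamma$ from Lemmas \ref{lem:delta+phi-via-gamma} and \ref{lem:delta-braid}, and recall $\lambda = \eps\circ\phi$ from \eqref{eq:Hopf-via-gdp}. Then, exactly as in the earlier pentagon lemmas, I would cap off the free legs by precomposing with units $\eta$ and postcomposing with counits $\eps$ on the appropriate strands. Each $\eps\circ\phi$ collapses to a factor $\lambda$, producing the two copies of $\lambda$, while the remaining $\delta$-strand, rewritten through its $\gamma$-expression, contracts against its partner and assembles the duality loop into a single $\gamma^{-1}$.

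To carry out the simplification I would use coassociativity and the counit axiom, the bubble properties of Figure \ref{fig:Hopf-property}\,g),h), the relations $\eps\circ S = \eps$ and $\eps\circ g = \id_\one$ (Lemma \ref{lem:g-grouplike}), and above all the multiplicative-inverse identities $\mu_{H\otimes H}\circ(\gamma\otimes\gamma^{-1}) = \eta\otimes\eta$ together with $\gamma^{-1} = (S\otimes\id)\circ\gamma = (\id\otimes S)\circ\gamma$ from Lemma \ref{lem:gamma-inv-via-S}. The goal is to reduce both sides to $\End(\one)$ and read off $(\lambda\otimes\lambda)\circ\gamma^{-1} = \id_\one$. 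As a consistency check this matches Lemma \ref{lem:integral-cointegral}: the identity says precisely that the left integral $\Lambda=(\lambda\otimes\Gamma)\circ\coev_H$ pairs to $\id_\one$ with the cointegral $\lambda$, once one identifies $(\id_H\otimes\Gamma)\circ\coev_H$ with $\gamma^{-1}$.

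The main obstacle I anticipate is purely diagrammatic bookkeeping: I must choose the precise strands on which to insert the units and counits so that exactly two of the $\phi$-blocks become $\eps\circ\phi = \lambda$ and the $\delta$-block closes into $\gamma^{-1}$, rather than into $\gamma$ or a conjugate $\Ad_{g^{-1}}\circ S^2$-twisted version of it. Getting the antipodes and the single braiding appearing in the $\delta$-expression of Lemma \ref{lem:delta-braid} to cancel correctly against $\gamma^{-1} = (S\otimes\id)\circ\gamma$ is the delicate point; everything else is a routine application of the Hopf-algebra axioms.
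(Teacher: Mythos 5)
Your overall strategy is exactly the paper's: the paper also proves this lemma by applying $(\eps \otimes \eps) \circ (-) \circ (\eta \otimes \eta)$ to \eqref{eq:pentagon-16} and substituting the expressions of Lemma \ref{lem:delta+phi-via-gamma}, and your structural count (two $\phi$-blocks giving two $\lambda$'s, one $\delta$-block giving a single copairing) is accurate. However, your execution plan has a gap at precisely the point you flag as ``delicate'', and the toolkit you list cannot close it. After capping, the group-like element $g$ hidden in $\delta = (S \otimes M_g)\circ\gamma^{-1}$ (equivalently, the $\Ad_{g^{-1}}\circ S^2$ twist in Lemma \ref{lem:delta-braid}) does \emph{not} cancel: what comes out is $\id_\one = \big((\lambda \circ {}_gM) \otimes \lambda\big) \circ \gamma^{-1}$, i.e.\ the desired identity twisted by left multiplication with $g$. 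None of the identities you invoke --- Hopf axioms, bubble property, $\eps \circ g = \id_\one$, $\mu_{H\ot H}\circ(\gamma\ot\gamma^{-1}) = \eta\ot\eta$, $\gamma^{-1} = (S\ot\id)\circ\gamma$ --- removes this ${}_gM$; in particular $\eps\circ g = \id_\one$ is of no direct use, because the $g$ is multiplied into an $H$-strand that is subsequently evaluated by $\lambda$, not by $\eps$.

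The ingredient that disposes of $g$ is exactly the lemma you demote to a ``consistency check''. The paper recognises $(\id \otimes \lambda)\circ\gamma^{-1}$ inside the capped diagram as the morphism $\Lambda'$ of Lemma \ref{lem:integral-cointegral} and uses that $\Lambda'$ is a \emph{left integral} --- a fact resting on $\Gamma$ being an algebra map (Lemma \ref{lem:Gamma-Hopf-iso}) and $\lambda$ a right cointegral (Lemma \ref{lem:lambda-cointegral-etc}), neither of which appears in your list of tools. The left integral property gives ${}_gM \circ \Lambda' = (\eps \circ g)\cdot \Lambda' = \Lambda'$, whence $\id_\one = \lambda \circ \Lambda' = (\lambda\otimes\lambda)\circ\gamma^{-1}$ after reverting to $\gamma$ via Lemma \ref{lem:gamma-inv-via-S}\,(ii). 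So to repair your argument, promote Lemma \ref{lem:integral-cointegral} from consistency check to the central step; with that change the rest of your plan goes through just as the paper's proof does.
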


\begin{proof}
Applying $(\eps \otimes \eps) \circ (-) \circ (\eta \otimes \eta)$ to \eqref{eq:pentagon-16} and using Lemma \ref{lem:delta+phi-via-gamma} gives
\be
  \id_\one = \scanPIC{22} \quad .
\ee
Substituting the definition of $\Gamma$ to replace $\gamma$ (use Lemma \ref{lem:gamma-inv-via-S}\,(ii) first) and then replacing both $\Gamma$ and $\lambda$ by $\Lambda'$ from Lemma \ref{lem:integral-cointegral} one arrives at $\id_\one = \lambda \circ {}_gM \circ \Lambda'$. But by that lemma, $\Lambda'$ is a left integral, and so the product with $g$ splits off as $\eps \circ g = \id_\one$. 
In the resulting identity $\id_\one = \lambda \circ \Lambda'$, revert $\Lambda'$ back to $\Gamma$ and $\lambda$ and then express $\Gamma$ in terms of $\gamma$ via \eqref{eq:Hopf-via-gdp}. Together with Lemma \ref{lem:gamma-inv-via-S}\,(ii), this then gives the normalisation condition.
\end{proof} 

\begin{proof}[Proof of Proposition \ref{prop:pent-to-Hopf}]
Part (i) is implied by Lemmas \ref{lem:delta+phi-via-gamma} and \ref{lem:gamma-inv-via-S}\,(ii). That $\Gamma$ is a Hopf-algebra isomorphism was shown in Lemma \ref{lem:Gamma-Hopf-iso}. This establishes property a) in Theorem \ref{thm:main1}\,(i). Property b) amounts to Lemmas \ref{lem:g-grouplike}\,(ii), \ref{lem:lambda-cointegral-etc} and \ref{lem:norm-cond}. Uniqueness of $g$ can be seen as follows. Define $\Lambda'$ as in Lemma \ref{lem:integral-cointegral}. Then the normalisation condition reads $\lambda \circ \Lambda' = \id_\one$. Thus, applying $\Lambda'$ to both sides of the identity $(\id \otimes \lambda) \circ \Delta = g \circ \lambda$ gives an expression for $g$ in terms of $\Gamma$ and $\lambda$. It remains to check property c) in Theorem \ref{thm:main1}\,(i). We have the equalities
\be \label{eq:Gamma-dual-via-gamma}
  \scanPIC{28a}
  ~=~
  \scanPIC{28b}
  ~=~
  \scanPIC{28c}
  \quad .
\ee
In the first step, the equality of the two morphisms in the dashed box is obtained from substituting \eqref{eq:gdp-via-Hopf} into \eqref{eq:gamma-braid} and using $\Ad_g \circ S = S \circ \Ad_g$, which in turn follows from Lemma \ref{lem:g-grouplike}\,(ii). In the second step $g^{-1}$ is cancelled against $g$ and the string diagram is deformed. The little loop is equal to $\theta_{H^\vee}$. Note that the right hand side is not yet equal to $\Gamma^\vee \circ \theta_{H^\vee}$ as it uses two different duality maps. Instead, the right hand side is equal to $\delta_H^{-1} \circ \Gamma^\vee\circ \theta_{H^\vee}$, where $\delta_H : H \to H^{\vee\vee}$ is the natural isomorphism to the double dual. Altogether, we obtain property c) (the morphisms $\theta_H$, $S^2$ and $\Ad_{g^{-1}}$ all commute with each other).
\end{proof}

\subsection{From Hopf-algebraic data to pentagon}\label{sec:Hopf-to-pent}

The following proposition proves Theorem \ref{thm:main1}\,(i).

\begin{proposition} \label{prop:Hopf-to-pent}
Suppose the pair $\Gamma,\lambda$ satisfies properties a)--c) in Theorem \ref{thm:main1}\,(i). Let $\gamma,\delta,\phi$ be given in terms of $\Gamma,\lambda$ by \eqref{eq:gdp-via-Hopf}. Then the 16 cases of the pentagon identity stated in Section \ref{sec:pentagon} are satisfied.
\end{proposition}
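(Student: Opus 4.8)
The plan is to verify each of the sixteen pentagon equations by substituting the expressions \eqref{eq:gdp-via-Hopf} for $\gamma,\delta,\phi$ in terms of $\Gamma,\lambda$ and then reducing each diagram equation to one of the three hypotheses a)--c). I would begin by disposing of the trivial cases first: cases 1, 2, 5 are identities by inspection, and cases 6, 9, 11 were already shown in Section \ref{sec:pentagon} to hold independently of the choice of $\gamma,\delta,\phi$, so nothing remains to check there. This leaves the ten genuine conditions \eqref{eq:pentagon-3}--\eqref{eq:pentagon-16}, and the entire content of the proposition is that each of these follows once $\Gamma$ is a Hopf-algebra isomorphism, $\lambda$ is a suitably normalised right cointegral with distinguished group-like $g$, and $\Gamma^\vee$ is related to $\Gamma$ as in c).

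The natural order is to reuse the bookkeeping already established in Section \ref{sec:pent-to-Hopf}, but read in reverse. First I would record the auxiliary facts that are forced by \eqref{eq:gdp-via-Hopf}: that $g = (\eps \otimes \id)\circ\delta$ is group-like with $g^{-1}=S\circ g$ (as in Lemma \ref{lem:g-grouplike}), that $\gamma$ satisfies the counit normalisation and $\gamma^{-1}=(S\otimes\id)\circ\gamma=(\id\otimes S)\circ\gamma$ (Lemma \ref{lem:gamma-inv-via-S}), and the braiding relation \eqref{eq:gamma-braid}, namely $c_{H,H}^{-1}\circ\gamma = (\id\otimes\{\Ad_{g^{-1}}\circ S^2\})\circ\gamma$, which is exactly the diagrammatic form of condition c) once \eqref{eq:gdp-via-Hopf} is inserted. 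With these in hand, I would then treat the pentagon conditions in pairs governed by the structure of $\Gamma$: equations \eqref{eq:pentagon-3} and \eqref{eq:pentagon-4} are precisely the statements that $\Gamma$ is an algebra map and a coalgebra map respectively (this is the computation displayed in Lemma \ref{lem:Gamma-Hopf-iso}, now run as an implication rather than an equivalence); \eqref{eq:pentagon-12} and \eqref{eq:pentagon-15} encode the cointegral and counit-normalisation properties of $\lambda$ together with $\phi = {}_gM\circ S^{-1}$ read off from \eqref{eq:gdp-via-Hopf}; and \eqref{eq:pentagon-7}, \eqref{eq:pentagon-10} reproduce the two expressions for $\delta$ in Lemmas \ref{lem:delta+phi-via-gamma} and \ref{lem:delta-braid}. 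The remaining conditions \eqref{eq:pentagon-8}, \eqref{eq:pentagon-13}, \eqref{eq:pentagon-14}, \eqref{eq:pentagon-16} would be checked by combining the group-likeness of $g$, the integral identity of Lemma \ref{lem:integral-cointegral} ($\Lambda=(\lambda\otimes\Gamma)\circ\coev_H$ is a left integral), and the normalisation $(\lambda\otimes\lambda)\circ(\id\otimes\Gamma)\circ\coev_H=\id_\one$ from hypothesis b).

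I expect the main obstacle to be the two most decorated cases, \eqref{eq:pentagon-13} and \eqref{eq:pentagon-16}, where several pieces of data ($\Gamma$, $\lambda$, $S$, $g$, and the twist) appear simultaneously and must be disentangled using the self-duality relation c) rather than any single structural property. In particular, verifying \eqref{eq:pentagon-16} amounts to showing the scalar normalisation is consistent with the cointegral and with $\Gamma^\vee = \delta_H\circ\theta_H^{-1}\circ S_H^2\circ\Ad_{g^{-1}}\circ\Gamma$; the point where $\theta_H$ and $S_H^2$ enter is exactly where condition c) is indispensable, and it is here that one must track the double-dual identification $\delta_H$ carefully, as was done at the end of the proof of Proposition \ref{prop:pent-to-Hopf} around \eqref{eq:Gamma-dual-via-gamma}. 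Since every individual reduction has already appeared (in equivalence form) in Section \ref{sec:pent-to-Hopf}, the proof reduces to asserting that each of those equivalences may be traversed from right to left given a)--c); I would therefore present it compactly by citing the relevant lemma for each case rather than redrawing the diagrams, flagging only \eqref{eq:pentagon-13} and \eqref{eq:pentagon-16} as requiring the full force of condition c).
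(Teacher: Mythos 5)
Your structural outline (trivial cases 1, 2, 5, 6, 9, 11; cases 3, 4 from $\Gamma$ being a Hopf-algebra map; condition c) as the crux of the hard cases) matches the paper, but your central claim --- that ``each of those equivalences may be traversed from right to left'' --- is a genuine gap. The lemmas of Section \ref{sec:pent-to-Hopf} are not equivalences: with the sole exception of the two displayed equivalences in Lemma \ref{lem:Gamma-Hopf-iso} (namely \eqref{eq:pentagon-3} $\Leftrightarrow$ $\Gamma$ algebra map and \eqref{eq:pentagon-4} $\Leftrightarrow$ $\Gamma$ coalgebra map, which is why cases 3, 4 really can be settled by citation), they are one-way implications obtained by composing a pentagon identity with non-invertible morphisms such as $\eps$ or $\eta$, which destroys information. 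For instance, Lemma \ref{lem:g-grouplike} derives $\Delta \circ g = g \otimes g$ by applying $\eps \otimes \id \otimes \id$ to \eqref{eq:pentagon-8}; knowing $g$ is group-like does not conversely give you \eqref{eq:pentagon-8}, and indeed in the forward direction group-likeness of $g$ must be re-proved from scratch out of the normalisation $\lambda \circ \Lambda' = \id_\one$ and coassociativity (Lemma \ref{lem:g-grouplike-2}), after which \eqref{eq:pentagon-8} still requires a separate computation using expression (\ref{eq:delta-expression-ab}\,b) for $\delta$, \eqref{eq:pentagon-3} and \eqref{eq:leftmult-g-coalg}. The same objection applies to your treatment of cases 7, 10, 12, 13, 15: Lemmas \ref{lem:delta+phi-via-gamma}, \ref{lem:delta-braid} and \ref{lem:lambda-cointegral-etc} extract the Hopf data \emph{from} those pentagons and cannot be cited to re-derive them.

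Concretely, what your plan is missing is the new machinery the forward direction needs and which appears nowhere in Section \ref{sec:pent-to-Hopf}: the braided Larson--Radford identities for moving products past integrals (Lemma \ref{lem:move-prod-past-integral}), the coproduct-past-$\lambda$ identities (Lemma \ref{lem:move-coprod-past-lam}) used for cases 13, 15 and 16, the non-degeneracy of $\Delta \circ \Lambda$ (Lemma \ref{lem:Lam-copair-nondeg}), the character $\alpha$ and the right integral $\tilde\Lambda$ (Lemma \ref{lem:right-int-from-left-int}), and above all the flip identity of Lemma \ref{lem:flip-lambda}, without which cases 14 and 16 do not close; the latter two cases also need the preliminary reduction \eqref{eq:pentagon-14-rewrite}, \eqref{eq:pentagon-16-rewrite} exploiting that both sides intertwine a residual $H$-action. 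Your sketch of case 16 as ``showing the scalar normalisation is consistent with the cointegral and with c)'' does not survive contact with this: the actual verification is a long diagrammatic chain through Lemma \ref{lem:move-coprod-past-lam}\,b), \eqref{eq:flip-lambda-alt} and Lemma \ref{lem:integral-cointegral}. Finally, a factual slip: $\phi$ is \emph{not} ${}_gM \circ S^{-1}$ (it depends on $\lambda$ and $\Gamma$ through \eqref{eq:gdp-via-Hopf}); the correct relation in this direction is $S^{-1} = \phi \circ {}_gM \circ \phi$, i.e.\ \eqref{eq:phi-1_via_phi}, and it is an output of \eqref{eq:pentagon-16}, not an input available before that case is proved.
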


The proof will again rely on a series of lemmas. The first is a general property of left/right integrals.

\begin{lemma}\label{lem:move-prod-past-integral}
Let $\Lambda : \one \to H$ be a left integral and let $\tilde\Lambda : \one \to H$ be a right integral. Then
$$
  \raisebox{2.5em}{\text{a)}}
  \quad
  \scanPIC{52a}
  =
  \scanPIC{52b}
  \qquad
  \raisebox{2.5em}{\text{b)}}
  \quad
  \scanPIC{52c}
  =
  \scanPIC{52d}
$$
\end{lemma}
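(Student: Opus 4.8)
The plan is to reduce both identities to the defining absorption property of an integral together with the antipode axioms. Recall that a left integral $\Lambda:\one\to H$ is characterised by $\mu\circ(\id_H\otimes\Lambda)=\Lambda\circ\eps$ and a right integral $\tilde\Lambda$ by $\mu\circ(\tilde\Lambda\otimes\id_H)=\tilde\Lambda\circ\eps$; graphically, an $H$-strand multiplied into the integral is replaced by a counit on that strand. Each side of a) (resp.\ b)) is assembled from $\mu$, $\Delta$, the antipode, and the integral $\Lambda$ (resp.\ $\tilde\Lambda$), and the two sides differ only in whether the extra product sits on the left or on the right of the integral, at the cost of inserting an antipode.

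First I would take the left-hand side of a) and use coassociativity of $\Delta$ to expose an auxiliary coproduct leg on the integral. I then feed this leg, together with the strand carrying the extra product, into the absorption property above: this collapses one factor to $\eta\circ\eps$ by way of the bubble property in Figure \ref{fig:Hopf-property}\,g). The counit produced in this step is removed by the counit axiom, and what survives is precisely the antipode decoration appearing on the right-hand side. Throughout, whenever the antipode has to be commuted past a product I would use its anti-multiplicativity \eqref{ap} (and, for a coproduct, its anti-comultiplicativity), keeping careful track of the accompanying braiding $c_{H,H}$.

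Part b) is the mirror image of part a): reflecting the string diagrams left-to-right interchanges the roles of left and right integrals and, correspondingly, replaces $S$ by $S^{-1}$ (so that the bubble property in Figure \ref{fig:Hopf-property}\,h) is used in place of g)). I would therefore obtain it by repeating the same computation with left and right reflected, rather than by a separate argument.

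The main obstacle I anticipate is bookkeeping rather than conceptual: in the braided (non-symmetric) category $\Sc$ the two placements of the product differ by a braiding, so I must apply \eqref{ap} with the correct braiding versus its inverse and ensure that every $c_{H,H}$ introduced when sliding $S$ through a multiplication is matched and cancelled. The delicate point is to arrive at exactly the claimed right-hand side and not at a braided variant of it.
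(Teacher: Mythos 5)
Your toolkit is close, but the step that actually makes the proof work is missing. The absorption property you quote, $\mu \circ (\id_H \otimes \Lambda) = \Lambda \circ \eps$, applies only to a product landing on $\Lambda$ itself, i.e.\ \emph{below} the coproduct. In both sides of a) the extra product sits on a \emph{leg} of $\Delta \circ \Lambda$, and there is no absorption rule for a product into $\Lambda_{(1)}$ or $\Lambda_{(2)}$ separately. The only axiom that converts ``product on the legs of $\Delta\circ\Lambda$'' into ``$\Delta$ applied to a product into $\Lambda$'' is the algebra-map property of $\Delta$ (Figure \ref{fig:Hopf-property}\,a)), and this is exactly the ingredient your plan never invokes; coassociativity cannot do this job, since it involves only $\Delta$, whereas the identity to be proved intertwines $\mu$ and $\Delta$. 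The correct skeleton (and the paper's proof) is: insert $\eps$ on the external strand via the counit axiom, expand it as $\mu\circ(S\otimes\id)\circ\Delta$ via the bubble property g), use Figure \ref{fig:Hopf-property}\,a) to recognise the resulting products on the two legs as $\Delta$ of a single product into $\Lambda$, and only then apply absorption. The paper packages the integral-free part of this into two auxiliary identities $\alpha$), $\beta$) and composes with $\id\otimes\Lambda$ resp.\ $\tilde\Lambda\otimes\id$ at the very end. Your step ``feed this leg \dots into the absorption property \dots this collapses one factor to $\eta\circ\eps$ by way of the bubble property'' conflates two distinct collapses (absorption produces $\Lambda\circ\eps$; the bubble produces $\eta\circ\eps$) and, as written, applies the integral property where it is not valid.

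A smaller point: part b) is not obtained from a) by trading $S$ for $S^{-1}$. The paper derives both identities from Figure \ref{fig:Hopf-property}\,a) and g) alone; what the left--right swap exchanges is the two halves of the bubble property g), namely $\mu\circ(S\otimes\id)\circ\Delta = \eta\circ\eps$ versus $\mu\circ(\id\otimes S)\circ\Delta = \eta\circ\eps$, while the antipode remains $S$ and Figure \ref{fig:Hopf-property}\,h) is never needed. (A literal mirror-reflection argument in the braided setting would land you in $(H^{\mathrm{op}})_{\mathrm{cop}}$, whose antipode is again $S$ but whose structure maps carry inverse braidings, so this shortcut requires exactly the bookkeeping you were worried about; the paper avoids it by running the analogous direct computation for $\tilde\Lambda$.) Likewise, anti-multiplicativity of $S$, eq.~\eqref{ap}, plays no role: the antipode is only created and destroyed by the bubble property, never pushed through a product.
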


\begin{proof}
This is a braided version of \cite[Lem.\,1.2]{Larson:1988}. To start with, note the following two chains of equalities, which follow from Figure \ref{fig:Hopf-property}\,a),\,g):
\be
\raisebox{3em}{\text{$\alpha$)}}~
\scanpic{95a}
~=~
\scanpic{95b}
~=~
\scanpic{95c}
\qquad
\raisebox{3em}{\text{$\beta$)}}~
\scanpic{95d}
~=~
\scanpic{95e}
~=
\scanpic{95f}
\ee
Statement a) of the lemma follows from composing identity $\alpha$) with $\id \otimes \Lambda$ from the right and using the left integral property of $\Lambda$ to remove the extra product on the right hand side. Statement b) follows by composing identity $\beta$) with $\tilde\Lambda \otimes \id$.
\end{proof}

Recall that by condition b) in Theorem \ref{thm:main1}\,(i)
there exists $g : \one \to H$ with $(\id_H \otimes \lambda) \circ \Delta_H = g \circ \lambda$. Define $\Lambda'$ as in Lemma \ref{lem:integral-cointegral}. The same argument as in the proof of Proposition \ref{prop:pent-to-Hopf} shows that $g$ is unique. The `element' $g$ is again group-like, that is, we can repeat the statement of Lemma \ref{lem:g-grouplike} (but now starting from a different set of assumptions).

\begin{lemma} \label{lem:g-grouplike-2}
(i) $\Delta \circ g = g \otimes g$ and $\eps \circ g = \id_\one$. \\
(ii) The multiplicative inverse satisfies $g^{-1} = S \circ g = S^{-1} \circ g$.
\end{lemma}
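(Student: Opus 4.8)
The plan is to mirror the proof of Lemma \ref{lem:g-grouplike}, but to replace the pentagon-based manipulations of $\delta$ by direct use of the defining relation $(\id_H \otimes \lambda) \circ \Delta_H = g \circ \lambda$ together with the normalisation condition of Theorem \ref{thm:main1}\,(i)\,b). The one structural ingredient I need is a way to strip the cointegral $\lambda$ off the right of various identities. This is provided by the left integral $\Lambda' = (\id \otimes (\lambda \circ \Gamma)) \circ \coev_H$ of Lemma \ref{lem:integral-cointegral} (whose proof only uses that $\Gamma$ is an algebra map and $\lambda$ a right cointegral, both available here): the normalisation condition in b) reads precisely $\lambda \circ \Lambda' = \id_\one$, so composing any identity $F \circ \lambda = G \circ \lambda$ on the right with $\Lambda'$ yields $F = G$.

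First I would establish $\eps \circ g = \id_\one$. Composing the defining relation with $\eps$ from the left and using the counit axiom $(\eps \otimes \id) \circ \Delta = \id$ gives $\lambda = (\eps \otimes \lambda) \circ \Delta = (\eps \circ g) \cdot \lambda$. Composing on the right with $\Lambda'$ and using $\lambda \circ \Lambda' = \id_\one$ then forces $\eps \circ g = \id_\one$.

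Next, for $\Delta \circ g = g \otimes g$, I would feed the defining relation into coassociativity. Substituting $g \circ \lambda = (\id \otimes \lambda) \circ \Delta$ and then moving $\Delta$ inward gives $\Delta \circ g \circ \lambda = (\id \otimes \id \otimes \lambda) \circ (\Delta \otimes \id) \circ \Delta$, which by coassociativity equals $(\id \otimes \id \otimes \lambda) \circ (\id \otimes \Delta) \circ \Delta = (\id_H \otimes g) \circ (\id_H \otimes \lambda) \circ \Delta = (\id_H \otimes g) \circ g \circ \lambda = (g \otimes g) \circ \lambda$, where in the middle step I apply the defining relation to the inner $(\id \otimes \lambda) \circ \Delta$ factor. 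Composing on the right with $\Lambda'$ and invoking $\lambda \circ \Lambda' = \id_\one$ once more cancels the $\lambda$ and yields $\Delta \circ g = g \otimes g$, so $g$ is group-like. This establishes part (i).

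Finally, once $g$ is group-like, part (ii) follows verbatim as in Lemma \ref{lem:g-grouplike}\,(ii): from $\Delta \circ g = g \otimes g$ and $\eps \circ g = \id_\one$, the antipode axiom gives $\mu \circ ((S \circ g) \otimes g) = \eta = \mu \circ (g \otimes (S \circ g))$, so $g^{-1} = S \circ g$; and since $c_{H,H}^{-1} \circ (g \otimes g) = g \otimes g$ (the braiding is trivial on the unit object), the bubble property of $S^{-1}$ in Figure \ref{fig:Hopf-property}\,h) gives $\mu \circ ((S^{-1} \circ g) \otimes g) = \eta \circ \eps \circ g = \eta$, whence $g^{-1} = S^{-1} \circ g$ by uniqueness of the multiplicative inverse. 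The only genuine subtlety, and the step I would treat most carefully, is this cancellation of $\lambda$: each group-like identity holds only after precomposition with $\lambda$, and it is the normalisation $\lambda \circ \Lambda' = \id_\one$ that legitimises removing it. Everything else is a routine transcription of the classical distinguished-group-like-element argument into the braided graphical calculus.
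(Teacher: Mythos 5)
Your proposal is correct and follows essentially the same route as the paper: the paper's proof likewise rewrites the normalisation condition as $\lambda \circ \Lambda' = \id_\one$, obtains $\Delta \circ g = g \otimes g$ by hitting coassociativity with $(\id \otimes \id \otimes \lambda) \circ (-) \circ \Lambda'$, gets $\eps \circ g = \id_\one$ by evaluating $(\eps \otimes \lambda) \circ \Delta \circ \Lambda'$ in two ways, and reduces part (ii) verbatim to the argument of Lemma \ref{lem:g-grouplike}\,(ii). Your explicit handling of the $\lambda$-cancellation via $\Lambda'$ is exactly the mechanism the paper's terser proof relies on.
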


\begin{proof}
Recall that the normalisation condition in property b) can be written as $\lambda \circ \Lambda' = \id_\one$.
To see the two equations in (i), apply $(\id \otimes \id \otimes \lambda) \circ (-) \circ \Lambda'$ to the coassociativity relation and evaluate $(\eps \otimes \lambda) \circ \Delta \circ \Lambda'$ in two ways. The proof of (ii) is identical to the proof in Lemma \ref{lem:g-grouplike}.
\end{proof}

From this lemma, it follows that the left and the right multiplication by $g$ are coalgebra maps. For example, in the case of left multiplication,
\be\label{eq:leftmult-g-coalg}
  \scanPIC{33a}
  ~=~
  \scanPIC{33b}
  ~=~
  \scanPIC{33c}
  \quad .
\ee

We will need the following identities involving the cointegral $\lambda$.

\begin{lemma}\label{lem:move-coprod-past-lam}
We have
$$
\raisebox{3em}{\text{a)}}\quad
\scanPIC{36a}
~=~
\scanPIC{36b}
\qquad , \qquad
\raisebox{3em}{\text{b)}}\quad
\scanPIC{36c}
~=~
\scanPIC{36d}
$$
\end{lemma}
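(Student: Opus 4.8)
The plan is to treat Lemma \ref{lem:move-coprod-past-lam} as the coalgebra-side mirror of Lemma \ref{lem:move-prod-past-integral}: where that lemma moved a \emph{product} past an \emph{integral} using Figure \ref{fig:Hopf-property}\,a),g), here I would move a \emph{coproduct} past the \emph{cointegral} $\lambda$, using the same figures read upside down together with the two defining relations of $\lambda$, namely the right cointegral property $(\lambda\ot\id)\circ\Delta=\eta\circ\lambda$ (Lemma \ref{lem:lambda-cointegral-etc}(i)) and $(\id\ot\lambda)\circ\Delta=g\circ\lambda$ (Lemma \ref{lem:lambda-cointegral-etc}(ii)).

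First I would establish an auxiliary identity valid for an arbitrary $H$-input — the analogue of the chains $\alpha)$, $\beta)$ in the proof of Lemma \ref{lem:move-prod-past-integral} — in which one antipode is slid across a coproduct. Concretely, starting from coassociativity and the algebra-map property of $\Delta$ (Figure \ref{fig:Hopf-property}\,a)), I would introduce an antipode-bubble cancellation via Figure \ref{fig:Hopf-property}\,g),h) so as to expose a coproduct leg on which the cointegral can act. The fact that left and right multiplication by the group-like $g$ are coalgebra maps, recorded in \eqref{eq:leftmult-g-coalg}, would be used to commute $g$ past the coproduct whenever it is produced by the relation $(\id\ot\lambda)\circ\Delta=g\circ\lambda$.

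Then, capping the free leg of this auxiliary identity with $\lambda$, the extra coproduct collapses: in part a) the right cointegral property $(\lambda\ot\id)\circ\Delta=\eta\circ\lambda$ removes it against a counit, while in part b) it is the companion relation $(\id\ot\lambda)\circ\Delta=g\circ\lambda$ that applies, which is exactly where the group-like $g$ (or its inverse $g^{-1}=S\circ g=S^{-1}\circ g$, Lemma \ref{lem:g-grouplike-2}(ii)) enters the final picture. I expect part b) to be the left/right and $S\leftrightarrow S^{-1}$ mirror of part a), proved by the same manipulation with Figure \ref{fig:Hopf-property}\,h) replacing g).

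The main obstacle is the one that already made Lemma \ref{lem:move-prod-past-integral} a genuinely \emph{braided} version of the classical Larson--Sweedler statement: all the sliding moves must be carried out with the correct braidings dictated by \eqref{eq:mult-on-product} and by the anti-automorphism property of $S$ in Figure \ref{fig:Hopf-property}\,e),f). Keeping track of whether a given braiding or its inverse appears is what fixes the variance of the antipode ($S$ versus $S^{-1}$) and of the group-like element ($g$ versus $g^{-1}$) in the two identities; getting this bookkeeping right, rather than any deep structural input, is the delicate part.
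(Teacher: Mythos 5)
Your proposal is correct and is essentially the paper's own argument: the paper likewise first establishes an auxiliary chain of equalities using the algebra-map property of $\Delta$ (Figure \ref{fig:Hopf-property}\,a)), the sliding of $S$ past a coproduct (Figure \ref{fig:Hopf-property}\,f)), the bubble-property (Figure \ref{fig:Hopf-property}\,g)), and the two defining relations of $\lambda$, and then recovers the stated identity by composing with $(\id\otimes\mu)\circ(\Delta\otimes\id)$ and invoking Lemma \ref{lem:H-intertwiner}\,a); part b) is the mirrored computation with $S^{-1}$ and Figure \ref{fig:Hopf-property}\,h), exactly as you predict. One correction to your citations: since Lemma \ref{lem:move-coprod-past-lam} belongs to the direction ``Hopf data $\Rightarrow$ pentagon'' (it feeds into Proposition \ref{prop:Hopf-to-pent}), the relations $(\lambda\otimes\id)\circ\Delta=\eta\circ\lambda$ and $(\id\otimes\lambda)\circ\Delta=g\circ\lambda$ must be taken from the hypotheses, namely condition b) of Theorem \ref{thm:main1}\,(i), and \emph{not} from Lemma \ref{lem:lambda-cointegral-etc}, which is proved \emph{from} the pentagon equations in the converse direction and would render the argument circular; your appeals to Lemma \ref{lem:g-grouplike-2} and to \eqref{eq:leftmult-g-coalg} are already in the correct direction.
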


\begin{proof}
For a) note that
\be
\scanPIC{37a}
~\overset{(1)}=~
\scanPIC{37b}
~\overset{(2)}=~
\scanPIC{37c} 
~\overset{(3)}=~
\scanPIC{37d} 
\quad ,
\ee
where in step 1 the algebra-map property of $\Delta$ was used and $S$ was moved past a coproduct, and step 2 follows from the bubble-property. Step 3 employs property b) in Theorem \ref{thm:main1}\,(i). Composing the above equation with $(\id \otimes \mu) \circ (\Delta \otimes \id)$ gives the first equality in the statement of the lemma (recall Lemma \ref{lem:H-intertwiner}\,a)). The analogous argument for b) is
\be
\scanPIC{38a}
~=~
\scanPIC{38b}
~=~
\scanPIC{38c}
~=~
\scanPIC{38d}
\quad .
\ee
\end{proof}

Let $\Lambda$ be given by $(\lambda \otimes \Gamma) \circ \coev_H$ as in Lemma \ref{lem:integral-cointegral}. 

\begin{lemma}\label{lem:Lam-copair-nondeg}
The copairing $\Delta \circ \Lambda : \one \to H \otimes H$ is non-degenerate.
\end{lemma}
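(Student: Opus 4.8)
The plan is to show that the copairing $\Delta \circ \Lambda$ is non-degenerate by exhibiting an explicit morphism that realises the induced map $H^\vee \to H$ as an isomorphism. The natural candidate comes from the data already at hand: since $\Gamma : H^\vee \to H$ is a Hopf-algebra isomorphism (Theorem \ref{thm:main1}\,a)) and $\lambda$ is a right cointegral whose normalisation condition $(\lambda \otimes \lambda)\circ(\id_H \otimes \Gamma)\circ\coev_H = \id_\one$ holds, I expect the non-degeneracy of $\Delta\circ\Lambda$ to be essentially the statement that the bilinear form
\be
  d ~=~ (\lambda \otimes \lambda)\circ \mu_{H\otimes H}^{\,\text{suitable}} ~:~ H \otimes H \to \one
\ee
(built from $\lambda$, the product, and possibly $\Gamma$) is a duality pairing for $\Delta\circ\Lambda$. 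Concretely, I would first guess $d$ and then verify the zig-zag identity $(\id_H \otimes d)\circ(\Delta\circ\Lambda \otimes \id_H) = \id_H$ using the integral/cointegral machinery.

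First I would rewrite $\Delta \circ \Lambda$ using the definition $\Lambda = (\lambda \otimes \Gamma)\circ\coev_H$ together with the fact, from Lemma \ref{lem:integral-cointegral}, that $\Lambda$ is a \emph{left integral}. The left-integral property $\mu\circ(\id \otimes \Lambda) = \Lambda\circ\eps$ (and its consequence for $\Delta\circ\Lambda$) is the key structural input: it lets me slide products past $\Lambda$. I would then invoke Lemma \ref{lem:move-prod-past-integral}\,a), which is exactly the tool for moving a multiplication past a left integral, to recognise $\Delta\circ\Lambda$ as a coproduct applied to an integral and thereby relate the two tensor factors. The cointegral identities in Lemma \ref{lem:move-coprod-past-lam} should then let me contract $d$ against $\Delta\circ\Lambda$, collapsing the diagram to a single strand.

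The second key step is to invoke Lemma \ref{lem:b-UU-secial-duality-property}, the general principle that once one zig-zag identity holds together with a near-symmetry $c_{H,H}^{\pm 1}\circ b = (\id \otimes \tau)\circ b$ for some automorphism $\tau$, the copairing is automatically non-degenerate. This is attractive because it means I only need to verify \emph{one} side of the duality and a quasi-cocommutativity-type relation for $\Delta\circ\Lambda$, rather than both zig-zags. The symmetry property of $\Delta\circ\Lambda$ under the braiding should follow from $\Delta\circ S = (S\otimes S)\circ c_{H,H}\circ\Delta$ (the coalgebra anti-automorphism property \eqref{ap}) combined with the group-like behaviour of $g$ from Lemma \ref{lem:g-grouplike-2} and condition c) of Theorem \ref{thm:main1}, which ties $\Gamma^\vee$ to $\Gamma$ through $S^2$, $\theta_H$ and $\Ad_{g^{-1}}$; this is precisely the sort of automorphism $\tau$ that Lemma \ref{lem:b-UU-secial-duality-property} allows.

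The main obstacle I anticipate is getting the correct form of the pairing $d$ and handling the antipode and group-like element $g$ bookkeeping in the braided setting, where $S^2 \neq \id$ in general and the distinguished group-like $g$ obstructs naive unimodularity. In the classical (unbraided, unimodular) case this is Radford's result that the coproduct of an integral is a non-degenerate copairing, but here the twist $\theta_H$, the square of the antipode, and $\Ad_{g^{-1}}$ all intervene, so the symmetry relation feeding into Lemma \ref{lem:b-UU-secial-duality-property} will carry a nontrivial automorphism $\tau$ rather than being strictly (co)symmetric. I would therefore spend most of the effort confirming that the relevant $\tau$ is genuinely an automorphism (it is a composite of invertible maps) and that the single zig-zag identity reduces, via Lemmas \ref{lem:move-prod-past-integral} and \ref{lem:move-coprod-past-lam} and the normalisation $\lambda\circ\Lambda' = \id_\one$, to the identity on $H$.
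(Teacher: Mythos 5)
Your strategy (exhibit an explicit evaluation and use Lemma \ref{lem:b-UU-secial-duality-property} so that only one zig-zag needs checking) is reasonable in outline, but it has a genuine gap exactly at its central step. To invoke Lemma \ref{lem:b-UU-secial-duality-property} you must \emph{prove} the near-symmetry $c_{H,H}^{\pm 1}\circ(\Delta\circ\Lambda) = (\id\ot\tau)\circ(\Delta\circ\Lambda)$ for some automorphism $\tau$. This is a braided version of Radford's formula for the coproduct of an integral (classically $\Lambda_{(2)}\ot\Lambda_{(1)} = \Lambda_{(1)}\ot S^2(\Lambda_{(2)})\,g$), and it does \emph{not} follow from the ingredients you cite. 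Condition c) of Theorem \ref{thm:main1} yields the analogous relation for $\gamma$, namely \eqref{eq:gamma-braid}; but $\Delta\circ\Lambda$ is obtained by contracting $\lambda$ against a \emph{product} of legs coming from two copies of $\gamma^{-1}$ (expand $\Delta$ on one leg of $\gamma^{-1}$ via \eqref{eq:pentagon-4}), so transporting the symmetry of $\gamma$ to $\Delta\circ\Lambda$ requires knowing how $\lambda\circ\mu$ behaves when its two arguments are braided past one another --- a Nakayama-type commutation rule for the cointegral. In the paper that rule is precisely Lemma \ref{lem:flip-lambda}, and its proof \emph{uses} Lemma \ref{lem:Lam-copair-nondeg}, i.e.\ the very non-degeneracy you are trying to establish. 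So the route you sketch, if filled in with the tools available at this point of the paper, is circular; making it non-circular would require an independent proof of the flip formula for $\Delta\circ\Lambda$, which you do not supply and which is essentially of the same strength as the second zig-zag you are trying to avoid.

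The paper's proof bypasses Lemma \ref{lem:b-UU-secial-duality-property} entirely and verifies \emph{both} duality conditions for the explicit pair $b = \Delta\circ\Lambda$, $d = \lambda\circ\mu\circ(S\ot\id)$ (note that the correct evaluation uses a single $\lambda$ together with $\mu$ and $S$; your candidate $(\lambda\ot\lambda)\circ\mu_{H\ot H}$ does not even have source $H\ot H$). The first zig-zag follows from Lemma \ref{lem:move-prod-past-integral}, the cointegral property of $\lambda$ and the normalisation $\lambda\circ\Lambda = \id_\one$ --- this part matches your plan. The second zig-zag is then obtained by a short bootstrap: inserting the first zig-zag into $\lambda\circ(-)\circ\Lambda$ and using that $\Lambda$ is a left integral gives $(\lambda\circ S\circ\Lambda)\cdot(\lambda\circ\Lambda) = \lambda\circ\Lambda$, hence $\lambda\circ S\circ\Lambda = \id_\one$; the computation analogous to the first zig-zag then shows the second one equals $(\lambda\circ S\circ\Lambda)\cdot\id = \id$. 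If you want to salvage your write-up, replace the unproved symmetry input by this bootstrap argument.
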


\begin{proof}
We will show that $b = \Delta \circ \Lambda$ is the coevaluation of a duality with evaluation given by $d = \lambda \circ \mu \circ (S \ot \id)$. One of the duality conditions follows from
\be
  \scanPIC{53a}
  ~\overset{ \text{Lem.\,\ref{lem:move-prod-past-integral}}}{=} ~
  \scanPIC{53b}
  ~\overset{ \text{$\lambda$ coint.}}{=}~ 
  \scanPIC{53c}
  ~\overset{(*)}{=}~ \id_H \ ,
\ee
where (*) amounts to the normalisation condition in property b) of Theorem \ref{thm:main1}\,(i). Inserting this identity in $\lambda \circ (-) \circ \Lambda$ and using that $\Lambda$ is a left integral results in $(\lambda \circ S \circ \Lambda) \cdot (\lambda \circ \Lambda) = \lambda \circ \Lambda$. Together with the normalisation condition one learns that $\lambda \circ S \circ \Lambda = \id_\one$. This latter identity is needed when verifying the second duality condition. Namely, after a calculation analogous to the one above (using again Lemma \ref{lem:move-prod-past-integral}\,a) one arrives at $(\id \otimes d) \circ (b \otimes \id) = (\lambda \circ S \circ \Lambda) \cdot \id$.
\end{proof}

\begin{lemma}\label{lem:right-int-from-left-int}
Let $\alpha : H \to \one$ be given by $\alpha = \ev_H \circ (\Gamma^{-1} \otimes g)$. Then
\begin{itemize}
\item[{(i)}] $\alpha \circ \mu = \alpha \otimes \alpha$.
\item[{(ii)}] $\tilde\Lambda = (\id \otimes \alpha )\circ \Delta \circ \Lambda$ is a right integral.
\end{itemize}
\end{lemma}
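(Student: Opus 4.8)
The plan is to handle the two parts separately. Part~(i) is a short multiplicativity check, while part~(ii) reduces, after one rewriting, to the integral/cointegral correspondence already established in Lemma~\ref{lem:integral-cointegral}.

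For part~(i) the point is that $\alpha$ is just the pairing of $\Gamma^{-1}(-)$ against the group-like element $g$. I would first record $\alpha \circ \Gamma = \ev_H \circ (\id_{H^\vee} \otimes g)$, obtained by cancelling $\Gamma^{-1}\circ\Gamma = \id$. Since $\Gamma$ is a Hopf-algebra isomorphism (Theorem~\ref{thm:main1}\,(a)), $\Gamma^{-1}$ is an algebra map, so $\alpha\circ\mu_H = \ev_H\circ(\mu_{H^\vee}\otimes g)\circ(\Gamma^{-1}\otimes\Gamma^{-1})$. Using that $\mu_{H^\vee}$ is dual to $\Delta_H$ (see \eqref{eq:H^-structuremaps}) together with $\Delta_H\circ g = g\otimes g$ from Lemma~\ref{lem:g-grouplike-2}, the evaluation against $g$ splits into two copies and yields $\alpha\circ\mu = \alpha\otimes\alpha$. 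The only categorical subtlety is the braiding switch hidden in $\mu_{H^\vee}$; it is harmless because $g\otimes g$ is braiding-invariant ($c_{H,H}\circ(g\otimes g) = g\otimes g$ by naturality and $c_{\one,\one}=\id$). I expect this part to be routine.

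The substance is part~(ii), and my plan is to rewrite $\tilde\Lambda$ into the exact shape of the left integral $\Lambda$ of Lemma~\ref{lem:integral-cointegral}. Substituting $\Lambda = (\lambda\otimes\Gamma)\circ\coev_H$ and using that $\Gamma$ is a coalgebra map turns $\Delta_H\circ\Gamma$ into $(\Gamma\otimes\Gamma)\circ\Delta_{H^\vee}$; the cap $\alpha$ on the right leg collapses via $\alpha\circ\Gamma = \ev_H(-\otimes g)$, and $(\id_{H^\vee}\otimes \ev_H(-\otimes g))\circ\Delta_{H^\vee}$ equals precomposition with left multiplication by $g$ (here the switch convention of \eqref{eq:H^-structuremaps} enters). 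Sliding this across $\coev_H$ by the duality identity $(f\otimes\id)\circ\coev_H = (\id\otimes f^\vee)\circ\coev_H$ gives the clean formula
\be
  \tilde\Lambda = \big((\lambda\circ {}_gM)\otimes\Gamma\big)\circ\coev_H \ ,
\ee
i.e.\ $\tilde\Lambda$ is built from $\lambda\circ {}_gM$ exactly as $\Lambda$ is built from $\lambda$.

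It then remains to show two things. First, that $\lambda\circ {}_gM$ is a \emph{left} cointegral, i.e.\ $(\id\otimes(\lambda\circ {}_gM))\circ\Delta = \eta\circ(\lambda\circ {}_gM)$. I would verify this by precomposing the defining relation $(\id\otimes\lambda)\circ\Delta = g\circ\lambda$ of $g$ (Theorem~\ref{thm:main1}\,(b)) with ${}_gM$: the fact that left multiplication by the group-like $g$ is a coalgebra map (\eqref{eq:leftmult-g-coalg}) lets me pull ${}_gM$ through $\Delta$, identify a leading factor ${}_gM$ on both sides, and cancel it using that ${}_gM$ is invertible (Lemma~\ref{lem:g-grouplike-2}); what remains is precisely the left-cointegral identity. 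Second, that feeding a left cointegral into $(-\otimes\Gamma)\circ\coev_H$ produces a \emph{right} integral; this is the mirror image of Lemma~\ref{lem:integral-cointegral} (right cointegral $\mapsto$ left integral) and follows by the same computation with left and right interchanged, since that argument only uses that $\Gamma$ is a Hopf isomorphism and the (co)integral and counit properties available here. I expect the main obstacle to be bookkeeping rather than conceptual: one must track the braidings and, above all, the switch in the dual structure maps of $H^\vee$ correctly throughout the rewriting of $\tilde\Lambda$, and recognise that the resulting functional $\lambda\circ {}_gM$ is exactly a left cointegral. Once the rewriting is in place, both remaining verifications are short and rest only on $g$ being group-like and invertible.
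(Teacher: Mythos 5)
Your proof is correct, but it takes a genuinely different route from the paper's. The paper first establishes the auxiliary identity $\mu \circ (\Lambda \otimes \id_H) = \Lambda \circ \alpha$ (from conditions a) and b) of Theorem~\ref{thm:main1}\,(i)); part (i) then follows by associativity together with the normalisation $\lambda \circ \Lambda = \id_\one$, and part (ii) by a further string-diagram computation which along the way produces $\alpha \circ \eta = \id_\one$ and the comultiplicative inverse $\alpha \circ S$ of $\alpha$. You instead prove (i) directly from $\Gamma^{-1}$ being an algebra map, the duality definition of $\mu_{H^\vee}$, and $\Delta \circ g = g \otimes g$ — no integral, cointegral or normalisation enters — and for (ii) you identify $\tilde\Lambda = \big((\lambda \circ {}_gM) \otimes \Gamma\big) \circ \coev_H$, check that $\lambda \circ {}_gM$ is a left cointegral via \eqref{eq:leftmult-g-coalg} and invertibility of ${}_gM$, and then invoke the left/right mirror of Lemma~\ref{lem:integral-cointegral}. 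All three steps are sound with the paper's conventions: the switch in \eqref{eq:H^-structuremaps} is exactly what turns the $g$-cap into \emph{left} multiplication, and even if one mistracked it and obtained $\lambda \circ M_g$ instead, that too is a left cointegral by the same cancellation argument, so the proof is robust; likewise the mirror of Lemma~\ref{lem:integral-cointegral} (left cointegral $\mapsto$ right integral) is not stated in the paper but follows by literally mirroring its proof, with the left-cointegral property replacing the right one. Your route is the more structural one — it exhibits $\tilde\Lambda$ as the mirror construction applied to the shifted cointegral $\lambda \circ {}_gM$, explaining \emph{why} a right integral comes out — while the paper's computational route has the advantage that its by-products ($\mu \circ (\Lambda \otimes \id_H) = \Lambda \circ \alpha$, $\alpha \circ \eta = \id_\one$, and the comultiplicative invertibility of $\alpha$) are reused later, e.g.\ in the proof of Lemma~\ref{lem:flip-lambda}; if your proof replaced the paper's, those small facts would need separate (easy) derivations.
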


\begin{proof}
We will need the identity 
\be \label{eq:right-int-from-left-int-aux1}
  \mu \circ (\Lambda\otimes\id) = \Lambda \circ \alpha \ ,
\ee
which follows from
\be
  \scanpic{99a}
  ~\overset{\text{Thm.\,\ref{thm:main1}\,(i)\,a)}}{=} ~
  \scanpic{99b}
  ~\overset{\text{Thm.\,\ref{thm:main1}\,(i)\,b)}}{=} ~
  \scanpic{99c}
  \quad .
\ee
For part (i) note that
\be
  \Lambda \otimes \alpha \otimes \alpha
  \overset{(1)}= 
  \mu \circ (\mu \otimes \id) \circ ( \Lambda \otimes \id \otimes \id)
  =
  \mu \circ ( \Lambda \otimes \mu )
  \overset{(3)}= 
  \Lambda \otimes (\alpha \circ \mu ) \ .
\ee
where step 1 follows from using \eqref{eq:right-int-from-left-int-aux1} twice, and step 3 from using \eqref{eq:right-int-from-left-int-aux1} once more. Composing this identity with $\lambda$ from the left and using the normalisation condition $\lambda \circ \Lambda = \id_\one$ from Theorem \ref{thm:main1}\,(i)\,b) proves part (i). 

For part (ii) we will first show that $\beta = \alpha \circ S : H \to \one$ is a comultiplicative inverse to $\alpha$, i.e.\ $(\alpha \otimes \beta) \circ \Delta = \eps$. Namely, insert \eqref{eq:right-int-from-left-int-aux1} into $\lambda \circ (-) \circ \eta$ to get $\lambda \circ \Lambda = (\lambda \circ \Lambda) \cdot (\alpha \circ \eta)$. Since $\lambda \circ \Lambda = \id_\one$, we get $\alpha \circ \eta = \id_\one$. Composing the equality in (i) with $(\id \otimes S) \circ \Delta$ and using the bubble-property of $S$ gives $(\alpha \otimes \beta) \circ \Delta = \eps$.
Part (ii) now follows from the equalities
\be
\scanpic{96a}
~~=~
\scanpic{96b}
~=~~
\scanpic{96c}
~~=~
\scanpic{96d}
\quad ,
\ee
together with \eqref{eq:right-int-from-left-int-aux1}.
\end{proof}

\begin{lemma}\label{lem:flip-lambda}
We have
$$
  \scanPIC{43a}
  ~=~
  \scanPIC{43b}
  \quad .
$$
\end{lemma}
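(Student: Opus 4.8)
The plan is to read the asserted equation as a \emph{flip} (symmetry) property of the pairing $\lambda \circ \mu : H \ot H \to \one$ built from the cointegral, and to prove it by exploiting the non-degeneracy established just above. Concretely, by Lemma \ref{lem:Lam-copair-nondeg} the copairing $\Delta \circ \Lambda$ is non-degenerate, with evaluation $d = \lambda \circ \mu \circ (S \ot \id)$ exhibiting $H$ as self-dual. Hence to prove an equality of two morphisms out of (or into) $H$ it suffices to check it after composing with this copairing, or equivalently after pairing one leg against the integral $\Lambda$. First I would use this to strip the cointegral out of its rigid position, reducing the two string diagrams \scanPIC{43a} and \scanPIC{43b} to an equality between morphisms in which $\lambda$ appears only through the controlled combinations provided by the preceding lemmas.

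Second, I would rewrite the left-hand side using the two cointegral identities of Lemma \ref{lem:move-coprod-past-lam}: these allow a coproduct to be pushed through $\lambda$ at the cost of producing an antipode together with an action of the distinguished group-like element $g$. In parallel, Lemma \ref{lem:move-prod-past-integral} lets me absorb any stray product into the integral $\Lambda$ (or the right integral $\tilde\Lambda$ of Lemma \ref{lem:right-int-from-left-int}), and the group-like relations $\Delta \circ g = g \ot g$, $\eps \circ g = \id_\one$, $g^{-1} = S \circ g$ from Lemma \ref{lem:g-grouplike-2} are then used to slide $g$ past coproducts and antipodes and to cancel it against its inverse. The remaining antipode bookkeeping is handled by the bubble-property (Figure \ref{fig:Hopf-property}\,g),h)) and the anti-automorphism relations \eqref{ap}, after which both sides should collapse to the same normal form; the cointegral property $(\lambda \ot \id) \circ \Delta = \eta \circ \lambda$ and the defining relation for $g$ from condition b) of Theorem \ref{thm:main1}\,(i) are the relations that close the computation.

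The main obstacle I anticipate is the careful tracking of the braiding morphisms $c_{H,H}$ together with the powers of the antipode. In the braided (non-symmetric) setting the order in which products and coproducts are braided past one another is rigid, so the group-like $g$ must be threaded through at exactly the right place; this is the same subtlety that forced the appearance of the $\delta_H$ correction and the split $S^2 = \Ad_g \circ (\cdots)$ at the end of the proof of Proposition \ref{prop:pent-to-Hopf}. Getting the $S^2$-versus-$\Ad_{g}$ accounting correct, and matching the two \emph{different} duality maps (the one from $\coev_H$ and the one coming from $\Delta \circ \Lambda$) so that the braidings cancel cleanly, is where the diagrammatic manipulation is most delicate; everything else is routine application of the integral and cointegral lemmas already proved.
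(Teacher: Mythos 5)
Your reduction strategy --- test both sides against the non-degenerate copairing $\Delta \circ \Lambda$ of Lemma \ref{lem:Lam-copair-nondeg}, simplify using Lemma \ref{lem:move-prod-past-integral} and the cointegral/group-like identities, then strip the copairing off again --- is exactly the paper's. The genuine gap lies in what you claim closes the computation: you list only condition b) of Theorem \ref{thm:main1}\,(i), the cointegral property, the bubble-property and the group-like relations for $g$. All of these are consequences of conditions a) and b) and concern only $\lambda$, $g$ and $\Lambda$; they are blind to the one piece of data whose behaviour under the braiding this lemma is actually about, namely $\Gamma$. In the paper's proof the decisive steps are precisely the ones you never invoke: step (1) of \eqref{eq:flip-lambda-aux1} uses that $\Gamma$ is a Hopf algebra isomorphism (condition a)), step (2) of \eqref{eq:flip-lambda-aux1} \emph{is} condition c), i.e.\ $\Gamma^\vee = \delta_H \circ \theta_H^{-1} \circ S_H^2 \circ \Ad_{g^{-1}} \circ \Gamma$, and \eqref{eq:flip-lambda-aux2} uses $\Gamma \circ S^\vee = S \circ \Gamma$. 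Condition c) is an independent hypothesis --- for $H = k(A)$ it is the symmetry of the bicharacter (Section \ref{sec:TY}), which does not follow from a) and b) --- so no amount of routine manipulation with the a)/b)-consequences can produce the flip identity. Your final paragraph treats the ``$\delta_H$ correction'' and the split $S^2 = \Ad_g \circ (\cdots)$ as delicate bookkeeping that careful handling of the braidings will produce; but in this direction of the argument (Hopf data $\Rightarrow$ pentagon) that relation cannot be derived, it must be imported as hypothesis c). Your own cross-reference is telling: in Proposition \ref{prop:pent-to-Hopf} that relation is derived \emph{from} the pentagon equations, whereas here the pentagon cases 14 and 16 are exactly what this lemma is needed to prove.

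A smaller but real issue: stripping the copairing is not quite enough to legitimise the reduction. The paper's two auxiliary identities carry an extra leg built from $\alpha = \ev_H \circ (\Gamma^{-1} \otimes g)$, and the final step needs, in addition to non-degeneracy of $\Delta \circ \Lambda$, that $\alpha$ has a comultiplicative inverse (established in the proof of Lemma \ref{lem:right-int-from-left-int}). Your plan mentions pairing a leg against $\Lambda$ but not this cancellation; without it the passage from the tested identity back to the asserted one does not close.
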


\begin{proof}
We will first establish equality of two auxiliary expressions and then conclude the statement of the lemma. The first expression is
\begin{align}
  &\scanPIC{50a}
  \overset{ \text{Lem.\,\ref{lem:move-prod-past-integral}}}{=} 
  \scanPIC{50b}
  \overset{ \text{$\lambda$ coint.}}{=} 
  \scanPIC{50c}
\nonumber \\
  &\overset{(1)}{=} 
  \scanPIC{50d}
  = 
  \scanPIC{50e}
  \overset{(2)}{=} 
  \scanPIC{50f}
  = 
  \scanPIC{50g}
  \quad ,
\label{eq:flip-lambda-aux1}
\end{align}
where (1) follows as $\Gamma$ is a Hopf-algebra isomorphism and $\lambda \circ \Lambda = \id_\one$ by property b) of Theorem \ref{thm:main1}\,(i) and (2) amounts to property c). The second expression is
\begin{align}
  &\scanPIC{51a} 
  = 
  \scanPIC{51b}
  \overset{ \text{Lem.\,\ref{lem:move-prod-past-integral}}}{=} \scanPIC{51c}
  \nonumber\\
  &\overset{ \text{$\lambda$ coint.}}{=} \scanPIC{51d}
  \overset{(*)}{=} \scanPIC{51e} \ ,
\label{eq:flip-lambda-aux2}
\end{align}
where (*) follows as $\lambda \circ \Lambda = \id_\one$ and $\alpha\circ\eta = \id_\one$ (as one easily checks), and since $\Gamma \circ S^\vee = S \circ \Gamma$. Together with Lemma \ref{lem:g-grouplike-2}\,(ii), we see that the right hand side of \eqref{eq:flip-lambda-aux1} equals that of \eqref{eq:flip-lambda-aux2}. Since by Lemma \ref{lem:Lam-copair-nondeg} the copairing $\Delta \circ \Lambda$ is non-degenerate, and since $\alpha$ has a comultiplicative inverse (cf.\ the proof of Lemma \ref{lem:right-int-from-left-int}), the equality of \eqref{eq:flip-lambda-aux1} and \eqref{eq:flip-lambda-aux2} implies the lemma.
\end{proof}

\begin{proof}[Proof of Proposition \ref{prop:Hopf-to-pent}]
In Section \ref{sec:pentagon} we saw that the {\bf cases 1, 2, 5, 6, 9, 11} hold automatically. In the following we verify the remaining cases.

\medskip\noindent
{\bf Cases 3, 4:} In the proof of Lemma \ref{lem:Gamma-Hopf-iso} we saw that 
\be \label{eq:Hopf-iso-P3+4}
  \mu \circ (\Gamma \otimes \Gamma) = \Gamma \circ \mu_{H^\vee}
  ~ \Leftrightarrow ~
  \text{\eqref{eq:pentagon-3}} 
  \qquad \text{and} \qquad
  (\Gamma \otimes \Gamma) \circ \Delta_{H^\vee} = \Delta \circ \Gamma
  ~ \Leftrightarrow ~
  \text{\eqref{eq:pentagon-4}} \ .
\ee
Thus cases 3 and 4 follow since $\Gamma$ is a Hopf-algebra map. 

\medskip

It turns out to be more convenient to work with $\gamma$ from \eqref{eq:gdp-via-Hopf} instead of $\Gamma$ and use \eqref{eq:pentagon-3} and \eqref{eq:pentagon-4} (which we established above). In particular, $\gamma$ satisfies all the properties stated in Lemmas \ref{lem:gamma-inv-via-S} and \ref{lem:gamma-inv}. We will use these properties below without further mention. 

Take the equalities in \eqref{eq:Gamma-dual-via-gamma} (which hold because of property c) in Theorem \ref{thm:main1}\,(i)) and substitute the definition \eqref{eq:gdp-via-Hopf} of $\gamma$ in terms of $\Gamma$. Comparing the resulting identity to the definition of $\delta$ in \eqref{eq:gdp-via-Hopf}, we get the following two expressions for $\delta$ (the first is the definition and the second already appeared in Lemma \ref{lem:delta-braid} when proving the converse of Theorem \ref{thm:main1}\,(i)):
\be\label{eq:delta-expression-ab}
\raisebox{3em}{\text{a)}} \quad
\delta ~=~ \scanPIC{31a}
\qquad , \qquad
\raisebox{3em}{\text{b)}} \quad
\delta ~=~ \scanPIC{31b}
\ee

\medskip\noindent
{\bf Cases 7, 8, 10, 12:} These pentagon identities are direct consequences of \eqref{eq:pentagon-3} and \eqref{eq:pentagon-4}. For case 7 consider the equalities
\begin{align}
  &\text{rhs of \eqref{eq:pentagon-7}} 
  \overset{(1)}{=} 
  \scanpic{32a}
  \overset{(2)}{=} 
  \scanpic{32b}
\nonumber\\
  &\overset{(3)}{=} 
  \scanpic{32c}
  \overset{(4)}{=} 
  \scanpic{32d}
  = 
  \text{lhs of \eqref{eq:pentagon-7}} \ .
\end{align}
In step (1), expression (a) for $\delta$ was substituted and associativity was used to move $g$ past the product. In step (2), property \eqref{eq:pentagon-3} of $\gamma$ and the algebra-map property of $\Delta$ was used, and a pair $S^{-1} \circ S$ has been inserted. In step (3), $S$ and $S^{-1}$ have been moved past products and coproducts. In step (4) one produces a loop to which one can apply the bubble-property. For case 8 we have, using expression (b) for $\delta$,
\be
  \text{lhs of \eqref{eq:pentagon-8}} 
  ~=~ 
  \scanpic{34a} 
  ~\overset{\eqref{eq:pentagon-3}}{=}~
  \scanpic{34b} 
  ~\overset{\eqref{eq:leftmult-g-coalg}}{=}~ 
  \text{rhs of \eqref{eq:pentagon-8}} \ .
\ee
For case 10 use once more expression (b) for $\delta$, as well as \eqref{eq:pentagon-4} (step 1) and the algebra-map property of $\Delta$ (step 2):
\be
  \text{rhs of \eqref{eq:pentagon-10}} 
  ~=~ 
  \scanpic{35a} 
  ~\overset{(1)}{=}~
  \scanpic{35b} 
  ~\overset{(2)}{=}~
  \scanpic{35c} 
  ~=~ 
  \text{lhs of \eqref{eq:pentagon-10}} \ .
\ee
The computation for case 12 is
\be
  \text{lhs of \eqref{eq:pentagon-12}}
  ~\overset{\text{\eqref{eq:gdp-via-Hopf}}}{\underset{\text{\&\,assoc.}}{=}}~ 
  \scanPIC{39a} 
    ~\overset{\text{\eqref{eq:pentagon-3}}}=~ 
  \scanPIC{39b} 
    ~\overset{\text{\eqref{eq:gdp-via-Hopf}}}=~ 
  \text{rhs of \eqref{eq:pentagon-12}} \ .
\ee

\medskip\noindent
{\bf Cases 13, 15:} In these cases we will make use of Lemma \ref{lem:move-coprod-past-lam}.
Firstly,
\begin{align}
  \text{rhs of \eqref{eq:pentagon-13}} 
   & 
  \overset{\text{(\ref{eq:delta-expression-ab}\,a)}}{\underset{\text{\eqref{eq:gdp-via-Hopf}}}{=}}~ 
  \scanpic{40a} 
  ~\overset{(1)}{=}~ 
  \scanpic{40b} 
  ~\overset{(2)}{=}~ 
  \scanpic{40c} 
  \nonumber \\
  &\overset{(3)}{=}
  \text{lhs of \eqref{eq:pentagon-13}} \ ,
\end{align}
where step 1 is a deformation of the string diagram and an application of Lemma \ref{lem:move-coprod-past-lam}; in step 2, property \eqref{eq:pentagon-4} of $\gamma$ is applied, $S$ is moved past the resulting coproduct, the two $g$'s cancel (recall Lemma \ref{lem:g-grouplike-2}\,(ii)), and an $S^{-1} \circ S$-pair is cancelled. In step 3 the bubble-property is used to remove a loop.
Secondly,
\be
  \text{lhs of \eqref{eq:pentagon-15}} 
  ~=~ 
  \scanPIC{41a} 
  ~\overset{\eqref{eq:pentagon-4}}{=} ~
  \scanPIC{41b} 
  ~\overset{\text{Lem.\,\ref{lem:move-coprod-past-lam}}}{=} ~
  \scanPIC{41c} 
  ~=~ 
  \text{rhs of \eqref{eq:pentagon-15}} \ .
\ee

\medskip\noindent
{\bf Cases 14, 16:} First note that the statement of Lemma \ref{lem:flip-lambda} can be rewritten as
\be\label{eq:flip-lambda-alt}
  \scanPIC{42a} 
  ~=~ 
  \scanPIC{42b} 
  \quad .
\ee
Next we turn to case 14. Because both sides of \eqref{eq:pentagon-14} commute with the right $H$-action on the right factor of $H \otimes H$, it is enough to show \eqref{eq:pentagon-14} after applying $\id \otimes \eta$ from the right. Using also Lemma \ref{lem:H-intertwiner} we see that
\be\label{eq:pentagon-14-rewrite}
  \text{\eqref{eq:pentagon-14}} \qquad \Leftrightarrow \qquad
  \scanPIC{44a}
  ~=~ 
  \scanPIC{44b} 
  \quad .
\ee
Starting from the right hand side, we compute
\begin{align}
  \text{rhs of \eqref{eq:pentagon-14-rewrite}} 
  &
  \overset{\text{(\ref{eq:delta-expression-ab}\,a)}}{\underset{\text{\eqref{eq:gdp-via-Hopf}}}{=}}~ 
  \scanPIC{45a}
  ~\overset{\eqref{eq:flip-lambda-alt}}{=} ~
  \scanPIC{45b}
  ~\overset{\eqref{eq:pentagon-4}}{=} ~
  \scanPIC{45c}
 \nonumber \\
  &\overset{(*)}{=} ~
  \scanPIC{45d}
  ~\overset{\eqref{eq:flip-lambda-alt}}{=} ~
  \text{lhs of \eqref{eq:pentagon-14-rewrite}} \ .
\end{align}
In (*) we used that $M_g$ is a coalgebra map (see below Lemma \ref{lem:g-grouplike-2}) and also moved $S$ past the coproduct. 

Finally, we show that case 16 of the pentagon holds. By Lemma \ref{lem:H-intertwiner}, both sides of \eqref{eq:pentagon-16} commute with the left $H$-action on the left factor of $H \otimes H$. It is thus enough to show \eqref{eq:pentagon-16} after applying $\eta \otimes \id$ from the right, that is,
\be\label{eq:pentagon-16-rewrite}
  \text{\eqref{eq:pentagon-16}} \qquad \Leftrightarrow \qquad
  \scanPIC{46a}
  ~=~ 
  \scanPIC{46b} 
  \quad .
\ee
We rewrite the bottom part of the right hand side as
\be \label{eq:case16-aux1}
  \scanPIC{47a}
  ~\overset{\text{(\ref{eq:delta-expression-ab}\,b)}}{=} ~
  \scanPIC{47b}
  ~\overset{\eqref{eq:pentagon-4}}{=} ~
  \scanPIC{47c}
  \quad .
\ee
For the second morphism $\phi$ in \eqref{eq:pentagon-16-rewrite} we will use the expression
\be\label{eq:case16-aux2}
  \phi ~=~ 
  \scanPIC{48a}
  ~\overset{\eqref{eq:delta-expression-ab}}{=} ~
  \scanPIC{48b}
  \quad .
\ee
Inserting \eqref{eq:case16-aux1} and \eqref{eq:case16-aux2} into the right hand side of \eqref{eq:pentagon-16-rewrite} gives
\begin{align}
&\text{rhs of \eqref{eq:pentagon-16-rewrite}} 
  ~=~ 
  \scanpic{49a}
  ~\overset{\eqref{eq:pentagon-4}}{=} ~
  \scanpic{49b}
  ~=~ 
  \scanpic{49c}
 \nonumber \\
  &\overset{ \text{Lem.\,\ref{lem:move-coprod-past-lam}\,b)}}{=} ~
  \scanpic{49d}
  ~\overset{\eqref{eq:flip-lambda-alt}}{=} ~
  \scanpic{49e}
  ~\overset{\eqref{eq:gdp-via-Hopf}}{=} ~
  \scanpic{49f}
 \nonumber \\
  &\overset{ \text{Lem.\,\ref{lem:integral-cointegral}}}{=} ~
  \scanpic{49g}
  ~=~ 
  \text{lhs of \eqref{eq:pentagon-16-rewrite}} \ .
\end{align}
In the last equality the normalisation condition in property b) of Theorem \ref{thm:main1}\,(i) is used.

This completes the proof of the proposition.
\end{proof}

In practice it will be easier to avoid working with $\Gamma$ and the dual Hopf algebra $H^\vee$ and to use $\gamma : \one \to H \otimes H$ instead, cf.\,\eqref{eq:Hopf-via-gdp}. To be explicit, we reformulate Theorem \ref{thm:main1}\,(i) via $\gamma$ and denote the resulting monoidal category by $\Cc(H,\gamma,\lambda)$ instead of $\Cc(H,\Gamma,\lambda)$.

\begin{corollary} \label{cor:main1-i}
Let $\gamma : \one \to H \otimes H$ and $\lambda : H \to \one$ be two morphisms such that
\begin{itemize}
\item[a)] 
$\gamma$ is non-degenerate and a Hopf-copairing, that is, it satisfies
$(\id \otimes \eps) \circ \gamma = \eta = (\eps \otimes \id) \circ \gamma$, 
$(\id \otimes S) \circ \gamma = (S \otimes \id) \circ \gamma$ and 
\eqref{eq:pentagon-3}, \eqref{eq:pentagon-4},
\item[b)] 
$\lambda$ is a right cointegral for $H$, such that there exists $g : \one \to H$ with $(\id \otimes \lambda) \circ \Delta_H = g \circ \lambda$, and such that $(\lambda \otimes \lambda) \circ (\id \otimes S) \circ \gamma = \id_\one$,
\item[c)] 
$\gamma$ 
satisfies $c_{H,H}^{-1} \circ \gamma = (\id \otimes (S^2 \circ \Ad_g^{-1})) \circ \gamma$.
\end{itemize}
Then the family of natural isomorphism $\alpha_{A,B,C} : A \otimes_\Cc (B \otimes_\Cc C) \to (A \otimes_\Cc B) \otimes_\Cc C$ given in  Section \ref{results} with
$$
  \delta = (\id \otimes (M_g \circ S^2)) \circ \gamma
  \quad , \quad
  \phi = (\id \otimes \lambda) \circ (S \otimes \mu) \circ (\gamma \otimes \id)
$$
are associativity isomorphisms for $\otimes_\Cc$. 
\end{corollary}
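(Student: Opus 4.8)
The plan is to read the corollary as Theorem~\ref{thm:main1}(i) rewritten in terms of the non-degenerate copairing $\gamma$ rather than the Hopf-algebra isomorphism $\Gamma:H^\vee\to H$, and to reduce it to Proposition~\ref{prop:Hopf-to-pent}. Since the two sets of data $(\Gamma,\lambda)$ and $(\gamma,\delta,\phi)$ are interchangeable through \eqref{eq:gdp-via-Hopf} and \eqref{eq:Hopf-via-gdp}, essentially all of the work has already been done in Propositions~\ref{prop:pent-to-Hopf} and~\ref{prop:Hopf-to-pent}; what remains is to set up the dictionary and check that conditions a)--c) of the corollary translate into conditions a)--c) of Theorem~\ref{thm:main1}(i).

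First I would define $\Gamma:H^\vee\to H$ from the given $\gamma$ by \eqref{eq:Hopf-via-gdp}; non-degeneracy of $\gamma$ is exactly what makes $\Gamma$ invertible. Before anything else one should verify that $\gamma$ admits a multiplicative inverse in $H\otimes H$ given by $(S\otimes\id)\circ\gamma$: this is not assumed but follows by rerunning the short computation in the proof of Lemma~\ref{lem:gamma-inv-via-S}(ii), which uses only the counit property, \eqref{eq:pentagon-3} and the bubble-property -- all part of condition a). With $\gamma^{-1}$ in hand the associativity isomorphisms of Section~\ref{results} are defined. Next I translate the three conditions. Condition a) is precisely the statement that $\Gamma$ is a Hopf-algebra isomorphism: by the equivalences in the proofs of Lemmas~\ref{lem:gamma-inv-via-S} and~\ref{lem:Gamma-Hopf-iso}, the counit and antipode identities for $\gamma$ give that $\Gamma$ preserves unit, counit and antipode, while \eqref{eq:pentagon-3} and \eqref{eq:pentagon-4} are equivalent to $\Gamma$ being an algebra and a coalgebra map. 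For condition b) the cointegral and distinguished group-like parts are literally the same; the normalisation matches once one notes the bending identity $(\id\otimes\Gamma)\circ\coev_H=(\id\otimes S)\circ\gamma=\gamma^{-1}$, so that $(\lambda\otimes\lambda)\circ(\id\otimes S)\circ\gamma=\id_\one$ becomes $(\lambda\otimes\lambda)\circ(\id\otimes\Gamma)\circ\coev_H=\id_\one$ (cf.\ Lemma~\ref{lem:norm-cond}). Condition c) in the form $c_{H,H}^{-1}\circ\gamma=(\id\otimes(S^2\circ\Ad_g^{-1}))\circ\gamma$ is exactly \eqref{eq:gamma-braid}, which is shown to be equivalent to condition c) of Theorem~\ref{thm:main1}(i) by the chain of equalities \eqref{eq:Gamma-dual-via-gamma}.

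It then remains to check that the $\delta$ and $\phi$ singled out in the corollary coincide with those of \eqref{eq:gdp-via-Hopf} when $\Gamma$ is the map just defined; equivalently, that they agree with the expressions obtained in Lemma~\ref{lem:delta+phi-via-gamma}. Granting this, the hypotheses of Proposition~\ref{prop:Hopf-to-pent} are met and all $16$ pentagon cases follow, proving that the $\alpha_{A,B,C}$ are associativity isomorphisms.

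The main obstacle is bookkeeping with the duality conventions: keeping straight where the antipode $S$ and the isomorphism $\delta_H:H\to H^{\vee\vee}$ enter when passing between $\gamma:\one\to H\otimes H$ and $\Gamma:H^\vee\to H$, and in particular verifying the bending identity $(\id\otimes\Gamma)\circ\coev_H=(\id\otimes S)\circ\gamma$ together with the equality of the two presentations of $\delta$ and $\phi$. These are purely diagrammatic manipulations of the kind already carried out in the two preceding propositions, so no genuinely new difficulty arises; the only care needed is to confirm that the $S$-insertions in condition b) and in the formula for $\delta$ are consistent with the convention \eqref{eq:Hopf-via-gdp} chosen for $\Gamma$.
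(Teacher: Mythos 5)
Your proposal is correct and follows essentially the same route as the paper: the paper's own proof of Corollary \ref{cor:main1-i} is precisely the translation between $\gamma$ and $\Gamma$ via \eqref{eq:gdp-via-Hopf} and \eqref{eq:Hopf-via-gdp}, matching conditions a)--c) through \eqref{eq:Hopf-iso-P3+4} and \eqref{eq:Gamma-dual-via-gamma}, and then invoking Proposition \ref{prop:Hopf-to-pent}. Your additional care in verifying the multiplicative inverse $\gamma^{-1}=(S\otimes\id)\circ\gamma$ and the agreement of the two presentations of $\delta$ and $\phi$ just makes explicit what the paper treats as immediate.
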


\begin{proof}
Substituting the expression for $\gamma$ in terms of $\Gamma$ in \eqref{eq:gdp-via-Hopf} into a)--c) above, and conversely inserting the expression for $\Gamma$ in terms of $\gamma$ in \eqref{eq:Hopf-via-gdp} into Theorem \ref{thm:main1}\,(i) a)--c) immediately shows the equivalence of the conditions (for \eqref{eq:pentagon-3} and \eqref{eq:pentagon-4} see \eqref{eq:Hopf-iso-P3+4}, and to compare condition c) use \eqref{eq:Gamma-dual-via-gamma}).
\end{proof}

\subsection{Inverse associativity isomorphisms}

For later use in Section \ref{sec:braiding} we record the inverses of the five non-trivial associativity isomorphisms in  Section \ref{results}. They are
\begin{align}
&  \alpha^{-1}_{A^0,B^1,C^0} ~=~ \scanPIC{54aa}
\qquad , \qquad
  \alpha^{-1}_{A^1,B^1,C^1} ~=~   \scanPIC{54ae}
\nonumber\\
&  \alpha^{-1}_{A^0,B^1,C^1} ~=~   \scanPIC{54ab}
  \quad , \quad
  \alpha^{-1}_{A^1,B^0,C^1} ~=~   \scanPIC{54ac}
  \quad , \quad
  \alpha^{-1}_{A^1,B^1,C^0} ~=~   \scanPIC{54ad}
  \quad,
\label{eq:inverse-assoc}  
\end{align}
where
\be \label{eq:tilde-delta-def}
  \tilde\delta = (\id \otimes {}_{g^{-1}}M) \circ \gamma^{-1} \ .
\ee
To see that the expression for $\tilde\delta$ indeed gives the inverse associator, substitute $\delta = (S \otimes M_g) \circ \gamma^{-1}$ (Lemma \ref{lem:delta+phi-via-gamma}) into $\alpha_{A^1,B^0,C^1}$ in  Section \ref{results} and rewrite $\tilde\delta = (S \otimes {}_{g^{-1}}M) \circ \gamma$ (use Lemma \ref{lem:gamma-inv-via-S}\,(ii)). Upon composing $\alpha_{A^1,B^0,C^1}$ and $\alpha^{-1}_{A^1,B^0,C^1}$, via Figure \ref{fig:Hopf-property}\,e) one arrives at the product of $\gamma$ and $\gamma^{-1}$ and at that of $g$ and $g^{-1}$, both of which cancel.

That $\alpha^{-1}_{A^0,B^1,C^1}$ and $\alpha^{-1}_{A^1,B^1,C^0}$ are inverses as claimed is immediate from Lemma \ref{lem:H-intertwiner}.

The inverse of $\phi$ has two useful expressions provided by \eqref{eq:pentagon-16}. Namely, inserting \eqref{eq:pentagon-16} into $(\eps \otimes \id) \circ (-) \circ (\eta \otimes \id)$ gives $S^{-1} = \phi \circ {}_gM \circ \phi$, or, equivalently,
\be\label{eq:phi-1_via_phi}
  \phi^{-1} = {}_gM \circ \phi \circ S = S \circ \phi \circ {}_gM \ .
\ee
Using these expressions, we can show the next lemma which will be useful in Section \ref{sec:braiding}.

\begin{lemma} \label{lem:lambda-S-g}
We have $\lambda \circ S = \lambda \circ {}_gM$ and $\lambda \circ S^{-1} = \lambda \circ M_g$.
\end{lemma}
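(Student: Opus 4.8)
The key relation is $\lambda = \eps\circ\phi$ from \eqref{eq:Hopf-via-gdp}, together with the two expressions for $\phi^{-1}$ recorded in \eqref{eq:phi-1_via_phi}, namely $\phi^{-1} = {}_gM\circ\phi\circ S = S\circ\phi\circ{}_gM$. The plan is to show that both $\lambda\circ S$ and $\lambda\circ{}_gM$ coincide with $\eps\circ\phi^{-1}$, which gives the first identity directly; the second identity is then bootstrapped from the first rather than proved from scratch, since \eqref{eq:phi-1_via_phi} only supplies relations involving the left multiplication ${}_gM$.

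For the first identity I would rearrange the two forms of \eqref{eq:phi-1_via_phi} into $\phi\circ S = {}_{g^{-1}}M\circ\phi^{-1}$ and $\phi\circ{}_gM = S^{-1}\circ\phi^{-1}$ (compose with ${}_{g^{-1}}M$ resp.\ $S^{-1}$ on the left and cancel ${}_{g^{-1}}M\circ{}_gM=\id$ resp.\ $S^{-1}\circ S=\id$). Then $\lambda\circ S = \eps\circ\phi\circ S = \eps\circ{}_{g^{-1}}M\circ\phi^{-1}$ and $\lambda\circ{}_gM = \eps\circ\phi\circ{}_gM = \eps\circ S^{-1}\circ\phi^{-1}$. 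Now $\eps\circ S^{-1}=\eps$ follows from $\eps\circ S=\eps$ in \eqref{ap}, while $\eps\circ{}_{g^{-1}}M=\eps$ because $\eps$ is an algebra morphism (condition~3 in Definition~\ref{def:braided-Hopf}), so $\eps\circ\mu\circ(g^{-1}\otimes\id)=(\eps\circ g^{-1})\cdot\eps=\eps$, using that $g$, and hence $g^{-1}=S\circ g$, is group-like with $\eps\circ g=\id_\one$ (Lemma~\ref{lem:g-grouplike-2}). Thus both expressions equal $\eps\circ\phi^{-1}$ and therefore $\lambda\circ S=\lambda\circ{}_gM$.

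For the second identity I would start by composing $\lambda\circ S=\lambda\circ{}_gM$ with $S^{-1}$ on the right, obtaining $\lambda=\lambda\circ{}_gM\circ S^{-1}$. The crucial intermediate step is the rewriting ${}_gM\circ S^{-1}=S^{-1}\circ M_{g^{-1}}$: writing $g=S^{-1}(g^{-1})$ (valid since $g^{-1}=S\circ g$ by Lemma~\ref{lem:g-grouplike-2}, whence $S^{-1}(g^{-1})=g$), then applying the braided anti-homomorphism property of $S^{-1}$ in the form $\mu\circ(S^{-1}\otimes S^{-1})=S^{-1}\circ\mu\circ c_{H,H}$ (the inverse of $S\circ\mu=\mu\circ c_{H,H}\circ(S\otimes S)$ from \eqref{ap}), and finally invoking naturality of the braiding on the morphism $g^{-1}:\one\to H$ to replace $c_{H,H}\circ(g^{-1}\otimes\id)$ by $\id\otimes g^{-1}$. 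Substituting gives $\lambda=\lambda\circ S^{-1}\circ M_{g^{-1}}$, and composing with $M_g$ on the right cancels $M_{g^{-1}}\circ M_g=\id$, leaving $\lambda\circ M_g=\lambda\circ S^{-1}$, as desired.

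I expect the only genuine subtlety to be the braiding bookkeeping in the last step: the equality ${}_gM\circ S^{-1}=S^{-1}\circ M_{g^{-1}}$ really does use that $S^{-1}$ reverses products up to a braiding and that $g^{-1}$ is an arrow out of the tensor unit, so that the braiding passes over it trivially by naturality. Everything else is bookkeeping with the elementary facts $\eps\circ S^{\pm1}=\eps$, $\eps\circ g^{\pm1}=\id_\one$, and the cancellation of $g$ against $g^{-1}$.
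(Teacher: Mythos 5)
Your proof is correct and takes essentially the same route as the paper: the first identity is obtained by composing the two expressions for $\phi^{-1}$ in \eqref{eq:phi-1_via_phi} with $\eps$ and using $\lambda = \eps\circ\phi$ (the paper does this directly, without first rearranging), and the second identity is bootstrapped from the first by multiplying with $g$ and $S^{-1}$ on the right. The only difference is presentational: you spell out the commutation ${}_gM\circ S^{-1}=S^{-1}\circ M_{g^{-1}}$ (via the braided anti-homomorphism property of $S^{-1}$ and naturality of the braiding over $g^{-1}:\one\to H$), which the paper's one-line proof, composing with $S^{-1}\circ M_g$ from the right, uses implicitly.
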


\begin{proof}
For the first identity compose the two equivalent expressions for $\phi^{-1}$ in \eqref{eq:phi-1_via_phi} with $\eps$ from the left and use that $\lambda = \eps \circ \phi$. The second identity follows by composing the first one with $S^{-1} \circ M_g$ from the right.
\end{proof}

\subsection{Monoidal equivalences}\label{sec:gauge}

In this section we prove Theorem \ref{thm:main1}\,(ii).
Let $H,\gamma,\lambda$ be a data satisfying the conditions of Theorem \ref{thm:main1}\,(i) (or rather of Corollary \ref{cor:main1-i}) and let  $\Cc(H,\gamma,\lambda)$ be
 the corresponding $\Zb/2\Zb$-graded monoidal category $\Rep_\Sc(H) + \Sc$. 
Let $\varphi : H' \to H$ be a Hopf algebra isomorphism. It allows us to transport the data $\gamma,\lambda$ from $H$ to $H'$:
\be \label{eq:mon-gauge-xfer}
  \gamma' = (\varphi^{-1} \ot\varphi^{-1}) \circ\gamma \quad ,\qquad \lambda' = \lambda\circ \varphi \ .
\ee
Clearly the new data $\gamma',\lambda'$ also satisfies to the conditions of Corollary \ref{cor:main1-i}. 
Define the $\Zb/2\Zb$-graded functor
\be\lb{G}
  G=G(\varphi)~:~ \Cc=\Cc(H,\gamma,\lambda)\longrightarrow \Cc'= \Cc(H',\gamma',\lambda') 
\ee
to be the pullback functor $\varphi^*: \Rep_\Sc(H)\to \Rep_\Sc(H')$ on the component of degree zero and the identity functor on the component of degree one. Clearly $G(\varphi)$ is an equivalence. Now we will describe a monoidal structure $G(\varphi)_{A,B}:G(A \ot_\Cc B)\to G(A)\ot_{\Cc'} G(B)$ on $G(\varphi)$. We have four cases depending on whether $A,B$ belong to the degree zero or degree one components. Of the four cases the only non-trivial (non-identical) part of the monoidal structure isomorphism is when $A,B\in\Cc_1$. In this case 
\be\lb{Gts}
  \xymatrix{G(\varphi)_{A,B}:G(A\ot_\Cc B) = H\ot A\ot B\ar[rr]^(.52){\varphi^{-1}\ot id\ot id} && H'\ot A\ot B = G(A)\ot_{\Cc'} G(B)} \ .
\ee
The coherence condition for the monoidal structure is that for all $A,B,C\in\Cc$ the following diagram has to commute:
\be
  \xymatrix{G(A\otimes_\Cc(B\otimes_\Cc C)) \ar[d]_{G(\alpha_{A,B,C})}\ar[rr]^{G_{A,BC}} && G(A)\otimes_{\Cc'} G(B\otimes_\Cc C) \ar[rr]^(.45){id\otimes
G_{B,C}} & & G(A)\otimes_{\Cc'} (G(B)\otimes_{\Cc'} G(C)) \ar[d]_{\alpha'_{G(A),G(B),G(C)}}
\\
G((A\otimes_\Cc B)\otimes_\Cc C) \ar[rr]^{G_{A B,C}} && G(A\otimes_\Cc B)\otimes_{\Cc'} G(B) \ar[rr]^(.45){G_{A,B}\otimes id} & &
(G(A)\otimes_{\Cc'} G(B))\otimes_{\Cc'} G(C) }
\ee
For example, the coherence for $A,C\in\Cc_0$, $B\in\Cc_1$ is equivalent to the equation $\gamma' = (\varphi^{-1}\ot\varphi^{-1})\circ\gamma$. The remaining seven cases are equally straightforward.
Altogether we have the following proposition, which proves Theorem \ref{thm:main1}\,(ii).

\begin{proposition}\label{prop:mon-equiv}
Let $H,\gamma,\lambda$ and $H',\gamma', \lambda'$ be related via a Hopf algebra isomorphism as in \eqref{eq:mon-gauge-xfer}. Then the categories $\Cc(H,\gamma,\lambda)$ and $\Cc(H',\gamma',\lambda')$ are monoidally equivalent.
\end{proposition}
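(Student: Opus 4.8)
The plan is to build an explicit monoidal equivalence $G = G(\varphi) : \Cc(H,\gamma,\lambda) \to \Cc(H',\gamma',\lambda')$ directly out of the Hopf algebra isomorphism $\varphi$. First I would define $G$ as the $\Zb/2\Zb$-graded functor that on the degree-zero component $\Cc_0 = \Rep_\Sc(H)$ is the pullback (restriction of scalars) functor $\varphi^* : \Rep_\Sc(H) \to \Rep_\Sc(H')$, sending an $H$-module $(M,\rho)$ to the $H'$-module $(M, \rho \circ (\varphi \ot \id_M))$, and on the degree-one component $\Cc_1 = \Sc$ is the identity functor. Since $\varphi$ is invertible, $\varphi^*$ is an equivalence with quasi-inverse $(\varphi^{-1})^*$, so $G$ is already an equivalence of underlying categories. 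Before the target even makes sense I would record that the transported data $\gamma' = (\varphi^{-1} \ot \varphi^{-1}) \circ \gamma$ and $\lambda' = \lambda \circ \varphi$ again satisfy conditions a)--c) of Corollary~\ref{cor:main1-i}: because $\varphi$ intertwines all of $\mu,\eta,\Delta,\eps,S$ and carries the group-like element to $g' = \varphi^{-1} \circ g$, each of those conditions is obtained from the one for $H$ by conjugating with $\varphi$.

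Next I would equip $G$ with a monoidal structure, a family of natural isomorphisms $G_{A,B} : G(A \ot_\Cc B) \to G(A) \ot_{\Cc'} G(B)$. Reading the tensor products off Table~\eqref{tab:tensorproducts}, in the three cases where at most one factor lies in $\Cc_1$ the underlying $\Sc$-objects coincide and $G_{A,B}$ may be taken to be the identity (here one uses that $\varphi^*$ is strict monoidal and that the forgetful functors satisfy $F' \circ \varphi^* = F$, so $G$ does not alter underlying objects of degree-zero factors). The only nontrivial component is $A,B \in \Cc_1$, where $A \ot_\Cc B = H \ot A \ot B$ and the required map is $G_{A,B} = \varphi^{-1} \ot \id_A \ot \id_B$, exactly as in Eqn.~\eqref{Gts}. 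Naturality of each $G_{A,B}$ is immediate from the formulas.

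The substance of the proof is the coherence axiom for a monoidal functor: for all $A,B,C$ the compatibility diagram relating $G_{-,-}$ to the two associators $\alpha$ and $\alpha'$ must commute, which splits into eight cases according to the grading $(a,b,c)$ of $(A,B,C)$. In each case the strategy is to substitute the explicit associativity isomorphisms of Section~\ref{results}, written through $\gamma,\delta,\phi$ for $\Cc$ and through $\gamma',\delta',\phi'$ for $\Cc'$, together with the above $G_{-,-}$, and to reduce commutativity to an identity forced by $\varphi$ being a Hopf algebra map. For $(a,b,c)=(0,1,0)$ the square collapses precisely to $\gamma' = (\varphi^{-1}\ot\varphi^{-1})\circ\gamma$, i.e.\ the defining relation~\eqref{eq:mon-gauge-xfer}. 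The remaining seven cases reduce in the same spirit, using $\delta' = (\varphi^{-1}\ot\varphi^{-1})\circ\delta$ and $\phi' = \varphi^{-1}\circ\phi\circ\varphi$, both of which follow from $\lambda' = \lambda\circ\varphi$, $g' = \varphi^{-1}\circ g$, and $\varphi$ commuting with $S$ and $\mu$.

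I expect the main obstacle to be bookkeeping rather than anything conceptual: one must track carefully where the factors $\varphi^{-1}$ introduced by $G_{-,-}$ sit on the two legs of each coherence square, and confirm that after pushing $\varphi$ through the structure morphisms $\mu,\Delta,S$ (and through the group-like $g$) every such factor cancels or migrates consistently. Since $\varphi^*$ is strict monoidal and $\varphi$ respects all the Hopf-algebraic data entering $\alpha$, no relation beyond~\eqref{eq:mon-gauge-xfer} is ever required, so each of the eight cases is a routine diagram chase. Assembling the verified cases shows that $(G, G_{-,-})$ is a monoidal equivalence, proving the proposition.
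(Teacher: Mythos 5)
Your proposal is correct and follows essentially the same route as the paper's own proof: the same graded functor (pullback $\varphi^*$ in degree zero, identity in degree one), the same monoidal structure with $\varphi^{-1}\ot\id\ot\id$ as the only non-trivial component, and the same reduction of the eight coherence cases to the transport relations, with the $(0,1,0)$ case collapsing to $\gamma' = (\varphi^{-1}\ot\varphi^{-1})\circ\gamma$. Nothing essential is missing.
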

\begin{remark}
It is in general not true that {\em all} monoidal equivalences are of the form stated in this proposition. For example, invertible objects of order two in $\Sc$ give additional monoidal equivalences. Indeed, to $P\in\Sc$ such that $P\ot P\cong I$ one can associate an equivalence $\Cc\to \Cc$ of the form
\be
  Id:\Cc_0\to\Cc_0,\quad\quad P\ot -:\Cc_1\to\Cc_1 \ .
\ee  
The monoidal structure is build using the braiding of $\Sc$ and the isomorphism $P\ot P\cong I$.
\end{remark}

The morphism space $\Sc(\one,H)$ is an algebra with respect to the product $\mu_H\circ(x\ot y)$.
As before, we say that a morphism $x\in\Sc(\one,H)$ is group-like if $\Delta\circ x = x\ot x$. For a group-like $x$, the composition $S \circ x$ is the multiplicative inverse of $x$ (cf.\ Lemma \ref{lem:g-grouplike}\,(ii)). In particular, group-like elements of $\Sc(\one,H)$ are invertible.

\begin{lemma}\lb{auxlem}
Let $\phi$ and $\psi$ be Hopf algebra automorphisms of $H$. An $\Sc$-module monoidal natural transformation $\chi : \phi^*\to\psi^*$ between the pullback functors $\phi^*, \psi^* : \Rep_\Sc(H)\to\Rep_\Sc(H)$ has the form
$$
  \chi_M:\phi^*M\to \psi^*M \quad ,\quad \chi_M= \rho_M \circ (x\ot id_M),\ M\in \Rep_\Sc(H)
$$ 
for a group-like $x\in \Sc(\one,H)$ such that $\mu\circ (\psi\ot x) = \mu\circ (x\ot\phi)$. 
\end{lemma}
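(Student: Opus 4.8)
The plan is to run a Tannakian-style argument: evaluate $\chi$ on the regular representation $H=(H,\mu)$ to produce the element $x$, then use naturality together with the $\Sc$-module and monoidal structures to pin down $\chi$ on every module and to force the three required properties of $x$.

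First I would set $x = \chi_H\circ\eta:\one\to H$, where $\chi_H:\phi^*H\to\psi^*H$ is the component of $\chi$ on the regular module. To recover $\chi$ on an arbitrary $M$, recall that the action $\rho_M:(H\ot M,\mu\ot\id_M)\to M$ is a morphism of $H$-modules out of the free module $H\ot M$ (this is precisely associativity of the action). Since the pullback functors act as the identity on underlying morphisms, naturality of $\chi$ for $\rho_M$ reads $\chi_M\circ\rho_M=\rho_M\circ\chi_{H\ot M}$. Now the free module $H\ot M$ is the image of the regular module under the right $\Sc$-module structure $N\mapsto N\ot\underline{M}$ (with $H$ acting on the $N$-factor), for which $\phi^*,\psi^*$ are strict $\Sc$-module functors; hence the $\Sc$-module property of $\chi$ gives $\chi_{H\ot M}=\chi_H\ot\id_M$. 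Composing $\chi_M\circ\rho_M=\rho_M\circ(\chi_H\ot\id_M)$ with $\eta\ot\id_M$ on the right and using the unit axiom $\rho_M\circ(\eta\ot\id_M)=\id_M$ then yields $\chi_M=\rho_M\circ(x\ot\id_M)$, which is the claimed form.

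Next I would extract the three properties of $x$. The relation $\mu\circ(\psi\ot x)=\mu\circ(x\ot\phi)$ is nothing but the statement that $\chi_H$ is a morphism $\phi^*H\to\psi^*H$ of $H$-modules: writing $\chi_H={}_xM$ and composing the intertwiner identity $\chi_H\circ\mu\circ(\phi\ot\id_H)=\mu\circ(\psi\ot\id_H)\circ(\id_H\ot\chi_H)$ with $\id_H\ot\eta$ on the right, the unit axiom collapses both sides to exactly this equation. For group-likeness I would use the monoidal structure. As $\phi$ is a Hopf-algebra map, $\phi^*$ is strict monoidal, i.e.\ $\phi^*(M\ot N)=\phi^*M\ot\phi^*N$, so monoidal naturality of $\chi$ reads $\chi_{M\ot N}=\chi_M\ot\chi_N$; specialising to $M=N=H$, substituting the form just obtained for $\chi_{H\ot H}$ (which involves $\Delta$ through $\rho_{H\ot H}$) on the left and $\chi_H\ot\chi_H$ on the right, and precomposing with $\eta\ot\eta$ gives, after using naturality of the braiding on morphisms out of $\one$, the equality $\Delta\circ x=x\ot x$. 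Finally, the unit coherence of a monoidal natural transformation forces $\chi_\one=\id_\one$; since the tensor unit carries the trivial action $\eps$, one has $\chi_\one=\eps\circ x$, whence $\eps\circ x=\id_\one$. Together these say that $x$ is group-like.

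I expect the main obstacle to be the bookkeeping around the two different module structures that appear: the free (external right) $\Sc$-module structure $N\ot\underline{M}$, used to evaluate $\chi$ on free modules via the $\Sc$-module property, versus the tensor-product module structure built from $\Delta$, used in the monoidal step. Obtaining the form of $\chi_M$ cleanly hinges on recognising $\rho_M$ as a map from the free module and on the strictness of $\phi^*,\psi^*$ as $\Sc$-module and as monoidal functors, both of which rest on $\phi,\psi$ being Hopf-algebra maps; once these identifications are in place, every remaining step reduces to the unit, counit, and braiding-naturality axioms.
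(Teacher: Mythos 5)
Your proof is correct and takes essentially the same route as the paper's: set $x=\chi_H\circ\eta$, use naturality of $\chi$ with respect to the module map $\rho_M : H\ot F(M)\to M$ together with the $\Sc$-module property $\chi_{H\ot F(M)}=\chi_H\ot \id_{F(M)}$, precompose with $\eta\ot\id_M$ to obtain $\chi_M=\rho_M\circ(x\ot\id_M)$, then get group-likeness from monoidality and the relation $\mu\circ(\psi\ot x)=\mu\circ(x\ot\phi)$ from $\chi_H$ being a morphism $\phi^*H\to\psi^*H$. The only difference is that you make explicit the computations (specialising to $M=N=H$, composing with units, and invoking the unit coherence for $\eps\circ x=\id_\one$) which the paper states without detail.
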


\begin{proof}
From naturality of $\chi_M$ with respect to the morphism of $H$-modules $\rho_M : H\ot F(M)\to M$ (here $F:\Rep_\Sc(H)\to\Sc$ is the forgetful functor) we have a commuting square in $\Sc$: 
\be \label{eq:S-nat-xfer-aux1}
\raisebox{2em}{
\xymatrix{H\ot F(M)\ar[rr]^{\chi_H\ot id} \ar[d]^{\rho_M} && H\ot F(M)\ar[d]^{\rho_M} \\ M\ar[rr]^{\chi_M} && M}
}
\ee
The identity $\chi_{H \otimes F(M)} = \chi_H\ot id_{F(M)}$ follows from the requirement that $\chi$ is an $\Sc$-module natural transformation.
Precomposing \eqref{eq:S-nat-xfer-aux1} with $\eta \ot id$ implies that $\chi_M=\rho_M \circ (x\ot id_M)$, where $x = \chi_H \circ \eta$.
That $x$ is group-like follows from monoidality of $\chi$.
The fact that $\chi_M$ is a morphism $\phi^*M\to \psi^*M$ implies the last equation of the lemma.
\end{proof}

There is a canonical embedding $\Sc \subset \Rep_\Sc(H) \subset \Cc(H,\gamma,\lambda)$ given by equipping $U \in \Sc$ with the trivial $H$-action. We denote by
\be
  \Aut_\ot(\Cc(H,\gamma,\lambda)/\Sc)
\ee
the group of isomorphism classes of {\em monoidal autoequivalences of} $\Cc(H,\gamma,\lambda)$ {\em over} $\Sc$. That is, autoequivalences which are monoidally equivalent to the identity functor when restricted to $\Sc$ (together with the trivialisation), up to natural monoidal isomorphisms which are the identity on $\Sc$. (Equivalently, consider monoidal autoequivalences which are also $\Sc$-module functors and divide by $\Sc$-module natural isomorphisms.)

Let $\Aut_{\mathrm{Hopf}}(H,\gamma,\lambda)$ be the group of Hopf algebra automorphisms of $H$ which leave $\gamma$ and $\lambda$ invariant (via \eqref{eq:mon-gauge-xfer}). 

\begin{proposition}\label{gma}
The assignment $\phi\mapsto G(\phi^{-1})$ defines an injective group homomorphism from $\Aut_{\mathrm{Hopf}}(H,\gamma,\lambda)$ to $\Aut_\ot(\Cc(H,\gamma,\lambda)/\Sc)$.
\end{proposition}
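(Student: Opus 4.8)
The plan is to verify, in turn, that $\phi\mapsto G(\phi^{-1})$ is well defined into $\Aut_\ot(\Cc(H,\gamma,\lambda)/\Sc)$, that it is a group homomorphism, and that it is injective. For well-definedness, note that $\phi\in\Aut_{\mathrm{Hopf}}(H,\gamma,\lambda)$ means precisely $(\phi\ot\phi)\circ\gamma=\gamma$ and $\lambda\circ\phi=\lambda$. Substituting $\varphi=\phi^{-1}$ into the transfer rule \eqref{eq:mon-gauge-xfer} therefore reproduces $\gamma$ and $\lambda$, so $G(\phi^{-1})$ defined in \eqref{G} is an \emph{endo}functor of $\Cc(H,\gamma,\lambda)$ rather than a functor between two distinct categories. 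It is a monoidal equivalence by the construction preceding Proposition \ref{prop:mon-equiv}, and it is an autoequivalence over $\Sc$: it is the identity on $\Cc_1=\Sc$, and on the copy of $\Sc$ sitting in $\Cc_0$ as trivial modules it acts again as the identity, since the pullback $(\phi^{-1})^*$ sends a trivial module to a trivial one (because $\eps\circ\phi^{-1}=\eps$). Hence $[G(\phi^{-1})]$ is a well-defined element of $\Aut_\ot(\Cc/\Sc)$.

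For the homomorphism property I would prove the stronger statement that $G(\phi^{-1})\circ G(\psi^{-1})=G((\phi\psi)^{-1})$ as monoidal functors. On $\Cc_0$ both sides equal $((\phi\psi)^{-1})^*$ by contravariance of pullback, $(\phi^{-1})^*\circ(\psi^{-1})^*=(\psi^{-1}\circ\phi^{-1})^*=((\phi\psi)^{-1})^*$, while on $\Cc_1$ both are the identity. It then remains to compare the monoidal structures, which by \eqref{Gts} are non-trivial only in the sector $\Cc_1\times\Cc_1$. Writing $\Phi=G(\phi^{-1})$, $\Psi=G(\psi^{-1})$, one has $\Psi_{A,B}=\psi\ot\id\ot\id$ and $\Phi_{A,B}=\phi\ot\id\ot\id$ on $H\ot A\ot B$, and since $\Phi$ leaves underlying morphisms unchanged, the composite oplax structure is
\[
  (\Phi\circ\Psi)_{A,B}=\Phi_{A,B}\circ\Phi(\Psi_{A,B})=(\phi\ot\id\ot\id)\circ(\psi\ot\id\ot\id)=(\phi\psi)\ot\id\ot\id,
\]
which is exactly $G((\phi\psi)^{-1})_{A,B}$. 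As the group law on $\Aut_\ot(\Cc/\Sc)$ is composition, this shows $\phi\mapsto G(\phi^{-1})$ is a homomorphism; the use of $\phi^{-1}$ is precisely what converts the contravariance of pullback into covariance. The remaining (identity) sectors of the comparison are trivial.

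Injectivity is the substantive part, and here I would invoke Lemma \ref{auxlem}. Suppose $\phi$ lies in the kernel, so there is a monoidal $\Sc$-module natural isomorphism $\chi:G(\phi^{-1})\Rightarrow\Id_\Cc$ that is the identity on $\Sc$. Restricting $\chi$ to $\Cc_0$ gives a monoidal $\Sc$-module natural isomorphism $(\phi^{-1})^*\Rightarrow\id^*$, so Lemma \ref{auxlem} forces $\chi_M=\rho_M\circ(x\ot\id_M)$ for a group-like $x:\one\to H$. The key step is to feed this form into the monoidal compatibility of $\chi$ in the sector $\Cc_1\times\Cc_1$. Taking $A=B=\one$ (the unit of $\Sc$ viewed in $\Cc_1$) the source object is $\one\ot_\Cc\one=H$, and writing $s=\chi_\one\in\End(\one)^\times$ the compatibility $\chi_{A\ot_\Cc B}=(\chi_A\ot_\Cc\chi_B)\circ G(\phi^{-1})_{A,B}$ becomes, using $\chi_H={}_xM$ and $G(\phi^{-1})_{\one,\one}=\phi$, the identity ${}_xM=s^2\cdot\phi$ of endomorphisms of $H$.

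Composing ${}_xM=s^2\cdot\phi$ with $\eta$ and using $\phi\circ\eta=\eta$ gives $x=s^2\cdot\eta$; applying $\eps$ and using that $x$ is group-like ($\eps\circ x=\id_\one$) yields $s^2=\id_\one$, whence $x=\eta$ and ${}_xM=\id_H$, so $\phi=\id_H$. Thus the kernel is trivial. I expect this last chain to be the main obstacle: the content lies in correctly coupling the degree-zero description of $\chi$ coming from Lemma \ref{auxlem} to the degree-one monoidal constraint, and the cleanest way to extract $\phi=\id$ is to evaluate on the unit objects, where group-likeness of $x$ simultaneously pins down the scalar $s$ and forces $x=\eta$. (One checks in passing that $\chi$ is automatically the identity on the embedded trivial modules, since $\chi_U=(\eps\circ x)\cdot\id_U=\id_U$, so no separate normalisation is needed.) All other checks follow the pattern already used around \eqref{Gts} and in Proposition \ref{prop:mon-equiv}.
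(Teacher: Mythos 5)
Your proof is correct, and its overall skeleton (well-definedness, homomorphism via the explicit formulas \eqref{G} and \eqref{Gts}, injectivity via Lemma~\ref{auxlem} plus monoidality of the comparison isomorphism) matches the paper's; but the injectivity argument, which is the substantive step, is carried out along a genuinely different route. The paper couples the two graded sectors through the \emph{mixed} products $M \otimes_\Cc X$ with $M \in \Cc_0$, $X \in \Cc_1$: using triviality of $\chi$ on the embedded $\Sc \subset \Cc_0$ it deduces $F(\chi_M) \ot \chi_X = \id_{F(M)} \ot \chi_X$, sets $X = \one$ to conclude $\chi_M = \id_M$ for \emph{all} $M \in \Cc_0$, and only then invokes Lemma~\ref{auxlem} to force $x = \eta$ and hence $\phi = \psi$. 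You instead invoke Lemma~\ref{auxlem} first to get $\chi_M = \rho_M \circ (x \ot \id_M)$, and then exploit the $\Cc_1 \times \Cc_1$ sector at $A = B = \one$, where the only genuinely non-trivial piece of the monoidal structure \eqref{Gts} lives; this yields ${}_xM = s^2 \cdot \phi$ with $s = \chi_\one$, after which evaluation at $\eta$ and composition with $\eps$ (using that the group-like $x$ is invertible, so $\eps \circ x = \id_\one$ — guaranteed here since $\chi_H = {}_xM$ is an isomorphism) pins down $s^2 = \id_\one$, $x = \eta$, $\phi = \id_H$. The paper's route buys economy: it never needs to examine $\chi$ on $\Cc_1$ or track the scalar $s$. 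Your route buys a more direct use of the defining datum $\phi \ot \id \ot \id$ of \eqref{Gts} — it makes visible exactly how the degree-one tensor structure detects $\phi$ — and produces the constraint $s^2 = \id_\one$ on $\chi|_{\Cc_1}$ as a by-product. Both arguments confirm, consistently with the remark following the proposition, that the degree-one data is indispensable: on $\Cc_0$ alone, Lemma~\ref{auxlem} only shows $\phi$ is inner, which is why neither proof can stop after applying that lemma.
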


\begin{proof}
From the explicit expressions in \eqref{G} and \eqref{Gts} one checks that $G(\phi \circ\psi) \cong 
G(\psi) \circ G(\phi)$  as monoidal functors, establishing that we have a group homomorphism.
We will show that the only possibility for a natural isomorphism $G(\phi)\to G(\psi)$ is the identity.
A natural isomorphism $\chi:G(\phi)\to G(\psi)$ consists of two graded components $G(\phi)_0 = \phi^*\to\psi^* = G(\psi)_0$ and $G(\phi)_1 = Id_\Sc\to Id_\Sc = G(\psi)_1$. By monoidality of $\chi$ the following diagram commutes for all $M\in\Cc_0$ and $X\in\Cc_1$
\be
  \xymatrix{G(\phi)(M\otimes_\Cc X) \ar[rr]^{\chi_{F(M)\ot X}}  \ar[d]^{\id} && G(\psi)(M \otimes_\Cc X) \ar[d]^{\id} \\ G(\phi)(M)\otimes_\Cc G(\phi)(X) \ar[rr]^{F(\chi_{M})\otimes \chi_X} && G(\psi)(M)\otimes_\Cc G(\psi)(X)}
\ee   
Recall that $M \otimes_\Cc X = F(M) \otimes X$, etc., where $F : \Cc_0 \to \Sc$ is the forgetful functor.
By triviality of $\chi$ on $\Sc \subset \Cc_0$ we have $\chi_{F(M) \ot X} = id_{F(M)} \ot \chi_X$. Thus $F(\chi_{M}) \ot \chi_X = id_{F(M)}\ot\chi_X$, which implies $\chi_M=id_M$ (choose $X = \one \in \Sc$). By Lemma \ref{auxlem}, $\chi_M= \rho_M \circ (x\ot id_M)$ for a group-like element $x$ of $\Sc(\one,H)$, and $\phi$ and $\psi$ are related by conjugation with $x$. Since $\chi_M = \id_M$, we have $x = \eta$ and hence $\phi=\psi$.
\end{proof}

By restriction to $\Cc_0$, the map $\phi \mapsto G(\phi^{-1})$ in the above proposition gives a group homomorphism from $\Aut_{\mathrm{Hopf}}(H,\gamma,\lambda)$ to $\Aut_\ot(\Rep_\Sc(H)/\Sc)$. Lemma \ref{auxlem} shows that this restricted map is in general not injective since it has inner automorphisms as its kernel.

\subsection{Rigidity}\label{sec:rigid}

Fix $H$, $\Gamma$, $\lambda$ as in Theorem \ref{thm:main1} and abbreviate $\Cc = \Cc(H,\Gamma,\lambda)$. We will define left and right duals on $\Cc$ and give the corresponding duality morphisms, so that $\Cc$ becomes a rigid category. The category $\Sc$ is ribbon and so in particular rigid with coinciding left and right duals. 
We denote the duality maps of $\Sc$ by, for $U \in \Sc$,
\begin{align}
  \ev_U &: U^\vee \otimes U \to \one 
  \quad , \quad
  &
  \coev_U &: \one \to U \otimes U^\vee \quad ,
  \nonumber\\
  \widetilde\ev_U &: U \otimes U^\vee \to \one
  \quad , \quad
  &
  \widetilde\coev_U &: \one \to U^\vee \otimes U 
  \ .
\end{align}

\medskip\noindent
{\bf Right and left duals in $\mathbf{\Cc_0}$:} Given an $H$-module $M$ with action $\rho_M$, the right dual $M^*$ and left dual ${}^*\!M$ of $M$ is $M^\vee$ as an object together with action morphisms $\rho_{M^*}$ and $\rho_{{}^*\!M}$ given by
\be \label{eq:C0-dual-objects}
   \rho_{M^*} = \scanpic{r1} \qquad , \qquad
   \rho_{{}^*\!M} = \scanpic{r2} \quad .
\ee
One checks that $M^*$ and ${}^*\!M$ are indeed $H$-modules. The evaluation and coevaluation maps in $\Cc_0$ are just those of $\Sc$,
\begin{align}
  \ev^{\Cc}_M = \ev_M &~:~ M^* \otimes_{\Cc} M \to \one 
  \quad , \quad
  &
  \coev^{\Cc}_M = \coev_M &~:~ \one \to M \otimes_{\Cc} M^* \quad ,
  \nonumber\\
  \widetilde\ev^{\Cc}_M = \widetilde\ev_M &~:~ M \otimes_{\Cc} {}^*\!M \to \one
  \quad , \quad
  &
  \widetilde\coev^{\Cc}_M = \widetilde\coev_M &~:~ \one \to {}^*\!M \otimes_{\Cc} M 
  \ .
\label{eq:C0-duality maps}
\end{align}
It is clear that these satisfy the duality map conditions. It remains to prove that these are really morphisms in $\Cc_0$, i.e.\ that they are $H$-module maps. We give the calculation for $\ev^{\Cc}_M$ and $\widetilde\ev^{\Cc}_M$, the calculation for the coevaluation is similar:
\begin{align}
&\scanpic{r3a} 
~=~ 
\scanpic{r3b} 
~=~ 
\scanpic{r3c} 
~=~ 
\scanpic{r3d} 
\quad ,
\nonumber \\
&\scanpic{r4a} 
~=~ 
\scanpic{r4b} 
~=~ 
\scanpic{r4c} 
~=~ 
\scanpic{r4d} 
\quad .
\end{align}

\medskip\noindent
{\bf Right and left duals in $\mathbf{\Cc_1}$:} Let $X \in \Cc_1 = \Sc$. The left and right dual of $X$ is $X^\vee$, the dual in $\Sc$: 
\be \label{eq:C1-dual-objects}
  X^* = {}^*\!X = X^\vee \ .
\ee
Note that $X^* \otimes_{\Cc} X = H \otimes X^\vee \otimes X$, etc. For the duality maps we choose
\begin{align}
  \ev^{\Cc}_X = \eps \otimes \ev_X &~:~ X^* \otimes_{\Cc} X \to \one 
  \quad , \quad
  &
  \coev^{\Cc}_X = \Lambda \otimes \coev_X &~:~ \one \to X \otimes_{\Cc} X^* \quad ,
  \nonumber\\
  \widetilde\ev^{\Cc}_X = \eps \otimes \widetilde\ev_X &~:~ X \otimes_{\Cc} {}^*\!X \to \one
  \quad , \quad
  &
  \widetilde\coev^{\Cc}_X = \Lambda \otimes \widetilde\coev_X &~:~ \one \to {}^*\!X \otimes_{\Cc} X 
  \ ,
\label{eq:C1-duality maps}
\end{align}
where $\Lambda$ is the left integral determined by the right cointegral $\lambda$ as in Lemma \ref{lem:integral-cointegral}. The evaluation maps are morphisms in $\Cc_0$ because the counit is an algebra map; the coevaluation maps are morphisms in $\Cc_0$ by the defining property of a left integral.

Next we need to check that the duality map conditions are satisfied.
For the right duals, the following two morphisms have to be equal to $\id_X$ and $\id_{X^\vee}$, respectively (we omitted all `$\otimes_\Cc$'):
\begin{align}
&
X
\xrightarrow{~=~} 
\one X
\xrightarrow{~ \coev^{\Cc}_X \otimes \id ~}
(X X^*) X
\xrightarrow{~ \alpha^{-1}_{X,X^*,X} ~}
X(X^* X)
\xrightarrow{~ \id \otimes \ev^{\Cc}_X ~}
X \one
\xrightarrow{~=~} 
X \quad ,
\nonumber\\
&
X^* 
\xrightarrow{~=~} 
X^* \one
\xrightarrow{~ \id \otimes \coev^{\Cc}_X ~}
X^* (X X^*)
\xrightarrow{~ \alpha_{X^*,X,X^*} ~}
(X^* X) X^*
\xrightarrow{~ \ev^{\Cc}_X \otimes \id ~}
\one X^*
\xrightarrow{~=~} 
X^* \ .
\end{align}
Substituting the expressions for $\alpha$ and $\alpha^{-1}$ in terms of $\phi$ and $\phi^{-1}$ from  Section \ref{results} and \eqref{eq:inverse-assoc} leads to the two conditions
\be
  \eps \circ \phi^{-1} \circ \Lambda = \id_\one
  \qquad , \qquad
  \eps \circ \phi \circ \Lambda = \id_\one \ .
\ee
Since $\eps \circ \phi = \lambda$ and by the definition of $\Lambda$, the second condition is just the normalisation condition b) in Theorem \ref{thm:main1}\,(i). For the first condition substitute the expression $\phi^{-1} = S \circ \phi \circ {}_gM$ given in \eqref{eq:phi-1_via_phi} and use that $\Lambda$ is a left integral to remove the left multiplication by $g$.

The calculation for the left duality maps is analogous. Altogether we have

\begin{proposition}
The category $\Cc(H,\Gamma,\lambda)$ is rigid with left and right dual objects given by \eqref{eq:C0-dual-objects} and \eqref{eq:C1-dual-objects} and duality maps given by \eqref{eq:C0-duality maps} and \eqref{eq:C1-duality maps}.
\end{proposition}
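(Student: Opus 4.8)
The plan is to establish rigidity componentwise, treating objects of $\Cc_0$ and of $\Cc_1$ separately. In each case I would verify three things in turn: that the proposed dual object carries the asserted structure, that the proposed evaluation and coevaluation are genuine morphisms of $\Cc$, and finally that the two zigzag (triangle) identities hold. Since $\Sc$ is ribbon and therefore already rigid, and since the unit lives in $\Cc_0$ with trivial action, the $\Cc_0$ case should be essentially inherited from $\Sc$, while the $\Cc_1$ case will carry the real content.

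First I would dispose of $\Cc_0$. For an $H$-module $M$ the candidate duals are $M^\vee$ equipped with the actions $\rho_{M^*}$, $\rho_{{}^*\!M}$ of \eqref{eq:C0-dual-objects}; I would check these are module structures using the unit and associativity of $\rho$ together with the anti-(co)automorphism relations \eqref{ap} for $S$ and the bubble property of Figure \ref{fig:Hopf-property}. One then verifies that the evaluation and coevaluation of $\Sc$ are $H$-linear, which uses that $\eps$ is an algebra map together with the $S$-relations. The crucial simplification is that every associativity isomorphism among three objects of $\Cc_0$ is the identity, so the two zigzag identities for $M$ reduce verbatim to the corresponding identities in $\Sc$; these hold because $\Sc$ is rigid. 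Thus every object of $\Cc_0$ is dualizable in $\Cc$.

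The substantive case is $\Cc_1$. Here the dual of $X \in \Sc$ is again $X^\vee$, but now $X^* \otimes_\Cc X = H \otimes X^\vee \otimes X$ lies in $\Cc_0$, and the maps of \eqref{eq:C1-duality maps} involve $\eps$, the cointegral $\lambda$, and the integral $\Lambda$ of Lemma \ref{lem:integral-cointegral}. I would first check these are morphisms in $\Cc_0$: the evaluations are $H$-linear because $\eps$ is an algebra map, and the coevaluations are $H$-linear precisely because $\Lambda$ is a left integral. The zigzag identities now genuinely involve the nontrivial associator \eqref{fig:assoc-d} for three $\Cc_1$-objects and its inverse from \eqref{eq:inverse-assoc}, both built from $\phi$. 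Substituting these, the two right-duality zigzags should collapse to the scalar conditions $\eps \circ \phi \circ \Lambda = \id_\one$ and $\eps \circ \phi^{-1} \circ \Lambda = \id_\one$. The former is immediate since $\eps \circ \phi = \lambda$ and $\lambda \circ \Lambda = \id_\one$ is exactly the normalization in Theorem \ref{thm:main1}\,(i)\,b). For the latter I would insert the expression $\phi^{-1} = S \circ \phi \circ {}_gM$ from \eqref{eq:phi-1_via_phi} and use the left-integral property of $\Lambda$ to absorb the left multiplication by $g$, reducing it to the former. The left-duality zigzags are verified the same way.

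I expect the $\Cc_1$ zigzags to be the only real obstacle: the point is to track how the associator built from $\phi$ interacts with the integral/cointegral pair and to recognize that, after the rewriting $\phi^{-1} = S \circ \phi \circ {}_gM$, everything funnels into the single normalization condition $\eps \circ \phi \circ \Lambda = \id_\one$. Once this is isolated, assembling the two graded cases yields rigidity of $\Cc(H,\Gamma,\lambda)$.
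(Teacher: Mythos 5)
Your proposal is correct and takes essentially the same approach as the paper: dispose of $\Cc_0$ using the trivial associators and the $H$-linearity checks, then reduce the $\Cc_1$ zigzags (which involve the $\phi$-built associator and its inverse) to the two scalar conditions $\eps \circ \phi \circ \Lambda = \id_\one$ and $\eps \circ \phi^{-1} \circ \Lambda = \id_\one$, the first being the normalisation $\lambda \circ \Lambda = \id_\one$ and the second following from $\phi^{-1} = S \circ \phi \circ {}_gM$ together with the left-integral property of $\Lambda$. This is precisely the paper's argument in Section \ref{sec:rigid}.
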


%%%%%%%%%%%%%%%%%

\section{Braiding on $\Rep_\Sc(H)+\Sc$ for $\Sc$ symmetric} \label{sec:braiding}

In this section, we prove Theorem \ref{thm:main2} and its converse in the sense stated in Remark \ref{rem:main2}\,(i), as well as the existence of twists and the relation to the reverse category as asserted in Remark \ref{rem:twist+reverse}. We assume the conventions in Notations \ref{not:sec2}, \ref{not:sec3} and \ref{not:sec4}.

\subsection{Hexagon equations}\label{sec:hexagon}

The braiding isomorphisms given in \eqref{eq:braiding-ansatz} have to satisfy the two hexagon axioms in $\Cc$, that is, the following two diagrams have to commute:
\begin{align}
\text{L:}& 
\xymatrix{& A\otimes_\Cc(B\otimes_\Cc C) \ar[dl]_{\alpha_{A,B,C}} \ar[rr]^{c_{A,B C}} && (B\otimes_\Cc C)\otimes_\Cc A \\
(A\otimes_\Cc B)\otimes_\Cc C \ar[rd]_{c_{A,B}\otimes_\Cc id_C~~~~} &&&& B\otimes_\Cc(C\otimes_\Cc A) \ar[ul]_{\alpha_{B,C,A}} \\
& (B\otimes_\Cc A)\otimes_\Cc C && B\otimes_\Cc(A\otimes_\Cc C) \ar[ll]^{\alpha_{B,A,C}}  \ar[ur]_{~~~~id_B\otimes_\Cc c_{A,C}} }
\nonumber\\[.5em]
\text{R:}&\quad \xymatrix{& (A\otimes_\Cc B)\otimes_\Cc C\ar[rr]^{c_{A B,C}} && C\otimes_\Cc(A\otimes_\Cc B) \ar[dr]^{\alpha_{C,A,B}}  \\
A\otimes_\Cc(B\otimes_\Cc C) \ar[rd]_{id_A\otimes_\Cc c_{B,C}~~~} \ar[ru]^{\alpha_{A,B,C}} &&&& (C\otimes_\Cc A)\otimes_\Cc B \\
& A\otimes_\Cc(C\otimes_\Cc B) \ar[rr]^{\alpha_{A,C,B}} && (A\otimes_\Cc C)\otimes_\Cc B \ar[ur]_{~~c_{A,C}\otimes_\Cc id_B} }
\end{align}
(We have omitted the $\otimes_{\Cc}$ in the indices of $c$.) In formulas, the two hexagon conditions read:
\be
\begin{array}{rcrcl}
L &:& 
\alpha_{B,C,A}^{-1}
\circ 
c_{A,BC} 
\circ 
\alpha_{A,B,C}^{-1} 
&=& 
(\id_B \otimes_\Cc c_{A,C}) 
\circ 
\alpha_{B,A,C}^{-1} 
\circ 
(c_{A,B} \otimes_\Cc \id_C) 
\\[.5em]
R &:& 
\alpha_{C,A,B}
\circ 
c_{AB,C} 
\circ 
\alpha_{A,B,C} 
&=& 
(c_{A,C} \otimes_\Cc \id_B) 
\circ 
\alpha_{A,C,B} 
\circ 
(\id_A \otimes_\Cc c_{B,C}) 
\end{array}
\ee
Writing $A = A^a$, $B = B^b$, $C = C^c$ for $a,b,c \in \{0,1\}$ there are once more 16 equations to consider, which we label as follows:
\begin{center}
\begin{tabular}{cccc||cccc}
case & $a$ & $b$ & $c$ &
case & $a$ & $b$ & $c$ \\
\hline
1\,L,R) & 0 & 0 & 0  & 5\,L,R) & 0 & 1 & 1  \\
2\,L,R) & 0 & 0 & 1  & 6\,L,R) & 1 & 0 & 1   \\
3\,L,R) & 0 & 1 & 0  & 7\,L,R) & 1 & 1 & 0   \\
4\,L,R) & 1 & 0 & 0  & 8\,L,R) & 1 & 1 & 1 
\end{tabular}
\end{center}
Let us for the moment forget our assumption (cf.\ Notation \ref{not:sec4}) that $\omega^2_U  = \id_U$ for all $U \in \Sc$. This will allow us to single out the hexagon equation which imposes this condition, namely hexagon 8R below.

% 1 L	(000)(00)(000) = (00)(000)(00)
%   R	(000)(00)(000) = (00)(000)(00)

% 2 L	(010)(01)(001) = (01)(001)(00)
%   R	(100)(01)(001) = (01)(010)(01)

% 3 L	(100)(01)(010) = (00)(100)(01)
%   R	(001)(10)(010) = (00)(001)(10)

% 4 L	(001)(10)(100) = (10)(010)(10)
%   R	(010)(10)(100) = (10)(100)(00)

% 5 L	(110)(00)(011) = (01)(101)(01)
%   R	(101)(11)(011) = (01)(011)(11)

% 6 L	(011)(11)(101) = (11)(011)(10)
%   R	(110)(11)(101) = (11)(110)(01)

% 7 L	(101)(11)(110) = (10)(110)(11)
%   R	(011)(00)(110) = (10)(101)(10)

% 8 L	(111)(10)(111) = (11)(111)(11)
%   R	(111)(01)(111) = (11)(111)(11)

The string diagram for the hexagon in case 1L is
\be
  \scanPIC{57a}
  ~=~
  \scanPIC{57b}
  \quad .
\ee
As before, this can be equivalently formulated in terms of a condition involving only $H$ leading to \be\tag{H-1L}\label{eq:hexagon-1L}
  \scanPIC{58a}
  ~=~
  \scanPIC{58b}
  \quad .
\ee
For case 1R one finds analogously
the first equation in
\be\tag{H-1R}\label{eq:hexagon-1R}
  \scanPIC{59a}
  ~=~
  \scanPIC{59b}
  \quad .
\ee
This, of course, is nothing but the requirement that $H$ has to be quasi-triangular, which is equivalent to endowing $\Rep_{\Sc}(H)$ with a braiding, see e.g.\ \cite[Sect.\,2.5]{Majid:1995} or \cite[Sect.\,2.6]{Bespalov:1995}.

Let us look at two more cases in more detail as further examples. Consider the hexagon 5R. The resulting string diagram is
\be\label{eq:hexagon-5R-aux1}
  \scanpic{61a}
  ~=~
  \scanpic{61b}
  \quad .
\ee
The dashed lines indicate how the string diagram is build out of the individual associativity and braiding isomorphisms. As with the conditions obtained from the pentagon equations, we can specialise to $A^0=H$ and $B^1=C^1=\one$ and compose with $\eta \otimes \id$ from the right (since both sides are module maps for the $H$-right action on the $A^0 = H$ factor). This gives condition \eqref{eq:hexagon-5R} in the list below and one verifies that \eqref{eq:hexagon-5R} is indeed equivalent to \eqref{eq:hexagon-5R-aux1}. 

The final case we treat in detail is 8R because it is the reason that we will impose $\omega^2 = \Id$. The string diagram is
\be\label{eq:hexagon-8R-aux1}
  \scanpic{62a}
  ~=~
  \scanpic{62b}
  \quad .
\ee
Specialising to $A^1=B^1=C^1=\one$ results in \eqref{eq:hexagon-8R} in the list below. Both sides of \eqref{eq:hexagon-8R} are isomorphisms. Setting $A^1=B^1=\one$ in \eqref{eq:hexagon-8R-aux1} and composing the $H$-factor with the inverse of \eqref{eq:hexagon-8R} gives $\id_H \otimes \id_{C^1} = \id_H \otimes (\omega_{C^1})^2$. Finally, inserting this identity into $(\eps \otimes \id_{C^1}) \circ (-) \circ (\eta \otimes \id_{C^1})$ shows $(\omega_{C^1})^2 = \id_{C^1}$ for all $C^1 \in S$.
Conversely, we obtain \eqref{eq:hexagon-8R-aux1} from  \eqref{eq:hexagon-8R} and $(\omega_{C^1})^2 = \id_{C^1}$. Hence, from here on we will require that
\be
  \omega^2 = \Id_{\Sc} \ .
\ee

The 16 hexagon conditions, obtained as outlined above and given in the equivalent formulation involving only $H$, are \eqref{eq:hexagon-1L} and \eqref{eq:hexagon-1R} above and
\\
\noindent
\begin{tabular}{p{19em}p{19em}}
\be
  \scanPIC{63a} ~~=~~ \scanPIC{63b} 
  \tag{H-2L} \label{eq:hexagon-2L}
\ee
&
\be
  \scanPIC{70a} ~~=~~ \scanPIC{70b} 
  \tag{H-2R} \label{eq:hexagon-2R} 
\ee
\end{tabular}
\\
\begin{tabular}{p{19em}p{19em}}
\be
  \scanPIC{64a} ~~=~~ \scanPIC{64b} 
  \tag{H-3L} \label{eq:hexagon-3L} 
\ee
&
\be
  \scanPIC{71a} ~~=~~ \scanPIC{71b} 
  \tag{H-3R} \label{eq:hexagon-3R}
\ee
\end{tabular}
\\
\begin{tabular}{p{19em}p{19em}}
\be
  \scanPIC{65a} ~~=~~ \scanPIC{65b} 
   \tag{H-4L} \label{eq:hexagon-4L}\ee
&
\be
  \scanPIC{72a} ~~=~~ \scanPIC{72b} 
  \tag{H-4R} \label{eq:hexagon-4R}
\ee
\end{tabular}
\\
\begin{tabular}{p{19em}p{19em}}
\be
  \scanPIC{66a} ~~=~~ \scanPIC{66b} 
  \tag{H-5L} \label{eq:hexagon-5L}\ee
&
\be
  \scanPIC{73a} ~~=~~ \scanPIC{73b} 
  \tag{H-5R} \label{eq:hexagon-5R}
\ee
\end{tabular}
\\
\begin{tabular}{p{19em}p{19em}}
\be
  \scanPIC{67a} ~~=~~ \scanPIC{67b} 
  \tag{H-6L} \label{eq:hexagon-6L} \ee
&
\be
  \scanPIC{74a} ~~=~~ \scanPIC{74b} 
  \tag{H-6R} \label{eq:hexagon-6R}
\ee
\end{tabular}
\\
\begin{tabular}{p{19em}p{19em}}
\be
  \scanPIC{68a} ~~=~~ \scanPIC{68b} 
  \tag{H-7L} \label{eq:hexagon-7L}\ee
&
\be
  \scanPIC{75a} ~~=~~ \scanPIC{75b} 
  \tag{H-7R} \label{eq:hexagon-7R} 
\ee
\end{tabular}
\\
\begin{tabular}{p{19em}p{19em}}
\be
  \scanPIC{69a} ~~=~~ \scanPIC{69b} 
  \tag{H-8L} \label{eq:hexagon-8L}\ee
&
\be
  \scanPIC{76a} ~~=~~ \scanPIC{76b} 
  \tag{H-8R} \label{eq:hexagon-8R}
\ee
\end{tabular}

\subsection{From hexagon to Hopf-algebraic data} \label{sec:hex-to-Hopf}

\begin{proposition} \label{prop:hex-to-Hopf}
Suppose that $R,\sigma,\tau,\nu$ from \eqref{eq:Rstn-def} satisfy \eqref{eq:R-intertwiner} and the 16 hexagon conditions \eqref{eq:hexagon-1L}--\eqref{eq:hexagon-8R}. Then $R,\tau,\nu$ are of the form \eqref{eq:R-etc-via-Hopf} with $\beta = \eps \circ \nu$ and conditions a)--e) in Theorem \ref{thm:main2}\,(i) are satisfied.
\end{proposition}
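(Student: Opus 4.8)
The plan is to mirror the strategy of the pentagon converse (Proposition \ref{prop:pent-to-Hopf}): each of the sixteen hexagon identities \eqref{eq:hexagon-1L}--\eqref{eq:hexagon-8R} is already an equality of morphisms built only from $H$ (the reduction from the full diagram on $A^a\otimes_\Cc B^b\otimes_\Cc C^c$ to an $H$-level statement having been carried out in Section \ref{sec:hexagon} by specialising the objects to $H$ or $\one$ and composing with units and counits). From each such $H$-level identity I would extract a relation among $R,\sigma,\tau,\nu$ and the monoidal data $\Gamma,\lambda,g$ by composing with $\eta$, $\eps$, $S^{\pm1}$ and invoking the non-degeneracy and invertibility hypotheses on $R,\sigma,\tau,\nu$. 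The first pair \eqref{eq:hexagon-1L}, \eqref{eq:hexagon-1R} is exactly quasi-triangularity of $R$, so $R$ is an R-matrix for $\Rep_\Sc(H)$; together with the standing hypothesis \eqref{eq:R-intertwiner} this records that $R$ makes $H$ quasi-cocommutative, which later yields the coproduct-reversal part of condition c).

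Next I would establish the structural identifications. Case 2 is the configuration in which the $\gamma$-carrying associator $\alpha_{A^0,C^1,B^0}$ meets the mixed braiding $\sigma$: its R-variant gives condition a), tying $\Gamma$ (equivalently $\gamma$) to $\sigma$, while its L-variant additionally brings in $R$ and so, combined with a), expresses $R$ through $\sigma$ alone, yielding the form of $R$ in \eqref{eq:R-etc-via-Hopf}. Comparing case 2 with the analogous configuration in the $\tau$-sector (case 4), where the same associator is instead tied to $\tau$, forces $\tau=\sigma$. The expression $\nu=(\eps\circ\nu)\cdot\sigma^{-1}$ — so that $\beta=\eps\circ\nu$ is the scalar of Theorem \ref{thm:main2} — I expect to come from the hexagons carrying the two-$\Cc_1$ braiding $c_{A^1,B^1}$ (cases 5 and 7). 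Condition d), which couples $\sigma$ to the group-like $g$ of Theorem \ref{thm:main1}, is to be drawn from the hexagons in which the $\delta$- or $\phi$-carrying associators are braided against $\sigma$, after rewriting $\delta,\phi$ through $\gamma,g$ as in Lemma \ref{lem:delta+phi-via-gamma}; and the cointegral relations b) follow from the hexagons carrying $\phi$ (recall $\lambda=\eps\circ\phi$), together with the cointegral property and a normalisation argument of the type in Lemma \ref{lem:norm-cond}.

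The decisive input is hexagon \eqref{eq:hexagon-8R}, the all-$\Cc_1$ case. As already observed in Section \ref{sec:hexagon}, specialising its objects forces $\omega^2=\Id_\Sc$; its remaining content, after substituting the now-known $R,\tau,\nu$ and the associator $\alpha_{A^1,B^1,C^1}$ (which carries $\Gamma$ and $\lambda$), is exactly the formula for $\omega_H$ in condition e). I would extract e) by composing \eqref{eq:hexagon-8R} with suitable units and counits and rewriting the resulting endomorphism of $H$ as the composite $S_H\circ\Ad_\sigma^{-1}\circ\Gamma\circ(\Ad_\sigma)^\vee\circ\Gamma^{-1}$, using that $\Gamma$ is a Hopf isomorphism (Theorem \ref{thm:main1}\,a)) and condition a). Finally, condition c) — that $\Ad_\sigma$ is a Hopf isomorphism $H\to H_{\mathrm{cop}}$ — I expect to assemble from the standing hypothesis \eqref{eq:R-intertwiner} together with the established form of $R$: non-degeneracy of $R$ turns quasi-cocommutativity into $c_{H,H}\circ R=(\Ad_\sigma\otimes\Ad_\sigma)\circ R$ and thence into the coproduct-reversal clause of c), while the unit, counit and antipode clauses follow by composing with $\eps$ and comparing with d).

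The main obstacle is the disentanglement required for c) and e): the hexagons couple the braiding data $R,\sigma,\tau,\nu$ to the associativity data $\Gamma,\lambda,g$, so none of the target conditions is visible in a single equation. Collapsing the long composites of $\Gamma$, the duality maps, $S^{\pm1}$ and $\Ad_\sigma$ into the closed forms of c) and e) will require feeding back the already-established relations — $\tau=\sigma$, condition a), quasi-triangularity — together with intermediate identities relating $S_H^2$, $\Ad_g$ and $\Ad_\sigma$. Keeping careful track of left- versus right-multiplications and of the placement of $g^{\pm1}$ through these string-diagram manipulations is where the bookkeeping will be heaviest.
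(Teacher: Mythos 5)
Your overall strategy---reduce each hexagon to an $H$-level identity and harvest Hopf-algebraic relations by composing with $\eta$, $\eps$, $S^{\pm1}$ and invoking non-degeneracy---is the paper's strategy, and several attributions are correct: condition a) from \eqref{eq:hexagon-2R}, the form of $R$ from the case 2L/3L family, $\nu=\beta\cdot\sigma^{-1}$ from \eqref{eq:hexagon-5R}, condition b) from the $\phi$-carrying cases 8, and the coproduct-reversal part of c) from \eqref{eq:R-intertwiner} together with the two expressions for $R$. However, there is a genuine error in where you locate condition e). You declare \eqref{eq:hexagon-8R} to be the decisive input whose remaining content ``is exactly the formula for $\omega_H$''. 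It is not: \eqref{eq:hexagon-8R} carries no information about $\omega_H$ whatsoever. This can be read off from the paper's forward direction (Proposition \ref{prop:Hopf-to-hex}), where \eqref{eq:hexagon-8R} is verified using only conditions a), b), d) plus the monoidal data---condition e) is never invoked---and indeed \eqref{eq:hexagon-8R} is equivalent to the $\omega$-free identity $\phi^{-1}\circ M_\nu\circ\phi = {}_\sigma M\circ\phi\circ M_{\nu^{-1}}$ (used in the proof of Proposition \ref{prop:twist}). So, given a)--d), hexagon \eqref{eq:hexagon-8R} holds automatically for \emph{any} $\omega$, and composing it with counits only reproduces the $\lambda$-relations of condition b), which is exactly how Lemma \ref{lem:lambda-S} uses it. What the object-level case 8R forces is merely $\omega^2=\Id_\Sc$, as explained in Section \ref{sec:hexagon}. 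Condition e) must instead be extracted from \eqref{eq:hexagon-3R} (equivalently from the case 4L/4R family), where $\omega_H$ appears unpaired against $R$ and $\gamma^{-1}$: rewriting \eqref{eq:hexagon-3R} using $\tau=\sigma$ and Lemma \ref{lem:S-sigma}\,(ii) gives $(\id\otimes\omega)\circ\gamma^{-1}=\{\Ad_\sigma\otimes(S\circ\Ad_\sigma^{-1})\}\circ\gamma^{-1}$, which is condition e); this is Lemma \ref{lem:omega-expr}. Your plan never mentions case 3R, so condition e) is unreachable by the route you describe.

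A secondary problem is your claim that $\tau=\sigma$ follows from comparing case 2 with case 4. The $\tau$-sector braiding $c_{A^1,B^0}$ is precisely the one that carries $\omega$ (this is why \eqref{eq:hexagon-4L} and \eqref{eq:hexagon-4R} involve $\omega$, cf.\ \eqref{eq:check-4L-aux1}), whereas the $\sigma$-sector hexagons of case 2 are $\omega$-free; so comparing the two cases only yields a relation among $\tau$, $\sigma$ and the still-unknown $\omega_H$. The paper avoids this entanglement: it extracts $\tau$ from \eqref{eq:hexagon-7L} composed with $(\eps\otimes\id)\circ(-)\circ\eta$, where naturality of $\omega$ eliminates $\omega$ and gives $\beta\cdot g^{-1}={}_\tau M\circ S\circ\nu$, hence $\tau={}_{g^{-1}}M\circ M_{g^{-1}}\circ\sigma$ (Lemma \ref{lem:nu-tau}); then \eqref{eq:hexagon-7R} forces ${}_{g^{-1}}M\circ M_{g^{-1}}\circ\sigma=\sigma$ (Lemma \ref{lem:g-sig-g=sig}), which yields $\tau=\sigma$ and, together with Lemma \ref{lem:S-sigma}\,(i), condition d). Some such $\omega$-elimination step is indispensable for the $\tau$ and d) parts of your plan to close up.
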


The proof will be given in the end of this section after a series of lemmas. For the discussion below, we will assume the conditions in Proposition \ref{prop:hex-to-Hopf} without further mention. 

\medskip

Using the fact that $\sigma$ has a multiplicative inverse, we have
\be \label{eq:gamma_via_sigma}
  \text{\eqref{eq:hexagon-2R}} ~\Leftrightarrow~
  \gamma 
  = ({}_{\sigma^{-1}}M \otimes M_{\sigma^{-1}}) \circ \Delta \circ \sigma \ .
\ee
If we form the multiplicative inverse in $H \otimes H$ of both sides, we see that
\be \label{eq:gamma-1_via_sigma}
  \gamma^{-1} = (M_\sigma \otimes {}_\sigma M) \circ \Delta \circ \sigma^{-1} \ .
\ee
We note at this point that since $\gamma$ and $\gamma^{-1}$ are non-degenerate copairings, and since $M_{\sigma^{\pm1}}$ and ${}_{\sigma^{\pm1}}M$ are isomorphisms, also the copairings $\Delta \circ \sigma^{\pm1}$ are non-degenerate.

Substituting the expression \eqref{eq:gamma-1_via_sigma} for $\gamma^{-1}$ into \eqref{eq:hexagon-2L} and \eqref{eq:hexagon-3L} leads to the following two expressions for $R$:
\begin{subequations}
\begin{align}
  R &= ({}_\sigma M \otimes {}_\sigma M) \circ \Delta \circ \sigma^{-1} && \text{from \eqref{eq:hexagon-3L}}
  \label{eq:R-via-sigma_1}
\\
  &= (M_\sigma \otimes M_\sigma) \circ \Delta^\mathrm{cop} \circ \sigma^{-1} && \text{from \eqref{eq:hexagon-2L}} \ ,
  \label{eq:R-via-sigma_2}
\end{align}
\end{subequations}
where $\Delta^\mathrm{cop} = c_{H,H} \circ \Delta$. For future use, we note that another way of writing \eqref{eq:hexagon-3L} is
\be\label{eq:R-via-gamma-1}
  R =  ( \Ad_\sigma \otimes \id ) \circ \gamma^{-1} \ .
\ee

\begin{lemma}\label{lem:A-prop-1}
We have
$$
\eps \circ \sigma = \eps \circ \sigma^{-1} = \id_\one 
\quad , \quad
\Delta^\mathrm{cop} \circ \Ad_\sigma = (\Ad_\sigma \otimes \Ad_\sigma) \circ \Delta \ .
$$
In particular, $\eps \circ \Ad_\sigma = \eps$, so that $\Ad_\sigma$ is a coalgebra map $H \to H_\mathrm{cop}$.
\end{lemma}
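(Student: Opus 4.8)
The plan is to separate the lemma into the two counit identities and the (anti\nobreakdash-)comul\-tiplicativity of $\Ad_\sigma$, and to reduce the latter to quasi-cocommutativity \eqref{eq:R-intertwiner} rather than to the bare equality of the two presentations of $R$.

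First I would prove $\eps\circ\sigma=\eps\circ\sigma^{-1}=\id_\one$. Composing \eqref{eq:gamma_via_sigma} with $\eps\otimes\id$ and using that $\eps$ is an algebra map, so $\eps\circ{}_{\sigma^{-1}}M=(\eps\circ\sigma^{-1})\cdot\eps$, makes the left tensor leg collapse to the scalar $\eps\circ\sigma^{-1}$; the counit axiom $(\eps\otimes\id)\circ\Delta=\id$ then removes the coproduct, leaving $(\eps\circ\sigma^{-1})\cdot M_{\sigma^{-1}}\circ\sigma=(\eps\circ\sigma^{-1})\cdot\eta$. Since $(\eps\otimes\id)\circ\gamma=\eta$ by Corollary~\ref{cor:main1-i}\,a), this equals $\eta$; composing with $\eps$ and using $\eps\circ\eta=\id_\one$ gives $\eps\circ\sigma^{-1}=\id_\one$. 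Then $\eps\circ\sigma=\id_\one$ follows by applying $\eps$ to $\mu\circ(\sigma\otimes\sigma^{-1})=\eta$ and again using that $\eps$ is an algebra map.

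The heart of the lemma is $\Delta^\mathrm{cop}\circ\Ad_\sigma=(\Ad_\sigma\otimes\Ad_\sigma)\circ\Delta$, and this is the step I expect to be the main obstacle. Naively equating the two presentations \eqref{eq:R-via-sigma_1} and \eqref{eq:R-via-sigma_2} of $R$ and cancelling the outer multiplications by $\sigma^{\pm1}$ only yields the desired relation \emph{precomposed with} $\sigma^{-1}$, i.e.\ evaluated on a single ``element'', not for general $h$. To get it in full I would instead invoke quasi-cocommutativity \eqref{eq:R-intertwiner}, which reads $\Delta^\mathrm{cop}(h)\,R=R\,\Delta(h)$, equivalently $\Delta^\mathrm{cop}=R\,\Delta(-)\,R^{-1}$ (the multiplicative inverse $R^{-1}$ in $H\otimes H$ exists because $R$ does). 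Now substitute the explicit form $R=(\sigma\otimes\sigma)\cdot\Delta(\sigma^{-1})$ coming from \eqref{eq:R-via-sigma_1}. Because $\sigma,\sigma^{-1}$ factor through the tensor unit, the braiding in the product \eqref{eq:mult-on-product} on $H\otimes H$ passes them without any sign, so left- and right-multiplication by $\sigma^{\pm1}\otimes\sigma^{\pm1}$ acts as $\Ad_\sigma$ (resp.\ $\Ad_\sigma^{-1}$) on each tensor factor. Using that $\Delta$ is an algebra map (Definition~\ref{def:braided-Hopf}), one finds $R^{-1}=\Delta(\sigma)\cdot(\sigma^{-1}\otimes\sigma^{-1})$ and collapses the middle as $\Delta(\sigma^{-1})\,\Delta(h)\,\Delta(\sigma)=\Delta(\sigma^{-1}h\sigma)=\Delta(\Ad_\sigma^{-1}(h))$. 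Hence $\Delta^\mathrm{cop}=(\Ad_\sigma\otimes\Ad_\sigma)\circ\Delta\circ\Ad_\sigma^{-1}$, and composing with $\Ad_\sigma$ on the right gives the claim.

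Finally, the ``in particular'' statement is routine. From $\Ad_\sigma={}_\sigma M\circ M_{\sigma^{-1}}$ and $\eps\circ\mu=\eps\otimes\eps$ one gets $\eps\circ\Ad_\sigma=(\eps\circ\sigma)\cdot\eps\cdot(\eps\circ\sigma^{-1})=\eps$ by the counit identities just established. Combined with the comultiplicativity relation — and noting that since $\Sc$ is symmetric we have $c_{H,H}^{-1}=c_{H,H}$, so that $\Delta_{H_\mathrm{cop}}=c_{H,H}\circ\Delta=\Delta^\mathrm{cop}$ — this says precisely that $\Ad_\sigma$ is a coalgebra map $H\to H_\mathrm{cop}$.
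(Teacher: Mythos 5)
Your proposal is correct. For the key identity $\Delta^\mathrm{cop}\circ\Ad_\sigma=(\Ad_\sigma\otimes\Ad_\sigma)\circ\Delta$ your route coincides in essence with the paper's: the paper also pivots on quasi-cocommutativity \eqref{eq:R-intertwiner} rather than on the bare equality of \eqref{eq:R-via-sigma_1} and \eqref{eq:R-via-sigma_2} (your observation that the latter only yields the relation evaluated on $\sigma^{-1}$ is exactly why). The execution differs mildly: the paper substitutes \eqref{eq:R-via-sigma_2} into the left side and \eqref{eq:R-via-sigma_1} into the right side of \eqref{eq:R-intertwiner}, absorbs the generic argument using the algebra-map property of $\Delta$, and then cancels by composing with $(M_{\sigma^{-1}}\otimes M_{\sigma^{-1}})\circ(-)\circ{}_\sigma M$; you instead use only \eqref{eq:R-via-sigma_1} together with the explicit inverse $R^{-1}=\Delta(\sigma)\cdot(\sigma^{-1}\otimes\sigma^{-1})$ and conjugate, which spares you \eqref{eq:hexagon-2L} at this point (and makes the standing invertibility assumption on $R$ redundant, since you construct the inverse). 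Where you genuinely depart from the paper is in the counit identities: the paper inserts \eqref{eq:hexagon-5R} into $(\id\otimes\eps)\circ(-)\circ\eta$ to get $\nu=\eps(\sigma)\cdot\nu$ and concludes $\eps\circ\sigma=\id_\one$ from the invertibility of $\nu$, whereas you apply $\eps\otimes\id$ to \eqref{eq:gamma_via_sigma} and use the counit property $(\eps\otimes\id)\circ\gamma=\eta$ of the Hopf copairing (available since $\gamma,\lambda$ satisfy Corollary \ref{cor:main1-i}), getting $\eps\circ\sigma^{-1}=\id_\one$ first and never invoking $\nu$. Both routes are equally short and valid in this setting, and your care with the braided bookkeeping --- morphisms out of $\one$ pass the braiding by naturality, and $c_{H,H}^{-1}=c_{H,H}$ in the symmetric case so that $\Delta_{H_\mathrm{cop}}=\Delta^\mathrm{cop}$ --- is exactly what is needed for the element-style computation to be legitimate.
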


\begin{proof}
Inserting \eqref{eq:hexagon-5R} into $(\id \otimes \eps) \circ (-) \circ \eta$ and using $(\id \otimes \eps) \circ \delta = \eta$ results in $\nu = \eps(\sigma) \cdot \nu$. Since $\nu$ has a multiplicative inverse, this shows $\eps(\sigma) = \id_\one$. Composing $\mu \circ (\sigma \otimes \sigma^{-1}) = \eta$ with $\eps$ shows that also $\eps(\sigma^{-1}) = \id_\one$.

To understand the compatibility of $\Ad_\sigma$ with $\Delta$ consider the equalities
\begin{align}
\text{lhs of \eqref{eq:R-intertwiner}} & ~\overset{\eqref{eq:R-via-sigma_2}}{=}~ 
\scanpic{78a}
~\overset{(*)}{=}~ 
\scanpic{78b} 
\quad ,
\nonumber \\
\text{rhs of \eqref{eq:R-intertwiner}} & ~\overset{\eqref{eq:R-via-sigma_1}}{=}~ 
\scanpic{79a}
~\overset{(*)}{=}~ 
\scanpic{79b} 
\quad ,
\label{eq:qcocomm-vs-Ad-anticoalg}
\end{align}
Step (*) is the algebra-map property of the coproduct. Equating the two expressions and inserting them into $(M_{\sigma^{-1}} \otimes M_{\sigma^{-1}}) \circ (-) \circ {}_\sigma M$ gives the second identity in the statement of the lemma.
\end{proof}

\begin{lemma}\label{lem:S2=GA-2}
(i) $(\Ad_\sigma^{\,2} \otimes \id)\circ \gamma = c_{H,H} \circ \gamma = (\id \otimes \Ad_\sigma^{-2})\circ \gamma$.
\\
(ii) $S^2 = \Ad_g \circ \Ad_\sigma^{-2}$.
\end{lemma}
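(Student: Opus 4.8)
The plan is to prove part (i) first for the multiplicative inverse $\gamma^{-1}$, where the R-matrix naturally lives, and only then to transfer the statement to $\gamma$; part (ii) will then follow at once by comparison with the pentagon relation \eqref{eq:gamma-braid}.

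First I would record an intertwining property of $R$ itself. Applying $c_{H,H}$ to \eqref{eq:R-via-sigma_1}, pulling ${}_\sigma M \otimes {}_\sigma M$ through the symmetry by naturality, and using $\Delta^\mathrm{cop}=c_{H,H}\circ\Delta$ gives $c_{H,H}\circ R = ({}_\sigma M \otimes {}_\sigma M)\circ \Delta^\mathrm{cop}\circ\sigma^{-1}$. Factoring ${}_\sigma M = \Ad_\sigma\circ M_\sigma$ and recognising \eqref{eq:R-via-sigma_2} yields
\[
  c_{H,H}\circ R = (\Ad_\sigma \otimes \Ad_\sigma)\circ R .
\]
Substituting $R = (\Ad_\sigma \otimes \id)\circ\gamma^{-1}$ from \eqref{eq:R-via-gamma-1}, using naturality of $c_{H,H}$ once more and cancelling the factor $\Ad_\sigma$ on the second leg (i.e.\ applying $\id\otimes\Ad_\sigma^{-1}$), I obtain the $\gamma^{-1}$-version of the first identity, $c_{H,H}\circ\gamma^{-1} = (\Ad_\sigma^{\,2}\otimes\id)\circ\gamma^{-1}$. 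Post-composing with $c_{H,H}$ and using $c_{H,H}^{\,2}=\id$ (here $\Sc$ is symmetric) yields $\gamma^{-1} = (\Ad_\sigma^{\,2}\otimes\Ad_\sigma^{\,2})\circ\gamma^{-1}$, hence also $c_{H,H}\circ\gamma^{-1} = (\id\otimes\Ad_\sigma^{-2})\circ\gamma^{-1}$. Thus both equalities of (i) hold with $\gamma$ replaced by $\gamma^{-1}$.

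Next I would transfer these identities from $\gamma^{-1}$ to $\gamma$. The point is that $c_{H,H}$, as well as $\Ad_\sigma^{\,2}\otimes\id$ and $\id\otimes\Ad_\sigma^{-2}$, are algebra automorphisms of $(H\otimes H,\mu_{H\otimes H})$: for the two conjugation maps this holds because $\Ad_\sigma^{\pm2}$ and $\id$ are algebra maps and the product \eqref{eq:mult-on-product} is built from $\mu$ and $c_{H,H}$, while for $c_{H,H}$ itself it uses that $\Sc$ is symmetric (the flip is an algebra isomorphism of the tensor-square only in the symmetric case). Consequently post-composition with each of them is a monoid automorphism of $(\Sc(\one,H\otimes H),\ast)$, and so sends the multiplicative inverse $\gamma^{-1}$ to the inverse of the image of $\gamma$. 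Since the images of $\gamma^{-1}$ under the relevant pairs of maps coincide by the previous paragraph, and multiplicative inverses are unique, the images of $\gamma$ coincide as well; this is exactly part (i).

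Finally, for part (ii) I would compare the third identity of (i), $c_{H,H}\circ\gamma = (\id\otimes\Ad_\sigma^{-2})\circ\gamma$, with the pentagon relation \eqref{eq:gamma-braid}, which in the symmetric case ($c_{H,H}^{-1}=c_{H,H}$) reads $c_{H,H}\circ\gamma = (\id\otimes(\Ad_g^{-1}\circ S^2))\circ\gamma$. Since $\gamma$ is non-degenerate (Lemma \ref{lem:gamma-inv}) the two second-leg operators must agree, i.e.\ $\Ad_\sigma^{-2} = \Ad_g^{-1}\circ S^2$; composing with $\Ad_g$ gives $S^2 = \Ad_g\circ\Ad_\sigma^{-2}$, as claimed. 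The main obstacle is the transfer step of the third paragraph: the identities are genuinely produced on $\gamma^{-1}$ (through $R$), and moving them to $\gamma$ is not a formal manipulation but rests on the algebra-automorphism property of $c_{H,H}$, which is precisely where the symmetry of $\Sc$ enters.
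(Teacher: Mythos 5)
Your proof is correct, but it follows a genuinely different path to part (i) than the paper does. The paper computes directly on $\gamma$: it inserts the expression \eqref{eq:gamma_via_sigma} for $\gamma$, applies the anti-coalgebra-map property of $\Ad_\sigma$ from Lemma \ref{lem:A-prop-1} together with $\Ad_\sigma(\sigma)=\sigma$, and arrives at $(\Ad_\sigma \otimes \Ad_\sigma) \circ c_{H,H} \circ \gamma = (\Ad_\sigma \otimes \Ad_\sigma^{-1}) \circ \gamma$ in a single three-step chain, from which both equalities of (i) follow by cancelling; no detour through $\gamma^{-1}$ and no transfer step is ever needed. You instead produce the intertwining identity $c_{H,H} \circ R = (\Ad_\sigma \otimes \Ad_\sigma) \circ R$ from the two expressions \eqref{eq:R-via-sigma_1} and \eqref{eq:R-via-sigma_2}, read it as a statement about $\gamma^{-1}$ via \eqref{eq:R-via-gamma-1}, and then move from $\gamma^{-1}$ to $\gamma$ by observing that post-composition with the unital algebra automorphisms $c_{H,H}$, $\Ad_\sigma^{\,2}\otimes\id$, $\id\otimes\Ad_\sigma^{-2}$ of $H \otimes H$ is a monoid map on $\Sc(\one, H\otimes H)$ and hence intertwines multiplicative inversion. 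That transfer argument is sound (and you correctly flag that the flip being an algebra automorphism of the tensor square is exactly where symmetry of $\Sc$ enters — note the paper also uses symmetry, if less visibly, e.g.\ through $c^{-1}_{H,H}=c_{H,H}$ in part (ii)). Your route buys something: it bypasses Lemma \ref{lem:A-prop-1} and thus the quasi-cocommutativity hypothesis \eqref{eq:R-intertwiner} entirely — equality of the two $R$-expressions is only the "evaluated at $\sigma^{-1}$" shadow of the full anti-coalgebra property, and that shadow suffices — and your intermediate identity is precisely the one quoted in Remark \ref{rem:main2}\,(ii). The cost is the extra transfer machinery, which the paper's choice of working with $\gamma$ (via the hexagon \eqref{eq:gamma_via_sigma}) rather than $\gamma^{-1}$ renders unnecessary. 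Your part (ii) coincides with the paper's: \eqref{eq:gamma-braid} is exactly the equality of the two expressions for $\delta$ in \eqref{eq:delta-expression-ab} that the paper invokes, and both arguments finish by non-degeneracy of $\gamma$.
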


\begin{proof}
To see (i) note that 
\be\begin{array}{l}
(\Ad_\sigma \otimes \Ad_\sigma) \circ c_{H,H} \circ \gamma
\overset{(1)}=
(\Ad_\sigma \otimes \Ad_\sigma) \circ (M_{\sigma^{-1}} \otimes {}_{\sigma^{-1}}M) \circ \Delta^\mathrm{cop} \circ \sigma
\\[.5em]
\qquad \overset{(2)}=
(M_{\sigma^{-1}} \otimes {}_{\sigma^{-1}}M) \circ \Delta \circ \sigma
\overset{(3)}=
(\Ad_\sigma \otimes \Ad_\sigma^{-1}) \circ \gamma \ ,
\end{array}
\ee
where in (1) the expression \eqref{eq:gamma_via_sigma} for $\gamma$ was inserted, in (2) the anti-coalgebra property of $\Ad_\sigma$ is used (Lemma \ref{lem:A-prop-1}), as well as $\Ad_\sigma(\sigma)=\sigma$. Finally, step (3) is once more \eqref{eq:gamma_via_sigma}. This shows the second identity claimed in (i). The first follows by composing the second one with $c_{H,H} \circ (\id \otimes \Ad_\sigma^{\,2})$.

For (ii) we start from the equality of the two expressions for $\delta$ given in \eqref{eq:delta-expression-ab}. This is equivalent to
\be
  \big\{ \id \otimes (\Ad_g^{-1} \circ S^2) \big\} \circ \gamma = c_{H,H} \circ \gamma \ .
\ee
Substituting the second identity in (i) and using non-degeneracy of $\gamma$ results in (ii).
\end{proof}

\begin{lemma}\label{lem:S-sigma}
(i) 
$S \circ \sigma = M_{g^{-1}} \circ \sigma$, as well as
$S^{-1} \circ \sigma = {}_{g^{-1}}M \circ \sigma$,
$S \circ \sigma^{-1} = {}_gM \circ \sigma^{-1}$, 
$S^{-1} \circ \sigma^{-1} = M_g \circ \sigma^{-1}$.
\\
(ii) $S \circ \Ad_\sigma^{-1} = \Ad_\sigma \circ \Ad_g^{-1} \circ S$ and $S \circ \Ad_\sigma = \Ad_\sigma \circ S^{-1}$.
\end{lemma}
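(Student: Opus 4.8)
The plan is to reduce part~(i) to a single identity and then to obtain part~(ii) by a purely formal manipulation. For part~(i), I would first observe that the four stated identities are equivalent, so that it suffices to establish $S\circ\sigma = M_{g^{-1}}\circ\sigma$, i.e.\ $S(\sigma)=\sigma g^{-1}$. Indeed, since $S$ and $S^{-1}$ are algebra anti-automorphisms with $S\circ\eta=\eta$ (see \eqref{ap}), and since $g$ is group-like with $S(g)=S^{-1}(g)=g^{-1}$ (Lemma~\ref{lem:g-grouplike-2}), whence also $S^2(g)=g$: applying $S^{-1}$ to $S(\sigma)=\sigma g^{-1}$ gives $\sigma = S^{-1}(g^{-1})\,S^{-1}(\sigma)=g\,S^{-1}(\sigma)$, hence $S^{-1}(\sigma)=g^{-1}\sigma$; applying $S$ to $\sigma\sigma^{-1}=\eta$ gives $S(\sigma^{-1})=S(\sigma)^{-1}=g\sigma^{-1}$; and one further application of $S^{-1}$ yields $S^{-1}(\sigma^{-1})=\sigma^{-1}g$. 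These are precisely the remaining three identities.

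The identity $S(\sigma)=\sigma g^{-1}$ is the one genuinely new input. Note that composing $S^2=\Ad_g\circ\Ad_\sigma^{-2}$ (Lemma~\ref{lem:S2=GA-2}(ii)) with $\sigma$ and using $\Ad_\sigma(\sigma)=\sigma$ only yields $S^2(\sigma)=\Ad_g(\sigma)=g\sigma g^{-1}$, i.e.\ a relation of degree two in the antipode; the degree-one relation cannot follow from the already established material and must be extracted from a hexagon in which a single antipode (entering through $\delta$, equivalently through the inverse associator $\alpha^{-1}_{A^1,B^0,C^1}$ via $\tilde\delta$ of \eqref{eq:tilde-delta-def}) meets the $\sigma$-dependence of the braiding. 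The natural candidate is \eqref{eq:hexagon-5R}, which already produced $\eps\circ\sigma=\id_\one$ in Lemma~\ref{lem:A-prop-1} and which couples $\delta$, $\nu$ and $\sigma$ (possibly in combination with one of the $6$-hexagons). The plan is to substitute $\delta=(S\otimes M_g)\circ\gamma^{-1}$ (Lemma~\ref{lem:delta+phi-via-gamma}) together with the expressions \eqref{eq:gamma_via_sigma} and \eqref{eq:gamma-1_via_sigma} for $\gamma^{\pm1}$ in terms of $\sigma$, to cancel the outer multiplications by $\sigma^{\pm1}$ using $\eps\circ\sigma=\id_\one$, and finally to strip off the coproduct using that $\Delta\circ\sigma^{\pm1}$ is non-degenerate (noted after \eqref{eq:gamma-1_via_sigma}). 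Correctly selecting the hexagon and carrying out this cancellation is the main obstacle; everything else in the lemma is formal.

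For part~(ii), both identities are consequences of part~(i) and Lemma~\ref{lem:S2=GA-2}(ii). Expanding with the anti-automorphism property and substituting the identities of~(i),
$$
  S(\Ad_\sigma^{-1}(x))=S(\sigma^{-1}x\sigma)=S(\sigma)\,S(x)\,S(\sigma^{-1})=\sigma g^{-1}\,S(x)\,g\sigma^{-1}=\Ad_\sigma(\Ad_g^{-1}(S(x))) \ ,
$$
which is the first identity $S\circ\Ad_\sigma^{-1}=\Ad_\sigma\circ\Ad_g^{-1}\circ S$. The identical computation for $\Ad_\sigma$ gives $S\circ\Ad_\sigma=\Ad_g\circ\Ad_\sigma^{-1}\circ S$.

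To obtain the second stated identity I would feed these two relations, together with $S\circ\Ad_g=\Ad_g\circ S$ (immediate from $g$ group-like) and $S^2=\Ad_g\circ\Ad_\sigma^{-2}$, into the trivial equality $S\circ S^2=S^2\circ S$. Writing out $S\circ(\Ad_g\Ad_\sigma^{-2})=(\Ad_g\Ad_\sigma^{-2})\circ S$ and moving every $S$ to the right with the two conjugation relations gives, after cancelling $\Ad_g$ on the left and $S$ on the right, the relation $(\Ad_\sigma\circ\Ad_g^{-1})^{2}=\Ad_\sigma^{-2}$, which rearranges to $\Ad_g\circ\Ad_\sigma^{-1}\circ\Ad_g=\Ad_\sigma^{3}$. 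Substituting this into $S\circ\Ad_\sigma=\Ad_g\circ\Ad_\sigma^{-1}\circ S$ and using $S^2=\Ad_g\circ\Ad_\sigma^{-2}$ once more turns the right-hand side into $\Ad_\sigma\circ S^{-1}$, establishing $S\circ\Ad_\sigma=\Ad_\sigma\circ S^{-1}$. All of these steps are routine identities in the group of automorphisms of $H$ under composition, so no further obstacle arises once part~(i) is in hand.
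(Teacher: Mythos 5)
Your reduction of part (i) to the single identity $S\circ\sigma = M_{g^{-1}}\circ\sigma$ is correct and matches the paper, as is your observation that $S^2=\Ad_g\circ\Ad_\sigma^{-2}$ alone cannot produce it. But that single identity is the entire content of the lemma, and your proposal does not prove it: you only sketch a plan and explicitly defer "correctly selecting the hexagon and carrying out this cancellation" — which is precisely the missing step. Moreover, your candidate hexagon \eqref{eq:hexagon-5R} cannot do the job on its own. In hexagon 5R the braidings $c_{A^0\otimes B^1,C^1}$ and $c_{B^1,C^1}$ are of $\Cc_1\times\Cc_1$ type and hence involve $\nu$, which at this stage of the converse direction is still an undetermined morphism; what 5R yields are relations entangling $\sigma$ and $\nu$ (e.g.\ composing with $(\eps\otimes\id)\circ(-)\circ\eta$ gives $\eps(\nu)\cdot g = {}_\sigma M\circ S^{-1}\circ\nu$), from which $S(\sigma)=\sigma g^{-1}$ cannot be isolated. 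Indeed, in the paper $\nu=\beta\cdot\sigma^{-1}$ is only established afterwards, in Lemma \ref{lem:nu-tau}, \emph{using} Lemma \ref{lem:S-sigma}\,(i) — so a route through 5R (even "in combination with one of the $6$-hexagons", which additionally bring in the unknown $\tau$) is at best incomplete and risks circularity. The paper instead uses hexagon \eqref{eq:hexagon-5L}, which is the unique hexagon of mixed degree involving only $R$, $\sigma$ and $\tilde\delta$ — all already expressed through $\sigma$ and the fixed monoidal data via \eqref{eq:R-via-sigma_2}, \eqref{eq:gamma_via_sigma} and \eqref{eq:tilde-delta-def} — and neither $\nu$ nor $\tau$. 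The computation \eqref{eq:hexagon-5L-alt} rewrites its left-hand side so that composing both sides with $\eta\otimes\eta$ gives
\begin{equation*}
   \big\{ \id \otimes ({}_{S \circ \sigma}M \circ {}_gM \circ M_\sigma) \big\} \circ \tilde\delta
   =
   \big\{ \id \otimes ({}_{\sigma}M \circ M_\sigma) \big\} \circ \tilde\delta \ ,
\end{equation*}
and non-degeneracy of $\tilde\delta$ then forces ${}_{S \circ \sigma}M \circ {}_gM \circ M_\sigma = {}_{\sigma}M \circ M_\sigma$, i.e.\ $S(\sigma)=\sigma g^{-1}$. This is the idea your proposal is missing.

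Part (ii) of your proposal is correct, granted part (i). The conjugation computation $S(\sigma^{-1}x\sigma)=\sigma g^{-1}S(x)g\sigma^{-1}$ is exactly the paper's first step, and your derivation of $S\circ\Ad_\sigma=\Ad_\sigma\circ S^{-1}$ does close: from $S\circ S^2=S^2\circ S$ one indeed gets $(\Ad_\sigma\circ\Ad_g^{-1})^2=\Ad_\sigma^{-2}$, i.e.\ $\Ad_g\circ\Ad_\sigma^{-1}\circ\Ad_g=\Ad_\sigma^{3}$, and then $\Ad_g\circ\Ad_\sigma^{-1}\circ S = \Ad_g\circ\Ad_\sigma^{-1}\circ\Ad_g\circ\Ad_\sigma^{-2}\circ S^{-1}=\Ad_\sigma\circ S^{-1}$. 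This is longer than the paper's argument, which simply writes $S\circ S = \Ad_g\circ\Ad_\sigma^{-1}\circ\Ad_\sigma^{-1} = S\circ\Ad_\sigma\circ S^{-1}\circ\Ad_\sigma^{-1}$ and cancels one $S$, but it is valid. Since (ii) rests entirely on (i), however, the lemma as a whole remains unproven in your proposal.
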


\begin{proof}
For (i) we do a longer calculation than necessary which however will be useful again in Section \ref{sec:Hopf-to-hex}. Namely, the left hand side of \eqref{eq:hexagon-5L} can be rewritten as follows:
\begin{align}
&\text{lhs of \eqref{eq:hexagon-5L}}
~\overset{(1)}=~
\scanpic{80a}
~\overset{(2)}=~
\scanpic{80b}
~=~
\scanpic{80c}
\nonumber \\
&\overset{(4)}=~
\scanpic{80d}
~\overset{(5)}=~
\scanpic{80e}
~=~
\scanpic{80f}
~\overset{(7)}=~
\scanpic{80g}
\label{eq:hexagon-5L-alt}
\end{align}
In step (1) expression \eqref{eq:R-via-sigma_2} for $R$ has been substituted. In step (2), first the $\sigma$-morphisms are moved to their new position (associativity) and then the algebra-map property of the coproduct is used. In step (4), the bubble-property is employed, as well as the fact that $\Ad_\sigma$ is an algebra-map and once more the algebra-map property of the coproduct. Step (5) follows as $\Ad_\sigma$ is a coalgebra anti-automorphism (Lemma \ref{lem:A-prop-1}). For step (7) insert expression \eqref{eq:gamma_via_sigma} for $\gamma$, use Lemma \ref{lem:gamma-inv-via-S}\,(ii) to combine it with $S$ to $\gamma^{-1}$, then insert a pair ${}_gM \circ {}_{g^{-1}}M$ and substitute the definition of $\tilde\delta$ in \eqref{eq:tilde-delta-def}.  

If we compose the right hand sides of \eqref{eq:hexagon-5L-alt} and \eqref{eq:hexagon-5L} with $\eta \otimes \eta$, we arrive at the equality
\be
   \big\{ \id \otimes ({}_{S \circ \sigma}M \circ {}_gM \circ M_\sigma) \big\} \circ \tilde\delta 
   = 
   \big\{ \id \otimes ({}_{\sigma}M \circ M_\sigma) \big\} \circ \tilde\delta 
   \ .
\ee
Since $\tilde\delta$ is non-degenerate, this implies ${}_{S \circ \sigma}M \circ {}_gM \circ M_\sigma = {}_{\sigma}M \circ M_\sigma$. Composing this identity with $\mu \circ (g^{-1} \otimes \sigma^{-1})$ gives the first identity stipulated in (i). Composing the first identity with ${}_{g^{-1}}M \circ S^{-1} = S^{-1} \circ M_g$ gives the second identity. The remaining two identities are the multiplicative inverses of the first two.

The first equality in (ii) follows from
\be
  S \circ \Ad_{\sigma^{-1}} 
  = 
  \Ad_{S \circ \sigma} \circ S
  \overset{\text{(i)}}= 
  \Ad_{M_{g^{-1}} \circ \sigma} \circ S
  =
  \Ad_{\sigma} \circ \Ad_{g^{-1}} \circ S \ .
\ee
Taking the inverse on both sides gives $\Ad_\sigma \circ S^{-1} = S^{-1} \circ \Ad_g \circ \Ad_\sigma^{-1}$, or, equivalently, $S \circ \Ad_\sigma \circ S^{-1} = \Ad_g \circ \Ad_\sigma^{-1}$. Using this we compute
\be
S \circ S
\overset{\text{Lem.\,\ref{lem:S2=GA-2}}}=
\Ad_g \circ \Ad_\sigma^{-1} \circ \Ad_\sigma^{-1}
=
S \circ \Ad_\sigma \circ S^{-1}\circ \Ad_\sigma^{-1} \ ,
\ee
which implies the second equality in (ii).
\end{proof}

\begin{lemma}\label{lem:nu-tau}
We have $\tau = {}_{g^{-1}}M \circ M_{g^{-1}} \circ \sigma$ and $\nu = \beta \cdot \sigma^{-1}$ with $\beta = \eps(\nu)$. The element $\beta \in \End(\one)$ is invertible.
\end{lemma}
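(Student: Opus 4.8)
The plan is to extract the two formulas from the hexagon conditions that Section~\ref{sec:hexagon} has so far exploited only partially, and to dispose of the invertibility of $\beta$ at once. The latter is immediate: since $\nu$ has a multiplicative inverse $\nu^{-1}$ with $\mu\circ(\nu\otimes\nu^{-1})=\eta$, composing with the algebra map $\eps$ gives $(\eps\circ\nu)\cdot(\eps\circ\nu^{-1})=\id_\one$, so that $\beta=\eps\circ\nu$ is invertible in $\End(\one)$ with inverse $\eps\circ\nu^{-1}$.

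For $\nu$ I would use hexagon \eqref{eq:hexagon-5R}, from which the proof of Lemma~\ref{lem:A-prop-1} only read off the weak consequence $\nu=\eps(\sigma)\cdot\nu$. In case $5R$ the two $\Cc_1$-$\Cc_1$ braidings contribute $\nu$, the single $\Cc_0$-$\Cc_1$ braiding contributes $\sigma$, and the associativity isomorphisms contribute $\delta$; crucially, no $\tau$ occurs. Substituting the expression \eqref{eq:delta-expression-ab} for $\delta$ (hence, through \eqref{eq:gamma_via_sigma}, everything in terms of $\sigma$, $S$ and $g$) turns the hexagon into an identity of copairings $\one\to H\otimes H$. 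Reducing it to an identity on $H$ by the specialisation-and-evaluation device used throughout Section~\ref{sec:hexagon} (setting the $\Cc_1$-objects to $\one$ and composing with units and counits) yields $\mu\circ(\sigma\otimes\nu)=\eps(\nu)\cdot\eta$; multiplying on the left by $\sigma^{-1}$ gives $\nu=\eps(\nu)\cdot\sigma^{-1}=\beta\cdot\sigma^{-1}$, and consistency with $\beta=\eps(\nu)$ follows from $\eps\circ\sigma^{-1}=\id_\one$ (Lemma~\ref{lem:A-prop-1}).

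For $\tau$ I would use a hexagon containing the $\Cc_1$-$\Cc_0$ braiding $c_{A^1,B^0}$, the only braiding built from $\tau$. The economical choice is a two-$\Cc_1$ hexagon such as \eqref{eq:hexagon-6L} or \eqref{eq:hexagon-7L}, in which $\tau$ occurs exactly once, alongside two copies of $\nu$ and the associativity isomorphisms; since $\nu$ has just been determined and $\nu,\gamma,\delta,\tilde\delta$ are all expressible through $\sigma$, $S$ and $g$ (via \eqref{eq:gamma_via_sigma}, \eqref{eq:delta-expression-ab} and \eqref{eq:tilde-delta-def}), the single occurrence of $\tau$ can be solved for directly. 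The factors of $g$ in the answer enter precisely through the $g$-dependence of $\tilde\delta$ together with the relations $S\circ\sigma=M_{g^{-1}}\circ\sigma$ of Lemma~\ref{lem:S-sigma}\,(i). After the usual specialisation and evaluation one obtains $\tau={}_{g^{-1}}M\circ M_{g^{-1}}\circ\sigma$. Alternatively one can stay within the one-$\Cc_1$ hexagons of case $3$ or $4$, where only $\tau$, $R$ and $\gamma$ appear: there $\tau$ occurs more than once, but the occurrences are tied together by the coproduct, and after evaluating on the regular module $C^0=H$ the condition again collapses to the stated formula.

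The main obstacle is the bookkeeping rather than any isolated identity: one must insert the graphical forms of the associativity isomorphisms $\alpha$, rewrite every occurrence of $\gamma,\delta,\phi$ through $\sigma$ and $g$, and then cancel the resulting copairing identities down to statements about $\nu$, respectively $\tau$, alone. This last cancellation rests on the non-degeneracy of the copairings $\Delta\circ\sigma^{\pm1}$ recorded after \eqref{eq:gamma-1_via_sigma} and of $\tilde\delta$, which lets one strip off the invertible $\sigma$-dressing on both sides. Keeping straight whether each step produces $\sigma$ or $\sigma^{-1}$, and on which side the antipode and the group-like factors $g$ land, is the delicate part of the argument.
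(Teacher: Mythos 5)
Your overall route is the same as the paper's: hexagon \eqref{eq:hexagon-5R} for $\nu$, hexagon \eqref{eq:hexagon-7L} for $\tau$, and multiplicativity of $\eps$ for the invertibility of $\beta$; the $\beta$-argument and your primary $\tau$-argument are fine. The gap is in the $\nu$-step. The specialisation of \eqref{eq:hexagon-5R} does \emph{not} yield $\mu\circ(\sigma\otimes\nu)=\eps(\nu)\cdot\eta$. In case 5R two of the three associators are of the mixed type \eqref{fig:assoc-c1} and carry an $S^{-1}$ (cf.\ Lemma \ref{lem:H-intertwiner}\,d)), while the third, $\alpha_{C^1,A^0,B^1}$, contributes $\delta$, whose partial counit is $(\eps\otimes\id)\circ\delta = g$ and not $\eta$ (see \eqref{eq:g-def}). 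The honest reduction, obtained by inserting \eqref{eq:hexagon-5R} into $(\eps\otimes\id)\circ(-)\circ\eta$, is $\eps(\nu)\cdot g = {}_\sigma M\circ S^{-1}\circ\nu$, i.e.\ $\nu = \beta\cdot S\circ M_g\circ\sigma^{-1}$. Your claimed identity differs from this precisely by the relation $M_g\circ\sigma^{-1} = S^{-1}\circ\sigma^{-1}$, which is Lemma \ref{lem:S-sigma}\,(i) --- and that lemma is not free: it is extracted from the \emph{different} hexagon \eqref{eq:hexagon-5L} via the computation \eqref{eq:hexagon-5L-alt}. So to conclude $\nu=\beta\cdot\sigma^{-1}$ you must invoke Lemma \ref{lem:S-sigma}\,(i) in the $\nu$-step as well (you only cite it for $\tau$); without it the antipode and the group-like $g$ cannot be removed, and the formula for $\nu$ does not follow.

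Your fallback route for $\tau$ through the hexagons of cases 3 and 4 would also fail. In \eqref{eq:hexagon-3L} no $\tau$ occurs at all, and in \eqref{eq:hexagon-3R} (and likewise in case 4) $\tau$ enters only through the conjugation ${}_{\tau^{-1}}M\circ M_\tau$, together with $\omega$, cf.\ \eqref{eq:hexagon-3R-alt}. Such an equation determines $\tau$ at most up to central invertible elements; for commutative $H$ (e.g.\ the Tambara--Yamagami case $H=k(A)$) it contains no information about $\tau$ whatsoever, so it cannot collapse to the stated formula. This is why the paper uses \eqref{eq:hexagon-3R} to pin down $\omega$ (Lemma \ref{lem:omega-expr}) rather than $\tau$, and determines $\tau$ from \eqref{eq:hexagon-7L}, where it occurs linearly through a single left multiplication --- exactly as in your primary route.
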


\begin{proof}
Inserting \eqref{eq:hexagon-5R} into $(\eps \otimes \id) \circ (-) \circ \eta$ gives $\eps(\nu) \cdot g = {}_\sigma M \circ S^{-1} \circ \nu$. This is equivalent to $\nu = \beta \cdot S \circ M_g \circ \sigma^{-1}$. Together with $M_g \circ \sigma^{-1} = S^{-1} \circ \sigma^{-1}$ (Lemma \ref{lem:S-sigma}\,(i)) one obtains the expression for $\nu$. Multiplying by $\nu^{-1}$ and composing with $\eps$ shows that $\beta$ is an invertible element of $\End(\one)$.
To get the expression for $\tau$ insert \eqref{eq:hexagon-7L} into $(\eps \otimes \id) \circ (-) \circ \eta$. This gives $\beta \cdot g^{-1} = {}_\tau M \circ S \circ \nu$, where we used naturality of $\omega$. Substituting the expression for $\nu$ and solving for $\tau$ gives $\tau = {}_{g^{-1}}M \circ S \circ \sigma$. Applying Lemma \ref{lem:S-sigma}\,(i) results in the formula for $\tau$.
\end{proof}

\begin{lemma}\label{lem:g-sig-g=sig}
We have ${}_gM \circ \sigma = M_{g^{-1}} \circ \sigma$.
\end{lemma}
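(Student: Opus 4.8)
The plan is to deduce the claimed identity $\mu\circ(g\ot\sigma)=\mu\circ(\sigma\ot g^{-1})$, i.e.\ $g\sigma=\sigma g^{-1}$, by comparing two expressions for the braiding datum $\tau$. First I would record the reformulation: Lemma \ref{lem:nu-tau} already gives $\tau={}_{g^{-1}}M\circ M_{g^{-1}}\circ\sigma=g^{-1}\sigma g^{-1}$, and since $g$ is invertible, $g\sigma=\sigma g^{-1}$ is equivalent to $\tau=\sigma$ (multiply $g^{-1}\sigma g^{-1}=\sigma$ on the left by $g$ to recover $\sigma g^{-1}=g\sigma$). Thus it suffices to extract from the hexagons a second, $g$-free expression $\tau=\sigma$.

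To this end I would specialise one of the hexagon equations not yet used in Lemmas \ref{lem:A-prop-1}--\ref{lem:nu-tau}, choosing one that couples the group-like element $g$ --- which enters the associators only through $\delta=(S\ot M_g)\circ\gamma^{-1}$ (Lemma \ref{lem:delta+phi-via-gamma}) --- with the $\tau$- and $\nu$-components of the braiding. The natural candidates are the mixed hexagons \eqref{eq:hexagon-6L} or \eqref{eq:hexagon-6R} (case $A\in\Cc_1$, $B\in\Cc_0$, $C\in\Cc_1$, so that an associator carries $\delta$ while the braidings carry $\tau$ and $\nu$), or else \eqref{eq:hexagon-7R}. Following the device used throughout Section \ref{sec:hex-to-Hopf} and in the proof of Lemma \ref{lem:nu-tau}, I would set the $\Cc_1$-objects to $\one$ and compose with $\eta$ and $\eps$ to reduce the full hexagon to an identity among morphisms $\one\to H$. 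Into this identity I would insert the already-established data $\nu=\beta\cdot\sigma^{-1}$ (Lemma \ref{lem:nu-tau}), $\tau=g^{-1}\sigma g^{-1}$, $\delta=(S\ot M_g)\circ\gamma^{-1}$ and $\tilde\delta$ from \eqref{eq:tilde-delta-def}, the antipode--$\sigma$ relations $S\circ\sigma=\sigma g^{-1}$ and $S^{-1}\circ\sigma=g^{-1}\sigma$ (Lemma \ref{lem:S-sigma}\,(i)), and the group-like identities $\eps\circ g=\id_\one$, $g^{-1}=S\circ g$ (Lemma \ref{lem:g-grouplike-2}). After removing the antipodes by the bubble property and using non-degeneracy of $\gamma$ --- equivalently of $\Delta\circ\sigma^{\pm1}$, cf.\ the remark after \eqref{eq:gamma-1_via_sigma} --- to strip off the surrounding left and right multiplications, the identity should collapse to $\tau=\sigma$, and hence to $g\sigma=\sigma g^{-1}$.

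The main obstacle is conceptual rather than computational. No combination of the identities already derived (Lemmas \ref{lem:A-prop-1}--\ref{lem:nu-tau}) can prove the lemma, because all of them remain valid if one pretends that $g$ is central with $g^2\neq\eta$: in that case $S\circ\sigma=\sigma g^{-1}=g^{-1}\sigma$ and $S^2=\Ad_\sigma^{-2}$ hold automatically, yet $g\sigma=\sigma g^{-1}$ would force the genuinely new relation $g^2=\eta$. Equivalently, the inner-automorphism identity $\Ad_g\circ\Ad_\sigma=\Ad_\sigma\circ\Ad_g^{-1}$ only pins down $g\sigma$ and $\sigma g^{-1}$ up to a central factor. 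The proof must therefore invoke a hexagon that distinguishes left from right multiplication by $g$ on honest morphisms $\one\to H$, and the delicate bookkeeping is to track the two separate occurrences of $g$ (one coming from $\delta$ in the associator, one from the antipode relations for $\sigma$) so that they combine into a single factor $g$ rather than $g^{\pm2}$; the final cancellation of the ambient module actions then rests on the non-degeneracy statements established earlier.
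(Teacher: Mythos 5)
Your overall strategy coincides with the paper's proof of Lemma \ref{lem:g-sig-g=sig}: reduce the claim to $\tau=\sigma$ via Lemma \ref{lem:nu-tau}, then extract this from a hexagon not yet consumed by Lemmas \ref{lem:A-prop-1}--\ref{lem:nu-tau} by specialising the $\Cc_1$-objects to $\one$ and composing with $\eta$ and $\eps$. The paper does exactly this with \eqref{eq:hexagon-7R}: it rewrites the left hand side of \eqref{eq:hexagon-7R} using $R = ({}_\sigma M\ot{}_\sigma M)\circ\Delta\circ\sigma^{-1}$, the anti-coalgebra property of $\Ad_\sigma$ and the bubble property (giving \eqref{eq:7R-aux1}), composes both sides with $(\eps\ot\id)\circ(-)\circ\eta$ to obtain $M_\sigma\circ S^{-1}\circ\sigma = {}_\tau M\circ M_\tau\circ g$, and then substitutes $S^{-1}\circ\sigma = {}_{g^{-1}}M\circ\sigma$ (Lemma \ref{lem:S-sigma}\,(i)) and $\tau = {}_{g^{-1}}M\circ M_{g^{-1}}\circ\sigma$ (Lemma \ref{lem:nu-tau}), which simplifies to $\sigma = {}_{g^{-1}}M\circ M_{g^{-1}}\circ\sigma$, i.e.\ $\tau=\sigma$. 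Your heuristic that the previously derived identities cannot suffice (they are blind to the left/right distinction of multiplication by $g$) is a sound observation and correctly explains why a fresh hexagon must be invoked.

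There is, however, a genuine gap: you never commit to a hexagon or carry out the reduction, and of your three candidates only \eqref{eq:hexagon-7R} delivers the lemma by the procedure you describe. Hexagon \eqref{eq:hexagon-6R} cannot work even in principle: as the paper's own verification \eqref{eq:6R-aux1} shows, \eqref{eq:hexagon-6R} is a consequence of the expression (\ref{eq:delta-expression-ab}\,a) for $\delta$, of \eqref{eq:gamma-1_via_sigma}, of the bubble property and of $S\circ\sigma = M_{g^{-1}}\circ\sigma$ alone --- all of which are already established before this lemma --- so specialising it can only produce a tautology, never the new relation $g\sigma=\sigma g^{-1}$. Hexagon \eqref{eq:hexagon-6L} does encode the lemma (the paper's forward check \eqref{eq:hexagon-6L-alt} invokes ${}_{g^{-1}}M\circ\Ad_\sigma\circ{}_{g^{-1}}M = \Ad_\sigma$, which is equivalent to it), but its reduced form also contains the automorphism $\omega$, which at this stage of the converse direction is still undetermined: condition e) is only derived afterwards, in Lemma \ref{lem:omega-expr}, and that derivation itself uses $\tau=\sigma$. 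So \eqref{eq:hexagon-6L} would not ``collapse'' as claimed without first eliminating $\omega$ via the raw form of \eqref{eq:hexagon-3R}, an extra step your plan does not foresee. In short, the plan succeeds only if one happens to pick \eqref{eq:hexagon-7R}, and the actual computation --- which is precisely where the two occurrences of $g$ you worry about get resolved into a single conjugation --- is missing.
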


\begin{proof}
We need the following reformulation of the left hand side of \eqref{eq:hexagon-7R}:
\be \label{eq:7R-aux1}
\text{lhs of \eqref{eq:hexagon-7R}} 
~\overset{\text{\eqref{eq:R-via-sigma_1}}}=~
\scanpic{87a}
~\overset{(2)}=~
\scanpic{87b}
~\overset{(3)}=~
\scanpic{87c}
~\overset{(4)}=~
\scanpic{87d}
\quad .
\ee
Step (2) is the algebra-map property of the coproduct (applied twice) and step (3) uses the anti-coalgebra map property of $\Ad_\sigma$ (Lemma \ref{lem:A-prop-1}). In step (4) the bubble-property for $S^{-1}$ has been applied. 

Now insert the right hand sides of \eqref{eq:7R-aux1} and \eqref{eq:hexagon-7R} into $(\eps \otimes \id) \circ (-) \circ \eta$. Together with $\eps(\sigma^{-1})=\id_\one$ (Lemma \ref{lem:A-prop-1}), this results in
\be
  M_\sigma \circ S^{-1} \circ \sigma = {}_\tau M \circ M_\tau \circ g \ .
\ee
Replacing $S^{-1} \circ \sigma = {}_{g^{-1}}M \circ \sigma$ (Lemma \ref{lem:S-sigma}\,(i)) and $\tau = {}_{g^{-1}}M \circ M_{g^{-1}} \circ \sigma$ (Lemma \ref{lem:nu-tau}) in the above identity gives an expression which simplifies to $\sigma = {}_{g^{-1}}M \circ M_{g^{-1}} \circ \sigma$.
\end{proof}

Two immediate consequences of Lemmas \ref{lem:nu-tau} and \ref{lem:g-sig-g=sig} are
\be \label{eq:tau=sig}
  \Ad_\sigma \circ \Ad_g = \Ad_{g^{-1}} \circ \Ad_\sigma 
  \quad , \quad
  \tau = \sigma 
  \ .
\ee

\begin{lemma}\label{lem:omega-expr}
We have $\omega = S \circ\Ad_\sigma^{-1} \circ \Gamma \circ (\Ad_\sigma)^\vee \circ \Gamma^{-1}$.
\end{lemma}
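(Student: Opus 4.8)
The plan is to read off $\omega_H$ from the one type of hexagon in which a single copy of $\omega$ (acting on an $H$-factor) survives on each side of the equation. Running through the sixteen cases by the gradings $a,b,c$, one checks that the braidings $c_{X^1,Y^1}$ -- the only ones carrying $\omega$ -- always occur in pairs, but that the way the pair is distributed differs: in \eqref{eq:hexagon-8R} both copies land on the same side and give the constraint $\omega^2=\Id$ already extracted in Section \ref{sec:hexagon}, whereas in cases 6 and 7 exactly one $\omega$-braiding sits on each side. I would therefore start from the $H$-only form of one of the as-yet-unused hexagons, e.g.\ \eqref{eq:hexagon-6L} (equally \eqref{eq:hexagon-6R}, or the non-$\eps$-projected part of \eqref{eq:hexagon-7L}): there the left-hand side carries $c_{A^1,(B^0C^1)^1}$ and the right-hand side carries $c_{A^1,C^1}$, each contributing one $\omega_H$ after specialising the $\Cc_1$-objects to $\one$. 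This also explains why $\omega$ has not appeared in the earlier lemmas: those invoked \eqref{eq:hexagon-5R} and \eqref{eq:hexagon-7L} only through their $\eps$-projections, and by naturality and monoidality of $\omega$ one has $\eps\circ\omega_H=\eps$ and $\omega_H\circ\eta=\eta$, so such projections are blind to $\omega$.

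Next I would expand both sides of the chosen hexagon using the ansatz \eqref{eq:braiding-ansatz} together with everything already available: the associativity data written through $\sigma$ via \eqref{eq:gamma_via_sigma} and \eqref{eq:delta-expression-ab}, the two expressions \eqref{eq:R-via-sigma_1}, \eqref{eq:R-via-sigma_2} for $R$, the values $\tau=\sigma$ and $\nu=\beta\cdot\sigma^{-1}$ from Lemma \ref{lem:nu-tau} and \eqref{eq:tau=sig}, and the $S$-$\sigma$ relations of Lemma \ref{lem:S-sigma}. After cancelling the invertible scalar $\beta$ and moving the multiplicatively invertible $\sigma^{\pm1}$-factors to the outside by associativity, the identity collapses to an equation of the form $P\circ\omega_H=Q$ with $P,Q$ built only from $\gamma$, $S$ and $\Ad_\sigma$; since $\gamma$ (equivalently $P$) is non-degenerate, solving for $\omega_H$ produces a first, $\gamma$-based, expression.

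The decisive step is to recast this $\gamma$-based expression in terms of $\Gamma$. Here I would use the transpose-across-the-copairing identity: for the non-degenerate copairing $b=(\id\otimes\Gamma)\circ\coev_H$ and any $f:H\to H$ one has $(f\otimes\id)\circ b=(\id\otimes(\Gamma\circ f^\vee\circ\Gamma^{-1}))\circ b$, which is immediate from the defining relation $(f\otimes\id)\circ\coev_H=(\id\otimes f^\vee)\circ\coev_H$ of the dual morphism. Because $\gamma$ and $b$ differ only by an antipode on one leg (compare \eqref{eq:gdp-via-Hopf}, \eqref{eq:Hopf-via-gdp} and the normalisation in Corollary \ref{cor:main1-i}\,b)), this lets me trade the $\Ad_\sigma$ living on one leg of $\gamma$ for the conjugated dual $\Gamma\circ(\Ad_\sigma)^\vee\circ\Gamma^{-1}$ on the other. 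Folding the remaining $S$ and $\Ad_\sigma^{-1}$ into place -- using Lemma \ref{lem:S-sigma}\,(ii) and $S^2=\Ad_g\circ\Ad_\sigma^{-2}$ from Lemma \ref{lem:S2=GA-2} -- then yields exactly $\omega_H=S\circ\Ad_\sigma^{-1}\circ\Gamma\circ(\Ad_\sigma)^\vee\circ\Gamma^{-1}$.

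The main obstacle I anticipate is precisely this last conversion: tracking the antipodes and the $\Ad_\sigma$-conjugations that appear when passing between the copairing $\gamma$ and the Hopf isomorphism $\Gamma$, and checking that $(\Ad_\sigma)^\vee$ -- the dual of a coalgebra \emph{anti}-morphism, cf.\ Lemma \ref{lem:A-prop-1} -- enters with the correct conjugation and not with its inverse. A convenient internal consistency check is Lemma \ref{lem:S2=GA-2}\,(i): the statement $(\Ad_\sigma^{\,2}\otimes\id)\circ\gamma=(\id\otimes\Ad_\sigma^{-2})\circ\gamma$ is exactly the $f=\Ad_\sigma^{\,2}$ instance of the transpose identity (moving $\Ad_\sigma^{\,2}$ from one leg of $\gamma$ to the other turns it into $\Ad_\sigma^{-2}$), so the antipode bookkeeping in the general step can be calibrated against it.
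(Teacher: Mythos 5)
Your opening premise --- that the braidings $c_{X^1,Y^1}$ are the only ones carrying $\omega$ --- is false, and your whole selection of hexagons rests on it. The mixed braidings carry $\omega$ too: cases 3 $(0,1,0)$ and 4 $(1,0,0)$ involve no $c_{X^1,Y^1}$ braiding at all --- only mixed braidings and the $R$-braiding --- and yet \eqref{eq:hexagon-3R} and \eqref{eq:hexagon-4L} do contain $\omega$, as their equivalent forms \eqref{eq:hexagon-3R-alt} and \eqref{eq:check-4L-aux1} show; see also the mixed double braiding $c^\Cc_{B^1,A^0} \circ c^\Cc_{A^0,B^1} = \{\rho_{A^0} \circ (\sigma^2 \otimes \omega_{A^0})\} \otimes \id_{B^1}$ in \eqref{eq:double-braiding}, whose single surviving $\omega_{A^0}$ could not arise if neither mixed braiding contained $\omega$. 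In fact the paper extracts $\omega$ from precisely one of the hexagons your classification would discard as $\omega$-free: it rewrites \eqref{eq:hexagon-3R} using $R = (\Ad_\sigma \otimes \id)\circ\gamma^{-1}$ from \eqref{eq:R-via-gamma-1}, $\tau=\sigma$ from \eqref{eq:tau=sig} and Lemma \ref{lem:S-sigma}\,(ii), obtaining
$(\id\otimes\omega)\circ\gamma^{-1} = \big\{\Ad_\sigma \otimes (S\circ\Ad_\sigma^{-1})\big\}\circ\gamma^{-1}$,
and then dualises.

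This error has concrete consequences for your plan. Of your three proposed starting points, \eqref{eq:hexagon-6R} cannot work: its verification in the proof of Proposition \ref{prop:Hopf-to-hex} (see \eqref{eq:6R-aux1}) uses only conditions a)--d) of Theorem \ref{thm:main2}\,(i) and never invokes condition e), so this hexagon imposes no constraint on $\omega$ and cannot be solved for $\omega_H$. Your other two candidates, \eqref{eq:hexagon-6L} and \eqref{eq:hexagon-7L}, do constrain $\omega$ (their verifications pass through \eqref{eq:hexagon-3R-alt2}), so a corrected version of your plan starting from one of them could be pushed through; but it is substantially more involved than the two-step rewriting of \eqref{eq:hexagon-3R}, since these hexagons also drag in $\nu$, $\tilde\delta$ and $\phi$ (cf.\ \eqref{eq:hexagon-6L-alt}). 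The second half of your proposal is sound and matches the paper: reading $\omega_H$ off a $\gamma$-identity by non-degeneracy, then converting to the $\Gamma$-form by transposing across the copairing via $(f\otimes\id)\circ\coev_H = (\id\otimes f^\vee)\circ\coev_H$, is exactly how the paper passes from the displayed $\gamma^{-1}$-identity to $\omega \circ \Gamma = S \circ\Ad_\sigma^{-1} \circ \Gamma \circ (\Ad_\sigma)^\vee$ in \eqref{eq:hexagon-3R-aux1}. Likewise your observation that $\eps$-projections are blind to $\omega$ (since $\eps\circ\omega_H = \eps$ and $\omega_H\circ\eta=\eta$ by naturality) is correct, and is indeed why the earlier lemmas saw no $\omega$.
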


\begin{proof}
We start by rewriting hexagon \eqref{eq:hexagon-3R}:
\be \label{eq:hexagon-3R-alt}
\begin{array}{ccrcl}
  \text{\eqref{eq:hexagon-3R}}
  &\overset{\text{\eqref{eq:R-via-gamma-1}}}\Leftrightarrow&
  (\id \otimes \omega) \circ \gamma &=& \big\{ \Ad_\sigma \otimes (  {}_{\tau^{-1}}M \circ M_\tau ) \big\} \circ \gamma^{-1}
  \\
  &\overset{\text{\eqref{eq:tau=sig}}}\Leftrightarrow&
  (S \otimes \omega) \circ \gamma &=& \big\{ (S \circ \Ad_\sigma) \otimes \Ad_\sigma^{-1} \big\} \circ \gamma^{-1} 
  \\
  &\overset{\text{Lem.\,\ref{lem:S-sigma}\,(ii)}}\Leftrightarrow&
  (\id \otimes \omega) \circ \gamma^{-1} &=& \big\{  \Ad_\sigma \otimes ( S \circ\Ad_\sigma^{-1} ) \big\} \circ \gamma^{-1} \ .
\end{array}
\ee
Using duality maps to convert the above into an identity of morphisms $H^\vee \to H$ and inserting the definition of   $\Gamma$ in terms of $\gamma^{-1}$ from \eqref{eq:Hopf-via-gdp} results in
\be \label{eq:hexagon-3R-aux1}
  \omega \circ \Gamma = S \circ\Ad_\sigma^{-1} \circ \Gamma \circ (\Ad_\sigma)^\vee \ .
\ee
\end{proof}

\begin{lemma} \label{lem:lambda-S}
We have $\lambda(\sigma) = \beta^2$ and $\lambda \circ S = \lambda \circ \Ad_\sigma$.
\end{lemma}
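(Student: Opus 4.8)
The two assertions are precisely condition b) of Theorem \ref{thm:main2}\,(i), and both involve the cointegral $\lambda$. Since $\lambda = \eps\circ\phi$ by \eqref{eq:Hopf-via-gdp}, and $\phi$ enters the associativity data only through the associators $\alpha_{A^1,B^1,C^0}$ and $\alpha_{A^1,B^1,C^1}$, the required information can only be extracted from those hexagon equations in which such an associator occurs. Inspecting the sixteen cases, these are \eqref{eq:hexagon-6R}, \eqref{eq:hexagon-7L}, \eqref{eq:hexagon-7R}, \eqref{eq:hexagon-8L} and \eqref{eq:hexagon-8R}; as \eqref{eq:hexagon-7L} and \eqref{eq:hexagon-7R} were already consumed by Lemmas \ref{lem:nu-tau} and \ref{lem:g-sig-g=sig}, the plan is to read off the two identities from \eqref{eq:hexagon-6R} and from the all-odd hexagon (case 8). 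Throughout I would substitute the expressions already obtained: $R$ via $\sigma$ from \eqref{eq:R-via-sigma_1}--\eqref{eq:R-via-sigma_2}, $\gamma$ and $\gamma^{-1}$ via $\sigma$ from \eqref{eq:gamma_via_sigma}--\eqref{eq:gamma-1_via_sigma}, $\delta$ via $\gamma$ from \eqref{eq:delta-expression-ab}, $\phi$ via $\gamma,\lambda$ from \eqref{eq:gdp-via-Hopf}, together with $\tau=\sigma$ and $\nu=\beta\cdot\sigma^{-1}$ from Lemma \ref{lem:nu-tau}.

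For the normalisation $\lambda\circ\sigma=\beta\circ\beta$ I would use the all-odd hexagon. Specialising the three objects $A^1,B^1,C^1$ to $\one$ collapses it to an identity of endomorphisms of $H$: the odd--odd braiding $c_{A^1,B^1}$ contributes $\nu=\beta\cdot\sigma^{-1}$ (the factor $\omega$ acts trivially on $\one$), the mixed braiding contributes $\tau=\sigma$, and the associator $\alpha_{A^1,B^1,C^1}$ contributes $\phi$. Composing the resulting morphism $H\to H$ with $\eta$ from the right and $\eps$ from the left then turns $\eps\circ\phi$ into $\lambda$ and the two $\beta$-factors coming from $\nu$ into $\beta\circ\beta$. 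Matching this against the already established normalisation $(\lambda\otimes\lambda)\circ\gamma^{-1}=\id_\one$ of Lemma \ref{lem:norm-cond}, rewritten with $\gamma^{-1}=(M_\sigma\otimes{}_\sigma M)\circ\Delta\circ\sigma^{-1}$ from \eqref{eq:gamma-1_via_sigma}, isolates the scalar $\lambda\circ\sigma=\beta\circ\beta$; invertibility of $\beta=\eps(\nu)$ is guaranteed by Lemma \ref{lem:nu-tau}.

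For the identity $\lambda\circ S=\lambda\circ\Ad_\sigma$ I would start from \eqref{eq:hexagon-6R}, a hexagon with two odd objects and one even object. Specialising $B^0=H$ and $A^1=C^1=\one$ and composing with a unit leaves an equation of morphisms $H\to\one$ assembled from $\phi$, $\delta$ and $R$. After substituting the expressions above and simplifying with the anti-coalgebra property of $\Ad_\sigma$ (Lemma \ref{lem:A-prop-1}), the bubble property, and the $S$--$\sigma$ relations $S\circ\sigma=M_{g^{-1}}\circ\sigma$ etc.\ of Lemma \ref{lem:S-sigma}, the right-hand side should reduce to $\lambda\circ\Ad_\sigma$, while the left-hand side, after using \eqref{eq:gdp-via-Hopf}, reduces to $\lambda\circ{}_gM$; the latter equals $\lambda\circ S$ by Lemma \ref{lem:lambda-S-g}. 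The main obstacle is the string-diagram bookkeeping: keeping track of where the natural automorphism $\omega$ and the grouplike $g$ sit after the braidings and associators are inserted, and then invoking the non-degeneracy of $\gamma$ (Lemma \ref{lem:gamma-inv}) and of the copairing $\Delta\circ\sigma$ to cancel the auxiliary legs and solve for the bare morphism $H\to\one$ (respectively the bare scalar). Once the diagrams are brought into the stated normal form, both equalities follow.
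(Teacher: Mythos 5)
There is a genuine gap, and it originates in your bookkeeping of where $\phi$ (and hence $\lambda = \eps\circ\phi$) actually lives. The associator $\alpha_{A^1,B^1,C^0}$ does \emph{not} contain $\phi$: it is built from the coproduct and the module action alone (cf.\ Lemma \ref{lem:H-intertwiner}\,c)). Of the three morphisms in \eqref{eq:gdp-def}, $\gamma$ enters only $\alpha_{A^0,B^1,C^0}$, $\delta$ enters only $\alpha_{A^1,B^0,C^1}$, and $\phi$ enters only $\alpha_{A^1,B^1,C^1}$. Consequently the only hexagons that see $\lambda$ at all are \eqref{eq:hexagon-8L} and \eqref{eq:hexagon-8R}, and your plan to extract $\lambda\circ S = \lambda\circ\Ad_\sigma$ from \eqref{eq:hexagon-6R} cannot succeed: that equation is assembled from $\delta$, $\nu$, $\sigma$, $\omega$ and the Hopf structure only --- no $\phi$, no cointegral --- as one can see from the paper's own verification \eqref{eq:6R-aux1}, which uses nothing beyond (\ref{eq:delta-expression-ab}\,a), the expression of $\gamma^{-1}$ through $\sigma$, the bubble property and the $S$--$\sigma$ relations. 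No specialisation of objects or composition with units can make a cointegral appear in an equation that does not contain one; the paper instead obtains this identity from \eqref{eq:hexagon-8R}.

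Your instinct to use the all-odd hexagon for the normalisation is correct (the paper derives $\lambda(\sigma)=\beta^2$ from \eqref{eq:hexagon-8L}), but the execution you describe loses exactly the information needed: once you compose with $\eta$ from the right, the left integral property collapses $\lambda\circ{}_\sigma M\circ\phi\circ\eta = \lambda\circ{}_\sigma M\circ\Lambda'$ to $\lambda\circ\Lambda' = \id_\one$, and what survives is only the scalar identity $\lambda(\sigma^{-1})=\beta^{-2}$ --- this is precisely how \eqref{eq:hexagon-8R} is used in the proof of Proposition \ref{prop:Cbar-equiv-C} --- which is not equivalent to $\lambda(\sigma)=\beta^2$ without further input. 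The missing tool, in both halves of the lemma, is the auxiliary identity \eqref{eq:lambda-S-aux1}, $\lambda \circ {}_\sigma M \circ \phi = \lambda(\sigma)\cdot\lambda\circ{}_{S\circ\sigma^{-1}}M$, proved from the formula \eqref{eq:gdp-via-Hopf} for $\phi$, the expression \eqref{eq:gamma_via_sigma} of $\gamma$ through $\sigma$, and the right cointegral property of $\lambda$. With it in hand, the paper composes \eqref{eq:hexagon-8L} (first rewritten via \eqref{eq:phi-1_via_phi} to remove $\phi^{-1}$) with $\eps$ only, keeping the free $H$-leg, which yields $\lambda(\sigma)\cdot\lambda = \beta^2\cdot\lambda$ and hence the first identity after applying $\Lambda'$; then composing \eqref{eq:hexagon-8R} with $\eps$ gives $\lambda\circ{}_\sigma M\circ\phi = \beta^2\cdot\lambda\circ M_{\sigma^{-1}}$, which together with \eqref{eq:lambda-S-aux1} and the already-proved $\lambda(\sigma)=\beta^2$ yields $\lambda\circ{}_{S\circ\sigma^{-1}}M = \lambda\circ M_{\sigma^{-1}}$; this becomes $\lambda\circ S = \lambda\circ\Ad_\sigma$ via $S\circ\sigma^{-1} = {}_gM\circ\sigma^{-1}$ (Lemma \ref{lem:S-sigma}\,(i)) and Lemma \ref{lem:lambda-S-g}. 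So the repair is: discard \eqref{eq:hexagon-6R}, use \eqref{eq:hexagon-8L} and \eqref{eq:hexagon-8R} for the two statements respectively, never compose with $\eta$, and establish \eqref{eq:lambda-S-aux1} first.
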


\begin{proof}
We will need the following identity of morphisms $H \to \one$:
\begin{align}
\lambda \circ {}_\sigma M \circ \phi
&\overset{(1)}=
(\lambda \otimes \lambda)
\circ
({}_\sigma M \otimes \mu)
\circ
(\id \otimes S \otimes \id)
\circ
(\gamma \otimes \id)
\nonumber \\
&\overset{(2)}=
(\lambda \otimes \lambda)
\circ
(\id \otimes \mu)
\circ
(\id \otimes (S \circ M_{\sigma^{-1}}) \otimes \id)
\circ
(\Delta \otimes \id)
\circ
(\sigma \otimes \id)
\nonumber \\
&\overset{(3)}=
\lambda(\sigma) \cdot \lambda \circ {}_{S \circ \sigma^{-1}}M \ .
\label{eq:lambda-S-aux1}
\end{align}
In step (1) expression \eqref{eq:gdp-via-Hopf} is inserted for $\phi$ (cf.\ Lemma \ref{lem:delta+phi-via-gamma}), 
in step (2) expression \eqref{eq:gamma_via_sigma} for $\gamma$ is inserted, and
in step (3) the fact that $\lambda$ is a right cointegral is used.

Next, we will rewrite \eqref{eq:hexagon-8L} in a way which no longer involves $\phi^{-1}$. Recall from \eqref{eq:tau=sig} and Lemma \ref{lem:nu-tau} that $\tau = \sigma$ and $\nu = \beta \cdot \sigma^{-1}$. Combining ${}_g M \circ {}_\sigma M \circ {}_g M = {}_\sigma M$ from Lemma \ref{lem:g-sig-g=sig} with the inverse of $\phi$ given in \eqref{eq:phi-1_via_phi} results in
\be \label{eq:hex8L-alt1}
  \text{\eqref{eq:hexagon-8L}}
  \quad \Leftrightarrow \quad
  S \circ \phi \circ {}_\sigma M \circ \omega \circ \phi \circ S
  = \beta^2 \cdot M_{\sigma^{-1}} \circ {}_g M \circ \phi \circ S \circ  M_{\sigma^{-1}} \ .
\ee
If we compose this identity with $\eps$ from the left and use $\lambda = \eps \circ \phi$, together with naturality of $\omega$ we obtain $\lambda \circ {}_\sigma M \circ \phi \circ S = \beta^2 \cdot \lambda \circ S \circ  M_{\sigma^{-1}}$. Now substitute \eqref{eq:lambda-S-aux1} on the left hand side and use ${}_{S \circ \sigma^{-1}}M \circ S = S \circ M_{\sigma^{-1}}$ to conclude $\lambda(\sigma) \cdot \lambda = \beta^2 \cdot \lambda$. Since $\lambda \circ \Lambda' = 1$ (with $\Lambda'$ as in Lemma \ref{lem:integral-cointegral}), the first identity of the lemma follows.
  
For the second identity we compose the hexagon \eqref{eq:hexagon-8R} from the left with $\eps$. Together with $\eps(\sigma^{-1}) = \id_\one$ from Lemma \ref{lem:A-prop-1}, this results in $\lambda \circ   {}_\sigma M \circ \phi = \beta^2 \cdot \lambda \circ M_{\sigma^{-1}}$. Substituting \eqref{eq:lambda-S-aux1} and $\lambda(\sigma) = \beta^2$ gives
\be \label{eq:lambda-S-aux2}
  \lambda \circ {}_{S \circ \sigma^{-1}}M = \lambda \circ M_{\sigma^{-1}} \ .
\ee
Now use $S \circ \sigma^{-1} = {}_gM \circ \sigma^{-1}$ from Lemma \ref{lem:S-sigma}\,(i) as well as Lemma \ref{lem:lambda-S-g} to rewrite the left hand side of \eqref{eq:lambda-S-aux2} of as $\lambda \circ {}_{S \circ \sigma^{-1}}M = \lambda \circ {}_gM \circ {}_{\sigma^{-1}}M = \lambda \circ S  \circ {}_{\sigma^{-1}}M$. Comparing to the right hand side of \eqref{eq:lambda-S-aux2} gives the second identity claimed in the statement of the lemma.
\end{proof}

\bigskip

\begin{proof}[Proof of Proposition \ref{prop:hex-to-Hopf}]
That $R,\tau,\nu$ are of the form \eqref{eq:R-etc-via-Hopf} with $\beta = \eps(\nu)$ amounts to \eqref{eq:R-via-sigma_1}, \eqref{eq:tau=sig} and Lemma \ref{lem:nu-tau}. 
Condition a) in Theorem \ref{thm:main2}\,(i) is equivalent to \eqref{eq:gamma-1_via_sigma}. 
Condition b) was proved in Lemma \ref{lem:lambda-S}.
For condition c) note that $\Ad_\sigma$ is automatically an algebra-map. The anti-coalgebra map property was proved in Lemma \ref{lem:A-prop-1}, compatibility with the antipode in Lemma \ref{lem:S-sigma}\,(ii).
Condition d) holds by Lemmas \ref{lem:S-sigma}\,(i) and \ref{lem:g-sig-g=sig}.
Condition e) is Lemma \ref{lem:omega-expr}.
\end{proof}

\subsection{From Hopf-algebraic data to hexagon} \label{sec:Hopf-to-hex}

The following proposition proves Theorem \ref{thm:main2}\,(i).

\begin{proposition} \label{prop:Hopf-to-hex}
Suppose $\sigma$ has a multiplicative inverse, $\beta \in \End(\one)$ is invertible, and conditions a)--e) in Theorem \ref{thm:main2}\,(i) are satisfied. Then for $R,\tau,\nu$ as defined in \eqref{eq:R-etc-via-Hopf}, the 16 hexagon conditions  \eqref{eq:hexagon-1L}--\eqref{eq:hexagon-8R} hold, and $R$ satisfies \eqref{eq:R-intertwiner}.
\end{proposition}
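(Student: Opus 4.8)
The plan is to run the arguments of Section~\ref{sec:hex-to-Hopf} backwards. There, each hexagon was rewritten into an equivalent (or, after a harmless specialisation, an extracted) Hopf-algebraic identity, and these were bundled into conditions a)--e); here I start from a)--e) and the definitions \eqref{eq:R-etc-via-Hopf} and reconstruct all sixteen hexagons together with \eqref{eq:R-intertwiner}. First I would collect the auxiliary identities which in the forward direction appeared as Lemmas~\ref{lem:A-prop-1}--\ref{lem:lambda-S}, checking that each now follows from a)--e) and from the fact that $(H,\Gamma,\lambda)$ satisfies Theorem~\ref{thm:main1}. Concretely: condition a), read through the dictionary \eqref{eq:gdp-via-Hopf} between $\Gamma$ and $\gamma$, is exactly the formulas \eqref{eq:gamma_via_sigma} and \eqref{eq:gamma-1_via_sigma} for $\gamma^{\pm1}$, hence also \eqref{eq:R-via-sigma_1} (the definition of $R$); condition c) supplies the statement of Lemma~\ref{lem:A-prop-1} that $\Ad_\sigma$ is a counit-preserving anti-coalgebra map, and therefore the alternative forms \eqref{eq:R-via-sigma_2} and \eqref{eq:R-via-gamma-1} of $R$, while $\eps\circ\sigma=\eps\circ\sigma^{-1}=\id_\one$ drops out of \eqref{eq:gamma_via_sigma} by composing with $\eps\otimes\id$ and using that $\gamma$ is a copairing; Lemma~\ref{lem:S2=GA-2}(i) then follows from a) and c), and combined with the two expressions \eqref{eq:delta-expression-ab} for $\delta$ (available from Theorem~\ref{thm:main1}(i)\,c) at trivial twist) it yields Lemma~\ref{lem:S2=GA-2}(ii), $S^2=\Ad_g\circ\Ad_\sigma^{-2}$. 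Finally condition d) is Lemma~\ref{lem:S-sigma}(i) and Lemma~\ref{lem:g-sig-g=sig}, whose algebraic combination gives Lemma~\ref{lem:S-sigma}(ii); condition b) is Lemma~\ref{lem:lambda-S}; and condition e) is Lemma~\ref{lem:omega-expr}.

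With these identities established I would verify the hexagons. The cases whose forward step was a genuine equivalence are immediate when read in reverse: \eqref{eq:hexagon-2R} is condition a) via \eqref{eq:gamma_via_sigma}; \eqref{eq:hexagon-3L} is the definition \eqref{eq:R-via-sigma_1} of $R$; \eqref{eq:hexagon-2L} is the content of \eqref{eq:R-via-sigma_2}; and \eqref{eq:hexagon-3R} unwinds along the chain \eqref{eq:hexagon-3R-alt} to \eqref{eq:hexagon-3R-aux1}, which is precisely condition e). The intertwining relation \eqref{eq:R-intertwiner} is obtained by reversing \eqref{eq:qcocomm-vs-Ad-anticoalg}: both sides are rewritten through \eqref{eq:R-via-sigma_1} and \eqref{eq:R-via-sigma_2} into expressions that coincide exactly because $\Ad_\sigma$ is an anti-coalgebra map. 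For the remaining cases I would reverse the corresponding computations of Section~\ref{sec:hex-to-Hopf}: for \eqref{eq:hexagon-5L} I run the chain \eqref{eq:hexagon-5L-alt} and close it using ${}_{S\circ\sigma}M\circ{}_gM={}_\sigma M$, a direct consequence of condition d); for \eqref{eq:hexagon-8L} and \eqref{eq:hexagon-8R} I use \eqref{eq:hex8L-alt1} and \eqref{eq:lambda-S-aux1} together with $\lambda\circ\sigma=\beta^2$ and $\lambda\circ S=\lambda\circ\Ad_\sigma$ from condition b); and the cases \eqref{eq:hexagon-5R}, \eqref{eq:hexagon-7L}, \eqref{eq:hexagon-7R}, as well as \eqref{eq:hexagon-4L}--\eqref{eq:hexagon-6R}, by the same diagrammatic manipulations, now fed by the collected identities rather than producing them.

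The quasi-triangularity hexagons \eqref{eq:hexagon-1L} and \eqref{eq:hexagon-1R} are the one genuinely separate piece of the argument: they state that $R$ is a universal $R$-matrix on $\Rep_\Sc(H)$, i.e.\ the two comultiplication axioms for $R$. I would derive them from $R=(\Ad_\sigma\otimes\id)\circ\gamma^{-1}$ in \eqref{eq:R-via-gamma-1} together with the Hopf-copairing identities \eqref{eq:pentagon-3}, \eqref{eq:pentagon-4} for $\gamma$, its non-degeneracy, and Lemma~\ref{lem:gamma-inv-via-S} (all available because $(H,\Gamma,\lambda)$ satisfies Theorem~\ref{thm:main1}), transporting the copairing axioms across $\Ad_\sigma$ using that it is simultaneously an algebra map and an anti-coalgebra map; the quasi-cocommutativity factor is then \eqref{eq:R-intertwiner}, already in hand.

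The main obstacle is twofold. First, several steps in the forward direction were \emph{lossy}: the hexagon was specialised---typically setting the degree-one objects to $\one$ and composing with $\eta$ or $\eps$---before a condition was read off, so the substance of the converse is to confirm that a)--e) suffice to recover the \emph{full} string-diagram hexagons. The mechanism that makes this work is the one used throughout Section~\ref{sec:theorem1}: naturality in the degree-zero arguments reduces each hexagon to an identity between morphisms built from tensor powers of $H$, the module-map identities of Lemma~\ref{lem:H-intertwiner} allow the $H$-actions to be stripped off, and the non-degeneracy of $\gamma$, of $\tilde\delta$ and of $\Delta\circ\Lambda$ lets one cancel and thereby promote the specialised identity to the unspecialised one. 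Second, the quasi-triangularity step is a bona fide $R$-matrix computation that must be carried out independently of the graded bookkeeping, and this is where most of the diagrammatic effort concentrates, even though it is conceptually standard.
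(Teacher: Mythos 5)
Your proposal is correct and follows essentially the same route as the paper's proof of Proposition \ref{prop:Hopf-to-hex}: the paper likewise first converts conditions a)--e) into the working identities (the expressions \eqref{eq:gammainv-via-sigma_2nd}, \eqref{eq:R-via-gamma_2nd}, \eqref{eq:S-sigma-g_2nd}, then $S^2=\Ad_g\circ\Ad_\sigma^{-2}$ via \eqref{eq:Adsig-inv} and $\Ad_\sigma^{\,4}=\id_H$ via \eqref{eq:Adsig4=id}), and then checks quasi-cocommutativity and the sixteen $H$-only hexagons case by case, reusing the computations \eqref{eq:qcocomm-vs-Ad-anticoalg}, \eqref{eq:hexagon-5L-alt}, \eqref{eq:hexagon-3R-alt2}, \eqref{eq:lambda-S-aux1} and \eqref{eq:hex8L-alt1} exactly as you indicate. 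In particular your treatment of the genuinely new cases matches the paper's: \eqref{eq:hexagon-1L} and \eqref{eq:hexagon-1R} are deduced from the Hopf-copairing identities \eqref{eq:pentagon-4} and \eqref{eq:pentagon-3} transported through $\Ad_\sigma$ (resp.\ $\Ad_\sigma\circ S$), and the cases \eqref{eq:hexagon-4L}--\eqref{eq:hexagon-7R} are fresh diagrammatic verifications fed by the collected identities rather than reversals of Section \ref{sec:hex-to-Hopf}.
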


\begin{proof} The proof consists of checking quasi-cocommutativity and all 16 hexagons one by one and will take up most of this section. We start with some preliminaries. Condition a) is equivalent to
\be\label{eq:gammainv-via-sigma_2nd}
  \gamma^{-1} = (M_\sigma \otimes {}_\sigma M) \circ \Delta \circ \sigma^{-1} \ ,
\ee
and together with \eqref{eq:R-etc-via-Hopf} we have
\be\label{eq:R-via-gamma_2nd}
  R 
  \overset{\text{\eqref{eq:R-etc-via-Hopf}}}= 
  ({}_\sigma M \otimes {}_\sigma M) \circ \Delta \circ \sigma^{-1}
  \overset{\text{\eqref{eq:gammainv-via-sigma_2nd}}}= 
  (\Ad_\sigma \otimes \id) \circ \gamma^{-1} \ .
\ee
Taking the multiplicative inverse of both sides of \eqref{eq:gammainv-via-sigma_2nd} gives
\be\label{eq:gamma-via-sigma_2nd}
  \gamma 
  = ({}_{\sigma^{-1}}M \otimes M_{\sigma^{-1}}) \circ \Delta \circ \sigma \ .
\ee
We also note that condition d) implies that
\be\label{eq:Adsig-Adg}
  \Ad_\sigma \circ \Ad_g = \Ad_g^{-1} \circ \Ad_\sigma \ .
\ee
Condition d) furthermore implies that all four identities in Lemma \ref{lem:S-sigma}\,(i) hold, which we reiterate here:
\be \label{eq:S-sigma-g_2nd}
S \circ \sigma = M_{g^{-1}} \circ \sigma
~,~~
S^{-1} \circ \sigma = {}_{g^{-1}}M \circ \sigma
~,~~
S \circ \sigma^{-1} = {}_gM \circ \sigma^{-1}
~,~~
S^{-1} \circ \sigma^{-1} = M_g \circ \sigma^{-1} \ .
\ee

\medskip\noindent
{\bf Quasi-cocommutativity:}
Inserting the anti-coalgebra map property of $\Ad_\sigma$ from condition c) 
into $(M_\sigma \otimes M_\sigma) \circ c_{H,H} \circ (-) \circ {}_{\sigma^{-1}}M$ results in
\be \label{eq:sig-Delta-sig-Deltaop}
  (M_\sigma \otimes M_\sigma) \circ \Delta^\mathrm{cop} \circ M_{\sigma^{-1}} 
  = 
  ({}_\sigma M \otimes{}_\sigma M ) \circ \Delta \circ {}_{\sigma^{-1}}M 
  \ .
\ee
Composing the right hand side with $\eta$ gives expression \eqref{eq:R-via-gamma_2nd} for $R$. The above equality then implies the alternative expression for $R$ also found in \eqref{eq:R-via-sigma_2} when proving the converse of Theorem \ref{thm:main2}\,(i):
\be
R = (M_\sigma \otimes M_\sigma) \circ \Delta^\mathrm{cop} \circ \sigma^{-1} 
  \label{eq:R-via-sigma_2-2nd}
\ee
Given the expressions \eqref{eq:R-via-gamma_2nd} and \eqref{eq:R-via-sigma_2-2nd} for $R$ and the identity \eqref{eq:sig-Delta-sig-Deltaop}, the computation in \eqref{eq:qcocomm-vs-Ad-anticoalg} shows that \eqref{eq:R-intertwiner} is satisfied.

\medskip\noindent
{\bf Cases 1L, 1R:} 
Substituting \eqref{eq:R-via-gamma_2nd} into \eqref{eq:hexagon-1L} shows that
\be
  \text{\eqref{eq:hexagon-1L}} 
  \quad \Leftrightarrow \quad
  \scanPIC{82a} 
  ~=~
  \scanPIC{82b} 
  \quad ,
\ee
which in turn is immediate from \eqref{eq:pentagon-4}. Along the same lines one verifies that \eqref{eq:hexagon-1R} is implied by \eqref{eq:pentagon-3} (use $R = ((\Ad_\sigma \circ S) \otimes \id) \circ \gamma$ and the fact that $\Ad_\sigma \circ S$ is a coalgebra map).

\medskip\noindent
{\bf Cases 2L, 2R:} 
The hexagon \eqref{eq:hexagon-2R} is just \eqref{eq:gamma-via-sigma_2nd}. Hexagon \eqref{eq:hexagon-2L} is immediate after substituting \eqref{eq:gammainv-via-sigma_2nd} for $\gamma^{-1}$ and \eqref{eq:R-via-sigma_2-2nd} for $R$. Incidentally, this also proves
\be \label{eq:R-via-gamma-op}
  R 
  = 
  (\Ad_\sigma^{-1} \otimes \id) \circ c_{H,H} \circ \gamma^{-1}  \ .
\ee
Comparing \eqref{eq:R-via-gamma_2nd} and \eqref{eq:R-via-gamma-op} we obtain the first of the two equalities in
\be \label{eq:gamma-and-c-gamma}
(\Ad_\sigma^{\,2} \otimes \id)\circ \gamma = c_{H,H} \circ \gamma = (\id \otimes \Ad_\sigma^{-2})\circ \gamma \ .
\ee
The second equality follows by composing the first one with $c_{H,H} \circ (\Ad_\sigma^{-2} \otimes \id)$. By the same argument as in the proof of Lemma \ref{lem:S2=GA-2}\,(ii), \eqref{eq:gamma-and-c-gamma} implies that $S^2 = \Ad_g \circ \Ad_\sigma^{-2}$. We will later use this identity in the form
\be \label{eq:Adsig-inv}
  \Ad_\sigma^{-1} = \Ad_\sigma \circ \Ad_g^{-1} \circ S^2 \ .
\ee

\medskip\noindent
{\bf Cases 3L, 3R:} 
The hexagon \eqref{eq:hexagon-3L} is just \eqref{eq:R-via-gamma_2nd}. Via the same steps as in \eqref{eq:hexagon-3R-alt} one checks that
\be \label{eq:hexagon-3R-alt2}
  \text{\eqref{eq:hexagon-3R}} \quad \Leftrightarrow \quad
  (\id \otimes \omega) \circ \gamma^{-1} = \big\{  \Ad_\sigma \otimes ( S \circ \Ad_\sigma^{-1} ) \big\} \circ \gamma^{-1} 
  \ .
\ee
The identity \eqref{eq:hexagon-3R-alt2} in turn is equivalent to condition e) (cf.\,\eqref{eq:hexagon-3R-aux1}).

\medskip\noindent
{\bf Cases 4L, 4R:} 
Substituting $\tau = \sigma$ from \eqref{eq:R-etc-via-Hopf} and using \eqref{eq:gamma-via-sigma_2nd}, we can rearrange \eqref{eq:hexagon-4L} to 
get
\be \label{eq:check-4L-aux1}
\text{\eqref{eq:hexagon-4L}} \quad \Leftrightarrow \quad
(\id \otimes \omega) \circ \gamma^{-1}  
= 
( \Ad_\sigma \otimes \Ad_\sigma^{-1}) \circ \gamma \ .
\ee
Inserting $\gamma = (\id \otimes S^{-1}) \circ \gamma^{-1}$ and using $\Ad_\sigma^{-1} \circ S^{-1} = S \circ \Ad_\sigma^{-1}$ (from condition c)) turns \eqref{eq:check-4L-aux1} into \eqref{eq:hexagon-3R-alt2}.

For hexagon \eqref{eq:hexagon-4R}, substitute $\tau = \sigma$ and the second expression for $R$ in \eqref{eq:R-via-gamma_2nd}, then bring all factors of $\Ad_\sigma$ to the left hand side. This gives
\be \label{eq:check-4R-aux1}
\text{\eqref{eq:hexagon-4R}} \quad \Leftrightarrow \quad
( \Ad_\sigma^{-1} \otimes \Ad_\sigma^{-1}) \circ c_{H,H} \circ \gamma 
= 
(\id \otimes \omega) \circ \gamma^{-1}  \ .
\ee
Starting from the right hand side, we compute
\begin{align}
(\id \otimes \omega) \circ \gamma^{-1} 
&\overset{\text{\eqref{eq:check-4L-aux1}}}= 
( \Ad_\sigma \otimes \Ad_\sigma^{-1}) \circ \gamma
\nonumber \\
&~\,=\,~
( \id  \otimes (\Ad_\sigma^{-1} \circ S^{-1}) ) \circ 
( \Ad_\sigma \otimes \id) \circ  \gamma^{-1}
\nonumber \\
&\overset{\text{\eqref{eq:R-via-gamma_2nd}}}{\underset{\text{\eqref{eq:R-via-gamma-op}}}{=}}
( \id  \otimes (\Ad_\sigma^{-1} \circ S^{-1}) ) \circ 
(\Ad_\sigma^{-1} \otimes \id) \circ c_{H,H} \circ \gamma^{-1}  \ ,
\label{eq:check-4R-aux2}
\end{align}
which is equal to the left hand side of \eqref{eq:check-4R-aux1}.

\medskip\noindent
{\bf Cases 5L, 5R:} 
For \eqref{eq:hexagon-5L} we already did all the work in \eqref{eq:hexagon-5L-alt}: a simple application of \eqref{eq:S-sigma-g_2nd} shows that the right hand side of \eqref{eq:hexagon-5L-alt} is equal to the right hand side of \eqref{eq:hexagon-5L}. For \eqref{eq:hexagon-5R} we have to do a small calculation. We start with the right hand side:
\be \label{eq:5R-aux1}
\beta^{-1} \cdot \text{rhs of \eqref{eq:hexagon-5R}} 
~=~
\scanpic{83a} 
~=~
\scanpic{83b} 
\quad .
\ee
For the left hand side we find
\be \label{eq:5R-aux2}
\beta^{-1} \cdot \text{lhs of \eqref{eq:hexagon-5R}} 
\overset{(*)}= 
\scanpic{84a} 
~=~
\scanpic{84b} 
\quad .
\ee
In (*) we substituted the second expression for $\delta$ given in (\ref{eq:delta-expression-ab}\,b), replaced $\gamma$ by $(S^{-1} \otimes \id) \circ \gamma^{-1}$ and inserted \eqref{eq:gammainv-via-sigma_2nd} for $\gamma^{-1}$. The right hand side of \eqref{eq:5R-aux1} equals that of \eqref{eq:5R-aux2} upon using the identity $S^{-1} \circ \sigma = {}_{g^{-1}}M \circ \sigma$ from \eqref{eq:S-sigma-g_2nd}.

\medskip\noindent
{\bf Cases 6L, 6R:} We start by rewriting the left hand side of hexagon \eqref{eq:hexagon-6L}:
\begin{align}
\beta^{-1} \cdot \text{lhs of \eqref{eq:hexagon-6L}}
~&\overset{(1)}=~
\scanpic{81a}
~\overset{(2)}=~
\scanpic{81b}
~\overset{(3)}=~
\scanpic{81c}
\nonumber \\
&\overset{(4)}=~
\scanpic{81d}
~\overset{(5)}=~
\scanpic{81e}
 \label{eq:hexagon-6L-alt}
\end{align}
In step (1), $\nu = \beta \cdot \sigma^{-1}$ and \eqref{eq:tilde-delta-def} have been substituted.
For step (2), replace $(\id \otimes \omega) \circ \gamma^{-1}$ by the right hand side of \eqref{eq:hexagon-3R-alt2} and use condition c) and \eqref{eq:Adsig-inv} to replace $S \circ \Ad_\sigma^{-1} = \Ad_\sigma^{-1} \circ S^{-1} = \Ad_\sigma \circ \Ad_g^{-1} \circ S$. In step (3) we used that by condition d) we have ${}_{g^{-1}}M \circ \Ad_\sigma \circ {}_{g^{-1}}M = \Ad_\sigma$, as well as that by condition c), $\Ad_\sigma$ is a Hopf algebra isomorphism $H \to H_\mathrm{cop}$; furthermore, $\gamma^{-1}$ was replaced by \eqref{eq:gammainv-via-sigma_2nd}. For step (4), use the algebra-map property of $\Ad_\sigma$ once more, as well as the algebra map property of the coproduct. Step (5) amounts to the bubble-property, to the anti-coalgebra map property of $\Ad_\sigma$, and to $S \circ \sigma = M_{g^{-1}} \circ \sigma$ from \eqref{eq:S-sigma-g_2nd}. It is now immediate that the right hand side of \eqref{eq:hexagon-6L-alt} is equal to $\beta^{-1}$ times the right hand side of \eqref{eq:hexagon-6L}.

For \eqref{eq:hexagon-6R} we compute
\be \label{eq:6R-aux1}
\beta^{-1} \cdot \text{lhs of \eqref{eq:hexagon-6R}} 
\overset{(1)}= 
\scanpic{85a} 
~=~
\scanpic{85b} 
\overset{(3)}= 
\beta^{-1} \cdot \text{rhs of \eqref{eq:hexagon-6R}} 
\ee
In step (1) we used the expression $\delta = ( \id \otimes (M_g \circ S)) \circ \gamma^{-1}$ obtained from (\ref{eq:delta-expression-ab}\,a) as well as \eqref{eq:gammainv-via-sigma_2nd}. For step (3) use the bubble-property and $S \circ \sigma = M_{g^{-1}} \circ \sigma$ from \eqref{eq:S-sigma-g_2nd}.

\medskip\noindent
{\bf Cases 7L, 7R:} 
We will need that
\begin{align}
  \id_H &= \Ad_\sigma^{\,2} \circ \Ad_\sigma^{-1} \circ \Ad_\sigma^{-1} 
  \overset{\text{\eqref{eq:Adsig-inv}}}= 
 \Ad_\sigma^{\,2} \circ \Ad_\sigma \circ \Ad_g^{-1} \circ S^2 \circ \Ad_\sigma \circ \Ad_g^{-1} \circ S^2
  \nonumber \\
 &   \overset{\text{cond.\,c)}}{\underset{\text{\eqref{eq:Adsig-Adg}}}{=}}
 \Ad_\sigma^{\,4} \circ \Ad_g \circ S^{-2} \circ \Ad_g^{-1} \circ S^2
  \overset{\text{(*)}}=
  \Ad_\sigma^{\,4} \ ,
  \label{eq:Adsig4=id}
\end{align}
where (*) follows form $S \circ \Ad_g = \Ad_g \circ S$ (which in turn follows from Lemma \ref{lem:g-grouplike-2}\,(ii)). Turning to hexagon \eqref{eq:hexagon-7L}, we find
\be \label{eq:7L-aux1}
\beta^{-1} \cdot \text{rhs of \eqref{eq:hexagon-7L}} 
~=~
\scanpic{86a}
~=~
\scanpic{86b}
\quad .
\ee
Comparing the right hand side of \eqref{eq:7L-aux1} to the left hand side of \eqref{eq:hexagon-7L}, we see that to establish \eqref{eq:hexagon-7L} it suffices to show that the morphism contained in the dashed box is equal to $\tilde\delta$. The morphism in the dashed box is equal to the first morphism in the following chain of equalities:
\be \label{eq:check-7L-aux1}
\begin{array}{l}
  ({}_\sigma M \otimes {}_\sigma M) \circ (\id \otimes (\omega \circ S)) \circ \Delta \circ \sigma^{-1}
\\[.5em] \qquad  
\overset{\text{\eqref{eq:gammainv-via-sigma_2nd}}}= ~
  \big(\Ad_\sigma \otimes \{ {}_\sigma M \circ S \circ {}_{\sigma^{-1}}M\} \big) \circ (\id \otimes \omega) \circ \gamma^{-1}
\\[.5em] \qquad  
\overset{\text{\eqref{eq:hexagon-3R-alt2}}}= ~
  \big(\Ad_\sigma^{\,2} \otimes \{ {}_\sigma M \circ S \circ {}_{\sigma^{-1}}M \circ S \circ \Ad_\sigma^{-1} \} \big) \circ \gamma^{-1}
\\[.5em] \qquad  
~\,\overset{\text{(*)}}=\,~ ~
  (\id \otimes {}_{g^{-1}}M) \circ (\Ad_\sigma^{\,2} \otimes \Ad_\sigma^{-2}) \circ \gamma^{-1}
\\[.5em] \qquad  
~\overset{\text{(**)}}=~ ~
  (\id \otimes {}_{g^{-1}}M) \circ \gamma^{-1}
~\overset{\text{\eqref{eq:tilde-delta-def}}}=~ \tilde\delta \ .
\end{array}
\ee
For step (*) first note that
\be
{}_\sigma M \circ S 
= 
S \circ M_{S^{-1} \circ \sigma} 
\overset{\text{\eqref{eq:S-sigma-g_2nd}}}= 
S \circ M_\sigma \circ M_{g^{-1}}
\overset{\text{cond.\,d)}}= 
S \circ M_{g} \circ M_\sigma 
=
{}_{g^{-1}}M \circ S \circ M_\sigma  \ .
\ee
Inserting this into the expression in curly brackets gives 
\be
 {}_\sigma M \circ S \circ {}_{\sigma^{-1}}M \circ S \circ \Ad_\sigma^{-1}
 =
 {}_{g^{-1}}M \circ S \circ \Ad_\sigma^{-1} \circ S \circ \Ad_\sigma^{-1}
 \overset{\text{cond.\,c)}}=
 {}_{g^{-1}}M \circ \Ad_\sigma^{-2}
\ee
Step (**) in \eqref{eq:check-7L-aux1} follows by first using \eqref{eq:gamma-and-c-gamma} to see $(\id \otimes \Ad_\sigma^{-2})\circ \gamma = (\Ad_\sigma^{\,2} \otimes \id)\circ \gamma$ and then using $\Ad_\sigma^{\,4} = \id_H$ from \eqref{eq:Adsig4=id}.
This establishes \eqref{eq:hexagon-7L}. 

To show \eqref{eq:hexagon-7R}, we start with the following series of equalities:
\begin{align}
(\id \otimes \omega) \circ \delta 
& \overset{\text{\eqref{eq:gdp-via-Hopf}}}= 
(\id \otimes \omega) \circ (\id \otimes (M_g \circ S)) \circ \gamma^{-1}
\nonumber \\ & \overset{\text{\eqref{eq:check-4R-aux2}}}= 
(\id \otimes (M_g \circ S)) \circ ( \Ad_\sigma^{-1}  \otimes (\Ad_\sigma^{-1} \circ S^{-1}) ) \circ c_{H,H} \circ \gamma^{-1}
\nonumber \\ & \overset{\text{\eqref{eq:Adsig-inv}}}= 
\big(\Ad_\sigma^{-1} \otimes \{ M_g \circ S \circ \Ad_g \circ S^{-2} \circ \Ad_\sigma \} \big) \circ c_{H,H} \circ \gamma
\nonumber \\ & ~\,=\,~ 
(\id \otimes ({}_gM \circ S^{-1})) \circ c_{H,H} \circ ( \Ad_\sigma \otimes \Ad_\sigma^{-1} ) \circ \gamma
\nonumber \\ & \overset{\text{\eqref{eq:gamma-via-sigma_2nd}}}= 
(\id \otimes ({}_gM \circ S^{-1})) \circ c_{H,H} \circ ( M_{\sigma^{-1}} \otimes {}_{\sigma^{-1}}M ) \circ \Delta \circ \sigma
\nonumber \\ & ~\,=\,~ 
\big( {}_{\sigma^{-1}}M \otimes \{ {}_gM \circ {}_{S^{-1}(\sigma^{-1})}M \circ S^{-1}\}\big) \circ c_{H,H} \circ \Delta \circ\sigma
\nonumber \\ & \overset{\text{\eqref{eq:S-sigma-g_2nd}}}{\underset{\text{cond.\,d)}}{=}} 
\big( {}_{\sigma^{-1}}M \otimes \{  {}_{\sigma^{-1}}M \circ S^{-1}\}\big) \circ c_{H,H} \circ \Delta \circ\sigma
\ .
\label{eq:7R-aux2}
\end{align}
Now recall the rewriting of the left hand side of \eqref{eq:hexagon-7R} given in \eqref{eq:7R-aux1}. Substituting \eqref{eq:7R-aux2} into the right hand side of \eqref{eq:hexagon-7R} we immediately see that this is equal to the right hand side of \eqref{eq:7R-aux1}. This proves \eqref{eq:hexagon-7R}.

\medskip\noindent
{\bf Cases 8L, 8R:} 
When composing \eqref{eq:pentagon-3} with $\id \otimes \id \otimes S$ one obtains
\be\label{eq:ga-1_doubling}
 \scanPIC{89a} ~=~  \scanPIC{89b}
 \quad .
\ee
Using $\phi = (S \otimes (\lambda \circ \mu)) \circ (\gamma \otimes \id)$ from \eqref{eq:gdp-via-Hopf} and inserting expression \eqref{eq:gamma-via-sigma_2nd} for $\gamma$ one obtains the first equality in
\be\label{eq:phi-alt1}
  \phi 
  ~=~ 
  \scanPIC{88a}
  ~=~ 
  \scanPIC{88b}
  \quad .
\ee
The second equality is Lemma \ref{lem:move-coprod-past-lam}\,a) together with \eqref{eq:S-sigma-g_2nd}.

To show \eqref{eq:hexagon-8R}, consider the equalities
\be
  \text{lhs of \eqref{eq:hexagon-8R}} 
  \overset{\text{\eqref{eq:phi-alt1}}}{\underset{\text{\eqref{eq:gdp-via-Hopf}}}{=}}  
  \scanpic{90a}
  \overset{\text{\eqref{eq:ga-1_doubling}}}{\underset{\text{\eqref{eq:gdp-via-Hopf}}}{=}}  
  \scanpic{90b}
  \overset{\text{\eqref{eq:lambda-S-aux1}}}{=}
  \lambda(\sigma) \cdot \scanpic{90c}
  \overset{(*)}{=}
  \text{rhs of \eqref{eq:hexagon-8R}} 
\ee
Since $\lambda(\sigma) = \beta^2$ by condition b), to show (*) it is enough to show that $\lambda \circ {}_{S \circ \sigma^{-1}}M = \lambda \circ M_{\sigma^{-1}}$. This in turn follows from
\be \label{eq:check-8R-aux1}
  \lambda \circ {}_{S \circ \sigma^{-1}}M
  \overset{\text{\eqref{eq:S-sigma-g_2nd}}}{=}
  \lambda \circ {}_gM \circ {}_{\sigma^{-1}}M
  \overset{\text{Lem.\,\ref{lem:lambda-S-g}}}{=}
  \lambda \circ S \circ {}_{\sigma^{-1}}M
  \overset{\text{cond.\,b)}}{=}
  \lambda \circ \Ad_\sigma \circ {}_{\sigma^{-1}}M
  = 
  \lambda \circ M_{\sigma^{-1}} \ .
\ee

For hexagon \eqref{eq:hexagon-8L} we will start from the alternative formulation in \eqref{eq:hex8L-alt1}. Using \eqref{eq:hexagon-8R}, which we just proved, the left hand side of \eqref{eq:hex8L-alt1} becomes
\be
  \omega \circ S \circ (\phi \circ {}_\sigma M \circ \phi) \circ S
  = \beta^2 \cdot
  \omega \circ S \circ (M_{\sigma^{-1}} \circ \phi  \circ M_{\sigma^{-1}}) \circ S \ .
\ee
Thus, together with \eqref{eq:phi-1_via_phi} so far we have
\be \label{eq:hex8L-alt2}
  \text{\eqref{eq:hexagon-8L}}
  \quad \Leftrightarrow \quad
  S \circ M_{\sigma^{-1}} \circ \omega \circ \phi  \circ M_{\sigma^{-1}} \circ S 
  = M_{\sigma^{-1}} \circ {}_g M \circ \phi \circ S \circ  M_{\sigma^{-1}} \ .
\ee
On the left hand side of \eqref{eq:hex8L-alt2}, first use \eqref{eq:gdp-via-Hopf} to replace $\phi$ by $(\id \otimes (\lambda \circ \mu)) \circ (\gamma^{-1} \otimes \id)$ and then use monoidality of $\omega$ to move $\omega$ as in $(\omega \otimes \id) \circ \gamma^{-1} = (\id \otimes \omega) \circ \gamma^{-1}$. Next, substitute the right hand side of \eqref{eq:hexagon-3R-alt2} for $(\id \otimes \omega) \circ \gamma^{-1}$ and apply \eqref{eq:check-8R-aux1} to $\lambda \circ M_{\sigma^{-1}}$. Together with $S \circ \Ad_\sigma^{-1} = \Ad_\sigma^{-1} \circ S^{-1}$ from condition c), this gives the first equality in
\be \label{eq:check-8L-aux1}
  \text{lhs of \eqref{eq:hex8L-alt2}} ~=~ \scanpic{91a} ~\overset{\text{(*)}}=~ \scanpic{91b} \quad .
\ee
Two separate simplifications make up step (*). Firstly, on the left-most $H$-leg we have
\be
  S \circ M_{\sigma^{-1}} \circ \Ad_\sigma 
  \overset{\text{cond.\,c)}}{=}
  {}_{S \circ \sigma^{-1}}M \circ \Ad_\sigma \circ S^{-1}
  \overset{\text{\eqref{eq:S-sigma-g_2nd}}}{=}
  {}_gM \circ M_{\sigma^{-1}} \circ S^{-1} \ .
\ee
Secondly, on the middle $H$-leg we compute
\be
  {}_{S \circ \sigma^{-1}}M \circ \Ad_\sigma^{-1} \circ S^{-1}
  \overset{\text{\eqref{eq:S-sigma-g_2nd}}}{\underset{\text{\eqref{eq:Adsig-inv}}}{=}}  
   {}_gM \circ {}_{\sigma^{-1}}M \circ \Ad_\sigma \circ \Ad_g^{-1} \circ S
   =
   M_{\sigma^{-1}} \circ M_g \circ S
  \overset{\text{\eqref{eq:S-sigma-g_2nd}}}{=}
   M_{S \circ \sigma^{-1}} \circ S \ .
\ee
On the right hand side of \eqref{eq:check-8L-aux1} we can now replace $(S^{-1} \otimes S) \circ \gamma^{-1} = \gamma^{-1}$ and move $S \circ \sigma^{-1}$ from the middle $H$-leg to the right-most $H$-leg. This results in the right hand side of \eqref{eq:hex8L-alt2}, establishing \eqref{eq:hexagon-8L} and completing the proof of Proposition \ref{prop:Hopf-to-hex}.
\end{proof}

\begin{remark}\label{rem:braided-via-gamma}
(i) The {\em Drinfeld element} of a quasi-triangular Hopf algebra is defined to be $u = \mu \circ (S \otimes \id) \circ c_{H,H} \circ R$. If the Hopf algebra satisfies the properties in Theorem \ref{thm:main2}\,(i), we can compute
\begin{align}
u 
&\overset{\text{\eqref{eq:R-via-gamma-op}}}{=}
\mu \circ (S \otimes \id) \circ c_{H,H} \circ (\Ad_\sigma^{-1} \otimes \id) \circ c_{H,H} \circ \gamma^{-1}
=
\mu \circ (S \otimes \Ad_\sigma^{-1} ) \circ \gamma^{-1}
\nonumber\\
&\overset{\text{\eqref{eq:gammainv-via-sigma_2nd}}}=
\mu \circ (S \otimes \Ad_\sigma^{-1} ) \circ (M_\sigma \otimes {}_\sigma M) \circ \Delta \circ \sigma^{-1}
= {}_{S \circ \sigma}M \circ M_\sigma \circ 
\mu \circ (S \otimes \id) \circ \Delta \circ \sigma^{-1}
\nonumber\\
&\overset{\text{cond.\,d)}}{=} 
g \sigma^2 \ .
\label{eq:Drinfeld-el}
\end{align}
Since by \eqref{eq:Adsig-inv}, $S^2 = \Ad_g \circ \Ad_\sigma^{-2}$, and by \eqref{eq:Adsig4=id}, $\Ad_\sigma^{\,4} = \id_H$, the element $u$ satisfies (see \cite[Prop.\,VIII.1]{Kassel-book} for the vector space case)
\be \label{eq:S2=Adu}
  S^2 = \Ad_u \ .
\ee
(ii)
As in Corollary \ref{cor:main1-i} it is helpful to formulate Theorem \ref{thm:main2}\,(i) without appealing to the dual Hopf algebra $H^\vee$. To this end we rewrite conditions a) and e) as
\begin{itemize}{\em
\item[a')]
$\gamma$ is determined through $\sigma$ by 
$$
  \gamma =  ({}_{\sigma^{-1}}M \otimes M_{\sigma^{-1}}) \circ \Delta \circ \sigma \ .
$$
\item[e')] The natural isomorphism $\omega$ evaluated on $H$ satisfies
$$
  (\id \otimes \omega_H) \circ \gamma^{-1} = \big\{  \Ad_\sigma \otimes ( S \circ \Ad_\sigma^{-1} ) \big\} \circ \gamma^{-1} 
  \ .
$$
}\end{itemize}
For a') we used \eqref{eq:gamma-via-sigma_2nd}, and e') is  \eqref{eq:hexagon-3R-alt2} together with the definition \eqref{eq:Hopf-via-gdp} of $\Gamma$ in terms of $\gamma^{-1}$ (cf.\ \eqref{eq:hexagon-3R-aux1}).
\end{remark}

\subsection{Braided monoidal equivalences}\label{sec:braided-equiv}

Let $H,\lambda,\sigma,\beta$ be data satisfying the conditions of Theorem \ref{thm:main2} (or rather those of Corollary \ref{cor:main1-i} and Remark \ref{rem:braided-via-gamma}\,(ii)) and let  $\Cc(H,\lambda,\sigma,\beta)$ be the corresponding $\Zb/2\Zb$-graded braided monoidal category $\Rep_\Sc(H) + \Sc$. Given a Hopf algebra isomorphism  $\varphi : H' \to H$, we can transport the data $\lambda,\sigma,\beta$ from $H$ to $H'$:
\be\label{eq:braid-mon-gauge-xfer}
  \lambda' = \lambda\circ \varphi \quad ,\qquad 
  \sigma' = \varphi^{-1}\circ\sigma
  \quad ,\qquad 
  \beta' = \beta \ .
\ee
Clearly, the new data $\lambda',\sigma',\beta'$ also satisfies the conditions of Theorem \ref{thm:main2}. 
The $\Zb/2\Zb$-graded functor from Section \ref{sec:gauge},
\be
  G=G(\varphi):\Cc(H,\lambda,\sigma,\beta)\to \Cc(H',\lambda',\sigma',\beta') \ ,
\ee  
is now a braided equivalence. 
Indeed, the monoidal structure $G(\varphi)_{A,B}:G(A\ot B)\to G(A)\ot G(B)$ on $G(\varphi)$ is compatible with the braidings in $\Cc(H,\lambda,\sigma,\beta)$ and $\Cc(H',\lambda',\sigma',\beta')$, i.e.\ the diagram 
\be
\xymatrix{G(A\otimes B) \ar[d]_{G(c_{A,B})}\ar[rr]^{G_{A,B}} && G(A)\otimes G(B)  \ar[d]^{c'_{G(A),G(B)}}
\\
G(B\otimes A) \ar[rr]^{G_{B,A}} && G(B)\otimes G(A)  }
\ee
commutes. For example the coherence for $A \in\Cc_0, B\in\Cc_1$ is equivalent to the equation $\sigma' = \varphi^{-1}\circ\sigma$. The remaining three cases are equally straightforward. Altogether this proves Theorem \ref{thm:main2}\,(ii):

\begin{proposition}\label{prop:braid-mon-equiv}
Let $H,\lambda,\sigma,\beta$ and $H',\lambda',\sigma',\beta'$ be related via a Hopf algebra isomorphism as in \eqref{eq:braid-mon-gauge-xfer}. Then the categories $\Cc(H,\lambda,\sigma,\beta)$ and $\Cc(H',\lambda',\sigma',\beta')$ are equivalent as braided monoidal categories.
\end{proposition}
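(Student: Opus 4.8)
The plan is to reuse the monoidal equivalence $G = G(\varphi)$ constructed in Section~\ref{sec:gauge} (Proposition~\ref{prop:mon-equiv}) and to upgrade it to a braided equivalence by checking one additional coherence condition. Recall that $G$ is the pullback functor $\varphi^*$ on $\Cc_0 = \Rep_\Sc(H)$ and the identity functor on $\Cc_1 = \Sc$, and that its monoidal structure isomorphism $G_{A,B}$ is the identity in every case except $A,B\in\Cc_1$, where $G_{A,B} = \varphi^{-1}\ot\id\ot\id$ acts on $A\ot_\Cc B = H\ot A\ot B$ (see \eqref{Gts}). Since the monoidal equivalence is already in hand, the only thing left to prove is that $G$ intertwines the braidings, i.e.\ that
\be
  c'_{G(A),G(B)} \circ G_{A,B} ~=~ G_{B,A}\circ G(c_{A,B})
\ee
holds for all $A,B\in\Cc$, where $c$ and $c'$ denote the braidings of $\Cc(H,\lambda,\sigma,\beta)$ and $\Cc(H',\lambda',\sigma',\beta')$ respectively.

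Before checking this, I would first record that the transported data actually satisfies the hypotheses of Theorem~\ref{thm:main2}. Since $\varphi$ is a Hopf algebra isomorphism it intertwines $\mu,\Delta,\eps,\eta,S$ and conjugation, so conditions a)--e) transport verbatim under \eqref{eq:braid-mon-gauge-xfer}; in particular $\Cc(H',\lambda',\sigma',\beta')$ is genuinely braided. Next I would compute the transported braiding data from \eqref{eq:R-etc-via-Hopf}: substituting $\sigma' = \varphi^{-1}\circ\sigma$ and using that $\varphi^{-1}$ is a bialgebra map yields
\be
  R' = (\varphi^{-1}\ot\varphi^{-1})\circ R \quad , \qquad \tau' = \varphi^{-1}\circ\tau \quad , \qquad \nu' = \varphi^{-1}\circ\nu \ .
\ee

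The core of the argument is to verify the braiding-coherence equation in the four cases $(a,b)\in\{0,1\}^2$ with $A\in\Cc_a$, $B\in\Cc_b$, by inserting the braiding ansatz \eqref{eq:braiding-ansatz} together with the explicit form of $G$ and $G_{A,B}$ and reducing each case to an identity among the transported data. When at most one of the two factors lies in $\Cc_1$ the isomorphism $G_{A,B}$ is the identity, and the equation collapses to the statement that $\varphi^{-1}$ (together with the pullback action on $\Cc_0$) carries each braiding datum to its primed counterpart: the case $A,B\in\Cc_0$ follows from $R' = (\varphi^{-1}\ot\varphi^{-1})\circ R$ and the definition of the pullback $H'$-action, while the mixed cases $\Cc_0\times\Cc_1$ and $\Cc_1\times\Cc_0$ reduce respectively to $\sigma' = \varphi^{-1}\circ\sigma$ and $\tau' = \varphi^{-1}\circ\tau$.

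The one case in which the non-trivial monoidal structure and the braiding genuinely interact --- and which I expect to be the main obstacle --- is $A,B\in\Cc_1$. Here $A\ot_\Cc B = H\ot A\ot B$, the braiding $c_{A^1,B^1}$ acts by left multiplication with $\nu$ on the $H$-factor followed by the symmetric swap of $A$ and $B$, and $G_{A,B} = \varphi^{-1}\ot\id\ot\id$. The coherence equation then amounts to demanding that left multiplication by $\nu'$ on $H'$ precomposed with $\varphi^{-1}$ agree with $\varphi^{-1}$ postcomposed with left multiplication by $\nu$ on $H$, that is $\mu_{H'}\circ(\nu'\ot\varphi^{-1}) = \varphi^{-1}\circ\mu_H\circ(\nu\ot\id)$. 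This holds because $\varphi^{-1}$ is an algebra morphism and $\nu' = \varphi^{-1}\circ\nu$. With this final case settled, all four coherence equations hold, $G(\varphi)$ is a braided monoidal equivalence, and the proposition follows.
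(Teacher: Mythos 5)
Your proposal is correct and takes essentially the same route as the paper: reuse the monoidal equivalence $G(\varphi)$ from Proposition~\ref{prop:mon-equiv} and verify the braiding-coherence square in the four graded cases, each of which reduces to the transport identities for the braiding data under $\varphi$. The paper's own proof is just a terser version of this, noting for instance that the case $A\in\Cc_0$, $B\in\Cc_1$ is equivalent to $\sigma' = \varphi^{-1}\circ\sigma$ and that the remaining cases (including the $\Cc_1\times\Cc_1$ case with the non-trivial $G_{A,B}$, which you spell out via $\nu' = \varphi^{-1}\circ\nu$) are equally straightforward.
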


Analogous to Proposition \ref{prop:mon-equiv}, it is in general not true that {\em all} braided equivalences are of the form stated above.

\medskip

Denote by $\Aut_{\mathrm{Hopf}}(H,\lambda,\sigma)$ the group of Hopf algebra automorphisms of $H$ which leave $\lambda$ and $\sigma$ invariant (via \eqref{eq:braid-mon-gauge-xfer}). Since $\sigma$ determines the copairing $\gamma$ we have an injective homomorphism $\Aut_{\mathrm{Hopf}}(H,\lambda,\sigma)\to \Aut_{\mathrm{Hopf}}(H,\gamma,\lambda)$. Together with Proposition \ref{gma} this gives 
an injective homomorphism 
\be\lb{bma}
  \Aut_{\mathrm{Hopf}}(H,\lambda,\sigma) \longrightarrow \Aut_{\mathrm{br}}(\Cc(H,\lambda,\sigma,\beta)/\Sc)
\ee
into the group of isomorphism classes of braided monoidal autoequivalences of $\Cc(H,\gamma,\lambda)$ over $\Sc$. 

\medskip

Recall from Section \ref{sec:Hopf} that the reverse category $\overline{\Cc}$ is equal to $\Cc$ as a monoidal category but has the braiding $\overline{c}_{X,Y} = c_{Y,X}^{-1}$.

\begin{proposition}
\label{prop:Cbar-equiv-C}
Let $\Cc(H,\lambda,\sigma,\beta)$ be a braided monoidal category as in Theorem \ref{thm:main2}. Then the triple $\lambda,\sigma^{-1},\beta^{-1}$ also satisfies the conditions of Theorem \ref{thm:main2} and there is a braided monoidal equivalence
$$
  \overline{\Cc(H,\lambda,\sigma,\beta)} \simeq \Cc(H,\lambda,\sigma^{-1},\beta^{-1}) \ .
$$
\end{proposition}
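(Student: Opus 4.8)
The plan is to prove the two assertions separately: first that the triple $(\lambda,\sigma^{-1},\beta^{-1})$ again meets conditions a)--e) of Theorem \ref{thm:main2}, and then that reversing the braiding realises exactly the passage $\sigma\mapsto\sigma^{-1}$, $\beta\mapsto\beta^{-1}$ up to a transport of structure. The conceptual anchor for the second part is that on the degree-zero component $\Cc_0=\Rep_\Sc(H)$ the reverse braiding is governed by the R-matrix $R_{21}^{-1}$, and a short computation identifies this with the R-matrix attached to $\sigma^{-1}$: since $c_{H,H}$ is an algebra automorphism of $H\ot H$ it commutes with multiplicative inversion, so using \eqref{eq:R-via-gamma_2nd} and \eqref{eq:gamma-and-c-gamma} one finds $R_{21}^{-1}=(\id\ot\Ad_\sigma^{-1})\circ\gamma$, which is precisely the R-matrix produced by $\sigma^{-1}$ through \eqref{eq:R-etc-via-Hopf}.

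For the first assertion I would check a)--e) for $(\lambda,\sigma^{-1},\beta^{-1})$ one at a time, exploiting the consequences collected in Remark \ref{rem:main2}(ii). Condition a) simply \emph{defines} the new self-duality $\Gamma_t$ through $\sigma^{-1}$, and one verifies that $\Gamma_t,\lambda$ satisfy Theorem \ref{thm:main1} by transporting the copairing: from \eqref{eq:gamma-via-sigma_2nd} and \eqref{eq:gammainv-via-sigma_2nd} one gets $\gamma_t=(\Ad_\sigma\ot\Ad_\sigma^{-1})\circ\gamma^{-1}$, which inherits non-degeneracy and the Hopf-copairing identities from $\gamma$, while the self-duality c) of Theorem \ref{thm:main1} for $\Gamma_t$ follows from that of $\Gamma$ together with $S_H^2=\Ad_g\circ\Ad_\sigma^{-2}$. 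Condition c) of Theorem \ref{thm:main2} for $\Ad_{\sigma^{-1}}=\Ad_\sigma^{-1}$ is immediate since the inverse of a coalgebra anti-automorphism compatible with $S_H$ is again one (Lemma \ref{lem:A-prop-1} and Lemma \ref{lem:S-sigma}(ii)); condition d) for $\sigma^{-1}$ is contained in Lemma \ref{lem:S-sigma}(i) and Lemma \ref{lem:g-sig-g=sig}; and condition e) is obtained by rewriting the given composite with $\Gamma_t$ in place of $\Gamma$ and $\Ad_\sigma^{-1}$ in place of $\Ad_\sigma$, the element $\omega_H$ being an involutive Hopf automorphism. The two genuinely substantive points sit in b): the identity $\lambda\circ S_H=\lambda\circ\Ad_{\sigma^{-1}}$ reduces to $\lambda\circ\Ad_\sigma^{\,2}=\lambda$, which I would derive from $\lambda\circ S_H=\lambda\circ\Ad_\sigma$ and $S_H\circ\Ad_\sigma=\Ad_\sigma\circ S_H^{-1}$ (Lemma \ref{lem:S-sigma}(ii)); and the normalisation $\lambda\circ\sigma^{-1}=\beta^{-2}$, which I expect to follow from $(\lambda\ot\lambda)\circ\gamma^{-1}=\id_\one$ (Lemma \ref{lem:norm-cond}) together with \eqref{eq:gammainv-via-sigma_2nd} and the cointegral identities of Lemma \ref{lem:move-coprod-past-lam}, producing a Gauss-sum type relation $(\lambda\circ\sigma)\cdot(\lambda\circ\sigma^{-1})=\id_\one$.

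For the braided equivalence itself I would use the transport machinery of Sections \ref{sec:gauge} and \ref{sec:braided-equiv}. The reverse category keeps the associativity of $\Cc(H,\lambda,\sigma,\beta)$, whose copairing is $\gamma$, whereas the target $\Cc(H,\lambda,\sigma^{-1},\beta^{-1})$ has copairing $\gamma_t=(\Ad_\sigma\ot\Ad_\sigma^{-1})\circ\gamma^{-1}\neq\gamma$; hence the equivalence cannot be the identity functor and must correct the associativity. The natural candidate is a functor $G$ which is the identity on $\Cc_1$ and a pullback along a Hopf-algebra automorphism $\varphi$ on $\Cc_0$ (as in \eqref{G} and \eqref{Gts}), chosen so that $(\varphi^{-1}\ot\varphi^{-1})\circ\gamma=\gamma_t$ and $\lambda\circ\varphi=\lambda$. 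Equivalently, and more in the spirit of the paper, I would try to recognise $\overline{\Cc(H,\lambda,\sigma,\beta)}$ as a category of the same type built on $(H^{\mathrm{op}})_{\mathrm{cop}}$ and then transport along the Hopf isomorphism $S_H:(H^{\mathrm{op}})_{\mathrm{cop}}\to H$ by Proposition \ref{prop:braid-mon-equiv}, since $\lambda\circ S_H=\lambda\circ\Ad_\sigma$ and $S_H^{-1}\circ\sigma$ are exactly the combinations that should output the data $(\lambda,\sigma^{-1})$ after simplification with conditions b)--d). In either formulation, once $G$ is fixed one verifies braided compatibility sector by sector: on $\Cc_0\times\Cc_0$ this is the identity $\overline{R}=R_{21}^{-1}=R'$ established above, and on the mixed and $\Cc_1\times\Cc_1$ sectors it amounts to matching the reverse data $\overline{\tau},\overline{\nu}$ with $\tau'=\sigma^{-1}$ and $\nu'=\beta^{-1}\cdot\sigma$ from \eqref{eq:R-etc-via-Hopf}, where $\omega_H$ enters.

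The main obstacle I anticipate is pinning down the transport $\varphi$ (or making the $(H^{\mathrm{op}})_{\mathrm{cop}}$ identification precise) using the self-duality $\Gamma^\vee=\delta_H\circ\Ad_\sigma^{\,2}\circ\Gamma$, and then checking that $G$ intertwines the reverse braiding with the target braiding in the $\Cc_1\times\Cc_1$ sector, where the associativity correction, the factor $\omega_H$ and the inverted cointegral normalisation all interact; this is also the place where the scalar identity $\lambda\circ\sigma^{-1}=\beta^{-2}$ is really needed. By contrast the $\Cc_0$-sector and the formal conditions a), c), d), e) of Theorem \ref{thm:main2} should be routine given the identities already proved in Section \ref{sec:hex-to-Hopf}.
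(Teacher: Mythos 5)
Your first half (checking that $(\lambda,\sigma^{-1},\beta^{-1})$ satisfies conditions a)--e)) is essentially the paper's own argument: your identity $\gamma_t=(\Ad_\sigma\ot\Ad_\sigma^{-1})\circ\gamma^{-1}$ is the paper's \eqref{eq:gam-om-Ad}--\eqref{eq:gam-om-gam} (there packaged as $(\id\ot\omega_H)\circ\gamma=\gamma'$), and your reduction of $\lambda\circ S_H=\lambda\circ\Ad_{\sigma^{-1}}$ to $\lambda\circ\Ad_\sigma^{\,2}=\lambda$ is the same computation. The Gauss-sum relation $(\lambda\circ\sigma)\cdot(\lambda\circ\sigma^{-1})=\id_\one$ is also correct and reachable along cointegral lines: compose \eqref{eq:lambda-S-aux1} with $\eta$, use the left-integral property, $\lambda\circ\Lambda'=\id_\one$, and $\lambda\circ S=\lambda\circ\Ad_\sigma$; the paper instead extracts it from hexagon \eqref{eq:hexagon-8R}, but either way this part is sound.

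The genuine gap is in the construction of the equivalence. Your ``natural candidate'' is a transport functor $G(\varphi)$ along a Hopf algebra automorphism $\varphi$ satisfying $(\varphi^{-1}\ot\varphi^{-1})\circ\gamma=\gamma_t$ and $\lambda\circ\varphi=\lambda$, but such a $\varphi$ need not exist, and it fails in the paper's central example. For symplectic fermions with $d=2$ one has $\gamma=\exp(C)$ and $\gamma_t=\exp(-C)$; any Hopf automorphism of $S(\h)$ preserves the primitives $\h$, so the condition $(\varphi^{-1}\ot\varphi^{-1})\circ\gamma=\gamma_t$ forces $(\varphi^{-1}\ot\varphi^{-1})(C)=-C$, i.e.\ $\det(\varphi|_\h)=-1$; but then $\varphi$ acts by $-1$ on the top degree $S^2(\h)$, which is exactly where $\lambda$ is supported, so $\lambda\circ\varphi=-\lambda\neq\lambda$. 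Hence no transport functor of the type in Sections \ref{sec:gauge}/\ref{sec:braided-equiv} can realise the equivalence (independently of the further point that Proposition \ref{prop:braid-mon-equiv} only yields braided functors between categories with their given braidings, so reversing one braiding always requires a separate verification). Your fallback via $(H^{\mathrm{op}})_{\mathrm{cop}}$ and $S_H$ leaves precisely the hard step unaddressed: identifying $\overline{\Cc(H,\lambda,\sigma,\beta)}$ as an instance of the construction --- which is where the flip $\beta\mapsto\beta^{-1}$ must occur, since transport keeps $\beta$ fixed --- is essentially the proposition itself. The paper's proof needs none of this: the equivalence is the \emph{identity} functor, equipped with the non-trivial monoidal structure $I_{A^a,B^b}=\id_{A^a}\ot_{\Cc}(\omega_{B^b})^a$, and both the coherence of this $\omega$-twisted structure with the two sets of associators (cases $010$, $101$, $111$) and its compatibility with the reverse braiding reduce to the single identity $(\id\ot\omega_H)\circ\gamma=\gamma'$ --- the identity you derived but did not put to use. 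The correction lives in the natural transformation $\omega$, not in a Hopf automorphism.
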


\begin{proof}
We will first check that the triple $\lambda,\sigma^{-1},\beta^{-1}$ satisfies the conditions of Theorem \ref{thm:main2}. Since $\Ad_\sigma$ is an algebra map, the inverse identity to \eqref{eq:check-4L-aux1} reads
\be \label{eq:gam-om-Ad}
  (\id \otimes \omega_H) \circ \gamma = ( \Ad_\sigma \otimes \Ad_\sigma^{-1}) \circ \gamma^{-1}
  \ .
\ee
The copairing $\gamma$ is defined via $\sigma$ as
$\gamma(\sigma) =  ({}_{\sigma^{-1}}M \otimes M_{\sigma^{-1}}) \circ \Delta \circ \sigma$, see Remark \ref{rem:braided-via-gamma}\,(ii\,a'). Write $\gamma = \gamma(\sigma)$ and $\gamma' = \gamma(\sigma^{-1})$. Then
\be\label{eq:gam-om-gam}
  (\id \otimes \omega_H) \circ \gamma  
  \overset{\text{\eqref{eq:gam-om-Ad}}}{\underset{\text{\eqref{eq:gammainv-via-sigma_2nd}}}{=}}
  ( \Ad_\sigma \otimes \Ad_\sigma^{-1}) \circ (M_\sigma \otimes {}_\sigma M) \circ \Delta \circ \sigma^{-1} 
  = ({}_{\sigma}M \otimes M_{\sigma}) \circ \Delta \circ \sigma^{-1} = \gamma'
  \ .
\ee
With the help of \eqref{eq:gam-om-gam} and naturality and monoidality of $\omega$ it is straightforward to verify that the pair $\lambda,\gamma'$ satisfies the conditions in Corollary \ref{cor:main1-i}. 

Conditions a)--e) of Theorem \ref{thm:main2} for $\lambda,\sigma^{-1},\beta^{-1}$ can be seen as follows:
\\[.3em]
\nxt For a) and e) use Remark \ref{rem:braided-via-gamma}\,(ii).
Condition a') holds by the definition of $\gamma'$ and for e') use \eqref{eq:gam-om-gam}. 
\\[.3em]
\nxt For b) we need to show $\lambda \circ S = \lambda \circ \Ad_\sigma^{-1}$ and $\lambda(\sigma^{-1}) = \beta^{-2}$. For the first identity, compose both sides of $\lambda \circ S = \lambda \circ \Ad_\sigma$ in b) with $S^{-1} \circ \Ad_\sigma^{-1}$ and use $\Ad_\sigma \circ S = S^{-1} \circ \Ad_\sigma$ from c). For the second identity insert \eqref{eq:hexagon-8R} into $\eps \circ (-) \circ \eta$ and use $\nu = \beta \cdot \sigma^{-1}$, $\eps \circ \phi = \lambda$ and $\phi \circ \eta = \Lambda'$ (the left integral from Lemma \ref{lem:integral-cointegral}) to get $\lambda \circ {}_\sigma M \circ \Lambda' = \beta^2 \cdot \eps(\sigma^{-1}) \cdot \lambda(\sigma^{-1})$. Now use the left integral property and the normalisation condition $\lambda \circ \Lambda' = \id_\one$, as well as $\eps(\sigma) = \id_\one = \eps(\sigma^{-1})$  from Lemma \ref{lem:A-prop-1}. 
\\[.3em]
\nxt For c) use that $\Ad_\sigma^{-1}$ is a Hopf algebra map from $H_\mathrm{cop} \to H$ which is the same as a Hopf algebra map $H \to H_\mathrm{cop}$. 
\\[.3em]
\nxt Finally, d) is immediate from \eqref{eq:S-sigma-g_2nd}

\smallskip

Next, we give the monoidal equivalence $I : \overline{\Cc} \to \Cc'$, where $\Cc = \Cc(H,\lambda,\sigma,\beta)$ and $\Cc' = \Cc(H,\lambda,\sigma^{-1},\beta^{-1})$ (which of course is the same as a monoidal equivalence $\Cc \to \Cc'$).
The underlying functor is the identity functor and the monoidal structure $I_{A,B}:A\ot_{\Cc} B\to A\ot_{\Cc} B$ is given by
\be
  I_{A^a,B^b} = id_{A^a}\ot_{\Cc} (\omega_{B^b})^a  \ ,
\ee
where $A^a \in \Cc_a$, $B^b \in \Cc_b$, and $(\omega_B)^0 = \id_B$, $(\omega_B)^1 = \omega_B$. We need to check that the diagram
\be
\raisebox{2em}{
  \xymatrix{A\otimes_\Cc(B\otimes_\Cc C)) \ar[d]_{\alpha_{A,B,C}}\ar[rr]^{I_{A,BC}} && A\otimes_{\Cc} (B\otimes_\Cc C) \ar[rr]^(.45){id\otimes_\Cc
I_{B,C}} & & A\otimes_{\Cc} (B\otimes_{\Cc} C) \ar[d]^{\alpha'_{A,B,C}}
\\
(A\otimes_\Cc B)\otimes_\Cc C \ar[rr]^{I_{A B,C}} && (A\otimes_\Cc B)\otimes_{\Cc} B \ar[rr]^(.45){I_{A,B}\otimes_\Cc id} & &
(A\otimes_{\Cc} B)\otimes_{\Cc} C }
}
\ee
commutes in all eight cases for the graded degrees of $A,B$ and $C$. With a little patience, one verifies that the five cases 000, 001, 100, 011, 110 are trivially true. Using monoidality and naturality of $\omega$,
as well as the definition of $\delta$ and $\phi$ in Corollary \ref{cor:main1-i}, it is easy to check that the remaining cases 010, 101, 111 all reduce to the identity \eqref{eq:gam-om-gam}.

\smallskip

Finally, we need to verify that $I : \overline{\Cc} \to \Cc'$ is indeed braided, that is, we need to verify commutativity of the diagram
\be
\raisebox{2em}{
\xymatrix{A\ot_\Cc B \ar[d]_{c_{B,A}^{-1}}\ar[rr]^{I_{A,B}} && A\ot_\Cc B  \ar[d]^{c'_{A,B}}
\\
B\ot_\Cc A \ar[rr]^{I_{B,A}} && B\ot_\Cc A  }
}
\qquad .
\ee
One easily checks that of the four cases of the graded degrees of $A$ and $B$, the only case which is not immediate is $A,B\in\Cc_0$ The latter amounts to $c_{H,H} \circ R^{-1} = R'$. To establish this identity, first note that
\be
  R^{-1} 
  \overset{\text{\eqref{eq:R-etc-via-Hopf}}}{=}
  ( M_{\sigma^{-1}} \otimes M_{\sigma^{-1}} ) \circ \Delta \circ \sigma \ .
\ee
Using this, we have
$c_{H,H} \circ R^{-1}
=
  ( M_{\sigma^{-1}} \otimes M_{\sigma^{-1}} ) \circ \Delta^\mathrm{cop} \circ \sigma 
\overset{\text{\eqref{eq:R-via-sigma_2-2nd}}}{=}
 R'$.
\end{proof}

\subsection{Ribbon structure}\label{sec:ribbon}

Let $\Cc= \Cc(H,\lambda,\sigma,\beta)$ be a braided monoidal category as in Theorem \ref{thm:main2} and suppose that $\sigma^2$ is central:
\be\label{eq:Adsig2=id-assumption}
\text{Assumption:} \qquad \Ad_\sigma^{\,2} = \id_H \ .
\ee
Under the above assumption, in this section we give a pivotal structure on $\Cc$. By definition, this turns $\Cc$ into a ribbon category whose twist isomorphisms are determined by the pivotal structure.

\medskip

Recall the identity $S^2 = \Ad_u = \Ad_g \circ \Ad_\sigma^{\,2}$ from \eqref{eq:S2=Adu}. From this we see that one immediate implication of \eqref{eq:Adsig2=id-assumption} is that
\be \label{eq:S2=Adg}
  S^2 = \Ad_g \ .
\ee

Let $\delta_U : U \to  U^{\vee\vee}$ be the pivotal structure of the ribbon category $\Sc$ (which is assumed to be symmetric with trivial twist as stated in Notations \ref{not:sec4}). For $M \in \Cc_0$ and $X \in \Cc_1$ define the isomorphisms
\be\label{eq:deltaC-def}
  \delta^{\Cc}_M = \delta_M \circ \rho^H_M \circ (g \otimes \id_M) \quad , \quad
  \delta^{\Cc}_X = \delta_X  \ .
\ee
Recall the right duality $(-)^*$ on $\Cc$ defined in Section \ref{sec:rigid}.

\begin{lemma}\label{lem:delta-nat-iso}
The family $\delta^{\Cc}_A : A \to A^{**}$ is a natural isomorphism $\delta^{\Cc} : \Id \to (-)^{**}$ in $\Cc$.
\end{lemma}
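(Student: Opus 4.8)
The plan is to check, on each graded component separately (there are no morphisms mixing $\Cc_0$ and $\Cc_1$), the three properties that make $\delta^{\Cc}$ a pivotal structure: correct source and target with invertibility, compatibility with the $\Cc$-structure (being a morphism in $\Cc$), and naturality. On objects one has $A^{**}=A^{\vee\vee}$ in both degrees, since by \eqref{eq:C0-dual-objects} and \eqref{eq:C1-dual-objects} the right dual leaves the underlying object $(-)^\vee$ unchanged and only modifies the $H$-action; hence $\delta^{\Cc}_A$ indeed has source $A$ and target $A^{**}$ as an $\Sc$-morphism. Invertibility is immediate: $\delta_X$ is invertible because $\Sc$ is pivotal, and for $M\in\Cc_0$ the additional factor $\rho^H_M\circ(g\otimes\id_M)$ is invertible with inverse $\rho^H_M\circ(g^{-1}\otimes\id_M)$, using that $g$ has a multiplicative inverse (Lemma \ref{lem:g-grouplike-2}) together with the unit axiom of the module.

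For $X\in\Cc_1=\Sc$ the morphism property is vacuous, and the naturality of $\delta^{\Cc}_X=\delta_X$ reduces to that of $\delta$ in $\Sc$, once one identifies the double right-dual $f^{**}$ of a degree-one morphism with $f^{\vee\vee}$ (a short computation from the duality maps \eqref{eq:C1-duality maps} and the normalisation $\lambda\circ\Lambda=\id_\one$). For $M\in\Cc_0$, naturality with respect to an $H$-module map $f:M\to N$ follows by combining the naturality of $\delta$ in $\Sc$ (to move $f^{\vee\vee}$ past $\delta$) with the intertwining property $f\circ\rho^H_M=\rho^H_N\circ(\id_H\otimes f)$ (to move $f$ past $\rho^H_N\circ(g\otimes\id_N)$); this is a routine diagram chase requiring no new input.

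The essential step is to show that $\delta^{\Cc}_M:M\to M^{**}$ is a morphism of $H$-modules, i.e. $\delta^{\Cc}_M\circ\rho^H_M=\rho^H_{M^{**}}\circ(\id_H\otimes\delta^{\Cc}_M)$. Using associativity of the action, the left-hand side collapses to $\delta_M\circ\rho^H_M\circ({}_gM\otimes\id_M)$. For the right-hand side one needs the double-dual twist formula $\rho^H_{M^{**}}\circ(\id_H\otimes\delta_M)=\delta_M\circ\rho^H_M\circ(S^2\otimes\id_M)$; granting it, the right-hand side becomes $\delta_M\circ\rho^H_M\circ((M_g\circ S^2)\otimes\id_M)$. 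The two expressions coincide for all $M$ exactly when ${}_gM=M_g\circ S^2$ as endomorphisms of $H$, that is when $S^2=\Ad_g$. This is precisely \eqref{eq:S2=Adg}, which holds under the standing assumption $\Ad_\sigma^{\,2}=\id_H$ of \eqref{eq:Adsig2=id-assumption}; this is the one place where that hypothesis is used.

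The hard part will therefore be establishing the twist formula $\rho^H_{M^{**}}\circ(\id_H\otimes\delta_M)=\delta_M\circ\rho^H_M\circ(S^2\otimes\id_M)$ from the explicit dual action in \eqref{eq:C0-dual-objects}. I would obtain it by composing the right-dual action $\rho_{M^*}$ with itself, unfolding the corresponding string diagram, and using that $\Sc$ is symmetric with trivial twist so that the two braiding contributions produced by the successive dualisations cancel and no twist factor survives; the net effect is to conjugate the action of $H$ by $S^2$, exactly as in the classical Hopf-module computation. Once this diagrammatic identity is in place, the lemma follows from $S^2=\Ad_g$ as above.
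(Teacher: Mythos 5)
Your proposal is correct and follows essentially the same route as the paper's proof: compute the $H$-action on $M^{**}$ by iterating the dual action $\rho_{M^*}$, use that $\Sc$ is symmetric with trivial twist so the result is conjugation by $S^2$ (transported along $\delta_M$), and invoke $S^2=\Ad_g$ — valid under the standing assumption $\Ad_\sigma^{\,2}=\id_H$ — to conclude that $\delta^{\Cc}_M$ intertwines the actions. The additional details you work out (invertibility via the multiplicative inverse $g^{-1}$, and the identification $f^{**}=f^{\vee\vee}$ for degree-one morphisms using $\lambda\circ\Lambda=\id_\one$) are precisely the steps the paper dismisses as ``clear'' and ``immediate''.
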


\begin{proof}
Since for $X \in \Cc_1$ we have $X^* = X^\vee$, $\delta^{\Cc}_X$ is an isomorphism $X \to X^{**}$. For $M \in \Cc_0$ iterate the definition of $\rho_{M^*}$ to obtain the $H$-action on $M^{**}$. Since $\Sc$ has trivial twist, and since by \eqref{eq:S2=Adg} we have $S^2 = \Ad_g$, one finds that
\be
  \rho_{M^{**}} = \delta_M \circ \rho_M \circ (\Ad_g \otimes \delta^{-1}_M) \ .
\ee  
It is then clear that $\rho_{M^{**}} \circ (\id_H \otimes \delta^{\Cc}_M) = \delta^{\Cc}_M \circ \rho_M$. Finally, naturality of $\delta^{\Cc}$ is immediate.
\end{proof}

\begin{remark}
A different family of isomorphisms is given by $\delta^{\Cc}_M = \delta_M \circ \rho^H_M \circ (u \otimes \id_M)$ where $u$ is the Drinfeld element from \eqref{eq:Drinfeld-el}. Since $S^2 = \Ad_u$ (see \eqref{eq:S2=Adu}), for this choice Lemma \ref{lem:delta-nat-iso} holds without the extra assumption of $\Ad_\sigma$ being an involution. However, the proof of monoidality of $\delta^{\Cc}$ in Proposition \ref{prop:delta-pivotal} below requires the choice \eqref{eq:deltaC-def}, see Remark \ref{rem:delta-choice}.
\end{remark}

We can now use $\delta^{\Cc}$ to define a left duality on $\Cc$ such that both the left and right dual of $A \in \Cc$ are given by $A^*$. To distinguish the left duality maps form those in Section \ref{sec:rigid}, we use hats on the left duality maps instead of tildes. Namely, for all $A \in \Cc$ we set
\begin{align}
\widehat\ev_A^\Cc
~&:=~
\big[\, 
 A \otimes_{\Cc} A^* 
 \xrightarrow{\delta_{A}^\Cc \otimes \id_{A^*}} 
 A^{**} \otimes_{\Cc} A^*
 \xrightarrow{\ev_{A^*}^\Cc} 
\one
 \, \big] \ ,
  \nonumber\\
\widehat\coev_A^\Cc
~&:=~
\big[\, 
 \one
 \xrightarrow{\coev_{A^*}^\Cc} 
 A^{*} \otimes_{\Cc} A^{**}
 \xrightarrow{\id_{A^*} \otimes (\delta_{A}^\Cc)^{-1} } 
A^* \otimes_{\Cc} A
\, \big] \ .
\label{eq:duality-hat-def}
\end{align}
Combining the maps $\widehat \ev_A^\Cc$ and $\widetilde \coev_A^\Cc$ one obtains an isomorphism $A^* \to {}^*\!A$, where ${}^*\!A$ was defined in Section \ref{sec:rigid}. We will henceforth only work with $A^*$ as the right and left dual and will no longer refer to ${}^*\!A$. 

Using the definition of $\delta^\Cc$ it is easy to give the new left duality maps explicitly. For $M \in \Cc_0$ we have
\begin{align}
\widehat\ev_M^\Cc
&= \widetilde\ev_M \circ (\rho_M \otimes \id_{M^*}) \circ (g \otimes \id_M \otimes \id_{M^*}) \quad ,
  \nonumber\\
\widehat\coev_M^\Cc
&=
(\id_{M^*} \otimes \rho_M) \circ (\id_{M^*} \otimes g^{-1} \otimes \id_M) \circ \widetilde\coev_M \ ,
\end{align}
and for $X \in \Cc_1$ 
\be
\widehat\ev_X^\Cc = \eps \otimes \widetilde\ev_X \quad , \quad
\widehat\coev_X^\Cc = \Lambda \otimes \widetilde\coev_X \ .
\ee
We have now gathered the ingredients to prove that $\delta^\Cc$ is monoidal.

\begin{proposition}\label{prop:delta-pivotal}
$\delta^{\Cc}$ is a pivotal structure on $\Cc$, i.e.\ it is a natural monoidal isomorphism $\Id \to (-)^{**}$.
\end{proposition}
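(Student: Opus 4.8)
The family $\delta^{\Cc}$ is already known to be a natural isomorphism $\Id \to (-)^{**}$ by Lemma \ref{lem:delta-nat-iso}, so the only point left to establish is monoidality. Recall that the double-dual functor $(-)^{**}$ carries a canonical monoidal structure $\zeta_{A,B} : A^{**} \otimes_\Cc B^{**} \to (A \otimes_\Cc B)^{**}$ assembled from the right duality maps of Section \ref{sec:rigid}; the assertion is that $\delta^{\Cc}_{A \otimes_\Cc B} = \zeta_{A,B} \circ (\delta^{\Cc}_A \otimes_\Cc \delta^{\Cc}_B)$ for all $A,B$. Since $\Cc$ is $\Zb/2\Zb$-graded, it suffices to verify this coherence square in the four cases $(a,b) \in \{0,1\}^2$ for $A = A^a$, $B = B^b$. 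The only Hopf-algebraic inputs should be the group-likeness of $g$ (Lemma \ref{lem:g-grouplike-2}), the identity $S^2 = \Ad_g$ from \eqref{eq:S2=Adg} (which is where the hypothesis \eqref{eq:Adsig2=id-assumption} enters), the normalization $\lambda \circ \Lambda = \id_\one$ and the cointegral relation $\lambda \circ S = \lambda \circ {}_gM$ of Lemma \ref{lem:lambda-S-g}, together with the left-integral property of $\Lambda$.

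I would begin with the case $a=b=0$, so $A = M$ and $B = N$ are $H$-modules. Here $\delta^{\Cc}_{M \otimes N} = \delta_{M \otimes N} \circ \rho^H_{M \otimes N} \circ (g \otimes \id)$, and because the action on $M \otimes_\Cc N$ is through the coproduct, the group-likeness $\Delta \circ g = g \otimes g$ lets $g$ act on $M$ and on $N$ separately. Combined with monoidality of the pivotal structure $\delta$ on the symmetric category $\Sc$, this reproduces $\delta^{\Cc}_M \otimes_\Cc \delta^{\Cc}_N$, while the constraint $\zeta_{M,N}$ is pinned down using $S^2 = \Ad_g$, which is exactly the relation determining the dual action $\rho_{M^{**}}$ in the proof of Lemma \ref{lem:delta-nat-iso}. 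This case already isolates why the definition in \eqref{eq:deltaC-def} must use $g$ and not the Drinfeld element $u$: only $g$ is group-like and hence only $g$ splits across the coproduct, which is the content of Remark \ref{rem:delta-choice}.

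In the mixed cases $a \neq b$ and in the case $a=b=1$ the tensor product crosses the grading, so the nontrivial associativity isomorphisms of Section \ref{results} and the degree-one duality maps of Section \ref{sec:rigid} come into play. The plan is to compute both sides of the coherence square by substituting the explicit left duality maps \eqref{eq:duality-hat-def}, namely the $g$-twisted maps for objects of $\Cc_0$ (built from \eqref{eq:C0-dual-objects}) and the maps $\eps \otimes \widetilde\ev$ and $\Lambda \otimes \widetilde\coev$ for objects of $\Cc_1$, and then to reduce to a string-diagram identity in $\Sc$. For $a=b=1$, where $A \otimes_\Cc B = H \otimes A \otimes B \in \Cc_0$, one writes $\delta^{\Cc}_{A \otimes B} = (\delta_H \circ {}_gM) \otimes \delta_A \otimes \delta_B$ using monoidality of $\delta$ in $\Sc$, and matches this against the double-dual $H$-module structure on $(H \otimes A \otimes B)^\vee$; there $S^2 = \Ad_g$ governs the dual action, while $\lambda \circ \Lambda = \id_\one$ and $\lambda \circ S = \lambda \circ {}_gM$ collapse the integral/cointegral pair produced by the duality maps.

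I expect the main obstacle to be precisely the case $a=b=1$, together with the intermediate bookkeeping of the double dual of an $H$-module. There the coherence condition entangles the pivotal twist $\delta_H \circ {}_gM$ on the internal $H$-factor with the integral $\Lambda$ and cointegral $\lambda$ carried by the $\Cc_1$-duality maps, and it is only after invoking $S^2 = \Ad_g$, the group-likeness of $g$, and the normalization $\lambda \circ \Lambda = \id_\one$ (Theorem \ref{thm:main1} and Lemma \ref{lem:integral-cointegral}) that the two sides reduce to the same morphism. The remaining cases $(a,b) = (0,1)$ and $(1,0)$ I anticipate to follow, after the same substitutions, from the group-like relations for $g$ and the cointegral identity of Lemma \ref{lem:lambda-S-g}, with no further input needed.
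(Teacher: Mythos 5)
Your proposal follows essentially the same route as the paper: naturality is quoted from Lemma \ref{lem:delta-nat-iso}, and monoidality is reduced to a duality-map coherence identity checked separately in the four graded cases, using exactly the inputs the paper uses (group-likeness of $g$, $S^2 = \Ad_g$ from \eqref{eq:S2=Adg}, the normalisation $\lambda \circ \Lambda = \id_\one$, the left-integral property of $\Lambda$, and \eqref{eq:phi-1_via_phi}). One minor slip worth noting: the forcing of $g$ rather than the Drinfeld element $u$ recorded in Remark \ref{rem:delta-choice} comes from the mixed case $A^1 \in \Cc_1$, $B^0 \in \Cc_0$ --- where the $g^{-1}$ produced by $\tilde\delta$ must be cancelled, pinning down $g$ exactly --- not from the case $\Cc_0 \times \Cc_0$, although your observation that that case needs a group-like element is also valid.
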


\begin{proof}
By Lemma \ref{lem:delta-nat-iso}, $\delta^\Cc$ is a natural isomorphism $\Id \to (-)^{**}$. It remains to show that $\delta^{\Cc}$ is monoidal. This is equivalent to the requirement that the two morphisms $R,L : B^* \otimes_{\Cc} A^* \to (A \otimes_{\Cc} B)^*$ given below are equal.\footnote{
  That $R=L$ is equivalent to $\delta^\Cc$ being monoidal is standard; one place where a detailed proof is spelled out is \cite[Lem.\,2.12\,(ii)]{Carqueville:2010hu}.}
In writing $L$ and $R$ we have omitted the `$\otimes_{\Cc}$' between objects and morphisms, and we have written `$1$' for the identity morphism.
\begin{align}
R ~=~ \Big[ \, &B^* A^* 
\xrightarrow{=}
(B^* A^*) \one
\xrightarrow{1\,1\,\coev^{\Cc}_{AB}}
(B^* A^*) \{ (AB)(AB)^* \}
\nonumber\\ &
\xrightarrow{\alpha^{\Cc}_{B^*A^*,AB,(AB)^*}}
\{(B^* A^*) (AB)\}(AB)^* 
\xrightarrow{\alpha^{\Cc}_{B^*A^*,A,B} \, 1}
\{((B^* A^*) A)B\}(AB)^* 
\nonumber\\ &
\xrightarrow{(\alpha^{\Cc}_{B^*,A^*,A})^{-1}\,1\, 1}
\{(B^* (A^* A))B\}(AB)^* 
\xrightarrow{1 \, \ev^{\Cc}_A \, 1 \, 1}
\{(B^* 1)B\}(AB)^* 
\xrightarrow{=}
\{B^* B\}(AB)^* 
\nonumber\\ &
\xrightarrow{\ev^{\Cc}_B \, 1}
1(AB)^* 
\xrightarrow{=}
(AB)^* ~ \Big]
\nonumber \\
L ~=~ \Big[ \, &B^* A^* 
\xrightarrow{=}
\one (B^* A^*)
\xrightarrow{\widehat\coev^{\Cc}_{AB} \, 1\,1}
\{ (AB)^*(AB) \}(B^* A^*) 
\nonumber\\ &
\xrightarrow{(\alpha^{\Cc}_{(AB)^*,AB,B^*A^*})^{-1}}
 (AB)^*\{(AB) (B^* A^*) \}
\xrightarrow{1\,(\alpha^{\Cc}_{A,B,B^*A^*})^{-1}}
 (AB)^*\{A(B (B^* A^*)) \}
\nonumber\\ &
\xrightarrow{1\,1\,\alpha^{\Cc}_{B,B^*,A^*}}
 (AB)^*\{A((B B^*) A^*) \}
\xrightarrow{1 \, 1 \, \widehat\ev^{\Cc}_B \, 1}
 (AB)^*\{(A1) A^* \}
\xrightarrow{=}
 (AB)^*\{AA^*\}
\nonumber\\ &
\xrightarrow{1\,\widehat\ev^{\Cc}_A}
(AB)^* 1
\xrightarrow{=}
(AB)^* ~ \Big]
\end{align}
Each of $A,B$ can be chosen from $\Cc_0$ or $\Cc_1$ so that we have to verify four identities. We will look at these one by one. For $A^0, B^0 \in \Cc_0$ we must check that
\be \label{eq:delta-monoidal-aux1}
\scanpic{pi1a} 
~=~ 
\scanpic{pi1b} 
\quad .
\ee
The $g$-actions cancel each other because $g$ is group-like so that $\Delta \circ g^{-1} = g^{-1} \otimes g^{-1}$. After this, the identity follows since $\Sc$ is pivotal. For $A^1 \in \Cc_1$ and $B^0 \in \Cc_0$ we get
\be\label{eq:pivot-aux2}
\scanPICC{pi2a} 
~=~ 
\scanPICC{pi2b} 
\quad .
\ee
On the left hand side, note that the two $H$-actions on $B^0$ combine to a loop, and one can use the bubble-property to obtain $\eps \circ \phi \circ \Lambda$, which is $\id_\one$. On the right hand side, the counit produces a term $(\eps \otimes \id_H) \circ \tilde \delta = g^{-1}$ by \eqref{eq:tilde-delta-def} which cancels with the other $g$ acting on $B^0$, and a term
\be
   \eps \circ \phi^{-1} \circ \Lambda 
   \overset{\eqref{eq:phi-1_via_phi}}= 
   \eps \circ S \circ \phi \circ {}_gM \circ \Lambda
   \overset{\text{$\Lambda$ left int.}}= 
   \lambda \circ \Lambda = \id_\one \ .
\ee
The identity \eqref{eq:pivot-aux2} now follows since $\Sc$ is pivotal. 

\begin{remark}\label{rem:delta-choice}
Let us interrupt the proof of Proposition \ref{prop:delta-pivotal} for a quick comment. 
Given the definition of the left duality maps in \eqref{eq:duality-hat-def} and our ansatz that $\delta^\Cc_X = \delta_X$ for all $X \in \Cc_1$, the equality  \eqref{eq:pivot-aux2} reduces to the requirement that $\delta_{B^0}^\Cc \circ {}_{g^{-1}}M = \id_{B^0}$. In this sense, monoidality of $\delta^\Cc$ forces the choice \eqref{eq:deltaC-def} and the assumption \eqref{eq:Adsig2=id-assumption}. 
\end{remark}

For $A^0 \in \Cc_0$ and $B^1 \in \Cc_1$ one finds
\be\label{eq:pivot-aux3}
\scanPICC{pi3a} 
~=~ 
\scanPICC{pi3b} 
\quad .
\ee
On the left hand side, after using $\eps \circ \mu = \eps \otimes \eps$ one has a contribution $(\eps \otimes \id_H) \circ \delta = g$ acting on $A^0$ and a contribution $\eps \circ \phi \circ \Lambda = \id_\one$. For the right hand side first note that
\begin{align}
  \phi^{-1} \circ \Lambda 
  &\overset{\eqref{eq:phi-1_via_phi}}=
  S \circ \phi \circ {}_gM \circ \Lambda 
  \overset{\text{(*)}}=
  S^2 \circ \big( \id_H \otimes (\lambda \circ \mu) \big) \circ (\gamma \otimes \Lambda)
  \nonumber \\
   &\overset{\text{$\Lambda$ left int.}}= 
   (\lambda \circ \Lambda) \cdot (S^2 \otimes \eps)\circ \gamma
  = \eta \ ,
\end{align}
where in (*) we used that $\Lambda$ is a left integral to remove ${}_gM$, and Lemma \ref{lem:delta+phi-via-gamma} to write $\phi$ in terms of $\lambda$ and $\gamma$.
Substituting this removes all the $H$-actions from the right hand side of \eqref{eq:pivot-aux3} except for the singe action of $g$ on $A^0$, in agreement with the left hand side. Finally, for $A^1,B^1 \in \Cc_1$ we have to show
\be
\scanPICC{pi4a} 
~=~ 
\scanPICC{pi4b} 
\quad .
\ee
Using pivotality of $\Sc$, one sees that this is equivalent to 
\be\label{eq:pivot-aux5}
\scanpic{pi5a} 
~=~ 
\scanpic{pi5b} 
\quad .
\ee
Inserting both sides into $\widetilde\ev_H \circ (\id_H \otimes (-))$, i.e.\ `bending the upper $H$ leg to the left', and substituting $\phi^{-1} = S \circ \phi \circ {}_gM$ from \eqref{eq:phi-1_via_phi} shows that \eqref{eq:pivot-aux5} is equivalent to (recall that $\Sc$ has trivial twist)
\be\label{eq:pivot-aux6}
  \lambda \circ \mu \circ \big[ ({}_gM \circ S^{-1}) \otimes \id_H \big]
  =
  \lambda \circ \mu \circ \big[ ( S \circ {}_{g^{-1}}M ) \otimes \id_H \big] \ .
\ee
The above equality holds since
\be
  {}_gM \circ S^{-1} 
  \overset{\text{\eqref{eq:S2=Adg}}}= 
  {}_gM \circ \Ad_g^{-1} \circ S^2 \circ S^{-1}
  =
  M_{g} \circ S
  = 
  S \circ {}_{g^{-1}}M \ .
\ee
\end{proof}

The above proposition shows that $\Cc$ is ribbon. The twist isomorphisms $\theta_A : A \to A$ are computed from the pivotal structure (hidden in the left duality maps) as follows:
\begin{align}
\theta_A ~=~ \Big[ \, &
A 
\xrightarrow{=}
A \one
\xrightarrow{1\,\coev^{\Cc}_{A}}
A(AA^*)
\xrightarrow{\alpha^{\Cc}_{A,A,A^*}}
(AA)A^*
\xrightarrow{c^{\Cc}_{A,A}\,1}
(AA)A^*
\nonumber \\ & \hspace{10em}
\xrightarrow{(\alpha^{\Cc}_{A,A,A^*})^{-1}}
A(AA^*)
\xrightarrow{1\,\widehat\ev^{\Cc}_{A}}
A \one
\xrightarrow{=}
A 
\,\Big] \ .
\label{eq:theta-via-piv}
\end{align}

\begin{proposition}\label{prop:twist}
For $M \in \Cc_0$ and $X \in \Cc_1$ we have
$$
  \theta_M = \rho_M \circ (\sigma^{-2} \otimes \id_M) \quad , \quad
  \theta_X = \beta^{-1} \cdot \omega_X \ .
$$  
\end{proposition}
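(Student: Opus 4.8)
The plan is to evaluate the defining expression \eqref{eq:theta-via-piv} for $\theta_A$ separately in the two cases $A\in\Cc_0$ and $A\in\Cc_1$, substituting the explicit coevaluation, left evaluation, associators and braiding that have already been pinned down, and then reducing the resulting string diagram to a single morphism.

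For $M\in\Cc_0$ both $M$ and its right dual $M^*$ lie in $\Cc_0$, so every associator appearing in \eqref{eq:theta-via-piv} is the case ``$0$ objects from $\Cc_1$'' of Section \ref{results} and hence the identity. Thus \eqref{eq:theta-via-piv} collapses to
$$
  \theta_M = (\id_M\otimes\widehat\ev^\Cc_M)\circ(c^\Cc_{M,M}\otimes\id_{M^*})\circ(\id_M\otimes\coev_M),
$$
where $\coev_M$ is the coevaluation of $\Sc$, where $\widehat\ev^\Cc_M=\widetilde\ev_M\circ(\rho_M\otimes\id_{M^*})\circ(g\otimes\id_M\otimes\id_{M^*})$, and where $c^\Cc_{M,M}$ is the $R$-matrix braiding on $\Rep_\Sc(H)$ with $R=({}_\sigma M\otimes{}_\sigma M)\circ\Delta\circ\sigma^{-1}$ from \eqref{eq:R-etc-via-Hopf}. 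Pairing the $M^*$-leg against $\coev_M$ contracts the two $M$-strands and leaves a single $H$-action, so that $\theta_M=\rho_M\circ(w\otimes\id_M)$ for an explicit morphism $w:\one\to H$ built from $R$, the grouplike $g$ and $\mu$ in the order dictated by \eqref{eq:theta-via-piv}. It then remains to simplify $w$ to $\sigma^{-2}$. For this I would use $S^2=\Ad_g$, which holds under the standing assumption \eqref{eq:Adsig2=id-assumption} by \eqref{eq:S2=Adg}, to move $g$ past a leg of $R$; the fact that $\Ad_\sigma^{\,2}=\id_H$ makes $\sigma^2$ (hence $\sigma^{-2}$) central; and the value $u=g\sigma^2$ of the Drinfeld element from \eqref{eq:Drinfeld-el}. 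Together these identify $w$ with the inverse ribbon element $\sigma^{-2}$, in agreement with the conceptual statement that the pivotal element $g$ satisfies $S^2=\Ad_g$, so the associated ribbon element is $ug^{-1}=\sigma^2$ and the twist is its inverse.

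For $X\in\Cc_1$ the dual $X^*=X^\vee$ again lies in $\Cc_1$, so now all three arguments of the associators in \eqref{eq:theta-via-piv} are in $\Cc_1$ and we must use the nontrivial isomorphism $\alpha_{A^1,B^1,C^1}$ of \eqref{fig:assoc-d} together with its inverse from \eqref{eq:inverse-assoc}. Substituting $\coev^\Cc_X=\Lambda\otimes\coev_X$, $\widehat\ev^\Cc_X=\eps\otimes\widetilde\ev_X$ and the braiding $c^\Cc_{X,X}=c_{A^1,B^1}$ from \eqref{eq:braiding-ansatz}, which carries the scalar $\nu=\beta\cdot\sigma^{-1}$ and the natural automorphism $\omega$, the $X$- and $X^\vee$-strands pair off through $\coev_X$ and $\widetilde\ev_X$, leaving only scalar data on the internal $H$-line. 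I would evaluate that scalar using $\eps\circ\phi=\lambda$, the normalisation $\lambda\circ\Lambda=\id_\one$ (property b) of Theorem \ref{thm:main1}), the left-integral property of $\Lambda$ and the bubble property, collecting the contribution of $\nu$ and of the two cointegral pairings (which produce a factor $\lambda(\sigma^{-1})=\beta^{-2}$) into the single factor $\beta^{-1}$. Naturality and monoidality of $\omega$ then leave precisely $\beta^{-1}\cdot\omega_X$.

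The routine but bookkeeping-heavy part, and the main obstacle, is the $\Cc_0$ case: reducing the $H$-valued morphism $w$ to $\sigma^{-2}$ demands careful tracking of the braiding convention in \eqref{eq:braiding-ansatz} and repeated use of the interplay between $g$, $\sigma$ and $S$ recorded in Lemma \ref{lem:S-sigma}, Lemma \ref{lem:g-sig-g=sig} and Remark \ref{rem:main2}. In the $\Cc_1$ case the only delicate point is checking that the several normalisation scalars ($\lambda\circ\Lambda$, $\eps\circ g$, and the $\beta$ hidden in $\nu$) combine with the correct powers to give exactly $\beta^{-1}$, with the $\omega_X$ surviving unchanged.
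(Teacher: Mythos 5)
Your overall strategy is the same as the paper's: evaluate \eqref{eq:theta-via-piv} case by case, observe that all associators are trivial for $M \in \Cc_0$ while for $X \in \Cc_1$ they contribute $\phi$ and $\phi^{-1}$, and reduce to $\theta_M = \rho_M \circ (w \ot \id_M)$ with $w = \mu \circ c_{H,H} \circ ({}_gM \ot \id_H) \circ R$, respectively $\theta_X = a \cdot \omega_X$ with $a = \eps \circ \phi^{-1} \circ M_\nu \circ \phi \circ \Lambda$; this is exactly the paper's reduction. Your finishing move in the $\Cc_1$ case differs from the paper's but is valid: the paper rewrites $\phi^{-1} \circ M_\nu \circ \phi$ via hexagon \eqref{eq:hexagon-8R} and then shows $\lambda \circ M_\sigma \circ \Lambda = \id_\one$, whereas your accounting $\beta \cdot \lambda(\sigma^{-1}) = \beta \cdot \beta^{-2} = \beta^{-1}$ can be justified directly: $\phi \circ \Lambda = \eta$ (left-integral property of $\Lambda$ together with $(\id \ot \eps) \circ \gamma = \eta$ and $\lambda \circ \Lambda = \id_\one$), hence $a = \eps \circ \phi^{-1} \circ \nu = \lambda \circ S \circ \nu$ by \eqref{eq:phi-1_via_phi}, and condition b) of Theorem \ref{thm:main2} gives $\lambda( S \circ \nu) = \beta\,\lambda(\Ad_\sigma \circ \sigma^{-1}) = \beta\,\lambda(\sigma^{-1})$, where $\lambda(\sigma^{-1}) = \beta^{-2}$ is established in the proof of Proposition \ref{prop:Cbar-equiv-C}, which precedes this one. (A small slip of language: $\nu$ is a morphism $\one \to H$, not a scalar; only $\beta$ is.)

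The one genuine gap is the conceptual shortcut you propose for the $\Cc_0$ case. Moving $g$ through $R$ with $S^2 = \Ad_g$ turns $w$ into $M_g$ applied to $\mu \circ (\id \ot S^2) \circ c_{H,H} \circ R$, but identifying that latter morphism with $u^{-1}$ --- i.e.\ the identity $u^{-1} = \mu \circ (\id \ot S^2) \circ c_{H,H} \circ R$ --- is a genuine theorem about quasi-triangular Hopf algebras: its proof uses the hexagon axioms, not just $S^2 = \Ad_u$, and it is neither proved in the paper (which only computes $u$ itself in \eqref{eq:Drinfeld-el}) nor cited by you. Without it, ``$w$ is the inverse ribbon element'' is an assertion, not a derivation. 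The paper sidesteps this entirely by substituting the explicit expression for $R$ in terms of $\sigma$ (in the form \eqref{eq:R-via-gamma-op}), after which the bubble-property, \eqref{eq:S-sigma-g_2nd} and Lemma \ref{lem:g-sig-g=sig} collapse $w$ to $\sigma^{-2}$ in a few lines. Since you name this direct route (via Lemmas \ref{lem:S-sigma} and \ref{lem:g-sig-g=sig}) as your fallback, your proof does go through --- but you must either carry out that substitution or import the standard formula for $u^{-1}$ (whose diagrammatic proof is valid in any symmetric category) with a proper citation.
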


\begin{proof}
Inserting the definition of evaluation, coevaluation, associativity isomorphisms and braiding into \eqref{eq:theta-via-piv} for $A=M$ results in
\be
  \theta_M = \rho_M \circ (t \otimes \id_M) \quad \text{with} \quad
  t = \mu \circ c_{H,H} \circ ({}_gM \otimes \id_H) \circ R \ .
\ee
In $t$, replace $R$ by
\begin{align}
R 
&\overset{\text{\eqref{eq:R-via-gamma-op}}}= 
(\Ad_\sigma^{-1} \otimes \id) \circ c_{H,H} \circ \gamma^{-1}  
=
c_{H,H} \circ (\id \otimes (\Ad_\sigma^{-1} \circ S)) \circ \gamma
\nonumber\\
&\overset{\text{\eqref{eq:gamma-via-sigma_2nd}}}= 
c_{H,H} \circ (\id \otimes (\Ad_\sigma^{-1} \circ S)) \circ ({}_{\sigma^{-1}}M \otimes M_{\sigma^{-1}}) \circ \Delta \circ \sigma
\ .
\end{align}
This gives
\begin{align}
t
&=
\mu \circ \big( {}_{\sigma^{-1}}M \otimes \{ {}_gM \circ \Ad_\sigma^{-1} \circ S \circ M_{\sigma^{-1}} \} \big) \circ \Delta \circ \sigma
\nonumber\\
&=
\Ad_\sigma^{-1} \circ \mu \circ \big( \id \otimes \{ {}_gM \circ {}_{\sigma^{-1}}M \circ {}_{S \circ \sigma^{-1}}M \} \big) \circ (\id \otimes S) \circ \Delta \circ \sigma
\nonumber\\
&\overset{\text{(*)}}=
\Ad_\sigma^{-1} \circ \mu \circ ( \id \otimes M_{\sigma^{-2}} ) \circ (\id \otimes S) \circ \Delta \circ \sigma
\nonumber\\
&=
\Ad_\sigma^{-1} \circ M_{\sigma^{-2}} \circ \mu \circ (\id \otimes S) \circ \Delta \circ \sigma = \sigma^{-2} 
\ .
\end{align}
In (*) we used \eqref{eq:S-sigma-g_2nd} to write ${}_{S \circ \sigma^{-1}}M = {}_gM \circ {}_{\sigma^{-1}}M$, as well as condition d) in Theorem \ref{thm:main2} (cf.\ Lemma \ref{lem:g-sig-g=sig}) to cancel the ${}_gM$'s. The morphism in curly brackets then becomes ${}_{\sigma^{-2}}M$, and by \eqref{eq:Adsig2=id-assumption} this is equal to $M_{\sigma^{-2}}$.

Computing $\theta_X$ from \eqref{eq:theta-via-piv} produces $\theta_X = a \cdot \omega_X$, where the constant $a$ is given by
\be
  a = \eps \circ \phi^{-1} \circ M_{\nu} \circ \phi \circ \Lambda \ .
\ee
To simplify this, use the hexagon identity \eqref{eq:hexagon-8R} in the form $\phi^{-1} \circ M_{\nu} \circ \phi = {}_\sigma M \circ \phi \circ M_{\nu^{-1}}$, as well as $\eps \circ \sigma = \id_\one$ (Lemma \ref{lem:A-prop-1}) to get $a = \lambda \circ M_{\nu^{-1}} \circ \Lambda = \beta^{-1} \cdot \lambda \circ M_{\sigma} \circ \Lambda$. 
In \eqref{eq:check-8R-aux1} we saw that $\lambda \circ {}_{S \circ \sigma^{-1}}M = \lambda \circ M_{\sigma^{-1}}$. Composing this with ${}_{S \circ \sigma}M \circ M_\sigma$ gives $\lambda \circ M_{\sigma} = \lambda \circ {}_{S \circ \sigma}M$. If we apply both sides of this identity to $\Lambda$ and use the left integral property we find
\be
   \lambda \circ M_{\sigma} \circ \Lambda 
   = 
   \lambda \circ {}_{S \circ \sigma}M
   \overset{\text{$\Lambda$ left int.}}{\underset{\eps(\sigma)=\id_\one}{=}}
   \lambda \circ \Lambda 
   \overset{\text{Thm.\,\ref{thm:main1}\,(i)\,b)}}= 
   \id_\one \ . 
\ee   
Altogether, we obtain $a = \beta^{-1}$.
\end{proof}

In a ribbon category one has $c_{B,A} \circ c_{A,B} = \theta_{A \otimes B} \circ (\theta_A^{-1} \otimes \theta_B^{-1})$. Via Proposition \ref{prop:twist} we obtain the following expressions for the double braiding ($A^i, B^i \in \Cc_i$):
\begin{align}
c^\Cc_{B^0,A^0} \circ c^\Cc_{A^0,B^0} &~=~ \rho^{H \otimes H}_{A^0 \otimes B^0} \circ (Q \otimes \id_{A^0} \otimes \id_{B^0}) \ ,
\nonumber\\
c^\Cc_{B^1,A^0} \circ c^\Cc_{A^0,B^1} &~=~ \{\rho_{A^0} \circ (\sigma^2 \otimes \omega_{A^0})\} \otimes \id_{B^1} \ ,
\nonumber\\
c^\Cc_{B^0,A^1} \circ c^\Cc_{A^1,B^0} &~=~ \id_{A^1} \otimes \{\rho_{B^0} \circ (\sigma^2 \otimes \omega_{B^0})\}  \ ,
\nonumber\\
c^\Cc_{B^1,A^1} \circ c^\Cc_{A^1,B^1} &~=~ \lambda(\sigma) \cdot {}_{\sigma^2}M \otimes \omega_{A^1} \otimes  \omega_{B^1} \ ,
\label{eq:double-braiding}
\end{align}
where
\be\label{eq:Q-matrix-1}
  Q = (M_{\sigma^2} \otimes M_{\sigma^2}) \circ \Delta \circ \sigma^{-2} \ ,
\ee
and where we used $\beta^2 = \lambda(\sigma)$ (condition b) in Theorem \ref{thm:main2}). We can also compute $Q$ (which is sometimes called the {\em monodromy matrix}) by composing the braiding in \eqref{eq:braiding-ansatz} twice, which gives
\be\label{eq:Q-matrix-2}
  Q = \mu_{H \otimes H} \circ \{ (c_{H,H} \circ R) \otimes R \} \ .
\ee
Altogether, the element $t = \sigma^{-2}$ is central and satisfies
\be \label{eq:t-ribbon-el}
            \eps \circ t \overset{\text{(*)}}=  \id_\one
  \quad , \quad
  S \circ t \overset{\text{(**)}}= t
  \quad , \quad
  \Delta \circ t \overset{\text{\eqref{eq:Q-matrix-1}}}= \mu_{H \otimes H} \circ \{Q \otimes (t \otimes t) \}  \ .
\ee
Indeed, (*) follows since by Lemma \ref{lem:A-prop-1} we have $\eps(\sigma^{-1}) = \id_\one$, for (**) use $S \circ \sigma^{-1} = {}_gM \circ \sigma^{-1}$ from \eqref{eq:S-sigma-g_2nd} and that the two $g$'s cancel via condition d) of Theorem \ref{thm:main2}. A central element $t$ of $H$ which satisfies \eqref{eq:t-ribbon-el} is called {\em ribbon element}, see \cite[Def.\,XIV.6.1]{Kassel-book} (with $\theta = t^{-1}$ to match the definition of the twist isomorphisms).

%%%%%%%%%%%%%%%%%%%%%%

\section{Examples}\label{sec:mon-ex}

\subsection{Tambara-Yamagami categories}\label{sec:TY}

Let $\Sc=\vect(k)$ be the category of finite-dimensional vector spaces over a field $k$. Let $H$ be the algebra of functions $k(A)$ on a finite group $A$ such that the characteristic of $k$ does not divide the order $|A|$. The Hopf algebra $k(A)$ is self dual if and only if the group $A$ is abelian ($k(A)$ is always cocommutative; if it is self-dual, it is also commutative). Non-degenerate Hopf copairings on $k(A)$ correspond to non-degenerate bi-characters on $A$: for a non-degenerate bi-character $\chi:A\times A\to k^\times$ the element $\gamma = \sum_{a,b\in A}\chi(a,b)p_a\otimes p_b\in k(A)\ot k(A)$ is a non-degenerate Hopf copairing. Here $p_a\in k(A)$ is the $\delta$-function supported on $a\in A$: $p_a(x) = \delta_{a,x}$ for $x\in A$.

As $A$ is abelian, any right cointegral $\lambda:k(A)\to k$ is automatically a left cointegral, i.e.\ the element $g\in k(A)$ is the identity. Any integral has the form $\lambda(p_a) = c$ for some $c\in k$. 
For a non-degenerate bi-character we have $\sum_{b\in A}\chi(a,b) = 0$ for $a \neq 0$, so that  $\sum_{a,b\in A}\chi(a,b) = |A|$.
The normalisation condition $(\lambda\ot(\lambda\circ S))(\gamma) = 1$ amounts to $c^2 = |A|^{-1}$, because
\be
(\lambda\ot (\lambda\circ S))(\gamma) = (\lambda\ot \lambda)\Big(\sum_{a,b\in A}\chi(a,b)\cdot p_a\otimes p_{b^{-1}}\Big) = c^2\sum_{a,b\in A}\chi(a,b) = c^2|A| \ .
\ee
Since the antipode for $k(A)$ is involutive, by condition c) in Corollary \ref{cor:main1-i} the copairing must be symmetric, i.e.\ it corresponds to a symmetric bi-character $\chi(a,b)=\chi(b,a)$. 
Altogether, we recover the data from \cite{Tambara:1998}.

In more detail, it was proved in \cite[Thm.\,3.2]{Tambara:1998} that associativity constraints on a $\Zb/2\Zb$-graded category $\Rep(k(A)) + \vect(k)$ with the tensor product of Table \ref{tab:tensorproducts} are in 1-1 correspondence with pairs $(\chi,c)$. Moreover the actual shape of the associativity isomorphisms in \cite[Def.\,3.1]{Tambara:1998} fits our template in  Section \ref{results}. 
The categories of the form $\Cc(k(A),\gamma,\lambda) =: \Cc(A,\chi,c)$ are known as {\em Tambara-Yamagami} categories. 
It was also proved in \cite[Thm.\,3.2]{Tambara:1998} that $\Cc(A,\chi,c)$ and $\Cc(A,\chi',c')$ are tensor equivalent iff $c=c'$ and there is an automorphism $\phi$ of $A$ such that $\chi(\phi(a),\phi(b)) = \chi'(a,b)$ for all $a,b\in A$. The `if' part of that statement agrees with Proposition \ref{prop:mon-equiv} (the `only if' part is not covered by that proposition).

\medskip

Braided structures on Tambara-Yamagami categories were described by Siehler \cite{Siehler:2001}. It was proved in \cite{Siehler:2001} that Tambara-Yamagami category $\Cc(A,\chi,c)$ affords a braiding if and only if $A$ is an elementary abelian 2-group.

Theorem \ref{thm:main2} reproduces the condition and data from \cite[Thm.\,1.2]{Siehler:2001}. 
Namely, since $k(A)$ is commutative, condition e) implies that  the antipode $S$ is the identity (Remark \ref{rem:main2}\,(iii) together with $\omega=\id$), which means that $A$ is an elementary abelian 2-group, i.e.\ $A = (\Zb/2\Zb)^n$.
The non-trivial conditions on $\sigma\in k(A)$ and $\beta\in k$ amount to (see Remark \ref{rem:main2}\,(iii) and condition a') in Remark \ref{rem:braided-via-gamma}\,(ii))
\be
  (\sigma\ot\sigma) \cdot \gamma = \Delta(\sigma) \quad,\qquad 
  \lambda(\sigma) = \beta^2 \quad,\quad 
  \varepsilon(\sigma) = 1 \quad,\quad 
  S(\sigma) = \sigma \ .
\ee
Writing $\sigma = \sum_{a\in A}\sigma(a)p_a$ we get the following conditions on $\sigma:A\to k$:
$$\sigma(a)\sigma(b)\chi(a,b) = \sigma(ab),\quad c\sum_{a\in A}\sigma(a) = \beta^2,\quad \sigma(e) = 1,\quad \sigma(a^{-1}) = \sigma(a).$$
The first and the last two conditions say that $\sigma$ is a quadratic function on $A$ associated with the symmetric bi-character $\chi$. It always exists (for an algebraically closed $k$ of characteristic zero) and two quadratic functions associated with a symmetric bi-character differ by a character on $A$. Thus the number of different pairs $(\sigma,\beta)$ associated with $(\chi,c)$ is equal to $2|\hat A|=2^{n+1}$ (here $n$ is the rank of $A$ and $\hat A$ is the group of characters).

Since the quadratic function $\sigma$ determines the bi-character we denote by $\Cc(A,\sigma,c,\beta)$ the braided tensor category structure on the Tambara-Yamagami category corresponding to $\sigma$ and $c,\beta\in k$.
According to Section \ref{sec:braided-equiv}, two such categories $\Cc(A,\sigma,c,\beta)$ and $\Cc(A,\sigma',c,\beta)$ are equivalent\footnote{
  It was also claimed in \cite[Thm.\,1.2\,(2)]{Siehler:2001} that non-equivalent braided structures on $\Cc(A,\chi,c)$ correspond to $(n{+}1)$-tuples of $\pm1$, which actually are just elements of $\hat A\times \Zb/2\Zb$. 
  However, this statement only takes into account braided auto-equivalences which act as the identity functor on the category. This does not exhaust all braided equivalences because the group of automorphisms of $A$ stabilising $\chi$ can still permute quadratic functions associated with $\chi$. 
} 
if there is an automorphism $\phi$ of $A$ such that $\sigma' = \sigma\circ\phi$. 

In particular for $A=\Zb/2\Zb$ we recover eight different structures of a braided tensor category on Ising category \cite{Etingof:2009}. Indeed there is a unique symmetric non-degenerate bicharacter $\chi$ on $\Zb/2\Zb$ and there are two choices for each of $c,\sigma,\beta$.

\subsection{Symplectic fermions}\label{sec:symp-ferm-mon}

The reason to call this class of examples `symplectic fermions' is the relation of the resulting braided monoidal category $\Cc$ to conformal field theory \cite{Runkel:2012cf}, as briefly mentioned in the introduction. In this section we will just describe the categorical structure.

\medskip

Let $\Sc$ be the category $\svect(k)$ of finite-dimensional super-vector spaces over a field $k$ of characteristic zero. Let $\mathfrak{h}$ be a non-zero finite-dimensional purely odd super-vector space, together with a non-degenerate super-symmetric pairing $(-|-)$. As $\mathfrak{h}$ is purely odd, this means that $(x|y) = - (y|x)$ for all $x,y \in \mathfrak{h}$. Non-degeneracy of $(-|-)$ then requires $\mathfrak{h}$ to be even dimensional. We set
\be
  d = \dim(\mathfrak{h}) ~\in~ 2 \Zb_{>0} \ .
\ee
If $d/2$ is odd, we in addition require that $\sqrt{-1} \in k$.

The Hopf algebra in $\Sc$ is the symmetric algebra $S(\mathfrak{h})$ of $\mathfrak{h}$ in $\svect(k)$. In other words, if we map $S(\mathfrak{h})$ to vector spaces (non-super) via the forgetful functor, it becomes the exterior algebra of $\mathfrak{h}$. Or, if we think of $\mathfrak{h}$ as an abelian Lie super-algebra then $S(\mathfrak{h})$ is the universal enveloping algebra $U(\mathfrak{h})$ of $\mathfrak{h}$. In any case, both the even and the odd component of $S(\mathfrak{h})$ have dimension $2^{d-1}$. The counit, coproduct and antipode are as for $U(\mathfrak{h})$, that is, for all $x\in \mathfrak{h}$,
\be
  \eps(x) = 0
  \quad , \quad
  \Delta(x) = x \otimes 1 + 1 \otimes x 
  \quad , \quad
  S(x) = -x \ .
\ee
Note that $S(\mathfrak{h})$ is commutative and co-commutative in $\svect(k)$, e.g.\ $x \cdot y = - y \cdot x$ for all $x,y \in \mathfrak{h}$. 
The super-vector space $S(\mathfrak{h})$ is graded by the number of tensor factors $\mathfrak{h}$ in $S(\mathfrak{h})$, and it is non-zero in grades $0,1,\dots,d$. We denote the component of grade $m$ by $S^m(\mathfrak{h})$.

Choose a basis $\{a_i\}_{i=1,\dots,d}$ of $\mathfrak{h}$ and denote by $\{b_i\}_{i=1,\dots,d}$ the basis dual to $\{a_i\}$ via $(a_i|b_j) = \delta_{i,j}$. Let
\be
  C = \sum_{i=1}^{d} b_i \otimes a_i
  ~\in~ S(\mathfrak{h}) \otimes S(\mathfrak{h})
\ee
be the copairing associated to $(-|-)$. One verifies that $C$ is independent of the choice of basis $\{a_i\}$. For later use we remark that since the basis $\{b_i\}$ is dual to $\{ - a_i \}$ in the sense that $(b_i|-a_j) = \delta_{i,j}$, basis independence implies $C = - \sum_{i=1}^{d} a_i \otimes b_i$. In other words, 
\be\label{eq:sf-C-sym}
  c_{S(\mathfrak{h}),S(\mathfrak{h})} \circ C = C \ .
\ee
Next, define
\be \label{eq:gamma-from-exp}
  \gamma ~=~ \exp(C) ~=~ 1 + C + \tfrac1{2!}  \, C \cdot C + \tfrac1{3!}  \, C \cdot C \cdot C + \cdots ~~ \in S(\mathfrak{h}) \otimes S(\mathfrak{h}) \ ,
\ee
where $1$ is the unit in $S(\mathfrak{h}) \otimes S(\mathfrak{h})$ and `$\cdot$' is the multiplication in $S(\mathfrak{h}) \otimes S(\mathfrak{h})$. The sum is finite because $C^{d+1}=0$. The $m$'th order term in \eqref{eq:gamma-from-exp} consists of summands of the form $(\text{const}) \cdot  (-1)^{m(m-1)/2} /m! \cdot (b_{i_1} \cdots b_{i_m}) \otimes (a_{i_1} \cdots a_{i_m})$, for $i_1 < \cdots <i_m$, where the sign arises from the braiding in the definition of $\mu_{S(\mathfrak{h}) \otimes S(\mathfrak{h})}$. Since any rearrangement of factors to bring the indices in the correct order leads to the same sign in the left and right tensor factor, we simply have $(\text{const})=m!$. Thus,
\be \label{eq:sy-gamma_s-in-basis}
  \gamma = 1 + \sum_{m=1}^d ~ (-1)^{m(m-1)/2} \!\!\! \sum_{i_1<\cdots<i_m} (b_{i_1} \cdots b_{i_m}) \otimes (a_{i_1} \cdots a_{i_m}) \ .
\ee
This proves in particular that $\gamma$ is non-degenerate as a copairing (since it is of the form $\gamma = \sum_j u_j \otimes v_j$ where $\{ u_j \}$ and $\{v_j\}$ are some bases of $S(\mathfrak{h})$). 
Let us also verify the remaining points in condition a) of Corollary \ref{cor:main1-i}.
That $(\id \otimes \eps) \circ \gamma = 1 = (\eps \otimes \id) \circ \gamma$ and 
$(\id \otimes S) \circ \gamma = (S \otimes \id) \circ \gamma$ is evident from \eqref{eq:sy-gamma_s-in-basis}. To check \eqref{eq:pentagon-3}, first note that
\be
(\Delta \otimes \id)(C) = \sum_{i=1}^d \big( b_i \otimes 1 \otimes a_i + 1 \otimes b_i \otimes a_i \big) = C_{13} + C_{23} \ ,
\ee
where $C_{kl}$ refers to the term in $S(\mathfrak{h}) \otimes S(\mathfrak{h}) \otimes S(\mathfrak{h})$ which has the $b_i$ in the $k$'th tensor factor and the $a_i$ in the $l$'th factor. Since $\Delta \otimes \id$ is an algebra map and since $S(\mathfrak{h}) \otimes S(\mathfrak{h}) \otimes S(\mathfrak{h})$ is commutative, we have
\begin{align}
(\Delta \otimes \id)(\exp(C)) &=  \exp( (\Delta \otimes \id)(C))
\nonumber\\
 &= \exp(C_{13} + C_{23}) = \exp( C_{13}) \cdot \exp( C_{23}) \ ,
\end{align}
where `$\cdot$' here is the product in $S(\mathfrak{h}) \otimes S(\mathfrak{h}) \otimes S(\mathfrak{h})$. This is precisely \eqref{eq:pentagon-3}. The identity  \eqref{eq:pentagon-4} is verified in the same way.

Condition c) in turn follows since $S^2=\id$, and since $\Ad_{g}^{-1} = \id$ for commutative Hopf algebras, together with \eqref{eq:sf-C-sym}.

It remains to analyse condition b) in Corollary \ref{cor:main1-i}. For this we need to understand the cointegrals of $S(\mathfrak{h})$ (since $S(\mathfrak{h})$ is cocommutative, there is no distinction between left and right cointegrals and one has $g=1$). Let 
\be
  \hat C = \mu(C) = \sum_{i=1}^d b_i a_i ~ \in ~ S^2(\mathfrak{h})  \ .
\ee
Since $(-|-)$ is non-degenerate, $\hat C^{\frac d2} \neq 0$. We define the {\em symplectic volume} $vol$ to be the element of $S(\mathfrak{h})^*$ which vanishes on $S^m(\mathfrak{h})$ for $m<d$, and which in the one-dimensional top-degree $d$ satisfies
\be \label{eq:vol_h-def}
  vol(\hat C^{\frac d2}) = 1 \ .
\ee
Since $C$ is independent of the choice of basis $\{ a_i \}$, so is $vol$. It is easy to see that $vol$ is a cointegral. Indeed, since $\Delta(a_{i_1} \cdots a_{i_m}) = (a_{i_1} \cdots a_{i_m}) \otimes 1 + \cdots$, where the omitted terms have less than $m$ factors of $a$ on the left, the defining identity $(vol \otimes \id)(\Delta(a_{i_1} \cdots a_{i_m})) = vol(a_{i_1} \cdots a_{i_m}) \, 1$ is satisfied (and is non-vanishing only for $m=d$). Conversely, it is easy to convince oneself that all cointegrals are of the form $t \cdot vol$.

For the next computation, we assume that the basis $\{ a_1,a_2,a_3,a_4,\dots,a_d \}$ we have chosen is {\em symplectic}, that is, it satisfies $(a^{2m-1}|a^{2m}) = 1$ and $(a^{2m-1}|a^{n}) = 0$ for $n \neq 2m$. The dual basis in this case is $\{ b_1,b_2,b_3,b_4,\dots,b_{d-1},b_d \}=\{ a_2,-a_1,a_4,-a_3,\dots,a_d,-a_{d-1} \}$. In terms of this basis, we have
\begin{align}
\hat C &= b_1 a_1 + b_2 a_2 + \cdots + b_{d-1} a_{d-1} + b_d a_d
\nonumber \\
&= a_2 a_1 - a_1 a_2 + \cdots + a_d a_{d-1} - a_{d-1} a_d
= \textstyle - 2 \sum_{m=1}^{d/2} a_{2m-1} a_{2m} \ .
\end{align}
From this expression it is straightforward to verify that
\be
  \hat C^{\frac d2} = (-2)^{\frac d2} \, \big(\tfrac d2\big)! ~ a_1 a_2 \cdots a_{d-1} a_d \ .
\ee
Thus, $vol(a_1\cdots a_d) = \big( (-2)^{\frac d2} \, \big(\tfrac d2\big)! \big)^{-1}$, and for $\lambda = t \cdot vol$ we obtain
\begin{align}
(\lambda \otimes \lambda)((\id \otimes S)(\gamma))
&= (-1)^{\frac d2}  \cdot
 \lambda \big(\,a_2(-a_1)\cdots a_d(-a_{d-1})\,\big) \cdot \lambda\big(\,a_1 \cdots a_d\,\big)
\nonumber \\
&=  (-1)^{\frac d2}   t^2 / \big( 2^{\frac d2} \big(\tfrac d2\big)! \big)^2 \ .
 \end{align}
We used that $(\id \otimes S)(\gamma)=\gamma^{-1} = \exp(-C)$ (see Lemma \ref{lem:gamma-inv-via-S}\,(ii)), and that since $d$ is even, the coefficient of the top degree contribution in $\exp(-C)$ is the same as in \eqref{eq:sy-gamma_s-in-basis}, namely $(-1)^{d(d-1)/2} = (-1)^{\frac d2}$.

Altogether we have now shown that for $\zeta \in \{ \pm 1\}$, the pair
\be \label{eq:sf-mon-gamma-lam}
  \gamma = \exp(C) \quad,\quad \lambda_\zeta
  = \zeta \cdot \big(\tfrac d2\big)! \, \big(2\,\sqrt{-1}\big)^{\frac d2} \cdot  vol
\ee
satisfies conditions a)--c) in Corollary \ref{cor:main1-i}. The data $\delta$ and $\phi$ in Corollary \ref{cor:main1-i} is given explicitly by
\be
  \delta = \gamma \quad , \quad
  \phi(u) = \sum_{m=0}^d ~ (-1)^{m(m+1)/2} \!\!\! \sum_{i_1<\cdots<i_m} 
   \lambda_\zeta(
  a_{i_1} \cdots a_{i_m} \cdot u) ~ b_{i_1} \cdots b_{i_m} \ .
\ee

\begin{remark}\lb{gsf}
Since all non-degenerate symplectic forms on $\h$ are isomorphic, Proposition \ref{prop:mon-equiv} implies that up to equivalence the monoidal categories $\Cc(S(\h), \gamma,\lambda)$ do not depend on the choice of the symplectic form.
\\
The group $\Aut_\mathrm{Hopf}(H,\gamma,\lambda)$ is isomorphic to $Sp(\h)$, where $Sp(\h)$ acts on $\h \subset S(\h)$. This action uniquely extends to an algebra automorphism of $S(\h)$ which is automatically a Hopf algebra automorphism and keeps $\gamma$ and $\lambda$ fixed. By Proposition \ref{gma}  we have an injective group homomorphism 
\be\lb{asf}Sp(\h)\longrightarrow \Aut_\ot\big(\Cc(S(\h), \gamma,\lambda) / \svect \big)\ee from the symplectic group into the group of isomorphism classes of tensor autoequivalences over $\svect$ of symplectic fermions.
\end{remark}

The most fundamental case is $d=2$. In terms of a basis $\{a_1,a_2\}$ of $\mathfrak{h}$ with $(a_1|a_2) = 1$ and $\zeta \in \{ \pm 1\}$ we have
\be
\begin{array}{l}
C = a_2 \otimes a_1 - a_1 \otimes a_2
\quad , \quad 
\hat C = - 2 a_1 a_2
\quad , \quad 
vol_\h(a_1 a_2) = - \tfrac12 \ ,
\\[.5em]
\gamma = \delta = 1 + a_2 \otimes a_1 - a_1 \otimes a_2 -  (a_1 a_2) \otimes (a_1 a_2) \ ,
\\[.5em]
\lambda(1)=\lambda(a_1)=\lambda(a_2)=0
\quad , \quad 
\lambda_\zeta(a_1a_2)= - \zeta\,\sqrt{-1} \ ,
\\[.5em]
\phi(1) = \zeta \, \sqrt{-1} \cdot a_1 a_2
~~,~~~
\phi(a_j) = \zeta \, \sqrt{-1} \cdot  a_j
~~,~~~
\phi(a_1 a_2) = -\zeta  \, \sqrt{-1} \cdot  \,1 \ ,
\end{array}
\ee
where $j \in \{1,2\}$. 

\medskip

Now we look at the braiding in the category of symplectic fermions.
Recall that $S(\mathfrak{h})$ is commutative and cocommutative, so that the conditions a)--e) in Theorem \ref{thm:main2}\,(i) simplify to conditions a)--e) in Remark \ref{rem:main2}\,(iii).
We need to fix a natural monoidal isomorphism of the identity functor on $\Sc = \svect(k)$. The simplest choice would be $\omega_V = \id_V$ for all $V \in \svect(k)$, but for this choice condition e), namely $\omega_H = S$, would fail in our example. Instead, we choose $\omega_V : V \to V$ to be the parity involution on the super-vector space $V$, which acts as the identity on even elements and multiplies odd elements by $-1$. Since the antipode is given by the parity involution on $S(\mathfrak{h})$, this choice satisfies $\omega_H = S$.

\medskip

For condition a) -- or rather its incarnation a') in Remark \ref{rem:braided-via-gamma}\,(ii) -- we need a suitable $\sigma \in S(\mathfrak{h})$. We claim that
\be
  \sigma = \exp\!\big( \tfrac{1}{2} \, \hat C \big)
\ee
satisfies $\gamma =  (\sigma^{-1} \otimes 1) \cdot \Delta(\sigma) \cdot (1 \otimes \sigma^{-1})$. To see this, note that (recall that $C =  \sum_{i=1}^{d} b_i \otimes a_i = - \sum_{i=1}^{d} a_i \otimes b_i$)
\begin{align}
  \Delta(\hat C) &= \sum_{i=1}^d \Delta(b_i)\Delta(a_i)
  = \sum_{i=1}^d \Big( (b_ia_i) \otimes 1 + b_i \otimes a_i - a_i \otimes b_i + 1 \otimes (b_ia_i) \Big)
\nonumber \\
  &= \hat C \otimes 1 + 2C  + 1 \otimes \hat C \ ,
\end{align}
and so
\be
  \Delta(\sigma) = \exp\!\big( \tfrac{1}{2} \Delta(\hat C) \big)
  = \exp\!\big( \tfrac{1}{2}\, \hat C \otimes 1 + C  +  \tfrac{1}{2} \, 1 \otimes \hat C \big)
  = (\sigma \otimes 1) \cdot \gamma \cdot (1 \otimes \sigma) \ .
\ee
For condition b) we first observe that indeed $\lambda_\zeta \circ S = \lambda_\zeta$ as $\lambda_\zeta$ is non-vanishing only in degree $d$, and $S$ acts as the identity on $S^d(\mathfrak{h})$. Next we need to fix an element $\beta \in k$ such that
\be \label{eq:beta2-symp-ferm}
  \beta^2 = \lambda_\zeta(\sigma) = \big(\tfrac 12\big)^{\frac d2} / \big(\tfrac d2\big)! \cdot \lambda_\zeta(\hat C^{\frac d2})
  \overset{\text{\eqref{eq:vol_h-def}}}{\underset{\text{\eqref{eq:sf-mon-gamma-lam}}}{=}} \zeta \cdot (\sqrt{-1})^{\frac d2} \ .
\ee
We will assume that $k$ contains the square root of the element on the right hand side. Condition c) is trivially true in the (co)commutative case. Condition d) holds since $\sigma = 1 + \cdots$, so that $\eps(\sigma)=1$, and since $\eps$ is parity-even, so that $S(\sigma) = \sigma$. Condition e) was already checked above.

Thus by Theorem \ref{thm:main2}\,(i), $\Cc$ is a braided category with braiding isomorphisms determined by the data
\be\lb{brsf}
  R = \gamma^{-1} = \exp(-C)
  \quad , \quad
  \tau = \sigma = \exp(\tfrac 12 \hat C)
  \quad , \quad
  \nu  = \beta\,\sigma^{-1} = \beta\,\exp(-\tfrac 12 \hat C)  \ .
\ee

\begin{remark}
In the application to conformal field theory, a piece of data easily accessible are the twist eigenvalues on irreducible representations. They are given by $e^{-2 \pi i h}$ where $h$ is the $L_0$-weight of the highest weight vector. The irreducible representations in $\Cc_0$ are the even and odd one-dimensional trivial $H$-modules, which have twist eigenvalue $1$, and the even and odd simple objects in $\Cc_1$, which have twist eigenvalue $\pm\beta^{-1}$ (Proposition \ref{prop:twist}). In the case of a single pair of symplectic fermions ($d=2$), by \eqref{eq:beta2-symp-ferm} we have 
   $\beta^2 = e^{-\pi i / 2}$ (for $\sqrt{-1}=+i$ and $\zeta = -1$), 
and the relevant square root is $\beta = e^{-2 \pi i / 8}$,
    see \cite{Runkel:2012cf} for details.
This then matches the conformal weights of the highest weight vectors of the four irreducible representations in the symplectic fermion model, which are $0,1,-\frac18,\frac38$, see \cite[Sect.\,2]{Gaberdiel:1996np} or \cite{Abe:2005}.
\end{remark}

As an application, let us find the transparent objects in $\Cc$. 

\begin{proposition}
For any choice of $\zeta,\beta$, the transparent objects in $\Cc$ all lie in $\Cc_0$ and are precisely the trivial $S(\mathfrak{h})$-representations on purely parity-even super-vector spaces. 
\end{proposition}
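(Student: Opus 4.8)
The plan is to read off transparency directly from the double-braiding formulas \eqref{eq:double-braiding}, which are available here because $H=S(\h)$ is commutative, so $\Ad_\sigma=\id$, assumption \eqref{eq:Adsig2=id-assumption} holds, and $\Cc$ is ribbon. Two inputs will be needed: that $\sigma^2=\exp(\hat C)$ acts on a module $M$ as $\rho^H_M(\exp(\hat C)\otimes-)=\exp\bigl(\rho^H_M(\hat C)\bigr)$ (since $\rho$ is an algebra map), and that the monodromy of \eqref{eq:Q-matrix-1} equals $Q=\exp(-2C)$; the latter follows from $\Delta(\hat C)=\hat C\otimes 1+2C+1\otimes\hat C$ (computed in this section) together with $\sigma^2=\exp(\hat C)$, which makes the two outer factors of $\exp(\hat C)$ cancel against $\Delta(\sigma^{-2})$. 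Recalling that $T$ is transparent iff $c^\Cc_{U,T}\circ c^\Cc_{T,U}=\id$ for all $U$, I read \eqref{eq:double-braiding} with the \emph{first} argument set to $T$ and treat the two graded components of $T$ separately.

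First I would rule out $\Cc_1$. For a nonzero $T^1\in\Cc_1$, pairing it with the unit $\one\in\Cc_1$ and using the fourth line of \eqref{eq:double-braiding} produces, on the $H$-leg of $T^1\otimes_\Cc\one=H\otimes T^1\otimes\one$, the operator $\lambda(\sigma)\cdot{}_{\sigma^2}M$, i.e.\ multiplication by $\lambda(\sigma)\,\sigma^2$. Since $\sigma^2=\exp(\hat C)=1+\hat C+\cdots$ has a nonzero component $\hat C\in S^2(\h)$ (as $(-|-)$ is nondegenerate and $\h\neq 0$) and $\lambda(\sigma)=\beta^2\neq 0$, this operator is not $\id_H$; hence no nonzero object of $\Cc_1$ is transparent.

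The crux is the $\Cc_0$ case, and here the key step is a nilpotency argument. For $T^0\in\Cc_0$, pairing with any $B^1\in\Cc_1$ via the second line of \eqref{eq:double-braiding} forces $\rho^H_{T^0}(\sigma^2\otimes-)\circ\omega_{T^0}=\id_{T^0}$, that is
$$
  \exp\bigl(\rho^H_{T^0}(\hat C)\bigr)=\omega_{T^0}.
$$
Now I would observe that $S(\h)\cong\Lambda^\bullet\h$, so the generators $a_i$ act by odd operators that anticommute and square to zero; writing $\hat C=-2\sum_m a_{2m-1}a_{2m}$ in a symplectic basis, each summand $N_m=\rho(a_{2m-1})\rho(a_{2m})$ satisfies $N_m^2=0$, and distinct $N_m$ commute (as $H$ is commutative). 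Thus $\rho^H_{T^0}(\hat C)$ is a sum of commuting square-zero operators, hence nilpotent, so $\exp\bigl(\rho^H_{T^0}(\hat C)\bigr)$ is unipotent. A unipotent involution is the identity (if $\omega_{T^0}=\id+M$ with $M$ nilpotent, then $\omega_{T^0}^2=\id$ gives $M(2+M)=0$ with $2+M$ invertible, so $M=0$). Therefore $\omega_{T^0}=\id_{T^0}$, i.e.\ $T^0$ is purely parity-even. But then every odd operator $\rho(a_i)$ maps the (zero) odd part and into it, so $\rho(a_i)=0$; as the $a_i$ generate $H$, the action factors through $\eps$ and $T^0$ is a trivial module. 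I expect this unipotency/involution dichotomy to be the main obstacle, in that it is the one genuinely nonformal point; the rest is bookkeeping.

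Finally I would check the converse, that a trivial module on a purely even $V$ is transparent, by verifying all double braidings. Pairing with $B^1\in\Cc_1$: here $\rho(\sigma^2)=\eps(\sigma^2)\,\id=\id$ (since $\eps(\hat C)=0$) and $\omega_V=\id$, so the second line of \eqref{eq:double-braiding} gives $\id$. Pairing with $B^0\in\Cc_0$: the monodromy $Q=\exp(-2C)$ acts through $(\eps\otimes\id)(Q)=\exp\bigl(-2(\eps\otimes\id)(C)\bigr)=1$, because $(\eps\otimes\id)(C)=\sum_i\eps(b_i)\,a_i=0$; hence $Q$ acts as the identity on $V\otimes_\Cc B^0$. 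Combining the three steps shows the transparent objects are exactly the trivial $S(\h)$-modules on purely even super-vector spaces, uniformly in $\zeta$ and $\beta$ (only $\beta\neq0$ is used).
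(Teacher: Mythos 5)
Your proof is correct, but it takes a genuinely different route from the paper's at the decisive step. The paper also reads transparency off \eqref{eq:double-braiding}, but for the $\Cc_0$-part it invokes the general principle that non-degeneracy of the monodromy matrix $Q=\exp(-2C)$ forces the transparent objects of $\Cc_0=\Rep_\Sc(S(\h))$ \emph{alone} to be the trivial modules (i.e.\ $\svect(k)$), and it then disposes of odd components and of all of $\Cc_1$ with a single computation: the double braiding of the odd trivial module $\Pi k\in\Cc_0$ with any $V\in\Cc_1$ equals $\eps(\sigma^2)\cdot\omega_{\Pi k}\otimes\id_V=-\id\neq\id$. You avoid the non-degeneracy principle entirely in the necessity direction: transparency against $\Cc_1$ alone gives $\rho_{T^0}\circ(\sigma^2\otimes\omega_{T^0})=\id_{T^0}$, your unipotence argument (nilpotency of $\rho(\hat C)$, and the fact that a unipotent involution is the identity in characteristic zero) forces $\omega_{T^0}=\id$, and pure evenness then yields triviality because the odd generators $a_i$ must act by zero. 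Likewise you exclude $\Cc_1$ by pairing with $\one\in\Cc_1$ (fourth line of \eqref{eq:double-braiding}) rather than with $\Pi k$ (second line). Your version is more self-contained -- it proves rather than quotes its key input, and it spells out the converse verification that the paper leaves implicit -- at the cost of length; the paper's version buys brevity, and the non-degeneracy of $Q$ is exactly what it needs anyway for the remark following the proof (that $\Cc$ is non-degenerate while $\Cc_0$ is only slightly degenerate), so it extracts two statements from one fact. Two small points to fix: the displayed identity should read $\omega_{T^0}=\exp\bigl(-\rho^H_{T^0}(\hat C)\bigr)$, since transparency gives $\omega_{T^0}=\rho_{T^0}(\sigma^2\otimes-)^{-1}$ (the sign is immaterial for unipotence); and in the $\Cc_1$ step what your computation really shows, and what is needed, is that $\lambda(\sigma)\cdot{}_{\sigma^2}M$ is not even a \emph{scalar} multiple of $\id_H$, which rules out any compensation from the $\omega$-factors on the other tensor legs.
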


\begin{proof}
Let $\Pi k = k^{0|1}$ be the standard one-dimensional odd super-vector space. We turn $\Pi k$ into an object of $\Cc_0 = \Rep_\Sc(H)$ by endowing it with the trivial $H$-action (given by evaluating with $\eps$). By \eqref{eq:double-braiding} we have, for any $V \in \Cc_1$,
\be
  c^\Cc_{V,\Pi k} \circ c^\Cc_{\Pi k,V} = \eps(\sigma^2) \cdot \omega_{\Pi k} \otimes \id_V = - \id_{\Pi k} \otimes \id_V ~\neq~ \id_{\Pi k} \otimes \id_V \ .
\ee
This shows at once that $\Pi k$ is not transparent, and neither is any object in $\Cc_1$. Finally, since $Q = R_{21} \cdot R = \exp(-2C)$ is non-degenerate, the transparent objects in $\Cc_0$ alone are the trivial representations of $S(\mathfrak{h})$, that is, $\svect(k)$ itself. Since we already saw that $\Pi k$ is not transparent as an object of $\Cc$, this shows that precisely the trivial parity-even $H$-representations are transparent.
\end{proof}

In other words, the symmetric centre of $\Cc$ is $\vect$, i.e.\ the category $\Cc$ is {\em non-degenerate} (as defined, e.g., in \cite{DGNO}).
Note that the symmetric centre of $\Cc_0$ alone is equivalent to $\svect(k)$; such categories are called {\em slightly-degenerate} in \cite{Etingof:2008a,Davydov:2011a}.

\begin{remark}
In Remark \ref{gsf} we saw that the automorphisms of $S(\h)$ which fix $\gamma$ act as $Sp(\h)$ on $\h \subset S(\h)$; these automorphisms automatically fix $\lambda_\zeta$. In fact, since these automorphisms also fix $C$, they equally fix $\hat C$ and thus $\sigma$. Conversely, an automorphism that fixes $\sigma$ also fixes $\gamma$ by Remark \ref{rem:braided-via-gamma}\,(ii\,a'). Thus $\Aut_\mathrm{Hopf}(H,\lambda,\sigma) \cong Sp(\h)$ and by \eqref{bma}, the injection \eqref{asf} gets promoted to an injective homomorphism
\be
  Sp(\h) ~ \lhook\joinrel\relbar\joinrel\rightarrow ~ \Aut_{\mathrm{br}}\big( \Cc(S(\h),\lambda,\sigma,\beta)/\svect \big)
\ee
into the group of isomorphism classes of braided tensor autoequivalences over $\svect$ of symplectic fermions.
\end{remark}

\subsection{Sweedler's 4-dimensional Hopf algebra}\label{sec:Sweedler-mon}

In this example, $k$ is a field of characteristic $\neq 2$ and $\Sc$ is the category of finite-dimensional $k$-vector spaces. We use the conventions in \cite[Ch.\,VIII.2]{Kassel-book}. Sweedler's four-dimensional Hopf algebra $H_4$ has basis $\{ 1,g,x,gx \}$ with multiplication determined by $g^2=1$, $x^2=0$, $xg = - gx$. The coalgebra structure is determined by $\eps(g)=1$, $\eps(x)=0$ and
\be
  \Delta(g) = g \otimes g 
  \quad , \qquad 
  \Delta(x) = 1 \otimes x + x \otimes g
  \quad , \qquad 
  \Delta(gx) = g \otimes gx + gx \otimes 1 \ .
\ee
The antipode is $S(g)=g$, $S(x)=gx$ and $S(gx) = -x$ and has order 4. We also remark that $S^2 = \Ad_g$.

From the form of the coproduct it is immediate that all solutions to $(\lambda \otimes \id)(\Delta(a)) = \lambda(a) \cdot 1$, i.e.\ all right cointegrals, are of the form
\be
  \lambda_t(1) = \lambda_t(g) = \lambda_t(x) = 0
  \quad , \quad
  \lambda_t(gx) = t \ ,
\ee
for some $t \in k$. The $\lambda_t$ satisfy $(\id \otimes \lambda_t)(\Delta(a)) = \lambda_t(a) \cdot g$. A family of solutions to \eqref{eq:pentagon-3} and \eqref{eq:pentagon-4} is
\be
  \gamma_s = 
  \tfrac12\big( 
  1 \otimes 1 + 1 \otimes g + g \otimes 1 - g \otimes g 
  \big)
  +
  \tfrac{s}2\big( 
  x \otimes x + x \otimes gx + gx \otimes x - gx \otimes gx 
  \big) 
\ee  
for $s \in k$. Non-degeneracy of $\gamma_s$ requires $s \neq 0$, and in this case $\gamma_s$ satisfies condition a) in Corollary \ref{cor:main1-i}. For condition b) it remains to check the normalisation, which results in
\be
  (\lambda_t \otimes \lambda_t) \circ (\id \otimes S) \circ \gamma_s 
  = \tfrac12 t^2 s 
  \quad \Rightarrow \quad
  t^2 s = 2 \ .
\ee
Finally, since $S^2 = \Ad_g$, condition c) demands that $\gamma$ be symmetric, which it is. Thus for all $t \in k^\times$, the pair
\be \label{eq:ex-H4-gamma-lambda}
  \gamma_{2/t^2} ~,~ \lambda_{t}
\ee
satisfies the conditions in Corollary \ref{cor:main1-i}. The data $\delta$ and $\phi$ are given by
\be
  \delta = 
  \tfrac12\big( 
  1 \otimes g + 1 \otimes 1 + g \otimes g - g \otimes 1 
  \big)
  +
  t^{-2} \big( 
  x \otimes gx + x \otimes x + gx \otimes gx - gx \otimes x 
  \big) 
\ee
and
\be
  \phi(1) = t^{-1} (1+g)x
  ~,~~
  \phi(g) = t^{-1} (1-g)x
  ~,~~
  \phi(x) = \tfrac12 t (1-g)
  ~,~~
  \phi(gx) = \tfrac12 t (1+g) \ .
\ee

Note that the Sweedler Hopf algebra has a one parameter family of Hopf algebra automorphisms 
\be
  f_{c}:H_4\to H_4 ~~,\quad f_{c}(g) = g ~~,\quad f_c(x) = cx,\quad c\in k^\times \ ,
\ee
which relates pairs $( \gamma_{2/t^2} \,,\, \lambda_{t})$ for different $t$. Thus by Proposition \ref{prop:mon-equiv}, the categories $\Cc(H_4, \gamma_{2/t^2} , \lambda_{t})$ for different $t \in k^\times$ are all monoidally equivalent.

\medskip

We now discuss the braiding on $\Cc = \Rep_\Sc(H) + \Sc$, or, rather, its non-existence.
It is known that Sweedler's 4-dimensional Hopf algebra $H_4$  is quasi-triangular with a 1-parameter familiy of possible $R$-matrices (see, e.g., \cite[Ch.\,4.2.F]{CP-book}),
\be\label{eq:sweedler-R}
  R_s = \tfrac12\big( 
  1 \otimes 1 + 1 \otimes g + g \otimes 1 - g \otimes g 
  \big)
  +
  \tfrac{s}2\big( 
  x \otimes x + x \otimes gx + gx \otimes gx - gx \otimes x 
  \big) \ ,
\ee
where $s \in k$. Since the existence of an $R$-matrix is a necessary condition to have a braiding on all of $\Cc$, let us try to find the remaining data. For simplicity we will work over the field $k = \Cb$.

Recall the solution $\gamma_{2/t^2},\lambda_t$ from \eqref{eq:ex-H4-gamma-lambda}. First we need a suitable invertible element $\sigma \in H_4$, which satisfies condition a'). After a short calculation one finds that there are precisely four such elements. Namely, choose a $4^\text{th}$ root of unity $\zeta$, set $a = \zeta^2$, $b = \sqrt{2} \, e^{-\pi i /4} \zeta / t$, and define
\be
  \sigma = \tfrac12(1+ai)\big\{1-ai\cdot g+b(x-ai\cdot gx)\big\}  \ .
\ee
Then $\sigma^{-1} = \tfrac12(1-ai)\big\{1+ai\cdot g-b(x-ai\cdot gx)\big\}$ and one verifies that $\gamma_{2/t^2} = ({}_{\sigma^{-1}}M \otimes M_{\sigma^{-1}}) \circ \Delta \circ \sigma$ as required. 

We now found all solutions to a'). 
From Lemma \ref{lem:S2=GA-2}\,(ii) we know that $S^2 = \Ad_g \circ \Ad_\sigma^{-2}$. For $H_4$ we have in addition that $S^2 = \Ad_g$, so that any $\sigma$ that satisfies the conditions a)--e) in Theorem \ref{thm:main2} must satisfy $\Ad_\sigma^{\,2} = \id$. Unfortunately, $\Ad_\sigma^{\,2}(x) = -x$, independent of $\zeta$.

One may ask if there is a different choice of $\gamma$ for which the above procedure works. This turns out to be not so. One way to see this is to use the fact that any $R$-matrix must be of the form \eqref{eq:sweedler-R}. It is then possible to verify that for no choice of $s \in \Cb^\times$ there is a $\sigma$ which solves \eqref{eq:R-etc-via-Hopf}.

\subsection{A 16-dimensional non-(co)commutative triangular Hopf algebra}\label{sec:H16-non-comm}

This example is included solely to make the point that the conditions in Theorem \ref{thm:main2}\,(i) do allow for a non-(co)commutative solution. It suffers from being a bit on the technical side, but we were happy to find an example in the first place. The starting point is the triangular Hopf algebra given in \cite[Ex.\,4.6.6]{Gelaki:2002} (the Hopf algebra itself appeared already in \cite{Kashina:2000}, the triangular structure we use is that from \cite{Gelaki:2002}, the expression of $R$ in terms of $\sigma$ given below is new).

We will construct this example `backwards', namely, we will start with the known $R$-matrix, guess a $\sigma$ which satisfies \eqref{eq:R-etc-via-Hopf} and then recover $\gamma$.

\medskip

In this example the symmetric category is $\Sc = \vect(\Cb)$ and so $\omega = \id$. Following \cite[Ex.\,4.6.6]{Gelaki:2002}, let $G = \Zb/2 \times \Zb/2$ and $A = \Zb/4$. The group $G$ acts on $u\in A$ via $(m,n).u = (-1)^n u$. Set $\tilde G = G \ltimes A$, so that the product and inverse in $\tilde G$ are (we will write the group operations in $\tilde G$ multiplicatively)
\be\begin{array}{l}
  (m,n;u) \cdot (m',n';u') = (m+m',n+n';u+(-1)^n u') \ ,
\\[.5em]
  (m,n;u)^{-1} = (-m,-n;(-1)^{n+1} u)\ .
\end{array}
\ee
Note that $\tilde G$ is not commutative (in fact, $\tilde G \cong C_2 \times D_8$).
As an algebra, the Hopf algebra $H_{16}$ is just the corresponding group algebra:
\be
  H_{16} = \Cb[\tilde G]
  \qquad \text{(as an algebra)} \ .
\ee
The coalgebra structure will be obtained via a twist.
Define $\pi : G \to A$ via $\pi(0,0)=0$, $\pi(1,0) = 2$, $\pi(0,1)=1$, $\pi(1,1)=3$ (this is a bijective 1-cocycle in the sense of \cite[Def.\,4.6.1]{Gelaki:2002}). Pick a group isomorphism from $A$ to its character group, $\psi : A \to A^*$, for example the one defined on the generator $1 \in A$ by
\be
  \psi_1(x) = \delta_{x,0} + i \, \delta_{x,1} - \delta_{x,2} - i \, \delta_{x,3} \ ,
\ee
so that $\psi_y(x) = (\psi_1(x))^y$ for $x,y \in A$. Define
\be
  \bar J = \frac{1}{|A|} \sum_{x,y \in A}
    \psi_y(x) \cdot \pi^{-1}(x) \otimes y
    \quad \in H_{16} \otimes H_{16} \ .
\ee
The first factor in the tensor product lies in $G \subset H_{16}$ and the second one in $A \subset H_{16}$. We will also need the multiplicative inverse of $\bar J$. Let $T : A \to A$ be defined by $\pi^{-1}(-x) + \pi^{-1}(T(x)) = 0$; since $G$ has order 2, this works out to be $T(u)=-u$. Then \cite[Prop.\,4.6.3]{Gelaki:2002}
\be
  \bar J^{-1} = \frac{1}{|A|} \sum_{x,y \in A}
    (\psi_y(x))^{-1} \cdot \pi^{-1}(T(x)) \otimes y \ .
\ee
By \cite[Prop.\,4.6.3]{Gelaki:2002}, $\bar J$ is a twist, and via the conventions in \cite[Sect.\,2.3]{Gelaki:2002} we obtain the coproduct and antipode on the basis $x \in \tilde G$ of $H_{16}$
\be
\Delta(x) = \bar J^{-1} \cdot (x \otimes x) \cdot \bar J
\quad , \quad
S(x) = \bar Q^{-1} \cdot x^{-1} \cdot \bar Q \ ,
\ee
where $\bar Q = \mu \circ (S_\text{untwisted} \otimes \id) \circ \bar J$, and $x^{-1}$ is the inverse group element in $\tilde G$ (so that $S_\text{untwisted}(x) = x^{-1}$). The counit is $\eps(x) = 1$ for all $x \in \tilde G$. Most importantly, we obtain the $R$-matrix
\be
  R = \bar J_{21}^{-1} \cdot \bar J 
  = \frac{1}{|A|^2} \sum_{x,y,x',y' \in A} \frac{\psi_{y'}(x')}{\psi_{y}(x)} \cdot (y \,\pi^{-1}(x')) \otimes (\pi^{-1}(T(x))y') \ .
\ee
The above $R$-matrix is triangular and non-degenerate \cite{Gelaki:2002}. This completes the definition of $H_{16}$ as a triangular Hopf algebra. It is semisimple and co-semisimple (since $S^2=\id$, see \cite[Thm.\,4.4]{Larson:1988}), and neither commutative nor cocommutative.

\medskip

Next we need to understand the right cointegrals for $H_{16}$. This space is one-dimensional (see \cite[Prop.\,1.1]{Larson:1988}) and is spanned by $\lambda_1 \in H_{16}^*$ with
\be
  \lambda_1(x) = \delta_{x,1}
  \qquad (x \in \tilde G) \ .
\ee
From this one checks that $H_{16}$ is unimodular (this is also clear as $H_{16}$ is semisimple), i.e.\ $\lambda_1$ is also a left-integral. In other words, the element $g \in H_{16}$ which enters Theorems \ref{thm:main1} and \ref{thm:main2} is the unit: $g = 1 \in H_{16}$.

\medskip

Now consider the element
\be
\raisebox{3.5em}{$\displaystyle
  \sigma 
  = \frac14 \big(
  \raisebox{-2em}{$\begin{array}{r@{}lr@{}lr@{}lr@{}l}
  1 \,\cdot &(000)& +~ i \,\cdot &(001)& -~1 \,\cdot &(002)& -~i \,\cdot &(003)\\ 
    +~ 1 \,\cdot &(010)& +~ 1 \,\cdot &(011)& + ~1\,\cdot &(012)& +~ 1 \,\cdot &(013)\\
    +~ 1 \,\cdot &(100)& -~i \,\cdot &(101)& -~1 \,\cdot &(102)& +~ i \,\cdot &(103)\\ 
    +~ 1 \,\cdot &(110)& -~1 \,\cdot &(111)& +~ 1\,\cdot &(112)& -~1 \,\cdot &(113) 
    \end{array}$}
    \raisebox{-3.8em}{\big)\ ,}  $}
\ee
where $(mnu)$ denotes an element of $\tilde G = (\Zb/2 \times \Zb/2) \ltimes \Zb/4$.
The element $\sigma$ satisfies
\be
  \sigma^2 = 1
  \qquad \text{and} \qquad
  R = (\sigma \otimes \sigma) \cdot \Delta(\sigma^{-1}) \ ,
\ee
so that \eqref{eq:R-etc-via-Hopf} holds.\footnote{
  We found this $\sigma$ by trail and error with the help of Mathematica.
  The $\sigma$ we give is not the only possible choice, but for the purpose here one such $\sigma$
  is enough. It would be nice to have a more systematic approach to
  finding examples.}
It is now a matter of patience to verify that for $\zeta \in \{ \pm 1 \}$ the pair
\be
  \gamma = (\sigma^{-1} \otimes 1) \cdot \Delta(\sigma) \cdot (1 \otimes \sigma^{-1})
  \quad , \quad
  \lambda = 4\zeta \cdot \lambda_1
\ee
satisfies the conditions in Corollary \ref{cor:main1-i}. One also checks that the conditions in Theorem \ref{thm:main2}\,(i) are satisfied (choose any $\beta$ with $\beta^2 = \lambda(\sigma) = \zeta$), so that $\Cc$ becomes a braided monoidal category. In fact, since $\sigma^2=1$, by Section \ref{sec:ribbon} $\Cc$ is even ribbon.

\medskip

Finally, let us find the transparent objects in $\Cc$. Firstly, $\Cc_0$ itself is symmetric (by construction, since $R$ is a triangular $R$-matrix). Next, since $\sigma^2 = 1$ and $\omega = \id$, by \eqref{eq:double-braiding} the mixed double braidings are all equal to the identity. The double braiding of two objects from $\Cc_1$ is equal to $\lambda(\sigma) = \zeta$ times the identity. Thus for $\zeta = 1$, $\Cc$ is again symmetric, while for $\zeta = -1$, the transparent objects in $\Cc$ are given by $\Cc_0$.

\newcommand\arxiv[2]      {\href{http://arXiv.org/abs/#1}{#2}}
\newcommand\doi[2]        {\href{http://dx.doi.org/#1}{#2}}
\newcommand\httpurl[2]    {\href{http://#1}{#2}}

\end{document}